\newcommand{\PP}{{\mathbb{P}}}
\newcommand{\QQ}{{\mathbb{Q}}}
\newcommand{\PPpmzero}[1]{{\mathbb{P}_{#1}^{\pm}}}
\newcommand{\EE}{{\mathbb{E}}}
\newcommand{\EEpm}[1]{{\mathbb{E}_{#1}^{\pm}}}
\newcommand{\EEpmtime}[2]{{\mathbb{E}_{#1,#2}^{\pm}}}
\newcommand{\EEplus}[1]{{\mathbb{E}_{#1}^{+}}}
\newcommand{\eps}{{\epsilon}}
\newcommand{\HH}{{\mathbb{H}}}
\newcommand{\CR}{{\operatorname{CR}}}
\newcommand{\diam}{{\operatorname{diam}}}
\newcommand{\vol}{{\operatorname{vol}}}
\newcommand{\der}{{\operatorname{d}}}
\newcommand{\Int}{{\operatorname{Int}}}
\newtheorem{theorem}{Theorem}[section]
\newtheorem{lemma}[theorem]{Lemma}
\newtheorem{proposition}[theorem]{Proposition}
\newtheorem{claim}[theorem]{Claim}
\theoremstyle{definition}
\theoremstyle{remark}
\newtheorem{remark}[theorem]{Remark}
\theoremstyle{remark}
\newcommand{\optionaldesc}[2]{%
  \phantomsection
  #1\protected@edef\@currentlabel{#1}\label{#2}%
}
\title{Interface scaling limit for the critical planar Ising model perturbed by a magnetic field}
\author{L\'eonie Papon \thanks{Durham University}}
\date{\today}
\begin{document}

\maketitle

\begin{abstract}
We prove that the interface separating $+1$ and $-1$ spins in the critical planar Ising model with Dobrushin boundary conditions perturbed by an external magnetic field has a scaling limit. This result holds when the Ising model is defined on a bounded and simply connected subgraph of $\delta \mathbb{Z}^2$, with $\delta >0$. We show that if the scaling of the external field is of order $\delta^{15/8}$, then, as $\delta \to 0$, the interface converges in law to a random curve whose law is conformally covariant and absolutely continuous with respect to SLE$_3$. This limiting law is a massive version of SLE$_3$ in the sense of Makarov and Smirnov and we give an explicit expression for its Radon-Nikodym derivative with respect to SLE$_3$. We also prove that if the scaling of the external field is of order $\delta^{15/8}g_1(\delta)$ with $g_1(\delta)\to 0$, then the interface converges in law to SLE$_3$. In contrast, we show that if the scaling of the external field is of order $\delta^{15/8}g_2(\delta)$ with $g_2(\delta) \to \infty$, then the interface degenerates to a boundary arc.
\end{abstract}

\section{Introduction}

\subsection{Main results}

Chordal SLE$_\kappa$ curves are a one-parameter family of planar curves, characterized by conformal invariance and a Markovian property, that were introduced by Schramm \cite{Schramm_SLE}. They have been shown to arise as the scaling limits of interfaces in many planar statistical mechanics models at criticality when the boundary conditions are chosen appropriately \cite{percolation, LERW, HE, DGFF, cvgIsingSLE}.

However, many interesting questions arise when looking at near-critical perturbations of these models, which are obtained by sending some of their parameters to their critical values at an appropriate rate. In the scaling limit, these perturbations introduce a finite correlation length and break conformal invariance: observables of the model that are conformally invariant in the critical regime are often only conformally covariant in the near-critical regime. In \cite{massiveSLE}, Makarov and Smirnov asked whether SLE-type curves could nevertheless describe the scaling limits of interfaces in the near-critical regime. The idea is that, on a lattice, the discrete interface of the near-critical model can be seen as a perturbation of the interface of the critical one. In the scaling limit, one can therefore expect the near-critical limiting interface to be in some sense a perturbation of an SLE$_{\kappa}$ curve. This near-critical limiting interface would depend on a mass parameter, arising from the near-critical perturbation, which led Makarov and Smirnov to call the (conjectural) limiting laws of near-critical interfaces massive SLEs \cite{massiveSLE}. In particular, these massive SLE should still enjoy a Markovian property similar to that of SLE$_{\kappa}$ and the perturbation would only affect their transformation law under conformal maps: the mass parameter would have to be changed when mapping one domain to another via a conformal map and as a result, massive SLE should be conformally covariant in law, instead of conformally invariant as SLE.

When $\kappa$ is fixed, SLE$_{\kappa}$ can often be characterized by a martingale observable and this characterization is a useful tool to prove scaling-limit-type results for models at criticality. In contrast, in the near-critical regime, less is known about the properties that should characterize the law of the limiting interface. Nevertheless, convergence of near-critical interfaces has been established in a few cases. For example, massive loop-erased random walk has been shown to converge to massive SLE$_2$ \cite{mLERW,off_dimers} and the massive harmonic explorer converges to massive SLE$_4$ \cite{mHE}, which is also the level line of the massive GFF with appropriate boundary conditions \cite{mSLE4-mGFF}. Some progress has also been made towards establishing convergence of the interface of the near-critical FK-Ising model \cite{massive_FK} and of near-critical percolation \cite{massiveSLE-6, near_percolation}, although in these two cases proving uniqueness of the limiting law remains a challenge.

In this paper, we look at another example of interface in a near-critical regime: the interface separating $+1$ and $-1$ spins in the Ising model at critical temperature with Dobrushin boundary conditions perturbed by an external magnetic field. When there is no external field, this interface is known to converge to an SLE$_3$ curve \cite{cvgIsingSLE}. According to \cite{massiveSLE}, in the presence of an external field, one should expect the limiting interface to be a massive version of SLE$_3$, in the sense that its law is obtained by "perturbing" the law of SLE$_3$. Our main theorem makes this precise. We will give an exact definition of the Ising model perturbed by an external magnetic field and of its interface below but for now, let us simply state the result.

\begin{theorem} \label{theorem_magnetic_SLE3}
    Let $h: \Omega \to \mathbb{R}_{+}$ be a bounded and Lipschitz function. Let $\Omega \subset \mathbb{C}$ be a bounded and simply connected domain with two marked boundary points $a,b \in \partial \Omega$. Let $(\Omega_{\delta};a_{\delta},b_{\delta})_{\delta}$ be discrete approximations of $(\Omega;a,b)$ where for each $\delta > 0$, $\Omega_{\delta}$ is a subgraph of $\delta\mathbb{Z}^2$. Consider the interface $\gamma_{\delta}$ of the critical Ising model in $\Omega_{\delta}$ with Dobrushin boundary conditions and external magnetic field $H(x,\delta)=C_{\sigma}^{-1}h(x)\delta^{15/8}$, where $C_{\sigma}>0$ is a constant that will be defined below and corresponds to the fact that we are working on the square lattice. Then, as $\delta \to 0$, $\gamma_{\delta}$ converges in law to a massive version of SLE$_3$ in $\Omega$ from $a$ to $b$, with law denoted by $\PP_{h}^{(\Omega,a,b)}$. 
    
    The Radon-Nikodym derivative of $\PP_{h}^{(\Omega,a,b)}$ with respect to the law $\PP_{\operatorname{SLE}_3}^{(\Omega,a,b)}$ of SLE$_3$ in $\Omega$ from $a$ to $b$ is given by, for any $t \geq 0$,
    \begin{equation} \label{RN_time_theorem_intro}
        \left.\frac{\der \PP_{h}^{(\Omega,a,b)}}{\der \PP_{\operatorname{SLE}_3}^{(\Omega,a,b)}}(\gamma)\right|_{ \sigma(\gamma(s): 0\leq s \leq t)} = \frac{1}{\mathcal{Z}_h(\Omega)} \sum_{k \geq 0} \frac{1}{k!}\int_{\Omega_t^k} h(z_1)\dots h(z_k)f_{t}^{(k)}(z_1,\dots , z_k) \prod_{j=1}^{k} dz_j
    \end{equation}
    where $\mathcal{Z}_{h}(\Omega)$ is a normalization constant. Above, we have set $\Omega_t = \Omega \setminus \gamma([0,t])$ and the functions $f_t^{(k)}: \Omega_t^k \to \mathbb{R}$ are explicit.
\end{theorem}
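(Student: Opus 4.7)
The plan is to identify the candidate limit $\PP_h^{(\Omega,a,b)}$ as the tilting of $\PP_{\operatorname{SLE}_3}^{(\Omega,a,b)}$ by the non-negative martingale appearing on the right-hand side of \eqref{RN_time_theorem_intro}, and to show that the discrete Radon--Nikodym derivatives of the perturbed Ising interface law with respect to the critical one converge to this continuum martingale. As a first step, the domain Markov property applied after conditioning on $\gamma_\delta[0,t]$, combined with a Taylor expansion of $\exp\bigl(\sum_x H(x,\delta)\sigma(x)\bigr)$, yields the discrete identity
\[
\left.\frac{\der \PP_{H}^{\delta}}{\der \PP_{0}^{\delta}}\right|_{\sigma(\gamma_\delta(s):\, s\leq t)} = \frac{1}{\mathcal{Z}_\delta}\sum_{k\geq 0}\frac{1}{k!}\sum_{x_1,\dots,x_k} H(x_1,\delta)\cdots H(x_k,\delta)\,\EE^{\pm}_{\delta}\!\left[\sigma(x_1)\cdots\sigma(x_k)\right],
\]
where the inner sum runs over $k$-tuples of sites in $\Omega_\delta\setminus\gamma_\delta[0,t]$ and $\EE^{\pm}_{\delta}$ denotes the critical Ising expectation on the slit domain with the Dobrushin boundary conditions inherited from the curve. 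The scaling $H(x,\delta)=C_\sigma^{-1}h(x)\delta^{15/8}$ is calibrated against the Chelkak--Hongler--Izyurov scaling $\delta^{-k/8}\EE^{\pm}_{\delta}[\sigma(x_1)\cdots\sigma(x_k)]\to C_\sigma^{k}\langle\sigma(z_1)\cdots\sigma(z_k)\rangle^{\pm}$: the factor $\delta^{15k/8}$ coming from $H$, the factor $\delta^{-2k}$ from converting the lattice sum into a Riemann integral, and the factor $\delta^{k/8}$ from the correlation cancel, producing a non-trivial limit.

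Next, I would define $f_t^{(k)}(z_1,\dots,z_k)$ to be the continuum Ising $\pm$-Dobrushin spin correlation on $\Omega\setminus\gamma([0,t])$ and prove that, as $\delta\to 0$, the discrete sum above converges to the right-hand side of \eqref{RN_time_theorem_intro} together with its normalization $\mathcal{Z}_h(\Omega)$. The two key analytic ingredients are (i) convergence of the discrete spin correlations on slit domains to the continuum ones, uniformly in the curve, and (ii) uniform bounds on the continuum $k$-point correlations, mild enough near the boundary and at short distances so that, combined with $h\in L^\infty$, they make the series in $k$ absolutely convergent and allow the $\delta\to 0$ limit, the $k$-sum, and the spatial integrations to be interchanged. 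I would then verify that $t\mapsto M_t:=\mathcal{Z}_h(\Omega)^{-1}\sum_{k\geq 0}\tfrac{1}{k!}\int_{\Omega_t^k} h(z_1)\cdots h(z_k)\,f_t^{(k)}(z_1,\dots,z_k)\prod_j dz_j$ is a non-negative local $\operatorname{SLE}_3$-martingale, by combining the domain Markov property of $\operatorname{SLE}_3$ with the conformal covariance of the Ising spin correlations (equivalently, the BPZ/CFT equations they satisfy).

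Given uniform convergence of the discrete Radon--Nikodym derivatives to $M_t$, the weak convergence of $\gamma_\delta$ under $\PP_H^\delta$ to $\PP_h^{(\Omega,a,b)}$ then follows by a standard tilting argument: tightness under $\PP_H^\delta$ is inherited from tightness under $\PP_0^\delta$ (a consequence of \cite{cvgIsingSLE}) together with the uniform boundedness of the RN derivatives, and any subsequential limit has density $M_\infty$ against $\operatorname{SLE}_3$ by passing to the limit in the discrete identity along a Skorokhod-coupled subsequence. This uniquely identifies the limit as the tilted law and establishes formula \eqref{RN_time_theorem_intro}.

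The principal obstacle is ingredient (i): the convergence, with uniform a priori bounds, of the discrete $\pm$-Dobrushin spin correlations on the slit domains $\Omega_\delta\setminus\gamma_\delta([0,t])$. The slit is a random discrete curve that, in the limit, is a fractal $\operatorname{SLE}_3$ path, and the correlation points $z_j$ may approach it; consequently the Chelkak--Hongler--Izyurov scaling results, which are formulated on smooth domains with well-separated insertion points, have to be extended to this class of rough, curve-dependent domains with quantitative error control uniform in the curve. Securing these estimates---most likely via discrete holomorphic fermionic observables on slit domains together with RSW-type crossing estimates for the critical Ising model---is the technical heart of the proof; once in place, the tilting/Girsanov outer scheme is parallel to what has been carried out for massive LERW \cite{mLERW} and for the massive harmonic explorer \cite{mHE}.
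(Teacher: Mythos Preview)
Your overall tilting strategy---express the discrete Radon--Nikodym derivative via the Markov property and a Taylor expansion, pass to the limit term-by-term using the Chelkak--Hongler--Izyurov correlation scaling, and deduce weak convergence from tightness plus identification of the RN limit---is exactly the scheme the paper follows. However, you have misidentified where the technical difficulty lies, and you have omitted two ingredients that turn out to be the heart of the argument.

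First, your stated ``principal obstacle'' is not one: the convergence of discrete $\pm$-Dobrushin spin correlations \emph{already} holds under Carath\'eodory convergence with no smoothness assumption on the boundary (this is how \cite{spinCorrelations} is stated and used in the paper). So there is nothing to extend in order to apply it to slit domains $\Omega\setminus\gamma([0,t])$; the rough SLE$_3$ boundary is handled out of the box, as long as the insertion points stay at positive distance from one another and from the boundary.

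Second, your discrete identity is not quite the right one. When you condition on $\gamma_\delta([0,t])$, the spins on vertices adjacent to the drawn portion of the curve are \emph{determined} ($+1$ on the left, $-1$ on the right), so the RN derivative factors as
\[
\frac{1}{\mathcal{Z}_\delta}\exp\bigl(C_\sigma^{-1}h\,\delta^{15/8}(|V_L(\gamma_\delta;t)|-|V_R(\gamma_\delta;t)|)\bigr)\,\EE_{\delta,t}^{\pm}\bigl[\exp(\ldots)\bigr],
\]
and you must show the first exponential tends to $1$. This ``magnetization of the interface'' term does not follow from Carath\'eodory convergence; the paper controls it via a two-arm exponent estimate (from \cite{armExponentsIsing}) to bound $\delta^{15/8}|V(\gamma_\delta)|$, and even then only obtains almost-sure convergence along a subsequence. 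You have no analogue of this step.

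Third, the genuine analytic work is not ingredient (i) but your ingredient (ii), and it is harder than you suggest. The CHI convergence is only uniform away from the boundary and away from the diagonal, so to interchange the $\delta\to 0$ limit with the $k$-sum and the spatial integrals you need \emph{uniform-in-$\delta$} bounds on $\delta^{15k/8}\sum|\EE^{\pm}_{\delta,t}[\sigma_{x_1}\cdots\sigma_{x_k}]|$ that are summable in $k$ and that survive letting the points approach the rough boundary $\partial\Omega_t$. The paper obtains these by (a) dominating $|\EE^{\pm}|\le\EE^{+}$ via an FK argument, (b) a pointwise bound of the form $\prod_j(\tfrac12 L(x_j)\wedge d(x_j,\partial\Omega_t))^{-1/8}$, and (c) a H\"older splitting together with the two-arm estimate on $|V(\gamma_\delta)|$ to control the contribution near the curve. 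None of this appears in your sketch.

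Finally, your plan to verify directly that $M_t$ is an $\operatorname{SLE}_3$-martingale via conformal covariance/BPZ is different from the paper and, as the paper remarks, it is not clear how to compute the conditional expectation of the $t=\infty$ density given $\gamma([0,t])$ purely in the continuum. The paper instead proves the time-$t$ formula by taking the scaling limit of the discrete RN derivative restricted to $\sigma(\gamma_\delta(s):s\le t_\delta)$, reusing the same machinery as for $t=\infty$.
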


The precise assumptions on $(\Omega_\delta;a_\delta,b_\delta)_\delta$ and $(\Omega;a,b)$ together with the topologies in which weak convergence is obtained will be detailed in Section \ref{sec_setting}. Above, the constant $C_{\sigma}$ and the functions $(f_{t}^{(k)})_{k \geq 1}$ are such that the $k$-point spin correlations of the critical Ising model in the discrete slit domain $\Omega_{\delta,t}$ with Dobrushin boundary conditions converge to $C_{\sigma}^kf_t^{(k)}$ as $\delta \to 0$, see \cite{spinCorrelations} and Section \ref{section_spin_correlations}. Rescaling the external field by $C_{\sigma}^{-1}$ guarantees that the limiting law $\PP_{h}^{(\Omega,a,b)}$ does not depend on the fact that the sequence $(\gamma_{\delta})_{\delta}$ is defined on the square lattice. In this way, one can expect $\PP_{h}^{(\Omega,a,b)}$ to be universal. Note that rescaling the strength of the perturbation by a lattice-dependent constant is common in the near-critical regime, see for example \cite{mHE}.

Since $\PP_{h}^{(\Omega,a,b)}$ is obtained as the scaling limit of an interface in a near-critical model, it is not conformally invariant, but conformally covariant, as shown below. Not surprisingly given the nature of the near-critical perturbation, the behavior of $\PP_{h}^{(\Omega,a,b)}$ under conformal maps is the same as that of the continuum magnetization field of the critical Ising model constructed in \cite{GarbanMagnetization}.

\begin{proposition} \label{prop_conformal_covariance}
    The law $\PP_{h}^{(\Omega,a,b)}$ is conformally covariant in the following sense. If $\gamma$ has law $\PP_{h}^{(\Omega,a,b)}$ and $\psi: \Omega \to \tilde \Omega$ is a conformal map, then $\psi(\gamma)$ has law $\PP_{\tilde h}^{(\tilde \Omega,\psi(a),\psi(b))}$ where $\tilde h$ is given by, for $w \in \tilde \Omega$,
    \begin{equation*}
        \tilde h(w) = \vert (\psi^{-1})'(w) \vert^{\frac{15}{8}} h(\psi^{-1}(w)).
    \end{equation*}
\end{proposition}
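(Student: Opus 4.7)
The plan is to deduce the conformal covariance directly from the explicit Radon--Nikodym description in (\ref{RN_time_theorem_intro}) by combining three ingredients: conformal invariance of chordal SLE$_3$, conformal covariance of the continuum spin correlation functions $f_t^{(k)}$ with weight $1/8$ per insertion, and the Jacobian of the area-integral change of variables in the sums over $\Omega_t^k$.

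Fix a conformal map $\psi:\Omega\to\tilde\Omega$ with $\psi(a)=\tilde a$, $\psi(b)=\tilde b$, let $\gamma$ have law $\PP_{h}^{(\Omega,a,b)}$, and set $\tilde\gamma := \psi\circ\gamma$. By conformal invariance of SLE$_3$, the pushforward $\psi_*\PP_{\operatorname{SLE}_3}^{(\Omega,a,b)}$ equals $\PP_{\operatorname{SLE}_3}^{(\tilde\Omega,\tilde a,\tilde b)}$, so it suffices to show that for every $t\geq 0$, on the filtration generated by the trace $\tilde\gamma([0,\tilde t])=\psi(\gamma([0,t]))$ (the densities only see the curve as a set, not its parameterization), the Radon--Nikodym density (\ref{RN_time_theorem_intro}) evaluated at $\gamma=\psi^{-1}\circ\tilde\gamma$ agrees with the analogous density written for the data $(\tilde\Omega,\tilde a,\tilde b,\tilde h)$.

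To verify this, I change variables $z_j=\psi^{-1}(w_j)$ in each integral over $\Omega_t^k=(\Omega\setminus\gamma([0,t]))^k$. The two-dimensional Lebesgue measure transforms as $dz_j=|(\psi^{-1})'(w_j)|^2\,dw_j$, and the functions $f_t^{(k)}$ inherit the conformal covariance of the continuum critical Ising spin field with weight $1/8$:
\begin{equation*}
    f_t^{(k)}(z_1,\dots,z_k)=\prod_{j=1}^k |\psi'(z_j)|^{1/8}\,\tilde f_{\tilde t}^{(k)}(\psi(z_1),\dots,\psi(z_k)),
\end{equation*}
where $\tilde f_{\tilde t}^{(k)}$ is the corresponding spin correlation on the slit domain $\tilde\Omega\setminus\tilde\gamma([0,\tilde t])$. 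Using $|\psi'(z_j)|^{1/8}=|(\psi^{-1})'(w_j)|^{-1/8}$, the integrand transforms as
\begin{equation*}
    \prod_{j=1}^k h(z_j)\,f_t^{(k)}(z_1,\dots,z_k)\prod_{j=1}^k dz_j \ \longmapsto \ \prod_{j=1}^k \bigl[h(\psi^{-1}(w_j))\,|(\psi^{-1})'(w_j)|^{2-1/8}\bigr]\,\tilde f_{\tilde t}^{(k)}(w_1,\dots,w_k)\prod_{j=1}^k dw_j.
\end{equation*}
Since $2-1/8=15/8$, the bracketed factor is exactly $\tilde h(w_j)$, so each term in the sum (\ref{RN_time_theorem_intro}) transforms into the corresponding term written for $(\tilde\Omega,\tilde h)$. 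The two normalizations $\mathcal{Z}_h(\Omega)$ and $\mathcal{Z}_{\tilde h}(\tilde\Omega)$ must then coincide since both make the resulting law a probability measure.

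The main obstacle is justifying the conformal covariance of $f_t^{(k)}$ on the \emph{random} slit domain $\Omega_t$: one must check that $\psi$ extends to a conformal equivalence of pointed domains between $(\Omega_t,a,b,\gamma(t))$ and $(\tilde\Omega_{\tilde t},\tilde a,\tilde b,\tilde\gamma(\tilde t))$ for $\PP_{\operatorname{SLE}_3}^{(\Omega,a,b)}$-a.e.\ realization (so that $\gamma(t)$ is a well-defined prime end and the Dobrushin boundary data is carried correctly), and that the scaling-limit construction of $f_t^{(k)}$ in \cite{spinCorrelations} (cf.\ Section \ref{section_spin_correlations}) indeed delivers the weight-$1/8$ transformation law in this setting. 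Once this is in place, the remaining argument is the power-counting identity $2-1/8=15/8$ displayed above.
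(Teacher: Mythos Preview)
Your proof is correct and follows essentially the same approach as the paper: use the Radon--Nikodym expression \eqref{RN_time_theorem_intro}, conformal invariance of SLE$_3$, the weight-$1/8$ conformal covariance of $f_t^{(\pm,k)}$ from Theorem~\ref{theorem_spin_correlations}, and the change of variables giving the exponent $2-1/8=15/8$. Your ``main obstacle'' is not really one, since Theorem~\ref{theorem_spin_correlations} states the covariance for arbitrary bounded simply connected domains (with no smoothness assumptions), so it applies directly to the random slit domain $\Omega_t$.
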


Theorem \ref{theorem_magnetic_SLE3} describes the scaling limit of the interface when the scaling of the external magnetic field is the near-critical one. A natural question that then arises is to give a description of the scaling limit of the interface when the scaling of the magnetic field is not the near-critical one. The first case to consider is when the scaling of the field goes to $0$ faster than $\delta^{15/8}$ as $\delta \to 0$. In this case, we show that the scaling limit of the interface is the same as in the critical regime. The topologies in which this convergence holds are the same as in Theorem \ref{theorem_magnetic_SLE3} and will be described in Section \ref{sec_setting}.

\begin{proposition} \label{prop_small_H}
    Let $g_1:\mathbb{R}_{+} \to \mathbb{R}_{+}$ be such that $g_1(\delta) \to 0$ as $\delta \to 0$. Assume that the function $h$, the domain $(\Omega;a,b)$ and its approximations $(\Omega_{\delta};a_{\delta},b_{\delta})_{\delta}$ are as in Theorem \ref{theorem_magnetic_SLE3}. Consider the interface $\gamma_{\delta}$ of the critical Ising model in $\Omega_{\delta}$ with Dobrushin boundary conditions and external magnetic field $H(x,\delta) = h(x)\delta^{15/8}g_{1}(\delta)$. Then, as $\delta \to 0$, $\gamma_{\delta}$ converges in law to an $\operatorname{SLE}_3$ in $\Omega$ from $a$ to $b$. 
\end{proposition}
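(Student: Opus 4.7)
The plan is to show that the convergence argument behind Theorem \ref{theorem_magnetic_SLE3} adapts uniformly to the setting where the coupling strength $h$ is replaced by the $\delta$-dependent function $\tilde h_\delta(x) := C_\sigma g_1(\delta) h(x)$, since then $H(x,\delta) = C_\sigma^{-1}\tilde h_\delta(x)\delta^{15/8}$ is of exactly the near-critical form of Theorem \ref{theorem_magnetic_SLE3}, with $\tilde h_\delta$ uniformly bounded and uniformly Lipschitz but converging uniformly to $0$. The core point is that the main steps of the proof depend on $h$ only through its uniform norms, so they remain valid; the new input is just that because $\tilde h_\delta \to 0$, the limiting Radon-Nikodym derivative of the interface law with respect to $\PP_{\operatorname{SLE}_3}^{(\Omega,a,b)}$ reduces to the constant $1$.

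Concretely, tightness of $(\gamma_\delta)_\delta$ in the topology used in Theorem \ref{theorem_magnetic_SLE3} follows from the same a priori estimates, which are uniform over bounded Lipschitz field strengths. To identify any subsequential limit $\gamma$, I would, following the strategy of Theorem \ref{theorem_magnetic_SLE3}, compute the discrete Radon-Nikodym derivative of the interface law with external field with respect to the law of the no-field critical Ising interface, stopped at a time $t$. Expanding the Boltzmann factor $\exp(\sum_x H(x,\delta)\sigma_x)$ as a power series and using that, conditionally on $\gamma_\delta[0,t]$, the spins in the slit domain $\Omega_{\delta,t}$ follow the critical Ising measure with Dobrushin boundary conditions, this derivative reads
\begin{equation*}
\frac{1}{\mathcal{Z}_{h,\delta}^{t}}\sum_{k \geq 0}\frac{g_1(\delta)^{k}}{k!}\sum_{z_1,\ldots,z_k\in\Omega_{\delta,t}} h(z_1)\cdots h(z_k)\,\delta^{15k/8}\,\EE^{\text{Dob}}_{\Omega_{\delta,t}}[\sigma_{z_1}\cdots\sigma_{z_k}],
\end{equation*}
with an analogous denominator $\mathcal{Z}_{h,\delta}^{t}$. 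By the convergence of rescaled Dobrushin spin correlations from \cite{spinCorrelations}, each discrete sum converges to the continuum integral of $h(z_1)\cdots h(z_k) f_t^{(k)}(z_1,\ldots,z_k)$, that is, to $g_1(\delta)^k$ times the $k$-th term on the right-hand side of \eqref{RN_time_theorem_intro}. Since $g_1(\delta) \to 0$, every $k \geq 1$ term vanishes in the limit and the series converges to the $k=0$ contribution, which equals $1$; the same holds for the denominator. Therefore the limiting Radon-Nikodym derivative equals $1$ on $\sigma(\gamma(s): 0 \leq s \leq t)$ for every $t$, and every subsequential limit agrees with $\PP_{\operatorname{SLE}_3}^{(\Omega,a,b)}$.

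The main obstacle is to upgrade the pointwise convergence of the Radon-Nikodym series to convergence of expectations of bounded continuous functionals of $\gamma_\delta[0,t]$, which requires a uniform-in-$\delta$ summability bound on the tail of the series in $k$. This is the same input needed to make sense of the right-hand side of \eqref{RN_time_theorem_intro} in the proof of Theorem \ref{theorem_magnetic_SLE3}, namely a polynomial-in-$k$ bound on the continuum correlations $f_t^{(k)}$ together with a uniform version of the convergence from \cite{spinCorrelations}. With those in hand, dominated convergence completes the proof and shows that the limit law is SLE$_3$ in $\Omega$ from $a$ to $b$.
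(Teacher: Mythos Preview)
Your approach is correct in spirit but takes a substantially heavier route than the paper. You propose to rerun the full machinery behind Theorem \ref{theorem_magnetic_SLE3}: expand the Radon--Nikodym derivative $F_\delta(\gamma_\delta;t)$ as a power series in $k$, invoke the convergence of the $k$-point Dobrushin spin correlations from \cite{spinCorrelations}, and then use the uniform-in-$k$ summability bounds (the analogues of Lemma \ref{lemma_uniform_bound_time_Omega} and Claims \ref{claim_integral_CR}--\ref{claim_bound_boundary_eta}) to pass to the limit via dominated convergence, where the extra factor $g_1(\delta)^k$ kills every $k\ge 1$ term. This works, but it drags in the interface-size estimate (Lemma \ref{lemma_size_gamma_infty}), the boundary and near-diagonal control on spin correlations, and the associated passage to a subsequence via Borel--Cantelli.

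The paper bypasses all of this with a short $L^2$ argument. After Skorokhod coupling, it simply shows $\EE_n^{\pm}[(F_n(\gamma_n)-1)^2]\to 0$: since $\EE_n^{\pm}[F_n(\gamma_n)]=1$, this reduces to showing $\EE_n^{\pm}[F_n(\gamma_n)^2]\to 1$, which follows from sandwiching $\EE_n^{\pm}[S_n(\gamma_n)^2]$ and $\mathcal{Z}_n(h)$ between $\exp(\pm c\,g_1(\delta_n))$-type bounds already obtained in the proof of Proposition \ref{prop_tightness}. No series expansion, no spin-correlation convergence, no subsequence extraction is needed. What your approach buys is a direct reuse of the Theorem \ref{theorem_magnetic_SLE3} template; what the paper's approach buys is a one-page proof that uses only the exponential moment estimate \eqref{ineq_log_EE} and Jensen's inequality. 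A minor omission in your sketch: the decomposition \eqref{decomposition_RN_time} also contains the factor $\exp\big(C_\sigma^{-1}h\delta^{15/8}g_1(\delta)(|V_L|-|V_R|)\big)$ coming from spins adjacent to the interface, which you would still need to control (it goes to $1$ easily here, but should be mentioned).
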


The second case to consider is that of an external magnetic field whose strength is larger than $\delta^{15/8}$. In this case, as in near-critical percolation away from the near-critical window \cite{massiveSLE-6}, we prove that the interface degenerates to a boundary arc. 

\begin{proposition} \label{prop_large_H}
    Let $g_2:\mathbb{R}_{+} \to \mathbb{R}_{+}$ be such that $g_2(\delta) \to \infty$ as $\delta \to 0$. Assume that $(\Omega;a,b)$ and its discrete approximations $(\Omega_{\delta};a_{\delta},b_{\delta})_{\delta}$ are as in Theorem \ref{theorem_magnetic_SLE3}. Consider the interface $\gamma_{\delta}$ of the critical Ising model in $\Omega_{\delta}$ with Dobrushin boundary conditions and external magnetic field $H(\delta) = h\delta^{15/8}g_{2}(\delta)$, for some $h>0$. Then, as $\delta \to 0$, $\gamma_{\delta}$ converges in law to $\partial \Omega^{-}$. Here, $\partial \Omega^{-}$ is the boundary arc of $\partial \Omega$ where $-1$ boundary conditions are imposed.
\end{proposition}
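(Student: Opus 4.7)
The strategy is to use FKG-monotonicity of the Ising measure in its external field to reduce, via Theorem \ref{theorem_magnetic_SLE3}, to showing that the massive version of SLE$_3$ with field parameter $Mh$ degenerates to $\partial\Omega^-$ as $M \to \infty$.

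Fix a compact set $K \subset \overline{\Omega}$ with $d(K, \partial\Omega^-) > 0$ and let $E_K$ denote the event that $\gamma_\delta$ separates $K$ from $\partial\Omega^-$. Equivalently, $E_K$ says that no vertex of $K$ lies in a minus-cluster reaching $\partial\Omega^-$, which is an increasing event in the spin configuration. By Griffiths' second inequality for the Ising model with nonnegative external field, $\PP[E_K]$ is nondecreasing in the field strength. Since $g_2(\delta)\to\infty$, for every fixed $M > 0$ one has $hg_2(\delta) \geq Mh$ once $\delta$ is small, and, writing $\PP_\delta^H$ for the law of $\gamma_\delta$ when the field is $H$,
\begin{equation*}
    \PP_\delta^{hg_2(\delta)\delta^{15/8}}[E_K] \;\geq\; \PP_\delta^{Mh\delta^{15/8}}[E_K].
\end{equation*}
The event $E_K$ is open in any reasonable curve topology (small Hausdorff perturbations preserve separation), so applying Theorem \ref{theorem_magnetic_SLE3} with $\tilde h \equiv Mh$ together with the portmanteau theorem gives
\begin{equation*}
    \liminf_{\delta\to 0}\; \PP_\delta^{hg_2(\delta)\delta^{15/8}}[E_K] \;\geq\; \PP_{Mh}^{(\Omega,a,b)}[E_K] \qquad \text{for every } M > 0.
\end{equation*}

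It therefore remains to show $\PP_{Mh}^{(\Omega,a,b)}[E_K]\to 1$ as $M \to \infty$. I would obtain this from the Radon--Nikodym expression \eqref{RN_time_theorem_intro}: replacing $h$ by $Mh$ multiplies the $k$th term of the series by $M^k$, and the resulting weight heuristically represents the partition function $\EE[\exp(M\int_{\Omega\setminus\gamma}h(z)\Phi^{\Omega\setminus\gamma}(z)\,dz)]$ of the continuum magnetization field in the slit domain with $\pm$ boundary. Curves $\gamma$ close to $\partial\Omega^-$ maximize this integral, since $\Omega\setminus\gamma$ is then almost entirely on the plus side, whereas curves visiting a neighborhood of $K$ force a strictly smaller value. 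A Laplace-type concentration then yields an exponential bound of the form $\PP_{Mh}^{(\Omega,a,b)}[E_K^c] \lesssim e^{-cM}$, which more than suffices.

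The main technical obstacle is to justify this Laplace asymptotic rigorously, since \eqref{RN_time_theorem_intro} is only a formal series and one needs uniform control of the multipoint functions $f_t^{(k)}$ across slit geometries. An alternative route that sidesteps most of this is to argue directly on the discrete side: for each $\delta$ the Radon--Nikodym derivative of $\PP_\delta^H[\gamma_\delta=\cdot]$ with respect to $\PP_\delta^0[\gamma_\delta=\cdot]$ is a product of ratios of Ising partition functions in the two slit subdomains with $\pm$ boundary conditions and field $H$, and the near-critical scaling of these ratios---together with the results of \cite{GarbanMagnetization}---should provide the needed concentration. Hausdorff convergence $\gamma_\delta \to \partial\Omega^-$ then follows from the concentration on arbitrarily thin neighborhoods of $\partial\Omega^-$ combined with planarity: a simple curve from $a$ to $b$ trapped in a narrow tube around $\partial\Omega^-$ must trace out the entire arc up to Hausdorff distance $\epsilon$.
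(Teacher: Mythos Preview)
Your reduction via FKG monotonicity to the statement ``$\PP_{Mh}^{(\Omega,a,b)}[E_K]\to 1$ as $M\to\infty$'' is clean in spirit, but the proof has a genuine gap at exactly this step, and the gap is not merely technical. You acknowledge yourself that the Laplace asymptotic for the Radon--Nikodym density is only heuristic; in fact the paper proves the continuum statement you need (Proposition~\ref{prop_h_limit}) \emph{as a consequence of} Proposition~\ref{prop_large_H}, not the other way around, and explicitly remarks that a direct continuum argument is not known. So as written your argument is circular relative to the logical structure of the paper: you are assuming the hard part. The alternative ``discrete-side'' route you sketch at the end would, if made to work, amount to reproving from scratch the near-critical partition function concentration that underlies the whole paper; it is not a shortcut.

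There are also smaller issues you glide over. The event $E_K$ you describe is not obviously an increasing spin event once you define $\gamma_\delta$ as the \emph{leftmost} interface (one has to check that the identification with ``no minus path from $K$ to $\partial\Omega^-$'' survives the tie-breaking rule), and separation events are not in general open in the curve topology \ref{topo_4} without additional care near the boundary.

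For comparison, the paper's proof is entirely different and does not pass through Theorem~\ref{theorem_magnetic_SLE3} at all. It works directly at the discrete level: first a spatial-mixing argument (using crossing estimates from \cite{crossing-rectangles} and \cite{armExponentsIsing}) replaces the Dobrushin boundary conditions by $+1$ boundary conditions at the cost of a multiplicative constant. Then the Edwards--Sokal coupling \emph{with external field} (Proposition~\ref{prop_ES_magnetic}) is used to show that, in an annular strip around $\partial\Omega^-$, a bounded number of FK clusters of macroscopic size connect the two ends; since each such cluster is connected to the ghost vertex with probability $\tanh(H|\mathcal C|)\to 1$, they all carry spin $+1$, producing a $+$-crossing that blocks $\gamma_\delta$ from leaving the strip. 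The existence of this bounded chain of macroscopic clusters (Claim~\ref{claim_proba_path_clusters}) is the one place where continuum input is used, via the convergence of FK cluster boundaries and area measures to CLE$_{16/3}$/CME$_{16/3}$.
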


Observe that Proposition \ref{prop_large_H} in fact covers the case $h < 0$. Indeed,  by spin flip (broken) symmetry, using Proposition \ref{prop_large_H} but with the opposite boundary conditions for the Ising model, one can easily see that when $h<0$, $\gamma_{\delta}$ converges in law to $\partial \Omega^{+}$, where $\partial \Omega^{+}$ is the boundary arc of $\partial \Omega$ where $+1$ boundary conditions are imposed.

In view of Proposition \ref{prop_large_H}, one may wonder if in the continuum, the measure $\PP_{h}^{(\Omega,a,b)}$ of Theorem \ref{theorem_magnetic_SLE3} also degenerates to a measure supported on $\partial \Omega^{-}$ as $h \to \infty$. In this direction, we prove that the $\PP_{h}^{(\Omega,a,b)}$-probability that the curve exits an arbitrarily small neighborhood of $\partial \Omega^{-}$ vanishes as $h \to \infty$.

\begin{proposition} \label{prop_h_limit}
    Let $\Omega \subset \mathbb{C}$ be a bounded and simply connected domain with two marked boundary points $a,b \in \partial \Omega$. Let $\eta > 0$ and define $\Omega^{-}(\eta):=\{ z \in \Omega: \operatorname{dist}(z,\partial \Omega^{-}) \leq \eta \}$. Then
    \begin{equation*}
        \lim_{h \to \infty} \PP_{h}^{(\Omega,a,b)}[\gamma \cap (\Omega \setminus \Omega^{-}(\eta)) \neq \emptyset] = 0.
    \end{equation*}
\end{proposition}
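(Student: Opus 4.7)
The natural approach is to analyse the Radon--Nikodym representation (\ref{RN_time_theorem_intro}) as $h\to\infty$, showing that the resulting density concentrates on curves close to $\partial\Omega^-$. I would first pass to the limit $t\to\infty$ in (\ref{RN_time_theorem_intro}). The right-hand side $M_t(\gamma,h)/\mathcal Z_h(\Omega)$ is a non-negative martingale under $\PP_{\operatorname{SLE}_3}^{(\Omega,a,b)}$, so modulo uniform integrability it converges almost surely to a limit $M_\infty(\gamma,h)/\mathcal Z_h(\Omega)$. Since $\gamma$ disconnects $\Omega$ into two Dobrushin components $\Omega_+^\gamma$ (attached to $\partial\Omega^+$) and $\Omega_-^\gamma$ (attached to $\partial\Omega^-$), and conditioning on $\gamma$ decouples the Ising subsystems on the two sides (each carrying pure $\pm$ boundary conditions), transporting this factorisation through the scaling limit should yield
\begin{equation*}
M_\infty(\gamma,h)=\mathcal F_+(\Omega_+^\gamma,h)\cdot\mathcal F_-(\Omega_-^\gamma,h),
\end{equation*}
where $\mathcal F_\pm(D,h)$ is the continuum analogue of the ratio of the magnetic Ising partition function to its field-free counterpart in $D$ with pure $\pm$ boundary conditions. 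Writing $A=\{\gamma\cap(\Omega\setminus\Omega^-(\eta))\neq\emptyset\}$ and $B=\{\gamma\subset\Omega^-(\eta/2)\}$, one then has
\begin{equation*}
\PP_h^{(\Omega,a,b)}[A]=\frac{\EE_{\operatorname{SLE}_3}^{(\Omega,a,b)}[\mathbf 1_A M_\infty(\gamma,h)]}{\EE_{\operatorname{SLE}_3}^{(\Omega,a,b)}[M_\infty(\gamma,h)]}.
\end{equation*}

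Next, I would argue that this ratio is suppressed by a divergent factor. Domain monotonicity gives $\mathcal F_+(D,h)$ increasing and $\mathcal F_-(D,h)$ decreasing in $D$ (inclusion order); at the discrete level both follow from Holley/FKG for the Ising model in the magnetic field. For any $\gamma\in A$ one can exhibit a ``pushed-down'' comparison curve $\gamma'\in B$ with $\Omega_+^{\gamma'}\supsetneq\Omega_+^\gamma$ and $\Area(\Omega_+^{\gamma'}\setminus\Omega_+^\gamma)\ge c_1(\eta)>0$. A quantitative large-$h$ strengthening of the monotonicity then gives $\sup_{\gamma\in A}M_\infty(\gamma,h)/\inf_{\gamma'\in B}M_\infty(\gamma',h)\to 0$ as $h\to\infty$, and since $\PP_{\operatorname{SLE}_3}^{(\Omega,a,b)}[B]>0$ forces the denominator in the display above to be at least a positive multiple of $\inf_{\gamma'\in B}M_\infty(\gamma',h)$, one concludes $\PP_h^{(\Omega,a,b)}[A]\to 0$.

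The main obstacle is to make the quantitative monotonicity above uniform over the (almost-surely fractal) SLE$_3$ slits, with an explicit rate of divergence in $h$. Analytically, this reduces to a Laplace-type asymptotic for $\log\mathcal F_\pm(D,h)$ with strictly domain-monotone leading coefficient, to be extracted from exponential-moment estimates on the continuum magnetization field of \cite{GarbanMagnetization} together with the explicit one-point formulas from \cite{spinCorrelations}. A less analytic alternative would be to push the discrete Holley ordering through Theorem \ref{theorem_magnetic_SLE3} to obtain a stochastic monotonicity of $(\PP_h^{(\Omega,a,b)})_{h\ge 0}$ in $h$, reducing the problem to identifying the monotone limit with the degenerate law on $\partial\Omega^-$; this identification is itself non-trivial because the scaling regimes of Theorem \ref{theorem_magnetic_SLE3} (fixed $h$, $\delta\to 0$) and Proposition \ref{prop_large_H} (diverging field, $\delta\to 0$) do not directly match, but a careful diagonal argument should bridge them.
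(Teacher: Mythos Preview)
Your main approach --- analysing the Radon--Nikodym density directly in the continuum via a factorisation $M_\infty = \mathcal F_+ \cdot \mathcal F_-$ and quantitative domain monotonicity --- is genuinely hard, and you have correctly located the obstacle: making the large-$h$ asymptotics uniform over fractal SLE$_3$ slits is not something one can extract from existing estimates. The authors themselves remark in the introduction that they ``do not know how to obtain Proposition~\ref{prop_h_limit} directly in the continuum,'' so this route, while natural, remains open.

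The paper's actual proof is essentially the diagonal argument you mention in your final sentence, but stripped of the stochastic-monotonicity step, which turns out to be unnecessary. One argues by contradiction: if $p(h,\eta):=\PP_h^{(\Omega,a,b)}[\gamma \cap (\Omega\setminus\Omega^-(\eta))\neq\emptyset]$ fails to vanish, pick $h_n\to\infty$ with $p(h_n,\eta)\ge\eps$. For each fixed $n$, Theorem~\ref{theorem_magnetic_SLE3} lets one choose $\delta_n$ small enough that the discrete probability (with external field $C_\sigma^{-1}h_n\delta_n^{15/8}$) is also at least $\eps$, and one may arrange $\delta_n\to 0$. But then the discrete field is of the form $\delta_n^{15/8}g(\delta_n)$ with $g(\delta_n)=C_\sigma^{-1}h_n\to\infty$, so Proposition~\ref{prop_large_H} forces the discrete probability to $0$ --- a contradiction. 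No Holley ordering, no factorisation of $M_\infty$, no Laplace asymptotics: the diagonal passage between the two scaling regimes you yourself identified is already the whole argument.
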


For the same reasons as those explained below Proposition \ref{prop_large_H} and using the convergence result of Theorem \ref{theorem_magnetic_SLE3}, it can be deduced from Proposition \ref{prop_h_limit} that the $\PP_{h}^{(\Omega,a,b)}$-probability that the curve exits an arbitrarily small neighborhood of $\partial \Omega^{+}$ vanishes as $h \to -\infty$.

Let us conclude this section of the introduction by a few remarks. Contrary to most results establishing convergence of a discrete interface to an SLE$_{\kappa}$ curve, the proof of Theorem \ref{theorem_magnetic_SLE3} does not rely on a martingale characterization of $\PP_{h}^{(\Omega,a,b)}$. As such, it leaves open the question of how to characterize the law of $\PP_{h}^{(\Omega,a,b)}$: the characterization of the limiting interface is often an issue in near-critical regimes. We note that \cite{massiveSLE} gives a conjecture on which observable could provide a characterizing martingale for $\PP_{h}^{(\Omega,a,b)}$ but even if this was established, it would probably not be very useful to prove scaling-limit type results. Indeed, the discrete version of this observable would satisfy a discrete boundary value problem for which convergence of the discrete solution to its continuum analogue is not known.

Let us also mention that Theorem \ref{theorem_magnetic_SLE3} and its proof suggest that it should be possible to construct a coupling between the continuum magnetization field of the critical Ising model with Dobrushin boundary conditions and an SLE$_3$ curve, in a spirit similar to the GFF-SLE$_4$ coupling \cite{contour_line, Dub_SLE}. As in the case of the massive GFF, this coupling could be extended to the near-critical regime \cite{mSLE4-mGFF}. These questions will be investigated in future work. Such a coupling may turn out to be useful to compare the arm exponents in the near-critical regime (if they exist) with those in the critical regime, which is a question raised in \cite{massiveSLE}.

Besides, we note that from Theorem \ref{theorem_magnetic_SLE3}, given the topologies in which convergence is obtained, we know that the driving function of the curve $\gamma$ under $\PP_{h}^{(\Omega,a,b)}$ exists, is continuous, and its law is absolutely continuous with respect to that of the driving function of $\gamma$ under $\PP_{\operatorname{SLE}_3}^{(\Omega,a,b)}$. If continuity in $t$ of the random variable on the right-hand side of \eqref{RN_time_theorem_intro} in Theorem \ref{theorem_magnetic_SLE3} was established, one could deduce an explicit expression for the driving function of $\gamma$ under $\PP_{h}^{(\Omega,a,b)}$. This driving function would have the form of $\sqrt{3}$ times a Brownian motion plus a drift term, where we recall that $\sqrt{3}$ times a Brownian motion is the driving function of $\gamma$ under $\PP_{\operatorname{SLE}_3}^{(\Omega,a,b)}$.

Finally, perturbing the critical Ising model by an external field is not the only way to move away from the critical regime. Another way is to perturb the temperature and look at the Ising model with inverse temperature $\beta = \beta_c + m\delta$, where $\beta_c$ is the critical inverse temperature and $m \in \mathbb{R}$ \cite{deformation-Ising, universality-massive-Ising}. Proving convergence of the interface in this setting is more challenging: this stems from the fact that the energy field of the critical Ising model cannot be constructed as a random field in the continuum, whereas the magnetization field can be \cite{GarbanMagnetization, MourratIsing}.

Let us now briefly discuss the proofs of Theorem \ref{theorem_magnetic_SLE3}, Proposition \ref{prop_small_H}, Proposition \ref{prop_large_H} and Proposition \ref{prop_h_limit}.

\subsection{Outline of the proofs}

The proofs of Theorem \ref{theorem_magnetic_SLE3} and Proposition \ref{prop_small_H}, given respectively in Section \ref{sec_proof_near_critical} and Section \ref{sec_proof_small_H}, follow the same strategy. Let us denote by $\PP_{\delta,h}^{\pm}$, respectively $\PP_{\delta}^{\pm}$, the law of $\gamma_{\delta}$ when $h \neq 0$, respectively $h\equiv 0$. The first step of the proofs of Theorem \ref{theorem_magnetic_SLE3} and Proposition \ref{prop_small_H} is to show tightness of $(\gamma_{\delta})_{\delta}$ under $(\PP_{\delta,h}^{\pm})_{\delta}$. For this, we observe that for each $\delta > 0$, the law of $\gamma_{\delta}$ under $\PP_{\delta,h}^{\pm}$ has a Radon-Nikodym derivative $F_{\delta}(\gamma_{\delta})$ with respect to the law of $\gamma_{\delta}$ under $\PP_{\delta}^{\pm}$. Moreover, $(\gamma_{\delta})_{\delta}$ under $(\PP_{\delta}^{\pm})_{\delta}$ is tight in an appropriate topology by \cite{cvgIsingSLE}. In view of this, to establish tightness of $(\gamma_{\delta})_{\delta}$ under $(\PP_{\delta,h}^{\pm})_{\delta}$, it therefore suffices to prove that for any $p \in [1,\infty)$, $\sup_{\delta > 0}\EE_{\delta}^{\pm}[\vert F_{\delta}(\gamma_{\delta})\vert^p] < \infty$, where $\EE_{\delta}^{\pm}$ denotes the expectation with respect to $\PP_{\delta}^{\pm}$. This is what we achieve in the first part of the proofs of Theorem \ref{theorem_magnetic_SLE3} and Proposition \ref{prop_small_H}.

The second step of the proofs of Theorem \ref{theorem_magnetic_SLE3} and Proposition \ref{prop_small_H} is to prove a characterization of the limiting laws obtained along weakly convergent subsequences of $(\gamma_{\delta})_{\delta}$ under $(\PP_{\delta,h}^{\pm})_{\delta}$. Let $(\gamma_n)_n$ be such a weakly convergent subsequence under $(\PP_{n,h}^{\pm})_n$. To characterize its limiting law, we fist use the fact that $(\gamma_n)_n$ under $(\PP_{n}^{\pm})_n$ converges in law to $\PP_{\operatorname{SLE}_3}^{(\Omega,a,b)}$ \cite{cvgIsingSLE}. By Skorokhod representation theorem, we can therefore define on a common probability space $(S,\mathcal{F},\PP)$ a sequence $(\gamma_n)_n$ and a random curve $\gamma$ such that for each $n$, $\gamma_n$ has law $\PP_{n}^{\pm}$, $\gamma$ has law $\PP_{\operatorname{SLE}_3}^{(\Omega,a,b)}$ and $\PP$-almost surely, $\gamma_n \to \gamma$. The Radon-Nikodym derivatives $(F_n(\gamma_n))_n$ of $(\PP_{n,h}^{\pm})_n$ with respect to $(\PP_{n}^{\pm})_n$ are then all defined on $(S,\mathcal{F},\PP)$. In the setting of Proposition \ref{prop_small_H}, when the scaling of the external field is $\delta_n^{15/8}g_1(\delta_n)$, we characterize the limiting law of weakly convergent subsequences under $(\PP_{n,h}^{\pm})_n$ by showing that the discrete Radon-Nikodym derivatives $(F_n(\gamma_n))_n$ converge to $1$ in $L^2(\PP)$. This yields that the limiting law is $\PP_{\operatorname{SLE}_3}^{(\Omega,a,b)}$, as claimed. In the setting of Theorem \ref{theorem_magnetic_SLE3}, when the scaling of the external field is $\delta_n^{15/8}$, the characterization of the limiting laws of weakly convergent subsequences $(\PP_{n,h}^{\pm})_n$ is established by proving that $(F_n(\gamma_n))_n$ has a subsequence that converges in $L^1(\PP)$ to an explicit random variable. To show this convergence, a crucial result is the convergence of the $k$-point spin correlations of the critical Ising model obtained in \cite{spinCorrelations}. This ultimately allows us to express the limit of $(F_n(\gamma_n))_n$ along a subsequence in terms of explicit functions of the domain slit by the limiting curve. This is similar in spirit to the construction of the continuum planar Ising magnetization field \cite{GarbanMagnetization}. However, in our setting, more work is needed as we must also take into account the contribution of points that arbitrarily close to the boundary. Moreover, since the underlying domain is typically $\Omega$ slit by a portion of a rough curve, we cannot assume smoothness of the boundary. Another important result used in the proof of the convergence of $F_n(\gamma_n)$ is the computation of the two-arm exponent of the critical Ising model \cite{armExponentsIsing}. This is instrumental to show that a quantity that can be seen as the magnetization of the discrete interface vanishes in the limit $n \to \infty$. This also provides a useful bound on the growth of the size of the interface, which is necessary to control what happens near the boundary of the domain.

The proof of Proposition \ref{prop_large_H}, given in Section \ref{sec_proof_large_H}, is more probabilistic in nature. The heuristic behind this result is that spins tend to align with the magnetic field and that the scaling $\delta^{\frac{15}{8}} g_2(\delta)$ makes this phenomena dominate in the limit $\delta \to 0$. To implement this idea, we rely on the Edwards-Sokal coupling for the Ising model with an external magnetic field, which was investigated in \cite{EScoupling_magnetic, CamiaFKIsing, Camia_exponential_decay}. For this, the first step is to change the Dobrushin boundary conditions into $+1$ boundary conditions using spatial-mixing-type arguments which build on what happens at criticality \cite{crossing-rectangles}. This change of boundary conditions then enables us to use the Edwards-Sokal coupling with an external field to show that, for arbitrary $\eta >0$, the probability that there is a path of $+1$ spins staying within distance $\eta$ from $\partial \Omega_{\delta}^{-}$ and joining the $\eta$-neighborhoods of $a_{\delta}$ and $b_{\delta}$ on $\partial \Omega_{\delta}^{+}$ converges to $1$ as $\delta \to 0$. To show this, an important result is the joint convergence of the lattice FK-Ising cluster boundaries to CLE$_{16/3}$ and of the discrete FK-Ising cluster area measures to CME$_{16/3}$ \cite{cvg-CLE-CME}. Having such a path of $+1$ spins rules out that a path with only $-1$ spins on one of its sides reaches distance $\eta$ from $\partial \Omega_{\delta}^{-}$ as $\delta \to 0$, which proves Proposition \ref{prop_large_H}.

Proposition \ref{prop_h_limit} is shown in Section \ref{sec_h_limit}. The proof relies on both Theorem \ref{theorem_magnetic_SLE3} and Proposition \ref{prop_large_H}. In particular, for the argument to work, we use results obtained in the discrete setting and, interestingly, we do not know how to obtain Proposition \ref{prop_h_limit} directly in the continuum.

\paragraph{Acknowledgments.} The author thanks S\'ebastien Martineau for suggesting a short argument to prove Proposition \ref{prop_h_limit}. The author is also grateful to Ellen Powell for a careful reading of an earlier version of this manuscript. Tyler Helmuth and Titus Lupu are also thanked for interesting discussions at various stages of this project. Part of this work was conducted while the author was at Sorbonne University; their hospitality is gratefully acknowledged. This work was funded by an EPSRC studentship.

\section{Setting and background} \label{sec_setting}

We detail here the assumptions under which Theorem \ref{theorem_magnetic_SLE3}, Proposition \ref{prop_small_H} and Proposition \ref{prop_large_H} are proved: the assumptions on $(\Omega_{\delta};a_{\delta},b_{\delta})_{\delta}$ and $(\Omega;a,b)$ are made precise in Section \ref{sec_assumptions_domain} and the topologies in which weak convergence is established are described in Section \ref{sec_topology_curves}. Section \ref{section_background_Ising} and Section \ref{section_spin_correlations} provide some background on the Ising model.

\subsection{Assumptions on the domain and Carath\'eodory convergence} \label{sec_assumptions_domain}

We consider a bounded, open and simply connected subset $\Omega$ of the complex plane $\mathbb{C}$ such that $0 \in \Omega$. We fix two marked boundary points $a, b \in \partial \Omega$ and we assume that both $a$ and $b$ are degenerate prime ends of $\Omega$, see \cite[Section~2.5]{Pommeranke} for a definition. These assumptions on $\Omega$ and the boundary points $a$ and $b$ will in particular allow us to use \cite[Theorem~4.2]{Karrila}. 

Regarding the assumptions on the boundary $\partial \Omega$ of $\Omega$, Theorem \ref{theorem_magnetic_SLE3} and Proposition \ref{prop_small_H} will be shown assuming that the Hausdorff dimension of $\partial \Omega$ is strictly smaller that $\frac{7}{4}$. However, the proof of Proposition \ref{prop_large_H} requires $\partial \Omega$ to be more regular and we will actually take it to be smooth, for reasons that we explain in the proof of Claim \ref{claim_proba_path_clusters}.

We assume that $(\Omega_{\delta})_{\delta}$ is a sequence of graphs approximating $\Omega$ in a sense that we will now explain. For each $\delta > 0$, $\Omega_{\delta}$ is a simply connected subgraph of the square lattice $\delta \mathbb{Z}^2$, so that every edge of $\Omega_{\delta}$ has length $\delta$. We denote by $V(\Omega_{\delta})$ the set of vertices of $\Omega_{\delta}$ and define the boundary $\partial \Omega_{\delta}$ of $\Omega_{\delta}$ as
\begin{equation*}
    \partial \Omega_{\delta} := \{ w \in \delta \mathbb{Z}^2 \setminus V(\Omega_{\delta}): \text{there exists $v \in V(\Omega_{\delta})$ such that $v \sim w$}\}
\end{equation*}
where $v \sim w$ means that there is an edge of $\delta \mathbb{Z}^2$ connecting $v$ and $w$. With this definition of $\partial \Omega_{\delta}$, it is legitimate to set $\Int(\Omega_{\delta}):=V(\Omega_{\delta})$. We also let $E(\Omega_{\delta})$ denote the set of edges of $\delta\mathbb{Z}^2$ with at least one endpoint in $\Int(\Omega_{\delta})$.

We associate to each $\Omega_{\delta}$ an open and simply connected polygonal domain
$\hat \Omega_{\delta} \subset \mathbb{C}$ by taking the union of all open squares with side length $\delta$ centered at vertices in $V(\Omega_{\delta})$. We assume that for any $\delta > 0$, $0$ belongs to $\hat \Omega_{\delta}$ and that there exists $\bar{R} > 0$ such that for any $\delta > 0$, $\hat \Omega_{\delta} \subset B(0,\bar{R})$ and $\Omega \subset B(0,\bar{R})$. These assumptions are necessary to apply \cite[Theorem~4.2]{Karrila}. The marked boundary points $a$ and $b$ of $\partial \Omega$ are then approximated by sequences $(a_{\delta})_{\delta}$ and $(b_{\delta})_{\delta}$ where, for each $\delta >0$, $a_{\delta}$ and $b_{\delta}$ belong to $\partial \hat \Omega_{\delta}$. 

The sequence $(\hat \Omega_{\delta}; a_{\delta}, b_{\delta})_{\delta}$ is assumed to converge to $(\Omega; a, b)$ in the Carath\'eodory topology. That is,
\begin{itemize}
    \item each inner point $z \in \Omega$ belongs to $\hat \Omega_{\delta}$ for $\delta$ small enough;
    \item for every boundary point $\zeta \in \partial \Omega$, there exists a sequence $(\upzeta_{\delta})_{\delta}$ such that $\upzeta_{\delta} \to \upzeta$ as $\delta \to 0$, where, for each $\delta >0$, $\upzeta_{\delta} \in \partial \hat \Omega_{\delta}$.
\end{itemize}
This can be rephrased in terms of conformal maps. Let $\psi: \Omega \to \mathbb{D}$ be a conformal map such that $\psi(a)=1$ and $\psi(0)=0$. Similarly, for each $\delta >0$, let $\psi_{\delta}: \hat \Omega_{\delta} \to \mathbb{D}$ be a conformal map such that $\psi_{\delta}(a_{\delta})=1$ and $\psi_{\delta}(0)=0$. Then, by \cite[Theorem~1.8]{Pommeranke}, the Carath\'eodory convergence of $(\hat \Omega_{\delta}; a_{\delta}, b_{\delta})_{\delta}$ to $(\Omega; a, b)$ is equivalent to
\begin{align*}
    & \psi_{\delta} \to \psi \quad \text{uniformly on compact subsets of $\Omega$ and}\\
    & \psi_{\delta}^{-1} \to \psi^{-1} \quad \text{uniformly on compact subsets of $\mathbb{D}$}.
\end{align*}
Furthermore, we suppose that $a_{\delta}$, respectively $b_{\delta}$, is a close approximation of $a$, respectively $b$, as defined by Karrila in \cite[Section~4.3]{Karrila}. Let us recall what this means. To lighten the notations, we identify the prime ends $a_{\delta}$ and $a$ with their corresponding radial limit points. For $r>0$, let $S_r$ be the arc of $\partial B(a,r) \cap \Omega$ disconnecting in $\Omega$ the prime end $a$ from $0$ and that is closest to $a$. In other words, $S_r$ is the last arc from the (possibly countable) collection $\partial B(a,r) \cap \Omega$ of arcs that a path running from $0$ to $a$ inside $\Omega$ must cross. Such an arc exists by \cite[Lemma~A.1]{Karrila} and approximation by radial limits. $a_{\delta}$ is then said to be a close approximation of $a$ if
\begin{itemize}
    \item $a_{\delta} \to a$ as $\delta \to 0$; and
    \item for each $r$ small enough and for all sufficiently (depending on $r$) small $\delta$, the boundary point $a_{\delta}$ of $\hat \Omega_{\delta}$ is connected to the midpoint of $S_r$ inside $\hat \Omega_{\delta} \cap B(a,r)$. 
\end{itemize}

In what follows, we will need to split the boundary $\partial \Omega_{\delta}$ of $\Omega_{\delta}$ into two set of vertices, depending on their position with respect to $a_{\delta}$ and $b_{\delta}$. We therefore denote by $\partial \Omega_{\delta}^{-}$, respectively $\partial \Omega_{\delta}^{+}$, the vertices of $\partial \Omega_{\delta}$ that are traversed when following the vertices of $\partial \Omega_{\delta}$ from $a_{\delta}$ to $b_{\delta}$ counterclockwise, respectively clockwise.

\subsection{The Ising model} \label{section_background_Ising}

For $\delta >0$, let $(\Omega_{\delta};a_{\delta}, b_{\delta})$ be as above. The Ising model on $\Omega_{\delta}$ is a probability measure on spin configurations $\sigma: V(\Omega_{\delta}) \cup \partial \Omega_{\delta} \to \{-1,1\}^{\vert V(\Omega_{\delta}) \cup \partial \Omega_{\delta}\vert}$. In our setting, since the boundary $\partial \Omega_{\delta}$ of $\Omega_{\delta}$ is non-empty, boundary conditions can be imposed to specify the measure. Two different choices of boundary conditions will be of interest here. The first one is that of Dobrushin boundary conditions: the spins of the vertices of $\partial \Omega_{\delta}^{-}$ are all $-1$ while the spins of the vertices of $\partial \Omega_{\delta}^{+}$ are all $+1$. The probability of an Ising spin configuration $\sigma$ in $\Omega_{\delta}$ with Dobrushin boundary conditions is given by
\begin{equation} \label{def_Ising_pm}
    \PP_{\Omega_{\delta},\beta}^{\pm}(\sigma)=\frac{1}{\mathcal{Z}_{\beta}}\exp\bigg(\beta \sum_{x \sim y} \sigma_x\sigma_y\bigg)\mathbb{I}_{\sigma_{\vert \partial \Omega_{\delta}^{+}}=+1}\mathbb{I}_{\sigma_{\vert \partial \Omega_{\delta}^{-}}=-1}
\end{equation}
where $\beta > 0$ is a parameter called the inverse temperature. Above, the sum is over nearest neighbor vertices $x \sim y$ of $\delta \mathbb{Z}^2$ such that at least one of them belong to $V(\Omega_{\delta})$.

The other set of boundary conditions that we will consider is that of $+1$ boundary conditions and $-1$ boundary conditions. The probability $\PP_{\Omega_{\delta}, \beta}^{+}(\sigma)$, respectively $\PP_{\Omega_{\delta}, \beta}^{-}(\sigma)$, of an Ising spin configuration $\sigma$  in $\Omega_{\delta}$ with $+1$, respectively $-1$, boundary conditions is given by replacing $\mathbb{I}_{\sigma_{\vert \partial \Omega_{\delta}^{+}}=+1}\mathbb{I}_{\sigma_{\vert \partial \Omega_{\delta}^{-}}=-1}$ by $\mathbb{I}_{\sigma_{\vert \partial \Omega_{\delta}}=+1}$, respectively $\mathbb{I}_{\sigma_{\vert \partial \Omega_{\delta}}=-1}$ in \eqref{def_Ising_pm}. In the course of the proof of Proposition \ref{prop_large_H}, we will also use the Ising model with free boundary conditions: in this case, the spins of the vertices on $\partial \Omega_{\delta}$ is not prescribed.

The parameter $\beta$ plays an important role in the macroscopic behavior of the Ising model on $\Omega_{\delta}$. Indeed, there exists a critical value $0 < \beta_{c} < \infty$ at which the model undergoes a phase transition. On the square lattice, it has been shown that $\beta_{c} = \frac{1}{2}\log(\sqrt{2}+1)$ \cite{book-Ising}. Moreover, at $\beta = \beta_{c}$, as $\delta \to 0$, the Ising model on the square lattice exhibits conformal invariance properties and this has been the topic of extensive research in the past twenty years, see \cite{Chelkak-survey, DC-survey} for a survey. Here, we will always take $\beta = \beta_{c}$ and we set $\PP_{\Omega_{\delta}}^{\pm}:=\PP_{\Omega_{\delta}, \beta_c}^{\pm}$ and $\PP_{\Omega_{\delta}}^{+}:=\PP_{\Omega_{\delta}, \beta_c}^{+}$ and call the corresponding Ising model the critical Ising model.

In what follows, we will consider the critical Ising model perturbed by an external magnetic field. For $H: V(\Omega_{\delta}) \to \mathbb{R}$ a bounded function, when we impose Dobrushin boundary conditions, this is a probability measure denoted $\PP_{\Omega_{\delta}, H}^{\pm}$ on spin configurations $\sigma: V(\Omega_{\delta}) \cup \partial \Omega_{\delta} \to \{-1,+1\}^{\vert V(\Omega_{\delta}) \cup \partial \Omega_{\delta} \vert}$ that can be defined via its Radon-Nikodym derivative with respect to $\PP_{\Omega_{\delta}}^{\pm}$ as follows:
\begin{equation*}
    \frac{\der \PP_{\Omega_{\delta}, H}^{\pm}}{\der \PP_{\Omega_{\delta}}^{\pm}} (\sigma) = \frac{1}{\mathcal{Z}_{H}}\exp\bigg( \sum_{x \in V(\Omega_{\delta})} H_x \sigma_x \bigg)
\end{equation*}
where $\mathcal{Z}_{H}$ is a normalization constant. We can also impose $+1$ boundary conditions to the model, in which case the corresponding probability measure $\PP_{\Omega_{\delta},H}^{+}$ has Radon-Nikodym derivative with respect to $\PP_{\Omega_{\delta}}^{+}$ given by
\begin{equation*}
    \frac{\der \PP_{\Omega_{\delta}, H}^{+}}{\der \PP_{\Omega_{\delta}}^{+}} (\sigma) = \frac{1}{\mathcal{Z}_{H}}\exp\bigg( \sum_{x \in V(\Omega_{\delta})} H_x \sigma_x\bigg).
\end{equation*}
Below, we will typically choose functions $H:V(\Omega_{\delta}) \to \mathbb{R}$ which depend on the meshsize $\delta >0$ of $\Omega_{\delta}$. Moreover, if $h: \Omega \to \mathbb{R}$, we define the function $h_{\delta}: V(\Omega_{\delta}) \to \mathbb{R}$ as the restriction of $h$ to $V(\Omega_{\delta})$, that is, for $x \in V(\Omega_{\delta})$, $h_{\delta}(x)=h(x)$.

An important property of the Ising model at arbitrary inverse temperature $\beta > 0$ perturbed by an external magnetic field $H$ is the FKG property \cite[Theorem~3.21]{book-Ising}. We introduce the partial order $\leq$ on spin configurations defined by $\sigma \leq \sigma'$ if and only if $\sigma_x \leq \sigma_x'$ for any $x \in V(\Omega_{\delta}) \cup \partial \Omega_{\delta}$. A function $f: \{-1,1\}^{V(\Omega_{\delta}) \cup \partial \Omega_{\delta}} \to \mathbb{R}$ is said to be increasing if $f(\sigma) \leq f(\sigma')$ whenever $\sigma \leq \sigma'$. Similarly, we say that an event $E$ is increasing if the indicator function $\mathbb{I}_{E}$ is increasing.

\begin{proposition}[FKG inequality]
    Let $f,g$ be two increasing functions and $H: V(\Omega_{\delta}) \to \mathbb{R}$. Then, for any boundary conditions $\xi$, $\EE_{\Omega_{\delta},\beta,H}^{\xi}[fg] \geq \EE_{\Omega_{\delta},\beta,H}^{\xi}[f]\EE_{\Omega_{\delta},\beta,H}^{\xi}[g]$.
\end{proposition}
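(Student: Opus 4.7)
The plan is to apply Holley's theorem, the standard route to FKG correlation inequalities on finite distributive lattices, which reduces the problem to verifying the FKG lattice condition for the measure $\mu := \PP_{\Omega_{\delta},\beta,H}^{\xi}$. Concretely, I would fix the boundary condition $\xi$ and restrict attention to the finite distributive lattice $\mathcal{S}^{\xi}$ of spin configurations on $V(\Omega_{\delta}) \cup \partial \Omega_{\delta}$ that agree with $\xi$ on $\partial \Omega_{\delta}$, equipped with the pointwise partial order. The measure $\mu$ is strictly positive on $\mathcal{S}^{\xi}$ since the Boltzmann weight is a product of positive exponentials, so Holley's theorem reduces the claim to
\[
\mu(\sigma \vee \sigma') \, \mu(\sigma \wedge \sigma') \geq \mu(\sigma) \, \mu(\sigma') \quad \text{for all } \sigma, \sigma' \in \mathcal{S}^{\xi},
\]
in which the normalizing constant $\mathcal{Z}_H$ cancels on both sides.

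Taking logarithms, this lattice condition is equivalent to submodularity of the Hamiltonian $\mathcal{H}(\sigma) = -\beta \sum_{x \sim y} \sigma_x \sigma_y - \sum_{x \in V(\Omega_{\delta})} H_x \sigma_x$ (plus the boundary coupling terms fixed by $\xi$). Since $\mathcal{H}$ is a sum of one- and two-body terms, submodularity can be checked term by term. Each one-body contribution $-H_x \sigma_x$ is linear in the single coordinate $\sigma_x$, and since $(\sigma \vee \sigma')_x + (\sigma \wedge \sigma')_x = \sigma_x + \sigma'_x$, these contributions are modular and cancel out of the inequality; in particular, the sign of $H_x$ plays no role, and any spin fixed by $\xi$ produces only further linear terms of the same form. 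For each edge $\{x,y\}$, the required inequality
\[
(\sigma \vee \sigma')_x (\sigma \vee \sigma')_y + (\sigma \wedge \sigma')_x (\sigma \wedge \sigma')_y \geq \sigma_x \sigma_y + \sigma'_x \sigma'_y
\]
is a finite check on $(\sigma_x, \sigma'_x, \sigma_y, \sigma'_y) \in \{\pm 1\}^4$: it is trivial when $\sigma$ and $\sigma'$ are ordered on $\{x,y\}$, and in the two remaining ``crossing'' configurations the left-hand side equals $+2$ while the right-hand side equals $-2$. Multiplication by $\beta > 0$ preserves the inequality, yielding submodularity.

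With the lattice condition in hand, Holley's theorem gives positive association, which is exactly $\EE_{\Omega_{\delta},\beta,H}^{\xi}[fg] \geq \EE_{\Omega_{\delta},\beta,H}^{\xi}[f] \, \EE_{\Omega_{\delta},\beta,H}^{\xi}[g]$ for any increasing $f, g$. The argument is uniform in the choice of boundary condition (only the ambient lattice $\mathcal{S}^{\xi}$ changes) and uniform in the sign and magnitude of $H$, so it covers Dobrushin, $\pm$, and free boundary conditions at once. There is no genuine obstacle here: the key point is simply that the nearest-neighbor ferromagnetic interaction is log-supermodular on $\{\pm 1\}^{V(\Omega_{\delta})}$, while external fields, fixed boundary couplings, and normalizing constants enter only through factors that are modular or constant on the lattice.
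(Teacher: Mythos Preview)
Your proof is correct and is the standard route to the FKG inequality for ferromagnetic Ising models. Note, however, that the paper does not actually prove this proposition: it is stated with a citation to \cite[Theorem~3.21]{book-Ising} and no argument is given. Your Holley--lattice-condition verification is precisely the textbook proof one finds behind such a citation, so there is nothing to compare beyond saying that you have filled in what the paper leaves to the reference.
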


As a consequence of the FKG inequality, we have the following comparison between the Ising model perturbed by a non-negative external field and the Ising model with no external field.

\begin{lemma} 
    Let $f$ be an increasing function and let $H: V(\Omega_{\delta}) \to \mathbb{R}_{+}$. Then, $\EE_{\Omega_{\delta},\beta,H}^{+}[f] \geq \EE_{\Omega_{\delta},\beta}^{+}[f]$.
\end{lemma}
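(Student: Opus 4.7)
The statement is a standard FKG-type monotonicity: turning on a non-negative external field can only increase the expectation of an increasing observable. The plan is to rewrite $\EE_{\Omega_{\delta},\beta,H}^{+}[f]$ as a ratio involving the unperturbed measure, and then invoke the FKG inequality stated just before.

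Concretely, by the definition of the perturbed measure via its Radon-Nikodym derivative with respect to $\PP_{\Omega_{\delta}}^{+}$, one has
\begin{equation*}
\EEpm{}[f]\ \text{is replaced by}\ \EE_{\Omega_{\delta},\beta,H}^{+}[f] \;=\; \frac{\EE_{\Omega_{\delta},\beta}^{+}\!\left[f(\sigma)\, e^{\sum_{x\in V(\Omega_\delta)} H_x \sigma_x}\right]}{\EE_{\Omega_{\delta},\beta}^{+}\!\left[e^{\sum_{x\in V(\Omega_\delta)} H_x \sigma_x}\right]}.
\end{equation*}
The next observation is that the function $g(\sigma):=\exp\!\bigl(\sum_{x\in V(\Omega_\delta)} H_x \sigma_x\bigr)$ is increasing in $\sigma$: if $\sigma\leq \sigma'$ coordinatewise, then because $H_x\geq 0$ we have $H_x\sigma_x\leq H_x\sigma_x'$ for every $x$, hence the exponents are ordered and so are the exponentials. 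This is the only place where the assumption $H\geq 0$ is used.

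Applying the FKG inequality (the proposition just stated, with external field $H\equiv 0$ and $+$ boundary conditions, or equivalently with $H$ itself — either works) to the two increasing functions $f$ and $g$, one obtains
\begin{equation*}
\EE_{\Omega_{\delta},\beta}^{+}\!\left[f\cdot g\right] \;\geq\; \EE_{\Omega_{\delta},\beta}^{+}[f]\cdot \EE_{\Omega_{\delta},\beta}^{+}[g].
\end{equation*}
Dividing by the positive quantity $\EE_{\Omega_{\delta},\beta}^{+}[g]$ and using the displayed ratio formula above yields $\EE_{\Omega_{\delta},\beta,H}^{+}[f] \geq \EE_{\Omega_{\delta},\beta}^{+}[f]$, as required.

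There is no real obstacle here; the only thing to be careful about is ensuring that $g$ is indeed an increasing function of the configuration, which requires $H_x\geq 0$ pointwise, and that one applies the FKG inequality in the unperturbed measure $\PP_{\Omega_\delta,\beta}^{+}$ (where it holds by the previous proposition). The same argument works verbatim for any fixed boundary condition, not only $+$, but the statement is used in the $+$ case in what follows.
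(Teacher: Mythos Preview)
Your proof is correct and is exactly the argument the paper has in mind: the lemma is stated without proof, introduced simply as ``a consequence of the FKG inequality,'' and your derivation---writing $\EE_{\Omega_{\delta},\beta,H}^{+}[f]$ as a ratio of $\EE_{\Omega_{\delta},\beta}^{+}$-expectations against the increasing function $g(\sigma)=\exp(\sum_x H_x\sigma_x)$ and then applying FKG---is the standard way to unpack that remark.
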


Ising measures with different boundary conditions can also be compared with one another. The following inequality will be useful, see \cite[Theorem~7.6]{book-parafermions} for a proof.

\begin{proposition} \label{prop_comparison_BC_Ising}
    Let $f$ be an increasing function. Then, $\EE_{\Omega_{\delta}, \beta}^{+}[f] \geq \EE_{\Omega_{\delta},\beta}^{\operatorname{free}}[f]$.
\end{proposition}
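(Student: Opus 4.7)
The plan is to derive the inequality from a stochastic-domination statement, namely that $\PP^{+}_{\Omega_\delta, \beta}$ dominates $\PP^{\operatorname{free}}_{\Omega_\delta, \beta}$ in the FKG partial order; integrating an increasing $f$ against the larger measure then yields the larger value. So the goal reduces to producing this stochastic domination.

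The first step is to establish a more general monotonicity: for any two deterministic boundary conditions $\xi \leq \xi'$ on $\partial \Omega_\delta$, one has $\PP^{\xi}_{\Omega_\delta, \beta} \preceq_{\mathrm{st}} \PP^{\xi'}_{\Omega_\delta, \beta}$. This is a standard application of Holley's criterion. The key input is that the single-site conditional probability
\[ \PP^{\xi}_{\Omega_\delta,\beta}(\sigma_x = +1 \mid \sigma_y,\ y \neq x) = \frac{e^{\beta S_x(\sigma)}}{e^{\beta S_x(\sigma)}+e^{-\beta S_x(\sigma)}}, \qquad S_x(\sigma):= \sum_{y \sim x} \sigma_y, \]
depends only on the neighboring spins of $x$ (so only on $\xi$ through its values at boundary vertices adjacent to $x$), and is monotone increasing in each of them because the Ising interaction is ferromagnetic. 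Verifying Holley's lattice condition for the two measures is then a short calculation using the explicit exponential form.

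The second step expresses the free measure as a convex combination of measures with deterministic boundary values. Conditionally on the boundary configuration $\sigma|_{\partial \Omega_\delta} = \xi$, the free-boundary Ising measure on $\operatorname{Int}(\Omega_\delta)$ coincides with $\PP^{\xi}_{\Omega_\delta,\beta}$ restricted to the interior, so
\[ \EE^{\operatorname{free}}_{\Omega_\delta,\beta}[f] = \sum_{\xi \in \{-1,+1\}^{\partial \Omega_\delta}} \PP^{\operatorname{free}}_{\Omega_\delta,\beta}\!\left(\sigma|_{\partial \Omega_\delta}=\xi\right) \EE^{\xi}_{\Omega_\delta,\beta}[f]. \]
Since $\xi \leq +\mathbf{1}$ pointwise for every such $\xi$, the monotonicity from the first step gives $\EE^{\xi}_{\Omega_\delta,\beta}[f] \leq \EE^{+}_{\Omega_\delta,\beta}[f]$ for each term, and summing against the probability weights yields the claim.

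The main obstacle is really the verification of Holley's lattice condition (equivalently, the monotonicity of the single-site conditional probabilities in their boundary data), but this is by now classical and entirely algebraic; once it is in place, the remainder of the argument is a short conditioning step on a finite configuration space, with no analytic subtlety.
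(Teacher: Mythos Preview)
Your argument is correct. Note that the paper does not supply its own proof of this proposition; it simply cites \cite[Theorem~7.6]{book-parafermions}. Your route---Holley's criterion for monotonicity of the Ising Gibbs measure in deterministic boundary data, together with writing $\PP^{\operatorname{free}}_{\Omega_\delta,\beta}$ as a mixture of the measures $\PP^{\xi}_{\Omega_\delta,\beta}$ over boundary configurations $\xi\in\{-1,+1\}^{\partial\Omega_\delta}$---is the standard direct spin-model argument and is complete as written. One small remark: since the paper's convention for free boundary conditions is that the boundary spins are present but unprescribed, your conditioning step is exactly adapted to the setup; had ``free'' meant dropping the boundary interactions entirely, the same conclusion would follow even more directly by viewing the $+$ measure as the free measure tilted by the increasing function $\sigma\mapsto\exp(\beta\sum_{x\sim\partial\Omega_\delta}\sigma_x)$ and applying FKG.
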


\subsubsection{The discrete interface of the Ising model}

Under $\PP_{\Omega_{\delta}}^{\pm}$ and $\PP_{\Omega_{\delta},H}^{\pm}$, in each spin configuration, there is an interface $\gamma_{\delta}$ separating $+1$ and $-1$ spins: this is a discrete path running from $a_{\delta}$ to $b_{\delta}$ such that a vertex adjacent to it has spin $+1$ if it is on its left and $-1$ if it is on its right. Note that this interface is a priori not unique. In what follows, we always choose the leftmost such interface but this choice is irrelevant: any other reasonable way of prescribing how to turn when encountering a face surrounded by four alternating spins would yield the same scaling limit as $\delta \to 0$. As in \cite{cvgIsingSLE}, for technical reasons, we draw $\gamma_{\delta}$ on the auxiliary square-octagon lattice, with octagons corresponding to vertices of $\Omega_{\delta}$. This for example prevents $\gamma_{\delta}$ from having self-intersection points.

\subsubsection{The Edwards-Sokal coupling} \label{sec_ES_coupling_no_external_field}

The Edwards-Sokal coupling provides a coupling between a edge configuration on the graph $G_{\delta}=(V(\Omega_{\delta}),E_{\delta})$, sampled from the FK-Ising measure, and an Ising configuration on $\Omega_{\delta}$ (with no external field). As this coupling will be a useful tool in what follows, we recall it below.

We call a function $\omega:E(\Omega_{\delta}) \to \{0,1\}^{\vert E(\Omega_{\delta}) \vert}$ an edge configuration. We say that an edge $e$ is open in $\omega$ if $\omega(e)=1$. Otherwise, $e$ is said to be closed. We identify $\omega$ with the subgraph whose vertex set is $V(\Omega_{\delta})$ and whose edge set is $\{e \in E(\Omega_{\delta}): \omega(e)=1 \}$. A cluster of $\omega$ is a connected component of this subgraph. One can also impose boundary conditions on $\partial \Omega_{\delta}$: this is a partition $\xi$ of $\partial \Omega_{\delta}$ and two vertices are wired together if they are in the same element of the partition. When $\xi=\operatorname{free}$, no vertices of $\partial \Omega_{\delta}$ are wired together and when $\xi=\operatorname{wired}$, all boundary vertices are wired together.

With these notations, the FK-Ising percolation measure with parameter $p \in [0,1]$ is defined as follows: for $\xi$ a set of boundary conditions and $\omega$ an edge configuration,
\begin{equation*}
    \PP_{\delta,p}^{\operatorname{FK},\xi}(\omega) = \frac{1}{\mathcal{Z}_{p}} 2^{k(\omega^{\xi})}\prod_{e \in E(\Omega_{\delta})} p^{\omega(e)}(1-p)^{1-\omega(e)}
\end{equation*}
where $\omega^{\xi}$ is the edge configuration obtained by identifying wired boundary vertices and $k(\omega^{\xi})$ the number of clusters of $\omega^{\xi}$. Above, $\mathcal{Z}_{p}$ is a normalization constant chosen such that $\PP_{\delta,p}^{\operatorname{FK},\xi}$ is a probability measure.

The measure $\PP_{\delta,p}^{\operatorname{FK},\xi}$ satisfies an FKG property similar to that of the Ising model and we will use it on several occasions. To compare edges configurations, we introduce a partial order $\leq$ defined as follows: for $\omega, \omega'$ two edge configurations, $\omega \leq \omega'$ if and only if $\omega(e) \leq \omega'(e)$ for any $e \in E(\Omega_{\delta})$. A function $f:\{0,1\}^{E(\Omega_{\delta})} \to \mathbb{R}$ is then said to be increasing if $f(\omega) \leq f(\omega')$ whenever $\omega \leq \omega'$. Similarly, an event $E$ is said to be increasing if the indicator function $\mathbb{I}_{E}$ is increasing. The FKG property of the FK-Ising model is the following inequality \cite[Theorem~4.14]{book-parafermions}. 

\begin{proposition} \label{prop_FKG_FK}
    Fix $p \in [0,1]$. Let $f$ and $g$ be two increasing functions and let $\xi$ be a set of boundary conditions. Then $\EE_{\delta,p}^{\operatorname{FK},\xi}[fg] \geq \EE_{\delta,p}^{\operatorname{FK},\xi}[f] \EE_{\delta,p}^{\operatorname{FK},\xi}[g]$.
\end{proposition}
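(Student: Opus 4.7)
The plan is to apply the standard FKG lattice criterion (Holley's theorem): to establish $\EE_{\delta,p}^{\operatorname{FK},\xi}[fg] \geq \EE_{\delta,p}^{\operatorname{FK},\xi}[f]\,\EE_{\delta,p}^{\operatorname{FK},\xi}[g]$ for all increasing $f,g$, it suffices to verify the lattice condition
\begin{equation*}
    \PP_{\delta,p}^{\operatorname{FK},\xi}(\omega \vee \omega') \, \PP_{\delta,p}^{\operatorname{FK},\xi}(\omega \wedge \omega') \geq \PP_{\delta,p}^{\operatorname{FK},\xi}(\omega) \, \PP_{\delta,p}^{\operatorname{FK},\xi}(\omega')
\end{equation*}
for every pair of edge configurations $\omega, \omega' \in \{0,1\}^{E(\Omega_{\delta})}$, where $(\omega \vee \omega')(e) := \max(\omega(e), \omega'(e))$ and $(\omega \wedge \omega')(e) := \min(\omega(e), \omega'(e))$. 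First I would invoke this reduction, and then verify the lattice condition factor by factor using the explicit density of $\PP_{\delta,p}^{\operatorname{FK},\xi}$.

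The Bernoulli factor $\prod_{e}p^{\omega(e)}(1-p)^{1-\omega(e)}$ satisfies the lattice condition with equality, since $\max(a,b)+\min(a,b)=a+b$ for $a,b\in\{0,1\}$. Only the cluster-counting factor $2^{k(\omega^{\xi})}$ requires work. To handle the boundary partition $\xi$ uniformly, I would replace $\Omega_{\delta}$ by the auxiliary graph $G^{\xi}$ obtained by identifying all vertices in each block of $\xi$, so that $k(\omega^{\xi})$ becomes simply the number of connected components of $\omega$ viewed as a subgraph of $G^{\xi}$. The crucial remaining step is the supermodularity of the cluster-count function,
\begin{equation*}
    k(\omega \vee \omega') + k(\omega \wedge \omega') \geq k(\omega) + k(\omega'),
\end{equation*}
which, combined with $2\geq 1$, yields $2^{k(\omega \vee \omega')+k(\omega \wedge \omega')} \geq 2^{k(\omega)+k(\omega')}$ and hence, together with the Bernoulli equality, the lattice condition.

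To establish this supermodularity, I would set $D := \omega \setminus \omega'$ and observe that adding the edges of $D$ one at a time takes $\omega \wedge \omega'$ to $\omega$ and simultaneously takes $\omega'$ to $\omega \vee \omega'$. At each step of this parallel process, the current extension of $\omega \wedge \omega'$ is a subgraph of the current extension of $\omega'$, so its cluster partition is finer; consequently, whenever the next edge of $D$ joins two distinct clusters on the larger side (decreasing $k$ by one there), it also joins two distinct clusters on the smaller side, so the per-step drop in $k$ on the smaller side dominates the per-step drop on the larger side. Summing these telescoping drops yields $k(\omega \wedge \omega') - k(\omega) \geq k(\omega') - k(\omega \vee \omega')$, which rearranges to the supermodularity inequality. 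The main obstacle here is really just careful bookkeeping --- handling the boundary partition $\xi$ in full generality and keeping track of cluster mergings in the parallel process --- which the wiring construction of $G^{\xi}$ clears up at one stroke.
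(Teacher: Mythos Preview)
Your argument is correct and is exactly the standard proof of the FKG inequality for the random-cluster model via the FKG lattice condition. The paper, however, does not give a proof of this proposition at all: it is stated as background and simply cited from \cite[Theorem~4.14]{book-parafermions}. So there is nothing to compare against; you have supplied the textbook argument that the paper outsources to the literature. One small remark: your phrase ``combined with $2\geq 1$'' is really the observation that the random-cluster parameter satisfies $q=2\geq 1$, which is what makes $q^{k(\cdot)}$ respect the supermodularity inequality --- you might say this more explicitly.
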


This proposition yields the following comparison between boundary conditions, see \cite[Corollary~4.19]{book-parafermions} for a proof.

\begin{proposition} \label{prop_comparison_BC_FK}
    Fix $p \in [0,1]$. Let $\xi \leq \psi$ be two sets of boundary conditions and let $f$ be an increasing function. Then, $\EE_{\delta,p}^{\operatorname{FK},\psi}[f] \geq \EE_{\delta,p}^{\operatorname{FK},\xi}[f]$.
\end{proposition}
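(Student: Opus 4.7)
The plan is to prove this via a monotone coupling. By Strassen's theorem (equivalently, Holley's monotonicity criterion), the desired inequality $\EE_{\delta,p}^{\operatorname{FK},\psi}[f] \geq \EE_{\delta,p}^{\operatorname{FK},\xi}[f]$ for all increasing $f$ follows from the lattice (Holley) condition
\begin{equation*}
\PP_{\delta,p}^{\operatorname{FK},\psi}(\omega \vee \omega')\,\PP_{\delta,p}^{\operatorname{FK},\xi}(\omega \wedge \omega') \;\geq\; \PP_{\delta,p}^{\operatorname{FK},\psi}(\omega)\,\PP_{\delta,p}^{\operatorname{FK},\xi}(\omega')
\end{equation*}
holding for every pair of edge configurations $\omega, \omega' \in \{0,1\}^{E(\Omega_\delta)}$. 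So the entire proof reduces to establishing this single inequality.

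The first step is to substitute the explicit FK density and simplify. Writing $|\omega|$ for the number of open edges of $\omega$ and using that $|\omega \vee \omega'| + |\omega \wedge \omega'| = |\omega| + |\omega'|$, the Bernoulli factors $\prod_{e} p^{\omega(e)}(1-p)^{1-\omega(e)}$ on the two sides match exactly, and the two normalization constants each appear once on each side and cancel. The Holley condition reduces to the purely combinatorial cluster-count statement
\begin{equation*}
k\bigl((\omega \vee \omega')^{\psi}\bigr) + k\bigl((\omega \wedge \omega')^{\xi}\bigr) \;\geq\; k(\omega^{\psi}) + k\bigl((\omega')^{\xi}\bigr).
\end{equation*}

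The second step is to verify this cluster-count inequality, and here the assumption $\xi \leq \psi$ is essential. The cleanest route I have in mind is induction on the number of edges on which $\omega$ and $\omega'$ disagree, flipping one such edge at a time. The two elementary facts used repeatedly are: (i) adding an edge to $\omega^{\zeta}$ decreases $k(\omega^{\zeta})$ by $0$ or $1$, with the decrease occurring exactly when the endpoints lie in distinct clusters; and (ii) replacing a boundary partition by a coarser one can only merge clusters, never separate them. The hypothesis $\xi \leq \psi$ enters the case analysis via (ii): whenever the endpoints of the flipped edge lie in the same cluster of $(\omega \wedge \omega')^{\xi}$, they also lie in the same cluster of every configuration with a coarser boundary partition or a larger edge set. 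A careful case distinction on the four possible connectivity statuses of the endpoints then delivers the inductive step.

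The main obstacle is precisely this combinatorial bookkeeping: the statement is elementary, but the simultaneous interaction between the meet/join operation on edge configurations and the coarsening of boundary partitions from $\xi$ to $\psi$ requires care to track. Once the cluster-count inequality is in place, Holley's theorem produces the monotone coupling and the proposition follows. A detailed proof in this exact setting is given in \cite[Corollary~4.19]{book-parafermions}, and the argument above is what I would follow.
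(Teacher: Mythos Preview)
The paper does not supply its own proof of this proposition; it simply refers the reader to \cite[Corollary~4.19]{book-parafermions}. Your sketch via Holley's lattice condition, reducing to the submodularity-type cluster-count inequality and then checking it by induction on disagreement edges, is exactly the standard argument carried out in that reference, so your proposal is correct and matches what the paper defers to.
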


We can now describe the Edwards-Sokal coupling for the Ising model in $\Omega_{\delta}$. Fix $p \in [0,1]$ and let $\xi \in \{ \operatorname{wired}, \operatorname{free}\}$. Sample first an edge configuration $\omega$ on $G_{\delta}$ according to $\PP_{\delta,p}^{\operatorname{FK},\xi}$. Then, assign independently to each cluster of $\omega$ not connected to $\partial \Omega_{\delta}$ a spin $+1$ or $-1$ with probability $1/2$. If $\xi=\operatorname{free}$, proceed in a similar way for clusters connected to $\partial \Omega_{\delta}$ and if $\xi=\operatorname{wired}$, assign the spin $+1$ to the cluster connected to $\partial \Omega_{\delta}$. Finally, for each vertex $x \in V(\Omega_{\delta})$, define $\sigma_x$ to be the spin of the cluster of $x$ in $\omega$.

\begin{proposition}[Edwards-Sokal coupling \cite{ES-coupling}] \label{prop_ES_coupling_no_field}
    The spin configuration $\sigma$ constructed above is distributed according to the Ising measure with inverse temperature $\beta = -\frac{1}{2}\log(1-p)$ and boundary conditions $\xi$, where $\xi \in \{ \operatorname{wired}, \operatorname{free} \}$.
\end{proposition}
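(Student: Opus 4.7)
The plan is the standard one: construct a joint measure on pairs $(\sigma, \omega) \in \{-1,+1\}^{V(\Omega_\delta) \cup \partial\Omega_\delta} \times \{0,1\}^{E(\Omega_\delta)}$ and identify both marginals and both conditionals. Concretely, for $\xi \in \{\operatorname{wired}, \operatorname{free}\}$, set
\begin{equation*}
    \mu_{\delta,p}^{\xi}(\sigma,\omega) \;\propto\; \Bigl(\prod_{e=\{x,y\} \in E(\Omega_\delta)}\!\bigl[(1-p)\mathbb{I}_{\omega(e)=0} + p\,\mathbb{I}_{\omega(e)=1}\mathbb{I}_{\sigma_x = \sigma_y}\bigr]\Bigr)\,\mathbb{I}_{\sigma \text{ compatible with }\xi},
\end{equation*}
where the compatibility indicator enforces $\sigma \equiv +1$ on $\partial \Omega_\delta$ for $\xi = \operatorname{wired}$ and is trivial for $\xi = \operatorname{free}$. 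The weight per edge kills any configuration in which an open edge joins sites of opposite spin, so the support of $\mu_{\delta,p}^{\xi}$ consists exactly of pairs $(\sigma,\omega)$ where every cluster of $\omega$ is monochromatic (and, when $\xi=\operatorname{wired}$, the boundary cluster carries spin $+1$).

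First I would compute the $\omega$-marginal. Summing over spin configurations $\sigma$ compatible with $\omega$, each finite cluster of $\omega$ contributes a factor of $2$ (free binary choice) and, when $\xi = \operatorname{wired}$, the boundary cluster contributes a factor of $1$; the edge weights produce $\prod_e p^{\omega(e)}(1-p)^{1-\omega(e)}$. Hence
\begin{equation*}
    \sum_{\sigma} \mu_{\delta,p}^{\xi}(\sigma,\omega) \;\propto\; 2^{k(\omega^\xi)} \prod_{e \in E(\Omega_\delta)} p^{\omega(e)}(1-p)^{1-\omega(e)},
\end{equation*}
which is exactly $\PP_{\delta,p}^{\operatorname{FK},\xi}(\omega)$. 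This also shows that, conditionally on $\omega$, the spins of distinct clusters are independent, each finite cluster being $\pm 1$ with probability $1/2$ and the wired boundary cluster being deterministically $+1$ — matching the coin-flipping recipe in the statement.

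Next I would compute the $\sigma$-marginal by summing over $\omega$ edge by edge. Each edge $e=\{x,y\}$ contributes $(1-p) + p\,\mathbb{I}_{\sigma_x = \sigma_y}$. Setting $p = 1 - e^{-2\beta}$, one uses the elementary identity
\begin{equation*}
    (1-p) + p\,\mathbb{I}_{\sigma_x=\sigma_y} \;=\; e^{-2\beta}\bigl(1 + (e^{2\beta}-1)\mathbb{I}_{\sigma_x=\sigma_y}\bigr) \;=\; e^{-\beta}\,e^{\beta \sigma_x \sigma_y},
\end{equation*}
so that $\sum_{\omega}\mu_{\delta,p}^{\xi}(\sigma,\omega) \propto \exp\bigl(\beta \sum_{x\sim y}\sigma_x\sigma_y\bigr)$ times the boundary indicator, which is precisely the Ising measure $\PP_{\Omega_\delta,\beta}^{\xi}$ with $\beta = -\tfrac{1}{2}\log(1-p)$.

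Combining the two marginal computations finishes the proof: the procedure described in the proposition — sampling $\omega$ from $\PP_{\delta,p}^{\operatorname{FK},\xi}$ and then assigning spins to clusters according to the conditional law identified above — realizes the joint measure $\mu_{\delta,p}^{\xi}$, whose $\sigma$-marginal is the claimed Ising measure. There is no real obstacle here; the only bookkeeping care is to treat the boundary cluster consistently under the two choices of $\xi$, and to verify the algebraic identity relating $p$ and $\beta$.
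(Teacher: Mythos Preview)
Your proof is correct and follows the standard Edwards--Sokal argument. The paper does not actually prove this proposition at all --- it is stated with a citation to \cite{ES-coupling} and used as background --- so there is no paper-proof to compare against; your write-up is essentially the classical argument one would find in that reference.
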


Below, when $\beta = \beta_{c}$, we simply write $\PP_{\delta}^{\operatorname{FK},\xi}$ instead of $\PP_{\delta,p_c}^{\operatorname{FK},\xi}$, where $p_c$ is given by Proposition \ref{prop_ES_coupling_no_field}.

\subsection{Scaling limit of the interface at criticality}

\subsubsection{Loewner chains}

Set $\HH:= \{ z \in \mathbb{C}: \Im(z) > 0 \}$ and let $\gamma: [0,\infty) \to \overline{\mathbb{H}}$ be a non-self-crossing curve targeting $\infty$ and such that $\gamma(0)=0$. For $t \geq 0$, let $K_t$ be the hull generated by $\gamma([0,t])$, that is $\HH \setminus K_t$ is the unbounded connected component of $\HH \setminus \gamma([0,t])$. In the case where $\gamma([0,t])$ is non-self-touching, $K_t$ is simply given by $\gamma([0,t])$. For each $t \geq 0$, it is easy to see that there exists a unique conformal $g_t: \HH \setminus K_t \to \HH$ satisfying the normalization $g_t(\infty) = \infty$ and such that $\lim_{z \to \infty} (g_t(z) - z) = 0$. It can then be proved that $g_t$ satisfies the asymptotic
\begin{equation*}
    g_t(z) = z + \frac{a_1(t)}{z} + O(\vert z \vert^{-2}) \quad \text{as } \vert z \vert \to \infty.
\end{equation*}
The coefficient $a_1(t)$ is equal to $\text{hcap}(K_t)$, the half-plane capacity of $K_t$, which, roughly speaking, is a measure of the size of $K_t$ seen from $\infty$. Moreover, one can show that $a_1(0)=0$ and that $t \mapsto a_1(t)$ is continuous and strictly increasing. Therefore, the curve $\gamma$ can be reparametrized in such a way that at each time $t$, $a_1(t) = 2t$. $\gamma$ is then said to be parameterized by half-plane capacity.

In this time-reparametrization and with the normalization of $g_t$ just described, it is known that there exists a unique real-valued function $t \mapsto W_t$, called the driving function, such that the following equation, called the Loewner equation, is satisfied:
\begin{equation} \label{eq_Loewner}
    \partial_t g_t(z) = \frac{2}{g_t(z) - W_t},  \quad g_0(z)=z, \quad \text{for all $z \in \HH \setminus K_t$}.
\end{equation}
Indeed, it can be shown that $g_t$ extends continuously to $\gamma(t)$ and setting $W_t = g_t(\gamma(t))$ yields the above equation, see e.g. \cite[Chapter~4]{book_Lawler} and \cite[Chapter~4]{book_SLE}.

Conversely, given a continuous and real-valued function $t \mapsto W_t$, one can construct a locally growing family of hulls $(K_t)_t$ by solving the equation \eqref{eq_Loewner}. Under additional assumptions on the function $t \mapsto W_t$, the family of hulls obtained using \eqref{eq_Loewner} is generated by a curve, in the sense explained above. 

Schramm-Loewner evolutions, or SLE for short, are random Loewner chains introduced by Schramm \cite{Schramm_SLE}. For $\kappa \geq 0$, SLE$_\kappa$ is the Loewner chain obtained by considering the Loewner equation \eqref{eq_Loewner} with driving function $W_t = \sqrt{\kappa}B_t$, where $(B_t, t \geq 0)$ is a standard one-dimensional Brownian motion. As such, SLE$_{\kappa}$ is defined in $\HH$ but, thanks to the conformal invariance of the Loewner equation, SLE$_{\kappa}$ can be defined in any simply connected domain $\Omega \subset \mathbb{C}$ with two marked boundary points $a, b \in \partial \Omega$ by considering a conformal map $\phi: \Omega \to \HH$ with $\phi(a)=0$ and $\phi(b) = \infty$ and taking the image of SLE$_{\kappa}$ in $\HH$ by $\phi^{-1}$. In particular, SLE$_{\kappa}$ is conformally invariant and it turns out that this conformal invariance property together with a certain domain Markov property characterize the family (SLE$_{\kappa}, \kappa \geq 0)$. 

\subsubsection{Topology and convergence on the space of curve} \label{sec_topology_curves}

Following \cite{tightness-curves}, the space of curves that we will consider is a subspace of the space of continuous mappings from $[0,1]$ to $\mathbb{C}$ modulo reparametrization. More precisely, let
\begin{equation*}
    \mathcal{C}':= \left \{
    f \in \mathcal{C}([0,1], \mathbb{C}): 
    \begin{aligned}
    &\text{ either $f$ is not constant on any subinterval of $[0,1]$} \\ &\text{or $f$ is constant on $[0,1]$} 
    \end{aligned}
    \right \}
\end{equation*}
and let $f_1, f_2 \in \mathcal{C}'$ be equivalent if there exists an increasing homeomorphism $\psi: [0,1] \to [0,1]$ with $f_2 = f_1 \circ \psi$. We denote by $[f]$ the equivalence class of $f$ under this equivalence relation and set
\begin{equation*}
    X(\mathbb{C}):= \{ [f]: f \in \mathcal{C}'\}.
\end{equation*}
$X(\mathbb{C})$ is called the space of curves. We turn $X(\mathbb{C})$ into a metric space by equipping it with the metric
\begin{equation*}
    d_X(f,g) := \inf \{ \| f_0 - g_0 \|_{\infty}: \, f_0 \in [f], g_0 \in [g] \}.
\end{equation*}
$(X(\mathbb{C}), d_X)$ is a separable and complete metric space, but it is not compact. Given $\Omega \subsetneq \mathbb{C}$ with $\partial \Omega \neq \emptyset$ and two marked boundary points $a$ and $b$, we define the space of simple curves from $a$ to $b$ in $\Omega$ as
\begin{equation*}
    X_{\text{simple}}(\Omega, a, b) := \{ [f]: f \in \mathcal{C}', f((0,1)) \subset \Omega, f(0)=a, f(1)=b, \, \text{$f$ injective} \}.
\end{equation*}
We then let $X_0(\Omega, a, b)$ be the closure of $X_{\text{simple}}(\Omega, a, b)$ in $X(\mathbb{C})$ with respect to the metric $d_X$. Curves in $X_0(\Omega, a, b)$ run from $a$ to $b$, may touch $\partial \Omega$ elsewhere than at their endpoints, may touch themselves and have multiple points but they can have no transversal self-crossings. Notice that if $(\PP_n)_n$ is a sequence of probability measures supported on $X_{\text{simple}}(\Omega, a, b)$ that converges weakly to a probability measure $\PP^{*}$, then a priori $\PP^{*}$ is supported on $X_0(\Omega, a, b)$.

Assume that we have a family of random curves $(\gamma_{\delta})_{\delta}$ distributed according to some family $(\PP_{\delta})_{\delta}$ such that for each $\delta > 0$, $\gamma_{\delta}$ is an element of $X_{0}(\hat \Omega_{\delta};a_{\delta},b_{\delta})$, with $((\hat \Omega_{\delta};a_{\delta},b_{\delta}))_{\delta}$ satisfying the assumptions of Section \ref{sec_assumptions_domain}. The curves $(\gamma_{\delta})_{\delta}$ can be made to be supported on the same space $X_{0}(\Omega;a,b)$ by uniformization. More precisely, let $\phi: \Omega \to \mathbb{H}$ be a conformal map such that $\phi(a) = 0$ and $\phi(b)=\infty$ and similarly, for $\delta > 0$, let $\phi_{\delta}: \hat \Omega_{\delta} \to \mathbb{H}$ be a conformal map such that $\phi_{\delta}(a_{\delta}) = 0$ and $\phi_{\delta}(b_{\delta})=\infty$. Setting $\gamma_{\delta}^{\HH}:= \phi_{\delta}(\gamma_{\delta})$, the family $(\gamma_{\delta}^{\HH})_{\delta}$ is supported on $X_{0}(\HH;0,\infty)$. Moreover, for each $\delta > 0$, when parametrized by half-plane capacity, $\gamma_{\delta}^{\HH}$ has an associated driving function $W_{\delta}$. In this setting, there are two different topological spaces in which we may wish to show tightness of $(\gamma_{\delta})_{\delta}$:
\begin{enumerate}[leftmargin=1.5cm]
    \item [{\crtcrossreflabel{(T.1)}[topo_1]}] the space of curves $X_0(\HH,0,\infty)$ equipped with the metric $d_X$;
    \item [{\crtcrossreflabel{(T.2)}[topo_2]}] the metrizable space of continuous functions on $[0, \infty)$ with the topology of uniform convergence on compact subsets of $[0, \infty)$.
\end{enumerate}
Tightness could also be established for the driving functions $(W_{\delta})_{\delta}$, in which case the topological space to consider is
\begin{enumerate}[itemsep=-1ex, leftmargin=1.5cm]
    \item [{\crtcrossreflabel{(T.3)}[topo_3]}] the metrizable space of continuous functions on $[0, \infty)$ with the topology of uniform convergence on compact subsets of $[0, \infty)$.
\end{enumerate}
It turns out that, by \cite[Corollary~1.7]{tightness-curves}, weak convergence in one of the topologies \ref{topo_1}--\ref{topo_3} implies weak convergence in the other two and that the limits agree, in the sense that the limiting random curve is driven by the limiting driving function.

Another natural topological space in which to establish tightness of $(\gamma_{\delta})_{\delta}$ is
\begin{enumerate}[leftmargin=1.5cm]
    \item [{\crtcrossreflabel{(T.4)}[topo_4]}] the space of curves $X(\mathbb{C})$ equipped with the metric $d_X$.
\end{enumerate}
Under the assumption that $(\hat \Omega_{\delta}; a_{\delta}, b_{\delta})_{\delta}$ converges in the Carath\'eodory sense to $(\Omega;a,b)$ and that $(a_{\delta})_{\delta}$ and $(b_{\delta})_{\delta}$ are close approximations of the degenerate prime ends $a$ and $b$, \cite[Theorem~4.1]{Karrila} and \cite[Theorem~4.2]{Karrila} show that weak convergence of $(\gamma_{\delta})_{\delta}$ in the topology \ref{topo_4} implies weak convergence in the other three topologies \ref{topo_1}--\ref{topo_3}, see the discussion below \cite[Theorem~4.2]{Karrila}. Moreover, if $\gamma$ denotes the limit in the topology \ref{topo_4} and $\gamma^{\HH}$ denotes the limit in the topology \ref{topo_1}, then $\gamma$ has the same law as $\phi^{-1}(\gamma^{\HH})$. We note that \cite[Theorem~4.2]{Karrila} also guarantees that $\gamma$ is supported on $\overline{\Omega}$.

\subsubsection{The critical Ising interface}

A conformal invariance property of the critical Ising model emerges when looking at the scaling limit of the interface $\gamma_{\delta}$ under $\PP_{\Omega_{\delta}}^{\pm}$ as $\delta \to 0$. Indeed, as a consequence of the next theorem, its limiting law is conformally invariant. 

\begin{theorem}[\cite{cvgIsingSLE}] \label{theorem_SLE3}
Under the above assumptions on $(\Omega;a,b)$ and $(\Omega_{\delta},a_{\delta},b_{\delta})_{\delta}$, as $\delta \to 0$, $(\gamma_{\delta})_{\delta}$ under $(\PP_{\delta}^{\pm})_{\delta}$ converges weakly in the topologies \ref{topo_1}--\ref{topo_4} to SLE$_3$ in $\Omega$ from $a$ to $b$.
\end{theorem}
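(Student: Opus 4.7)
The strategy is the one introduced by Smirnov and completed in \cite{cvgIsingSLE}, combining tightness of the interface with the identification of the subsequential limits via a discrete holomorphic observable. Concretely, the plan splits into three parts: tightness of $(\gamma_\delta)_\delta$, existence and convergence of a martingale observable, and extraction of the Loewner driving function from the scaling limit of the observable.

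First I would establish tightness in the topology \ref{topo_4} (from which tightness in \ref{topo_1}--\ref{topo_3} will follow by \cite[Theorems~4.1, 4.2]{Karrila}). The natural route is to verify the Kemppainen--Smirnov unforced-crossing condition for $(\gamma_\delta)_\delta$: for every annulus $A(z,r,R) \subset \Omega$ hit by $\gamma_\delta$, one uniformly bounds the probability that $\gamma_\delta$ crosses $A$ an unforced second time. The domain Markov property of the Ising model under $\PPpmzero{\delta}$ reduces this to a crossing estimate for the critical FK-Ising model with unfavorable boundary conditions, which is supplied by the RSW theory of Chelkak--Duminil-Copin--Hongler at $p_c=\sqrt2/(1+\sqrt2)$ together with the FKG and comparison-of-boundary-conditions inequalities (Propositions \ref{prop_FKG_FK}--\ref{prop_comparison_BC_FK}). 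Tightness in \ref{topo_4} then follows.

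Second, I would introduce Smirnov's discrete fermionic (holomorphic spinor) observable $F_\delta$ on the medial graph of $\Omega_\delta$, with the Dobrushin square-root-type boundary conditions at $a_\delta,b_\delta$; up to the explicit multiplicative normalization, $F_\delta$ is $s$-holomorphic and, crucially, is a martingale for the filtration generated by the tip-by-tip exploration of $\gamma_\delta$. Using the s-holomorphicity together with the Riemann boundary condition, one shows that $F_\delta$ converges, uniformly on compact subsets of the slit domain $\Omega_\delta \setminus \gamma_\delta([0,t])$, to the unique continuum holomorphic function $F$ on $\Omega \setminus \gamma([0,t])$ solving the corresponding Riemann--Hilbert problem with spinor boundary values; conformal covariance identifies $F$ in terms of a conformal map $g_t$ onto $\HH$ sending the tip to $W_t$. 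Carath\'eodory stability of the slit domains along the convergent subsequences (here one invokes \cite[Theorem~4.2]{Karrila} again, plus the fact that limits of $(\gamma_\delta)$ are supported on $\overline{\Omega}$) ensures that this convergence passes to any weak subsequential limit of $(\gamma_\delta,F_\delta)$.

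Finally, along a subsequential limit, the evaluation $t\mapsto F(\gamma([0,t]);z)$ at any fixed $z \in \Omega$ is a continuous martingale with respect to the filtration of the limiting curve. Expanding $F$ in the local coordinate near the tip yields, after reading off the first two nontrivial coefficients, an expression for the Loewner driving function $W_t$ of $\gamma$ in $\HH$. Applying It\^o's formula to this martingale identity and matching the drift and quadratic variation terms forces $W_t$ to be a semimartingale with zero drift and quadratic variation $3t$, i.e.\ $W_t = \sqrt{3}\,B_t$ for a standard Brownian motion $B$. By L\'evy's characterization, every subsequential limit is then chordal SLE$_3$ in $(\Omega;a,b)$, giving convergence of the full family. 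The main obstacle is the combination of (a) proving the boundary/compactness estimates needed to control $F_\delta$ uniformly up to the possibly fractal boundary $\partial\Omega$ (we only assume $\dim_H \partial \Omega < 7/4$) and near the moving tip of $\gamma_\delta$, and (b) verifying that the limiting observable is smooth enough in $t$ to legitimately apply It\^o calculus; these are precisely the points where the s-holomorphicity machinery of \cite{cvgIsingSLE} and the Karrila-type domain convergence results are essential.
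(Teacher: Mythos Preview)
Your sketch is a faithful outline of the Chelkak--Duminil-Copin--Hongler--Kemppainen--Smirnov argument in \cite{cvgIsingSLE}: tightness via the unforced-crossing condition and RSW, convergence of the $s$-holomorphic fermionic observable to the solution of a Riemann--Hilbert problem, and identification of the driving function as $\sqrt{3}B_t$ via the martingale property and It\^o's formula. As an outline this is correct.

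However, there is nothing to compare against here: the paper does not prove Theorem~\ref{theorem_SLE3}. It is stated as a known result, attributed to \cite{cvgIsingSLE}, and used as a black box input to the proofs of Theorem~\ref{theorem_magnetic_SLE3} and Propositions~\ref{prop_small_H}--\ref{prop_large_H}. The paper's own contribution begins only once SLE$_3$ convergence at $h=0$ is taken for granted, and consists in controlling the Radon--Nikodym derivative of $\PP_{\delta,h}^{\pm}$ with respect to $\PP_{\delta}^{\pm}$. So your proposal is not so much a different route as a reproduction of the cited literature, which the paper itself does not attempt.

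One minor remark on the content of your sketch: the assumption $\dim_H \partial\Omega < 7/4$ that you flag as a potential obstacle is not needed for Theorem~\ref{theorem_SLE3} itself. The convergence to SLE$_3$ in \cite{cvgIsingSLE} holds without any regularity assumption on $\partial\Omega$ beyond the Carath\'eodory/close-approximation setup of Section~\ref{sec_assumptions_domain}; the $7/4$ bound is imposed in this paper only for the perturbed model, to control the integrals of spin correlations near the boundary in the proof of Theorem~\ref{theorem_magnetic_SLE3}.
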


Theorem \ref{theorem_SLE3} will be the starting point of the proof of Theorem \ref{theorem_magnetic_SLE3}.

\subsection{Spin correlations at $\beta = \beta_c$} \label{section_spin_correlations}

The crucial input for the proof of Theorem \ref{theorem_magnetic_SLE3} is the existence of a scaling limit for the $k$-point spin correlations of the critical Ising model, when the boundary conditions are either $+1$, $-1$ or Dobrushin. This result, established in \cite{spinCorrelations}, holds under no assumption on the smoothness of the boundary of the underlying domain $\Omega$. This fact is essential as we will typically apply this result when the underlying domain is a domain slit by a portion of an SLE$_3$ curve. 

\begin{theorem}[\cite{spinCorrelations}] \label{theorem_spin_correlations}
Let $\Omega \subset \mathbb{C}$ be a bounded and simply connected domain and $a, b \in \partial \Omega$ be two prime ends of $\partial \Omega$. Let $(\Omega_{\delta};a_{\delta}, b_{\delta})_{\delta}$ be discrete approximations of $(\Omega;a,b)$ satisfying the assumptions of Section \ref{sec_assumptions_domain}. Then, for any $\eta >0$ and any $k \geq 1$, we have that
\begin{align*}
    &\delta^{-\frac{k}{8}} \EEpm{\delta}[\sigma_{x_1}\dots \sigma_{x_{k}}] = C_{\sigma}^k f_{\Omega}^{(\pm, k)}(x_1,\dots ,x_k) + o(1) \\
    &\delta^{-\frac{k}{8}} \EEplus{\delta}[\sigma_{x_1}\dots \sigma_{x_{k}}] = C_{\sigma}^k f_{\Omega}^{(+,k)}(x_1,\dots ,x_k) + o(1) \\
    &\delta^{-\frac{k}{8}} \EE_{\delta}^{-}[\sigma_{x_1}\dots \sigma_{x_{k}}] = C_{\sigma}^k f_{\Omega}^{(-,k)}(x_1,\dots ,x_k) + o(1)
\end{align*}
as $\delta \to 0$, uniformly over all $x_1, \dots, x_k \in \Omega$ at distance at least $\eta$ from $\partial \Omega$ and from each other. Here, $C_{\sigma}>0$ is a constant that corresponds to the fact that we are working on the square lattice. The functions $f_{\Omega}^{(\pm,k)}$, $f_{\Omega}^{(+,k)}$ and $f_{\Omega}^{(-,k)}$ are explicit and satisfy the following conformal covariance property. If $\psi: \Omega \to \tilde \Omega$ is a conformal map, then for any $k \geq 1$ and any $x_1, \dots , x_k \in \Omega$, with $\xi \in \{\pm,-,+\}$,
\begin{align*}
    &f_{\Omega}^{(\xi, k)}(x_1,\dots ,x_k) = \prod_{j=1}^{k} \vert \psi'(x_j) \vert^{1/8} f_{\tilde \Omega}^{(\xi,k)}(\psi(x_1),\dots ,\psi(x_k)).
\end{align*}
\end{theorem}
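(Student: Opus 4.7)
The three boundary-condition cases ($+$, $-$, Dobrushin) can be treated by the same pipeline, only the boundary data of the observables change. The overall strategy, in the spirit of the s-holomorphicity technology for the Ising model, is to represent the $k$-point spin correlation via a discrete holomorphic fermionic observable, prove a scaling limit for that observable on rough domains, and then recover the correlation by integration along a path in configuration space. Concretely, for a fixed $k$-tuple $(x_1,\ldots,x_k)$ of distinct interior points, I would introduce Kadanoff--Ceva spinor observables $F_\delta(\,\cdot\,; x_1,\ldots,x_k)$ defined on corners/edges of the square-octagon lattice, which branch around each $x_j$. Using the standard low/high-temperature expansion and Kramers--Wannier duality, the ratio $\EE^\xi_\delta[\sigma_{x_1}\cdots \sigma_{x_k}]/\EE^\xi_\delta[\sigma_{x_1}\cdots \widehat{\sigma_{x_j}}\cdots \sigma_{x_k}]$ can be read off from the coefficient of the leading singularity of $F_\delta$ near $x_j$, so convergence of correlations reduces to convergence of $F_\delta$ near each $x_j$.

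Next, I would show that $F_\delta$ is s-holomorphic on $\Omega_\delta\setminus\{x_1,\ldots,x_k\}$ with prescribed boundary values on $\partial\Omega_\delta$ (matching the b.c.\ under consideration, i.e.\ a Riemann--Hilbert condition that is locally constant on $\partial\Omega_\delta^+$ and $\partial\Omega_\delta^-$) and with a singular profile of order $\delta^{-1/2}$ at every $x_j$. Standard a priori estimates for s-holomorphic functions (discrete Cauchy formula, boundary modification trick, Beurling-type harmonic measure bounds) make the family $\{\delta^{1/2}F_\delta\}_\delta$ precompact on compacts of $\Omega\setminus\{x_1,\ldots,x_k\}$ bounded away from $\partial\Omega$. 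Any subsequential limit solves the continuum Riemann--Hilbert problem on $\Omega\setminus\{x_1,\ldots,x_k\}$ with matching boundary conditions and singularities; uniqueness of this continuum boundary value problem identifies the limit as an explicit holomorphic spinor $F_\Omega$ expressible in terms of the uniformizing conformal map from $\Omega$ to the upper half plane.

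With convergence of $F_\delta$ in hand, the coefficient extraction at each $x_j$ yields a continuum expression for $\partial_{x_j}\log f_\Omega^{(\xi,k)}(x_1,\ldots,x_k)$, uniformly in $(x_1,\ldots,x_k)$ staying $\eta$-apart and $\eta$-away from $\partial\Omega$. Integrating these log-derivatives along a path joining the configuration $(x_1,\ldots,x_k)$ to a base configuration where the correlation asymptotic is known (e.g.\ collision of two points, which reduces $k$ to $k-2$ via the short-distance fusion relation, or moving all points to a reference half-plane calculation) then produces the explicit functions $f_\Omega^{(\xi,k)}$ together with the universal prefactor $C_\sigma^k$. The constant $C_\sigma$ is lattice-dependent, arising from the normalization of the local singular part of $F_\delta$ against its continuum counterpart; it is the same for each insertion and is independent of $\Omega$. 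Finally, the stated conformal covariance of the $f_\Omega^{(\xi,k)}$ follows from the conformal covariance of $F_\Omega$ (a spin-$\tfrac12$ covariance at each insertion), which upon integration yields the overall scaling dimension $\tfrac18$ per point, matching the bulk one-arm exponent of the Ising spin field.

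\textbf{Main obstacle.} The single hardest step is proving scaling-limit convergence of $\delta^{1/2}F_\delta$ on a domain with merely Hausdorff-dimension-less-than-$7/4$ boundary (no regularity of $\partial\Omega$ is assumed, which is crucial downstream since one wants to apply the theorem with $\Omega$ replaced by a slit domain $\Omega\setminus\gamma([0,t])$). Standard proofs on smooth domains use a boundary modification trick that requires approximating $\partial\Omega_\delta$ from outside by nicer polygons; on rough boundaries one needs Beurling-type estimates to control harmonic measure and ensure that the boundary contribution to the Riemann--Hilbert solution remains uniformly bounded, together with a delicate extraction of subsequential limits coupled to a continuum uniqueness statement on non-smooth domains. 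The second delicate point is the uniformity of the $o(1)$ error as the distance parameter $\eta$ is held fixed but the domain is allowed to degenerate, which in applications is handled via a priori diameter bounds on the random slit.
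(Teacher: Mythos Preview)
The paper does not prove this theorem: it is quoted from the literature (the reference \cite{spinCorrelations}, i.e.\ Chelkak--Hongler--Izyurov), and only a remark on lattice conventions follows the statement. So there is no ``paper's own proof'' to compare against.

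That said, your outline is a faithful high-level summary of how \cite{spinCorrelations} actually proceeds: spinor observables with branching at the marked points, s-holomorphicity, precompactness from a priori estimates, identification of the limit via uniqueness of a continuum Riemann--Hilbert problem, and recovery of the correlations by integrating the logarithmic derivatives extracted from the leading singularities. The constant $C_\sigma$ does indeed arise from the lattice-dependent normalization of the discrete singular part, and the $|\psi'|^{1/8}$ covariance follows from the spin-$\tfrac12$ transformation law of the continuum spinor.

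Two small corrections. First, the Hausdorff-dimension-$<7/4$ hypothesis is \emph{not} part of what \cite{spinCorrelations} requires; that theorem holds with no regularity assumption on $\partial\Omega$ beyond simple connectedness (which is precisely why the present paper can apply it to domains slit by a piece of SLE$_3$). The $7/4$ assumption appears elsewhere in the paper, in Section~\ref{sec_assumptions_domain}, to control certain integrals of $\operatorname{dist}(\cdot,\partial\Omega)^{-\alpha}$; you have conflated the two. Second, the ``uniformity as the domain degenerates'' that you flag as a delicate point is not part of the theorem as stated: the result is for a fixed $(\Omega;a,b)$ and approximating $(\Omega_\delta;a_\delta,b_\delta)$. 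In the paper's applications to random slit domains, one works on the almost-sure event where the Carath\'eodory convergence holds and applies the theorem to that fixed (random) limit domain; no additional uniformity in the slit is needed.
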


\begin{remark}
In \cite{spinCorrelations}, Theorem \ref{theorem_spin_correlations} is established under the assumption that $\Omega_{\delta}$ is a subset of the rotated square lattice $\sqrt{2}\delta e^{i\frac{\pi}{4}}\mathbb{Z}^2$ with meshsize $\sqrt{2}\delta$. The difference of meshsize does not change anything while rotating $\delta \mathbb{Z}^2$ only affects the value of the constant $C_{\sigma}$. The functions $(f_{\Omega}^{(\xi,k)})_{k\geq 1}$, $\xi \in \{\pm, -, +\}$, remain the same.   
\end{remark}

Spin correlations of the critical Ising model with $+1$ boundary conditions are often easier to work with than those of the critical Ising model with Dobrushin boundary conditions. The following lemma bounds the latter correlations by the former and this bound will enable us to get sufficient control on the moments of $\sum_x \sigma_x$.

\begin{lemma} \label{lemma_EEpm_EEplus}
Let $k \geq 1$. For any $x_1, \dots, x_k \in \Omega_{\delta}$, $\vert \EEpm{\delta}[\prod_{j=1}^{k}\sigma_{x_j}] \vert \leq \EEplus{\delta}[\prod_{j=1}^k \sigma_{x_j}]$. 
\end{lemma}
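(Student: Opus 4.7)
The plan is to embed both the Dobrushin measure $\PP_{\Omega_{\delta}}^{\pm}$ and the $+1$ measure $\PP_{\Omega_{\delta}}^{+}$ into a common ferromagnetic Ising model on an enlarged graph, and then invoke the Griffiths--Kelly--Sherman (GKS) inequalities. Concretely, I would introduce two auxiliary ``ghost'' vertices $g^{+}$ and $g^{-}$ and consider the Ising measure $\QQ$ on $V(\Omega_{\delta}) \cup \{g^{+},g^{-}\}$ with free boundary conditions, the original couplings $\beta_{c}$ on edges inside $\Omega_{\delta}$, and extra ferromagnetic couplings $\beta_{c}\,n^{+}(x)$ and $\beta_{c}\,n^{-}(x)$ on the edges $(g^{+},x)$ and $(g^{-},x)$ respectively, where $n^{\pm}(x):=|\{y \in \partial \Omega_{\delta}^{\pm} : y \sim x\}|$. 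Comparing Hamiltonians, conditioning $\QQ$ on $(\sigma_{g^{+}},\sigma_{g^{-}})=(+1,-1)$ generates an effective magnetic field $\beta_{c}(n^{+}(x)-n^{-}(x))$ on each $x$ and recovers $\PP_{\Omega_{\delta}}^{\pm}$, while conditioning on $(+1,+1)$ produces the field $\beta_{c}(n^{+}(x)+n^{-}(x))$ and recovers $\PP_{\Omega_{\delta}}^{+}$. So it suffices to compare these two conditional expectations of $\sigma_{A}:=\prod_{j=1}^{k}\sigma_{x_{j}}$ under $\QQ$.

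Next, I would rewrite both conditional expectations using the identities $\mathbb{I}_{\sigma_{g}=s}=(1+s\sigma_{g})/2$ and exploit the global spin-flip symmetry of $\QQ$, which forces every correlator involving an odd total number of spins to vanish. For $k$ even this yields
\begin{equation*}
    \EEpm{\delta}[\sigma_{A}] \;=\; \frac{u-v}{1-w}, \qquad \EEplus{\delta}[\sigma_{A}] \;=\; \frac{u+v}{1+w},
\end{equation*}
with $u:=\langle \sigma_{A}\rangle_{\QQ}$, $v:=\langle \sigma_{A}\sigma_{g^{+}}\sigma_{g^{-}}\rangle_{\QQ}$ and $w:=\langle \sigma_{g^{+}}\sigma_{g^{-}}\rangle_{\QQ}$; for $k$ odd an analogous pair of identities holds with $u:=\langle \sigma_{A}\sigma_{g^{+}}\rangle_{\QQ}$ and $v:=\langle \sigma_{A}\sigma_{g^{-}}\rangle_{\QQ}$ (and the same $w$).

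Since $\QQ$ is a ferromagnetic free-boundary Ising model, the first GKS inequality gives $u,v,w\geq 0$, and $w<1$ because $g^{+}$ and $g^{-}$ are not directly coupled. The second GKS inequality, applied to the set-pairs $(A,\{g^{+},g^{-}\})$ and $(A\cup\{g^{+},g^{-}\},\{g^{+},g^{-}\})$ when $k$ is even, and to $(A\cup\{g^{+}\},\{g^{+},g^{-}\})$ and $(A\cup\{g^{-}\},\{g^{+},g^{-}\})$ when $k$ is odd, yields exactly the two inequalities $v\geq uw$ and $u\geq vw$ (using $\sigma_{g^{\pm}}^{2}=1$ to collapse the symmetric differences). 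A short cross-multiplication then shows that these are jointly equivalent to $|u-v|(1+w)\leq(u+v)(1-w)$, which is precisely the desired bound $|\EEpm{\delta}[\sigma_{A}]|\leq \EEplus{\delta}[\sigma_{A}]$. The only conceptual input is the ghost-spin embedding; once it is in place the application of GKS is routine, so the main point to verify carefully is the parity-dependent bookkeeping that identifies which GKS second-inequality pairs to use.
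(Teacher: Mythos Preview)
Your proof is correct and takes a genuinely different route from the paper. The paper argues via the Edwards--Sokal coupling: it writes $\EEplus{\delta}[\sigma_{A}]$ as the wired FK-probability of an ``even clusters'' event $A_{x_1,\dots,x_k}$, and $\EEpm{\delta}[\sigma_{A}]$ as a signed difference of two related probabilities under a two-wired FK measure conditioned on $\partial\Omega_{\delta}^{+}\nleftrightarrow\partial\Omega_{\delta}^{-}$; it then bounds the absolute value by the probability of the union, removes the conditioning via FKG (the union is increasing, the disconnection event is decreasing), and finally passes from two-wired to fully wired boundary conditions by monotonicity. Your argument bypasses the FK representation entirely: the two-ghost embedding reduces both measures to conditionings of a single free ferromagnetic model, and the pair of GKS-II inequalities $v\ge uw$, $u\ge vw$ does in one algebraic stroke what the paper obtains through FKG plus boundary comparison. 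Your approach is more elementary and self-contained (it needs only GKS, not the FK machinery developed in Section~\ref{sec_ES_coupling_no_external_field}), while the paper's approach has the advantage of yielding an explicit percolation interpretation of both sides, which it reuses later (e.g.\ in Lemma~\ref{lemma_bound_EEplus} and Claim~\ref{claim_bound_boundary_eta}). The parity bookkeeping you flag is indeed the only delicate point, and your choice of GKS pairs is exactly right in both cases.
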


\begin{proof}
The proof relies on the Edwards-Sokal coupling of Proposition \ref{prop_ES_coupling_no_field}. Let us first examine the case when the Ising model in $\Omega_{\delta}$ has $+$ boundary conditions. 
Let $k \geq 1$. We identify the vertices on the boundary of $\Omega_{\delta}$ with a single vertex $b_{+}$. For $x_1, \dots, x_k \in \Omega_{\delta}$, an FK-Ising percolation configuration on $\Omega_{\delta}$ induces a graph on $\{x_1, \dots , x_k, b_{+}\}$, where two vertices $x_i$ and $x_j$ are connected via an edge if there is a direct open path between them and a vertex $x_i$ is connected to $b_{+}$ if there is a direct open path between $x_i$ and the boundary of $\Omega_{\delta}$. Let us denote by $A_{x_1, \dots, x_k}$ the event that the graph on $\{x_1, \dots , x_k, b_{+}\}$ induced by the FK-Ising percolation configuration is such that each cluster contains an even number of vertices, except possibly the cluster containing $b_{+}$. Then, using the Edwards-Sokal coupling, it is easy to see that
\begin{equation*}
    \EEplus{\delta}[\prod_{j=1}^{k} \sigma_{x_{j}}] = \PP_{\delta}^{\operatorname{FK},\operatorname{wired}}(A_{x_{1}, \dots, x_{k}})
\end{equation*}
Note also that $A_{x_{1}, \dots, x_{k}}$ is an increasing event.

The Edwards-Sokal coupling can also be used in the case of Dobrushin boundary conditions. The coupling is as with $+$ boundary conditions except that the vertices of $\partial \Omega_{\delta}^{+}$ are wired together, the vertices of $\partial \Omega_{\delta}^{-}$ are wired together but $\partial \Omega_{\delta}^{+}$ is not wired to $\partial \Omega_{\delta}^{-}$. Moreover, the FK-Ising percolation configuration is in that case conditioned on the event $\{ \partial \Omega_{\delta}^{+} 	\nleftrightarrow \partial \Omega_{\delta}^{-}\}$ that there are no clusters containing both a vertex of $\partial \Omega_{\delta}^{+}$ and a vertex of $\partial \Omega_{\delta}^{-}$. We denote by $\PP_{\delta}^{2-w}(\cdot \vert \partial \Omega_{\delta}^{+} 	\nleftrightarrow \partial \Omega_{\delta}^{-} )$ the corresponding probability measure. We identify the vertices on $\partial \Omega_{\delta}^{+}$, respectively $\partial \Omega_{\delta}^{-}$, with a single vertex $b_{+}$, respectively $b_{-}$. As before, for $x_1, \dots, x_k \in \Omega_{\delta}$, such an FK-Ising percolation configuration on $\Omega_{\delta}$ induces a graph on $\{x_1, \dots , x_k, b_{+}, b_{-}\}$, where two vertices $x_i$ and $x_j$ are connected via an edge if there is a direct open path between them and a vertex $x_i$ is connected to $b_{+}$, respectively $b_{-}$, if there is a direct open path between $x_i$ and $\partial \Omega_{\delta}^{+}$, respectively $\partial \Omega_{\delta}^{-}$. Let us denote by $A_{x_{1}, \dots, x_{k}}^{\operatorname{(even)}}$, respectively $A_{x_{1}, \dots, x_{k}}^{\operatorname{(odd)}}$, the event that the graph induced on $\{x_1, \dots , x_k, b_{+}, b_{-}\}$ by the FK-Ising percolation configuration is such that each cluster not connected to $b_{+}$ or $b_{-}$ contains an even number of vertices and that the cluster of $b_{-}$ contains an even, respectively odd, number of vertices. No restrictions are imposed on the cluster connected to $b_{+}$. Then, it is easy to see that
\begin{equation*}
    \EEpm{\delta}[\prod_{j=1}^{k} \sigma_{x_{j}}] = \PP_{\delta}^{2-w}(A_{x_{1}, \dots, x_{k}}^{\operatorname{(even)}} \vert \partial \Omega_{\delta}^{+} 	\nleftrightarrow \partial \Omega_{\delta}^{-}) - \PP_{\delta}^{2-w}(A_{x_{1}, \dots, x_{k}}^{\operatorname{(odd)}} \vert \partial \Omega_{\delta}^{+} 	\nleftrightarrow \partial \Omega_{\delta}^{-}).
\end{equation*}
This yields that $\vert \EEpm{\delta}[\prod_{j=1}^{k} \sigma_{x_{j}}] \vert \leq \PP_{\delta}^{2-w}(A_{x_{1}, \dots, x_{k}}^{\operatorname{(even)}} \cup A_{x_{1}, \dots, x_{k}}^{\operatorname{(odd)}}  \vert \partial \Omega_{\delta}^{+} 	\nleftrightarrow \partial \Omega_{\delta}^{-})$. Now, since $A_{x_{1}, \dots, x_{k}}^{\operatorname{(even)}} \cup A_{x_{1}, \dots, x_{k}}^{\operatorname{(odd)}}$ is an increasing event and $\{\partial \Omega_{\delta}^{+} 	\nleftrightarrow \partial \Omega_{\delta}^{-}\}$ is a decreasing event, we have that, by the FKG inequality (Proposition \ref{prop_FKG_FK}),
\begin{align*}
    \PP_{\delta}^{2-w}(A_{x_{1}, \dots, x_{k}}^{\operatorname{(even)}} \cup A_{x_{1}, \dots, x_{k}}^{\operatorname{(odd)}}  \vert \partial \Omega_{\delta}^{+} 	\nleftrightarrow \partial \Omega_{\delta}^{-}) &= \frac{\PP_{\delta}^{2-w}((A_{x_{1}, \dots, x_{k}}^{\operatorname{(even)}} \cup A_{x_{1}, \dots, x_{k}}^{\operatorname{(odd)}}) \cap \{\partial \Omega_{\delta}^{+} 	\nleftrightarrow \partial \Omega_{\delta}^{-}\})}{\PP_{\delta}^{2-w}(\partial \Omega_{\delta}^{+} 	\nleftrightarrow \partial \Omega_{\delta}^{-})} \\
    &\leq \frac{\PP_{\delta}^{2-w}(A_{x_{1}, \dots, x_{k}}^{\operatorname{(even)}} \cup A_{x_{1}, \dots, x_{k}}^{\operatorname{(odd)}})\PP_{\delta}^{2-w}(\partial \Omega_{\delta}^{+} 	\nleftrightarrow \partial \Omega_{\delta}^{-})}{\PP_{\delta}^{2-w}(\partial \Omega_{\delta}^{+} 	\nleftrightarrow \partial \Omega_{\delta}^{-})} \\
    &= \PP_{\delta}^{2-w}(A_{x_{1}, \dots, x_{k}}^{\operatorname{(even)}} \cup A_{x_{1}, \dots, x_{k}}^{\operatorname{(odd)}}).
\end{align*}
Moreover, by Proposition \ref{prop_comparison_BC_FK}, $\PP_{\delta}^{2-w}(A_{x_{1}, \dots, x_{k}}^{\operatorname{(even)}} \cup A_{x_{1}, \dots, x_{k}}^{\operatorname{(odd)}}) \leq \PP_{\delta}^{\operatorname{FK},\operatorname{wired}}(A_{x_{1}, \dots, x_{k}}^{\operatorname{(even)}} \cup A_{x_{1}, \dots, x_{k}}^{\operatorname{(odd)}})$. Since when $\partial \Omega_{\delta}^{+}$ and $\partial \Omega_{\delta}^{-}$ are wired together, $A_{x_{1}, \dots, x_{k}}^{\operatorname{(even)}} \cup A_{x_{1}, \dots, x_{k}}^{\operatorname{(odd)}}$ and $A_{x_{1}, \dots, x_{k}}$ are the same events, this implies that $\vert \EEpm{\delta}[\prod_{j=1}^{k} \sigma_{x_{j}}] \vert \leq \PP_{\delta}^{\operatorname{FK},\operatorname{wired}}(A_{x_{1}, \cdots, A_{x_{k}}})$, from which we deduce the lemma.
\end{proof}

The next lemma provides a bound on $k$-point spin correlations of the critical Ising model with $+1$ boundary conditions. It is certainly well-known but for completeness, we prove it below.

\begin{lemma} \label{lemma_bound_EEplus}
There exists a constant $C>0$ such that for any $k \geq 1$ and any $x_1, \dots , x_k \in \Omega_{\delta}$, 
\begin{equation*}
    \EEplus{\delta}[\prod_{j=1}^{k}\sigma_{x_j}] \leq C^k\delta^{\frac{k}{8}}\exp\bigg( \frac{1}{8}\sum_{j=1}^{k} \log \frac{1}{\frac{1}{2}L(x_j) \wedge \text{d}(x_j, \partial \Omega_{\delta})} \bigg)
\end{equation*}
where for each $j$, $L(x_j) = \min_{p:p \neq j} \vert x_p - x_j \vert$. Above, for $x \in \Omega_{\delta}$, $\text{d}(x, \partial \Omega_{\delta})$ is the Euclidean distance between $x$ and $\partial \Omega_{\delta}$.
\end{lemma}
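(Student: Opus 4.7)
The plan is to translate the $k$-point spin correlation into an FK-Ising connectivity probability via the Edwards--Sokal coupling, use the FK domain Markov property together with the comparison of boundary conditions to factorize the resulting event across disjoint balls, and close with the one-point scaling limit of Theorem \ref{theorem_spin_correlations}.

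For each $j$, set $r_j := \tfrac{1}{2}L(x_j) \wedge \mathrm{d}(x_j,\partial\Omega_\delta)$ and let $B_j$ denote the subgraph of $\Omega_\delta$ inside the Euclidean ball $B(x_j, r_j)$. By construction the $B_j$'s are pairwise disjoint and contained in $\Omega_\delta$. Proceeding as in the proof of Lemma \ref{lemma_EEpm_EEplus}, the Edwards--Sokal coupling (Proposition \ref{prop_ES_coupling_no_field}) with wired boundary conditions gives
\[ \EEplus{\delta}\!\left[\prod_{j=1}^{k}\sigma_{x_j}\right] = \PP_{\delta}^{\operatorname{FK},\operatorname{wired}}(A_{x_1,\ldots,x_k}). \]
On this event, each $x_j$ lies in an FK-cluster containing either some other $x_{j'}$ or a vertex of $\partial\Omega_\delta$; since the choice of $r_j$ places all such vertices outside $B(x_j, r_j)$, there must exist an open path from $x_j$ to $\partial B_j$ whose initial segment uses only edges of $B_j$. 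Calling this local one-arm event $A_j$, one obtains the inclusion $A_{x_1,\ldots,x_k}\subset \bigcap_{j=1}^{k} A_j$.

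Next I would exploit the FK domain Markov property: conditional on the configuration of the edges outside $\bigsqcup_j B_j$, the configurations inside the disjoint $B_j$'s are independent FK-Ising measures with boundary conditions $\xi_j$ on $\partial B_j$ determined by the wirings induced from outside. Since $A_j$ is an increasing event depending only on the edges in $B_j$, the comparison of boundary conditions (Proposition \ref{prop_comparison_BC_FK}) gives $\PP(A_j \mid \text{outside}) \leq \PP_{B_j}^{\operatorname{FK},\operatorname{wired}}(A_j)$. Multiplying in $j$ using conditional independence and integrating over the outside yields
\[ \EEplus{\delta}\!\left[\prod_{j=1}^{k}\sigma_{x_j}\right] \leq \prod_{j=1}^{k} \PP_{B_j}^{\operatorname{FK},\operatorname{wired}}(x_j \leftrightarrow \partial B_j) = \prod_{j=1}^{k} \EEplus{B_j}[\sigma_{x_j}], \]
where the last equality is the single-site version of Edwards--Sokal: on the event $\{x_j \leftrightarrow \partial B_j\}$ the spin at $x_j$ is forced to be $+1$, while otherwise it has mean zero.

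Finally, to bound each one-point expectation by $C\,\delta^{1/8} r_j^{-1/8}$ uniformly in $r_j$, I would invoke the scale-invariance of the critical Ising model on $\delta\ZZ^2$, which gives $\EEplus{B(x_j,r_j),\delta}[\sigma_{x_j}] = \EEplus{\DD,\delta/r_j}[\sigma_0]$, and then apply Theorem \ref{theorem_spin_correlations} with $k=1$, $\xi=+$ in the fixed domain $\DD$ at the interior point $0$. This yields $\EEplus{\DD,\varepsilon}[\sigma_0] = C_\sigma f_\DD^{(+,1)}(0)\,\varepsilon^{1/8} + o(\varepsilon^{1/8})$ as $\varepsilon\to 0$, and combining with the trivial bound $|\EEplus{\DD,\varepsilon}[\sigma_0]|\leq 1$ for $\varepsilon$ bounded below produces a universal constant $C$ such that $\EEplus{B_j}[\sigma_{x_j}]\leq C\,(\delta/r_j)^{1/8}$. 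Multiplying these bounds over $j$ and rewriting $\prod_j r_j^{-1/8}$ as the exponential in the statement concludes the proof. The main obstacle I anticipate is extracting a uniform-in-$r_j$ estimate from the asymptotic statement of Theorem \ref{theorem_spin_correlations}; this is precisely what the rescaling to the fixed reference domain $\DD$ achieves. A minor subtlety worth verifying carefully is that the boundary conditions on $\partial B_j$ induced by the outside configuration are stochastically dominated by wired boundary conditions, which is exactly the content of Proposition \ref{prop_comparison_BC_FK}.
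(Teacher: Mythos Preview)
Your argument is essentially the same as the paper's: translate to the FK connection event via Edwards--Sokal, observe that on $A_{x_1,\dots,x_k}$ every $x_j$ must reach either another marked point or the boundary and hence must exit a small box/ball around itself, then factorize over disjoint boxes using the domain Markov property and comparison of boundary conditions. The only substantive difference is in the final one-point input: the paper invokes the one-arm estimate $\PP_{\Lambda_\delta}^{\operatorname{FK},\operatorname{wired}}(x\leftrightarrow\partial\Lambda)\leq C(\delta/r)^{1/8}$ directly from RSW-type bounds (cited as \cite[Lemma~5.4]{RSWbounds}), whereas you extract it from the $k=1$ case of Theorem \ref{theorem_spin_correlations} via rescaling to the unit disk. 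Your route is valid---after translating so that $x_j$ sits at the origin, all the rescaled discretizations of $\DD$ satisfy the hypotheses of Theorem \ref{theorem_spin_correlations}, and a standard subsequence argument upgrades the convergence along a fixed sequence to a uniform bound---but it is slightly circuitous compared to quoting the one-arm exponent directly, which is in any case the content underlying Theorem \ref{theorem_spin_correlations}.
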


\begin{proof}
The proof of this lemma relies on the Edwards-Sokal coupling and the Markov property of FK-Ising percolation. We note that similar ideas appear in the proof of \cite[Proposition~3.10]{MourratIsing}. Let $k \geq 1$ and let $x_1, \dots , x_k \in \Omega_{\delta}$. For each $j \in \{1, \dots, k\}$, set $L(x_j) := \frac{1}{2}\min_{p: p \neq j}\vert x_j - x_p \vert$ and $d(x_j):=d(x_j, \partial \Omega_{\delta})$. We then define $\Lambda(x_j)$ to be the Euclidean square of side-length $\frac{1}{\sqrt{2}}\times (d(x_j) \wedge L(x_j))$ centered at $x_j$. Observe that $\Lambda(x_j) \subset \Omega$. Using that $\EEplus{\delta}[\sigma_{x_1}\dots\sigma_{x_{k}}]$ can expressed in terms of connection events in a FK-Ising percolation configuration (see the proof of Lemma \ref{lemma_EEpm_EEplus}) and the Markov property of FK-Ising percolation, we obtain that
\begin{equation*}
    \EEplus{\delta}[\sigma_{x_1}\dots\sigma_{x_{k}}] \leq \prod_{j=1}^{k} \PP_{\Lambda_{\delta}(x_j)}^{\operatorname{FK},\operatorname{wired}}(x_j \leftrightarrow \partial \Lambda(x_j)),
\end{equation*}
where $ \Lambda_{\delta}(x_j) := \Lambda(x_j) \cap \delta\mathbb{Z}^2$. By \cite[Lemma~5.4]{RSWbounds}, there exists a constant $C>0$ such that for any $j$, $\PP_{\Lambda(x_j)}^{w}(x_j \leftrightarrow \partial \Lambda(x_j)) \leq C \delta^{1/8}(d(x_j) \wedge L(x_j))^{-1/8}$. This concludes the proof of the lemma.
\end{proof}

\section{Scaling limit of the near-critical interface: proof of Theorem \ref{theorem_magnetic_SLE3}} \label{sec_proof_near_critical}

In this section, we prove Theorem \ref{theorem_magnetic_SLE3}. We give the proof for $H_x=C_{\sigma}^{-1}h\delta^{15/8}$ where $h \in \mathbb{R}$ and $C_{\sigma}$ is the constant of Theorem \ref{theorem_spin_correlations}. The extension to space-varying external fields can easily be obtained, as long as the function $h: \Omega \to \mathbb{R}$ is bounded and Lipschitz. Indeed, this last property guarantees that discrete sums approximate continuum integrals in a sufficiently nice way, see \cite{Discrete_analysis}, which makes the generalization of the proof below straightforward.

To lighten notations, we set $\PP_{\delta}^{\pm}:= \PP_{\Omega_{\delta}}^{\pm}$ and $\PP_{\delta,h}^{\pm}:= \PP_{\Omega_{\delta,H}}^{\pm}$. Similarly, when $(\delta_n)_n$ is a sequence converging to $0$, we set $\Omega_n:=\Omega_{\delta_n}$, $\PP_{n}^{\pm}:=\PP_{\Omega_{\delta_{n}}}^{\pm}$ and $\PP_{n,h}^{\pm}:= \PP_{\Omega_{\delta_{n},H}}^{\pm}$. We will also often write $x \in \Omega_{\delta}$ instead of $x \in \Int(\Omega_{\delta})$.

\begin{proof}[Proof of Theorem \ref{theorem_magnetic_SLE3}]
The proof consists of two main steps. First, in Proposition \ref{prop_tightness}, we establish tightness of $(\gamma_{\delta})_{\delta}$ under $(\PP_{\delta,h}^{\pm})_{\delta}$ in the topology \ref{topo_4}. Then, in Proposition \ref{prop_characterization}, we prove a characterization of the limiting law of any weakly convergent subsequence obtained as a consequence of the tightness of $(\gamma_{\delta})_{\delta}$ under $(\PP_{\delta,h}^{\pm})_{\delta}$. This yields convergence in law of $(\gamma_{\delta})_{\delta}$ under $(\PP_{\delta,h}^{\pm})_{\delta}$ in the topology \ref{topo_4}. As explained in Section \ref{sec_topology_curves}, this also implies tightness and convergence of $(\gamma_{\delta})_{\delta}$ under $(\PP_{\delta,h}^{\pm})_{\delta}$ in the topologies \ref{topo_1}--\ref{topo_3} and the limiting laws in each topology agree.

Both to prove tightness and characterize the limiting law, we are going to use the fact that for each $\delta>0$, the law of $\gamma_{\delta}$ under $\PP_{\delta,h}^{\pm}$ is absolutely continuous with respect to that of $\gamma_{\delta}$ under $\PP_{\delta}^{\pm}$ with Radon-Nikodym given by
\begin{equation}\label{RN_delta}
    F_{\delta}(\gamma_{\delta}):= \frac{\der \PP_{\delta,h}^{\pm}}{\der \PP_{\delta}^{\pm}}(\gamma_{\delta}) =\frac{1}{\mathcal{Z}_{\delta}(h)}\EE_{\delta}^{\pm}\bigg[ \exp\bigg(C_{\sigma}^{-1}h\delta^{15/8}\sum_{x \in \Omega_{\delta}} \sigma_x \bigg) \vert \gamma_{\delta} \bigg].
\end{equation}
Tightness of $(\gamma_{\delta})_{\delta}$ under $(\PP_{\delta,h}^{\pm})_{\delta}$ in the topology \ref{topo_4} in fact follows from the uniform boundedness of $(F_{\delta}(\gamma_{\delta}))_{\delta}$ in $L^p$ ($p\ge 1$), which is established in the proposition below.

\begin{proposition}[tightness] \label{prop_tightness}
    For any $p \geq 1$, $\sup_{\delta > 0}\EE_{\delta}^{\pm}[F_{\delta}(\gamma_{\delta})^p] < \infty$. In particular, since $(\gamma_{\delta})_{\delta}$ under $(\PP_{\delta}^{\pm})_{\delta}$ is tight in the topology \ref{topo_4}, $(\gamma_{\delta})_{\delta}$ under $(\PP_{\delta,h}^{\pm})_{\delta}$ is also tight in the topology \ref{topo_4}.
\end{proposition}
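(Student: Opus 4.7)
The plan is to reduce the $L^p$ bound on $F_\delta(\gamma_\delta)$ to an exponential moment bound on the discrete magnetization $\delta^{15/8}\sum_{x\in\Omega_\delta}\sigma_x$. Jensen's inequality applied to $x\mapsto x^p$ inside the conditional expectation in \eqref{RN_delta} gives
\begin{equation*}
F_\delta(\gamma_\delta)^p \leq \mathcal{Z}_\delta(h)^{-p}\,\EE_\delta^\pm\!\Big[\exp\!\Big(pC_\sigma^{-1}h\delta^{15/8}\textstyle\sum_{x\in\Omega_\delta}\sigma_x\Big)\,\Big|\,\gamma_\delta\Big],
\end{equation*}
so taking unconditional expectation via the tower property reduces the task to two uniform-in-$\delta$ estimates: a lower bound $\mathcal{Z}_\delta(h)\geq c>0$, and an upper bound $\EE_\delta^\pm[\exp(\lambda\delta^{15/8}\sum_x\sigma_x)]\leq C_\lambda<\infty$ for every $\lambda>0$. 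Once these are in hand, the tightness claim follows from the standard Radon--Nikodym/Hölder comparison with the known tightness of $(\gamma_\delta)$ under $(\PP_\delta^\pm)$ from Theorem~\ref{theorem_SLE3}: for any compact $K$, $\PP_{\delta,h}^\pm(K^c)\leq\|F_\delta\|_{L^p}\,\PP_\delta^\pm(K^c)^{1/q}$.

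The lower bound on $\mathcal{Z}_\delta(h)$ is routine. Jensen in the reverse direction gives $\mathcal{Z}_\delta(h)\geq\exp(C_\sigma^{-1}h\delta^{15/8}\EE_\delta^\pm[\sum_x\sigma_x])$, and Lemma~\ref{lemma_EEpm_EEplus} together with Lemma~\ref{lemma_bound_EEplus} at $k=1$ yields $|\EE_\delta^\pm[\sigma_x]|\leq C\delta^{1/8}d(x,\partial\Omega_\delta)^{-1/8}$. Summing, the algebraic identity $\delta^{15/8}\cdot\delta^{1/8}\cdot\delta^{-2}=1$ and the integrability $\int_\Omega d(x,\partial\Omega)^{-1/8}dx<\infty$ (valid since $1/8$ is well below $2$ and the boundary has Hausdorff dimension $<7/4$ by assumption) combine to give a uniform bound on $\delta^{15/8}\sum_x|\EE_\delta^\pm[\sigma_x]|$.

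For the exponential moment, I would expand the exponential as a power series and interchange sum and expectation. Using Lemma~\ref{lemma_EEpm_EEplus} to bound the absolute value of each Dobrushin $k$-point correlation by the corresponding $+$-correlation (which is nonnegative by Griffiths' inequality), then applying Lemma~\ref{lemma_bound_EEplus} and a Riemann-sum approximation $\sum_x\sim\delta^{-2}\int$ (with the coincident-lattice-site case handled separately via $\sigma_x^2=1$), the identity $15/8+1/8-2=0$ causes the powers of $\delta$ to cancel exactly and leaves
\begin{equation*}
\EE_\delta^\pm\!\Big[\exp\!\Big(\lambda\delta^{15/8}\textstyle\sum_x\sigma_x\Big)\Big]\leq\sum_{k\geq 0}\frac{(\lambda C)^k}{k!}\,I_k,\quad I_k:=\int_{\Omega^k}\prod_{j=1}^{k}\min\!\Big(\tfrac{L(x_j)}{2},d(x_j,\partial\Omega)\Big)^{-1/8}\!dx_1\cdots dx_k,
\end{equation*}
where $L(x_j)=\min_{p\neq j}|x_j-x_p|$.

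The main obstacle is to bound $I_k$ sharply enough that this series converges for every $\lambda>0$, which forces $I_k$ to grow strictly slower than $k!$. My approach would be to use the pointwise estimate $\min(a,b)^{-1/8}\leq a^{-1/8}+b^{-1/8}$ to separate, over subsets $A\subseteq[k]$, the boundary factors $d(x_j,\partial\Omega)^{-1/8}$ (for $j\in A$, each contributing $\int_\Omega d^{-1/8}dx<\infty$) from the bulk factors $L(x_j)^{-1/8}$ (for $j\in A^c$). For the bulk, write $L(x_j)^{-1/8}=\max_{p\neq j}|x_p-x_j|^{-1/8}\leq\sum_{p\neq j}|x_p-x_j|^{-1/8}$ to further expand into a sum indexed by maps $\phi\colon A^c\to[k]$ with $\phi(j)\neq j$; each $\phi$ defines a directed graph on $[k]$, and one integrates variable by variable via Hölder's inequality and the two-dimensional integrability of $|y|^{-\alpha}$ (valid for any $\alpha<2$, with $\alpha=1/8$ leaving substantial room). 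Heuristically, for $k$ points placed uniformly in $\Omega$ the typical value of $\prod_j\rho_j^{-1/8}$ is of order $k^{k/16}$ (since the nearest-neighbor distance scales as $1/\sqrt{k}$), which is far smaller than $k!$, so careful combinatorial accounting of the $(A,\phi)$ pairs should produce a bound of the form $I_k\leq C^k(k!)^\alpha$ with $\alpha<1$ (in fact $\alpha\approx 1/16$), ensuring summability for every $\lambda$. Turning this combinatorial-analytic heuristic into a rigorous estimate, with tight control of the factorial growth, is the technical heart of the argument.
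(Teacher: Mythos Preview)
Your reduction via Jensen and the lower bound on $\mathcal{Z}_\delta(h)$ match the paper exactly. The exponential-moment step, however, is where the paper takes a much shorter path: rather than expanding the series and bounding all $k$-point moments, the paper passes from $\EE_\delta^\pm$ to $\EE_\delta^+$ by monotonicity and then invokes a sub-Gaussian concentration bound (from \cite[Proposition~3.5]{GarbanMagnetization}, ultimately a consequence of the GHS inequality),
\[
\log\EE_\delta^+\Big[\exp\Big(\lambda\delta^{15/8}\sum_{x}\sigma_x\Big)\Big] \le \lambda\,\delta^{15/8}\EE_\delta^+\Big[\sum_{x}\sigma_x\Big] + \tfrac{\lambda^2}{2}\,\delta^{15/4}\operatorname{Var}_\delta^+\Big(\sum_{x}\sigma_x\Big),
\]
so only the first and second moments of the magnetization need to be controlled. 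No combinatorics are required.

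Your series-expansion route is in principle viable---indeed the paper carries out essentially this computation later, in Lemma~\ref{lemma_uniform_bound_time_Omega} and Claim~\ref{claim_integral_Ck_alpha}, for the characterization step---but the specific argument you sketch for $I_k$ has a real gap. Expanding $L(x_j)^{-1/8}\le\sum_{p\ne j}|x_p-x_j|^{-1/8}$ and summing over all maps $\phi\colon A^c\to[k]$ with $\phi(j)\ne j$ produces on the order of $k^{|A^c|}$ terms; even if each term integrates to $O(C^k)$, the total is $C^k k^k\gg k!$, which does not sum against $\lambda^k/k!$. The sharp bound $I_k\le C^k k^{k/16}$ that your heuristic correctly predicts is genuine, but it needs a different argument (a recursive integration scheme rather than a blind expansion); see \cite[Proposition~3.10]{MourratIsing} and \cite[Lemma~3.10]{Junnila}, which the paper itself cites in Claim~\ref{claim_integral_Ck_alpha}. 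With that input your approach goes through; without it the proof is incomplete, and the paper's GHS-based shortcut is both shorter and self-contained.
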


We postpone the proof of Proposition \ref{prop_tightness} to Section \ref{sec_tightness} and now state our characterization of the limiting law of weakly convergent subsequences.

\begin{proposition}[characterization of the limiting law] \label{prop_characterization}
    There exists a unique law $\PP_{h}^{(\Omega,a,b)}$ such that the following holds.
    The Radon-Nikodym derivative of $\PP_{h}^{(\Omega,a,b)}$ with respect to $\PP_{\operatorname{SLE}_3}^{(\Omega,a,b)}$ is given by
    \begin{equation} \label{RN_infty_continuum}
        \frac{\der \PP_{h}^{(\Omega,a,b)}}{\der \PP_{\operatorname{SLE}_3}^{(\Omega,a,b)}}(\gamma)= \frac{1}{\mathcal{Z}_h(\Omega)}\bigg(\sum_{k \geq 0}\frac{h^k}{k!}\int_{\Omega_L^k} f_{L}^{(+,k)}(z_1, \dots, z_k) \prod_{j=1}^k dz_j\bigg)\bigg(\sum_{k \geq 0}\frac{h^k}{k!}\int_{\Omega_{R}^k} f_{R}^{(-,k)}(z_1,\dots,z_k)\prod_{j=1}^{k}dz_j\bigg)
    \end{equation}
    and, for any sequence $(\delta_n)_n$ such that $(\PP_{n,h}^{\pm})_n$ converges weakly in the topology \ref{topo_4}, there exists a further subsequence $(\delta_{n_k})_k$ such that $\PP_{n_k,h}^{\pm} \to \PP_h^{(\Omega,a,b)}$ weakly as $k \to \infty$ in the topology \ref{topo_4}. Above, $\Omega_R$, respectively $\Omega_L$, denotes the connected component of $\Omega \setminus \gamma$ on the right, respectively left, of $\gamma$ and the functions $f_{L}^{(+,k)}:= f_{\Omega_L}^{(+,k)}$ and $f_{R}^{(-,k)}:= f_{\Omega_R}^{(-,k)}$ are as in Theorem \ref{theorem_spin_correlations}.
\end{proposition}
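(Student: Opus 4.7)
\textbf{Proof proposal for Proposition \ref{prop_characterization}.}
The plan is to pin down the limit of the Radon-Nikodym derivative $F_\delta(\gamma_\delta)$ along subsequences and then convert that into weak convergence of $\PP_{n,h}^\pm$. Starting from a sequence $(\delta_n)_n$ along which $(\PP_{n,h}^\pm)_n$ converges weakly in the topology \ref{topo_4}, I will combine this with Theorem~\ref{theorem_SLE3} and use Skorokhod's representation to put everything on a common probability space $(S,\mathcal{F},\PP)$ on which $\gamma_n\to\gamma$ almost surely, with $\gamma_n\sim\PP_n^\pm$ and $\gamma\sim\PP_{\operatorname{SLE}_3}^{(\Omega,a,b)}$. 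Because the $L^p$ bound of Proposition~\ref{prop_tightness} gives uniform integrability of $(F_n(\gamma_n))_n$, convergence of $\PP_{n,h}^\pm$ to a limit whose Radon–Nikodym derivative with respect to $\PP_{\operatorname{SLE}_3}^{(\Omega,a,b)}$ equals $\lim F_n(\gamma_n)$ will follow once I prove that $F_n(\gamma_n)$ converges in $L^1(\PP)$ along a further subsequence to the right-hand side of \eqref{RN_infty_continuum} (divided by the appropriate normalization constant).

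To identify the limit of $F_n(\gamma_n)$, I use the spatial Markov property of the Ising model: conditionally on the interface $\gamma_n$, the spin configuration splits into two independent Ising models, one on the left slit domain $\Omega_n^L$ with $+1$ boundary conditions and one on the right slit domain $\Omega_n^R$ with $-1$ boundary conditions. Therefore
\begin{equation*}
    F_n(\gamma_n)=\frac{1}{\mathcal Z_{\delta_n}(h)}\,\EE_{\Omega_n^L}^{+}\Bigl[e^{C_\sigma^{-1}h\delta_n^{15/8}\sum_{x\in\Omega_n^L}\sigma_x}\Bigr]\EE_{\Omega_n^R}^{-}\Bigl[e^{C_\sigma^{-1}h\delta_n^{15/8}\sum_{x\in\Omega_n^R}\sigma_x}\Bigr].
\end{equation*}
Expanding each exponential as a Taylor series in the small parameter $\delta_n^{15/8}$ and using Theorem~\ref{theorem_spin_correlations}, the $k$-th term in, say, the left factor equals
\begin{equation*}
    \tfrac{1}{k!}(C_\sigma^{-1}h)^k\delta_n^{15k/8}\sum_{x_1,\ldots,x_k\in\Omega_n^L}\EE_{\Omega_n^L}^+[\sigma_{x_1}\cdots\sigma_{x_k}]=\tfrac{h^k}{k!}\delta_n^{2k}\sum_{x_1,\ldots,x_k}\bigl(\delta_n^{-k/8}C_\sigma^{-k}\EE_{\Omega_n^L}^+[\sigma_{x_1}\cdots\sigma_{x_k}]\bigr),
\end{equation*}
which is a Riemann sum converging to $\tfrac{h^k}{k!}\int_{\Omega_L^k}f_L^{(+,k)}(z_1,\ldots,z_k)\,dz_1\cdots dz_k$ using Theorem~\ref{theorem_spin_correlations}. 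Once this convergence is justified, taking the product over the two sides and normalizing yields exactly \eqref{RN_infty_continuum}, and uniqueness of the limit law is then automatic.

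The main obstacle lies in justifying the above Riemann-sum limit rigorously for a \emph{random} slit domain whose boundary is rough. Three issues must be resolved. First, Theorem~\ref{theorem_spin_correlations} provides uniform convergence only over $k$-tuples at distance at least $\eta$ from $\partial\Omega_n^{L,R}$ and from each other; I will need to truncate the Riemann sum to such points and bound the contribution of points near $\gamma_n$ or $\partial\Omega$, using Lemma~\ref{lemma_EEpm_EEplus} and Lemma~\ref{lemma_bound_EEplus} together with the two-arm exponent estimate (which, as flagged in the introduction, controls the magnetization concentrated along the interface and the growth of $|\gamma_n|$). Second, a Carath\'eodory-convergence-type statement for the random slit domains $\Omega_n^L\to\Omega_L$, $\Omega_n^R\to\Omega_R$ is needed so that integrals of $f_L^{(+,k)}$ and $f_R^{(-,k)}$ against the approximating counting measures converge; this will be extracted along a further subsequence from the a.s.\ convergence $\gamma_n\to\gamma$ using the topology \ref{topo_4}, and will require that $\gamma$ has no loops enclosing macroscopic area, which follows from SLE$_3$ theory. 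Third, the interchange of the sum over $k$ and the limit $n\to\infty$ must be justified; this will be done by dominated convergence, the necessary uniform-in-$n$ tail bound on each Taylor coefficient being furnished by Lemma~\ref{lemma_bound_EEplus} (applied after Lemma~\ref{lemma_EEpm_EEplus} to reduce to $+$ boundary conditions), exactly in the spirit of the construction of the continuum magnetization field in \cite{GarbanMagnetization}.

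Once $F_n(\gamma_n)\to F(\gamma)$ almost surely along a subsequence, the uniform $L^p$-bound of Proposition~\ref{prop_tightness} upgrades this to $L^1(\PP)$ convergence via Vitali, so that for any bounded continuous test functional $\Phi$ on $X(\mathbb{C})$,
\begin{equation*}
    \EE_{n,h}^{\pm}[\Phi(\gamma_n)]=\EE_n^{\pm}[\Phi(\gamma_n)F_n(\gamma_n)]\longrightarrow\EE_{\operatorname{SLE}_3}^{(\Omega,a,b)}[\Phi(\gamma)F(\gamma)].
\end{equation*}
Normalizing by $\mathcal Z_h(\Omega):=\EE_{\operatorname{SLE}_3}^{(\Omega,a,b)}[\text{unnormalized }F(\gamma)]$ (which is finite and positive by the same tail estimates) produces the explicit Radon-Nikodym derivative \eqref{RN_infty_continuum}, defines $\PP_h^{(\Omega,a,b)}$ uniquely, and shows that \emph{every} weakly convergent subsequence has a further subsequence converging to this same limit, as required.
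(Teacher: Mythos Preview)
Your approach is essentially the same as the paper's: Skorokhod coupling, spatial Markov decomposition of $F_n(\gamma_n)$, termwise convergence of the Taylor coefficients via Theorem~\ref{theorem_spin_correlations}, dominated convergence to swap sum and limit, and the $L^p$ bound from Proposition~\ref{prop_tightness} to upgrade a.s.\ convergence to $L^1$.

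There is one concrete omission in your decomposition. Conditioning on $\gamma_n$ does not give
\[
F_n(\gamma_n)=\frac{1}{\mathcal Z_{\delta_n}(h)}\,\EE_{\Omega_n^L}^{+}\Bigl[e^{C_\sigma^{-1}h\delta_n^{15/8}\sum_{x\in\Omega_n^L}\sigma_x}\Bigr]\EE_{\Omega_n^R}^{-}\Bigl[e^{C_\sigma^{-1}h\delta_n^{15/8}\sum_{x\in\Omega_n^R}\sigma_x}\Bigr]
\]
exactly: the vertices \emph{adjacent} to $\gamma_n$ have deterministic spins ($+1$ on the left, $-1$ on the right) and are not part of either slit-domain Ising model. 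They contribute an additional multiplicative factor
\[
\exp\bigl(C_\sigma^{-1}h\,\delta_n^{15/8}(|V_L(\gamma_n)|-|V_R(\gamma_n)|)\bigr),
\]
and one must show separately that this factor tends to $1$ along a subsequence. This is precisely what the paper isolates as Lemma~\ref{lemma_thinness_gamma}, proved via the stronger statement $\delta_m^{11/8+\beta}|V(\gamma_m)|\to 0$ (Lemma~\ref{lemma_size_gamma_infty}) using the two-arm exponent. You do invoke the two-arm exponent, but you attribute its role to bounding contributions of points near $\gamma_n$ inside the Riemann sums; in fact it serves a double purpose---killing this extra multiplicative factor \emph{and} (through the resulting bound on $|V(\gamma_m)|$) controlling the near-boundary part of the Riemann sums in the slit domains. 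Once you insert this missing factor and handle it as above, your outline matches the paper's proof.
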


We will give the proof of Proposition \ref{prop_characterization} in Section \ref{sec_proof_characterization}. In view of Proposition \ref{prop_tightness} and Proposition \ref{prop_characterization}, the convergence part of Theorem \ref{theorem_magnetic_SLE3} follows from Prokhorov's theorem. It remains to prove the equality \eqref{RN_time_theorem_intro} for the Radon-Nikodym derivative of $\PP_{h}^{(\Omega,a,b)}$ with respect to $\PP_{\operatorname{SLE}_3}^{(\Omega,a,b)}$ when restricted to $\sigma(\gamma(s): 0 \leq s \leq t)$. This is the content of the next proposition, whose proof is postponed to Section \ref{sec_proof_characterization}. This proposition concludes the proof of Theorem \ref{theorem_magnetic_SLE3}.

\begin{proposition}\label{proposition_RN_time}
    Let $t \geq 0$. The Radon-Nikodym derivative of $\PP_{h}^{(\Omega,a,b)}$ with respect to $\PP_{\operatorname{SLE}_3}^{(\Omega,a,b)}$ restricted to the $\sigma$-field $\sigma(\gamma(s): 0 \leq s \leq t)$ is given by
    \begin{equation} \label{RN_time_statement}
       \left. \frac{\der \PP_{h}^{(\Omega,a,b)}}{\der \PP_{\operatorname{SLE}_3}^{(\Omega,a,b)}}(\gamma)\right|_{ \sigma(\gamma(s): 0 \leq s \leq t)} =\frac{1}{\mathcal{Z}_h(\Omega)}\sum_{k \geq 0}\frac{h^k}{k!}\int_{\Omega_t^k}f_{t}^{(\pm,k)}(z_1,\dots,z_k) \prod_{j=1}^k dz_j
    \end{equation}
    where we have set $\Omega_t:= \Omega \setminus \gamma([0,t])$ and the functions $f_{t}^{(\pm,k)}:= f_{\Omega_t}^{(\pm,k)}$ are as in Theorem \ref{theorem_spin_correlations}.
\end{proposition}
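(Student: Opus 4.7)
The plan is to compute the conditional expectation under $\PP_{\operatorname{SLE}_3}^{(\Omega,a,b)}$ of the Radon-Nikodym derivative \eqref{RN_infty_continuum} with respect to the $\sigma$-field $\mathcal{F}_t := \sigma(\gamma(s): 0\leq s \leq t)$, and to identify the result with the right-hand side of \eqref{RN_time_statement}. The two main ingredients will be the domain Markov property of SLE$_3$ and an analogue of the factorization identity developed in the proof of Proposition \ref{prop_characterization}, applied in the slit domain $(\Omega_t;\gamma(t),b)$.

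As a first step, I will rewrite the product of series in \eqref{RN_infty_continuum} as a single series indexed by $k = k_L + k_R$. Since SLE$_3$ is a simple curve ($\kappa = 3 < 4$), the open sets $\Omega_L$ and $\Omega_R$ partition $\Omega_t = \Omega \setminus \gamma([0,t])$ up to the continuation $\gamma([t,\infty))$, which has Lebesgue measure zero. Using the binomial identity $\frac{1}{k_L!\,k_R!} = \frac{1}{k!}\binom{k}{k_L}$ to choose which of the $k$ integration variables lie in $\Omega_L$ versus $\Omega_R$, the double series collapses to
\begin{equation*}
    F(\gamma) = \frac{1}{\mathcal{Z}_h(\Omega)} \sum_{k \geq 0} \frac{h^k}{k!} \int_{\Omega_t^k} f_L^{(+,|J_L|)}\bigl(\{z_j\}_{j \in J_L}\bigr)\, f_R^{(-,|J_R|)}\bigl(\{z_j\}_{j \in J_R}\bigr) \prod_{j=1}^{k} dz_j,
\end{equation*}
where $J_L := \{j : z_j \in \Omega_L\}$ and $J_R := \{j : z_j \in \Omega_R\}$ depend on $(z_1, \dots, z_k)$ and on $\gamma$.

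Next, I will invoke the domain Markov property of SLE$_3$: conditional on $\mathcal{F}_t$, the tail $\gamma|_{[t,\infty)}$ is an SLE$_3$ in $\Omega_t$ from $\gamma(t)$ to $b$, and only $\Omega_L, \Omega_R, J_L, J_R$ depend on this continuation. Taking the conditional expectation in the previous display and interchanging expectation with integration and summation, which I would justify via the uniform $L^p$ bounds on $(F_\delta)_\delta$ from Proposition \ref{prop_tightness} passed to the limit, produces
\begin{equation*}
    \EE_{\operatorname{SLE}_3}\bigl[F(\gamma) \mid \mathcal{F}_t\bigr] = \frac{1}{\mathcal{Z}_h(\Omega)}\sum_{k \geq 0}\frac{h^k}{k!}\int_{\Omega_t^k} G_t^{(\pm,k)}(z_1,\dots,z_k)\prod_{j=1}^k dz_j,
\end{equation*}
where $G_t^{(\pm,k)}(z_1,\dots,z_k)$ denotes the expectation over the SLE$_3$ continuation in $\Omega_t$ from $\gamma(t)$ to $b$ of the integrand $f_L^{(+,|J_L|)}(\{z_j\}_{J_L})\, f_R^{(-,|J_R|)}(\{z_j\}_{J_R})$.

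The final step is to identify $G_t^{(\pm,k)}$ with $f_t^{(\pm,k)} = f_{\Omega_t}^{(\pm,k)}$. This is exactly the factorization identity proven in the course of Proposition \ref{prop_characterization}, now applied in the slit domain $(\Omega_t;\gamma(t),b)$: at the discrete level, conditioning the critical Dobrushin-Ising measure on $\Omega_{\delta,t}$ on its interface decouples the two sides into Ising models with $+1$ and $-1$ boundary conditions on the respective components, so renormalized $k$-point spin correlations factorize; the scaling limit of this identity, obtained from Theorem \ref{theorem_SLE3} applied to the interface and Theorem \ref{theorem_spin_correlations} applied in each of the two subdomains, yields $G_t^{(\pm,k)} = f_t^{(\pm,k)}$. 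The main technical obstacle is to verify that these two inputs are indeed available in the random slit domain $\Omega_t$: although $\partial \Omega_t$ contains a portion of an SLE$_3$ curve, this portion has Hausdorff dimension $11/8 < 7/4$ almost surely so the regularity assumption of Section \ref{sec_assumptions_domain} is met, and the delicate control of points $z_j$ arbitrarily close to $\gamma([0,t])$ should follow from the moment bounds and the two-arm-exponent estimates already developed in the proof of Proposition \ref{prop_characterization}.
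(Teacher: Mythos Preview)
Your approach is genuinely different from the paper's, and in fact the paper explicitly considers it in the remark immediately following the proposition: ``a priori, Proposition \ref{proposition_RN_time} could be derived from Proposition \ref{prop_characterization} by computing the conditional expectation \ldots\ However, it is not clear how to do this.'' The paper instead works entirely at the discrete level: it writes down the discrete Radon--Nikodym derivative $F_n(\gamma_n;t)$ restricted to $\mathcal{F}_{n,t_n}$ (equation \eqref{RN_hnonzero_time}), decomposes it via the Ising Markov property as in \eqref{decomposition_RN_time}, and then passes to the limit directly using the analogues (Lemma \ref{lemma_thinness_gamma_time} and Lemma \ref{lemma_RN_time}) of the lemmas used for $t=\infty$.

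The concrete gap in your argument is the identification $G_t^{(\pm,k)} = f_t^{(\pm,k)}$. You assert that this ``is exactly the factorization identity proven in the course of Proposition \ref{prop_characterization}'', but it is not: that proposition only establishes convergence of the \emph{integrated} Radon--Nikodym derivative along a subsequence, not any pointwise identity for correlation functions. The identity you need says that the Dobrushin $k$-point function in $\Omega_t$ equals the SLE$_3$-expectation of the factorised $(+)/(-)$ correlation functions in the two complementary domains; at the discrete level this is just the tower property, but passing to the limit requires uniform integrability of the random variable $\delta^{-k/8}\,\EE_{n,L}^{+}[\prod_{J_L}\sigma]\,\EE_{n,R}^{-}[\prod_{J_R}\sigma]$ as a function of the interface, for each fixed $(z_1,\dots,z_k)$. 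This is a martingale-observable statement for the spin correlations that is nowhere established in the paper, and the estimates in Section \ref{subsec_controlling_spin_field} concern spatially integrated quantities, so they do not immediately yield it. One could try to bypass the pointwise identity by observing that, by domain Markov, the conditional expectation of the product formula equals $\mathcal{Z}_h(\Omega_t)/\mathcal{Z}_h(\Omega)$ and then identifying $\mathcal{Z}_h(\Omega_t)$ with the series $\sum_k \frac{h^k}{k!}\int_{\Omega_t^k} f_t^{(\pm,k)}$; but making this identification rigorous in the random slit domain $\Omega_t$ is precisely the content of Lemma \ref{lemma_RN_time}, which is the technical heart of the paper's proof. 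In short, your route is plausible but not shorter: the work you defer to ``moment bounds and two-arm estimates already developed'' is essentially the same work the paper carries out directly for $F_n(\gamma_n;t)$.
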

\end{proof}

\begin{remark}
    Let us note that a priori, Proposition \ref{proposition_RN_time} could be derived from Proposition \ref{prop_characterization} by computing the conditional expectation of the random variable on the right-hand side of \eqref{RN_infty_continuum} given $\sigma(\gamma(s): 0 \leq s \leq t)$. However, it is not clear how to do this. Instead, to establish Proposition \ref{proposition_RN_time}, we obtain the right-hand side of \eqref{RN_time_statement} by computing the scaling limit of the corresponding discrete quantity. This thus makes the statement of Theorem \ref{theorem_magnetic_SLE3} stronger than the one we would obtain by only proving Proposition \ref{prop_characterization}.
\end{remark}

\subsection{Tightness of $(\PP_{\delta}^{\pm})_{\delta}$: proof of Proposition \ref{prop_tightness}} \label{sec_tightness}

In this section, we prove Proposition \ref{prop_tightness}. Recall the definition \eqref{RN_delta} of $F_{\delta}(\gamma_{\delta})$.

\begin{proof}[Proof of Proposition \ref{prop_tightness}]
We give the proof in the case where $h: \Omega \to \mathbb{R}$ is a bounded and Lipchitz function: compared to the case when $h$ is a constant, this requires a few modifications that we prefer to detail. We first show that for any $p \in [1,\infty)$, $\sup_{\delta} \EE_{\delta}^{\pm}[F_{\delta}(\gamma_{\delta})^p] < \infty$. Fix $p \in [1,\infty)$ and let $\delta > 0$. Set 
\begin{equation*}
    S_{\delta}(\gamma_{\delta}):=\EE_{\delta}^{\pm}\bigg[\exp(\delta^{15/8}\sum_{x \in \Omega_{\delta}} \tilde h_{\delta}(x)\sigma_x) \vert \gamma_{\delta} \bigg]
\end{equation*}
with $\tilde h_{\delta}(x):=C_{\sigma}^{-1}h_{\delta}(x)$. We are going to obtain uniform bounds on $\mathcal{Z}_{\delta}(h)^{-p}$ and $\EE_{\delta}^{\pm}[S_{\delta}(\gamma_{\delta})^p]$ separately. For $\EE_{\delta}^{\pm}[S_{\delta}(\gamma_{\delta})^p]$, using first Jensen's inequality for the conditional expectation and then H\"older's inequality, we have that
\begin{align} \label{ineq_Sn_p}
    \EE_{\delta}^{\pm}[S_{\delta}(\gamma_{\delta})^p] &\leq \EE_{\delta}^{\pm}\bigg[ \exp\bigg( p\delta^{\frac{15}{8}} \sum_{x \in \Omega_{\delta}} \tilde h_{\delta}(x)\sigma_x \bigg) \bigg] \nonumber \\
    &=\EE_{\delta}^{\pm}\bigg[ \exp\bigg( p\delta^{\frac{15}{8}} \sum_{x \in \Omega_{\delta}} (\tilde h_{\delta}(x)-\vert \tilde h_{\delta}(x) \vert) \sigma_x \bigg) \exp\bigg( p\delta^{\frac{15}{8}} \sum_{x \in \Omega_{\delta}} \vert \tilde h_{\delta}(x) \vert \sigma_x \bigg)\bigg] \nonumber \\
    &\leq \EE_{\delta}^{\pm}\bigg[ \exp\bigg( 2p\delta^{\frac{15}{8}} \sum_{x \in \Omega_{\delta}} (\tilde h_{\delta}(x)-\vert \tilde h_{\delta}(x) \vert) \sigma_x \bigg)\bigg]^{1/2}\EE_{\delta}^{\pm}\bigg[ \exp\bigg( 2p\delta^{\frac{15}{8}} \sum_{x \in \Omega_{\delta}} \vert \tilde h_{\delta}(x) \vert \sigma_x \bigg) \bigg]^{1/2}.
\end{align}
Setting $M = C_{\sigma}^{-1}\max_{x \in \Omega} \vert h(x) \vert$ and applying Proposition \ref{prop_comparison_BC_Ising} with the increasing function $\sigma \mapsto \sum_x \vert \tilde h_{\delta}(x) \vert \sigma_x$, we obtain that
\begin{align} \label{ineq_pm_plus}
    \EE_{\delta}^{\pm}\bigg[ \exp\bigg( 2p\delta^{\frac{15}{8}} \sum_{x \in \Omega_{\delta}} \vert \tilde h_{\delta}(x) \vert \sigma_x \bigg) \bigg] \leq \EE_{\delta}^{+}\bigg[ \exp\bigg( 2p\delta^{\frac{15}{8}} \sum_{x \in \Omega_{\delta}} \vert \tilde h_{\delta}(x) \vert \sigma_x \bigg) \bigg] 
    \leq \EE_{\delta}^{+}\bigg[ \exp\bigg( 2pM\delta^{\frac{15}{8}} \sum_{x \in \Omega_{\delta}} \sigma_x \bigg) \bigg]
\end{align}
where the last inequality follows by expanding the exponential and noticing that for any $k\geq 1$, $\EE_{\delta}^{+}[\prod_{j \leq k} \sigma_j] \geq 0$. The proof of \cite[Proposition~3.5]{GarbanMagnetization} shows that
\begin{equation} \label{ineq_log_EE}
    \log \EE_{\delta}^{+}\bigg[ \exp\bigg( 2pM\delta^{\frac{15}{8}} \sum_{x \in \Omega_{\delta}} \sigma_x \bigg) \bigg] \leq 2pM\delta^{\frac{15}{8}}\EE_{\delta}^{+}\big[\sum_{x \in \Omega_{\delta}} \sigma_x \big] + 2p^2M^{2}\delta^{\frac{15}{4}}\EE_{\delta}^{+}\big[ \big( \sum_{x \in \Omega_{\delta}} \sigma_x - \EE_{\delta}^{+}\big[\sum_{x \in \Omega_{\delta}} \sigma_x \big]\big)^2 \big].
\end{equation}
Since both $\delta^{\frac{15}{8}}\EE_{\delta}^{+}[\sum_x \sigma_x]$ and $\delta^{\frac{15}{4}}\EE_{\delta}^{+}[(\sum_x \sigma_x)^2]$ converge as $\delta \to 0$, see e.g. \cite{GarbanMagnetization} or Lemma \ref{lemma_moments_infty} below, it follows from the above inequality that there exist $\delta_0 >0$ and $c_1, c_2 > 0$ (depending only on $\Omega$) such that for any $0 <\delta < \delta_0$,
\begin{equation*}
    \EE_{\delta}^{+}\bigg[ \exp\bigg( 2pM\delta^{\frac{15}{8}} \sum_{x \in \Omega_{\delta}} \sigma_x \bigg) \bigg] \leq \exp(2pMc_1 + 2p^2M^2c_2).
\end{equation*}
On the other hand, using the spin flip symmetry of the Ising model, we have that
\begin{equation*}
    \EE_{\delta}^{\pm}\bigg[ \exp\bigg( 2p\delta^{\frac{15}{8}} \sum_{x \in \Omega_{\delta}} (\tilde h_{\delta}(x)-\vert \tilde h_{\delta}(x) \vert) \sigma_x \bigg)\bigg] = \EE_{\delta}^{\mp}\bigg[ \exp\bigg( 2p\delta^{\frac{15}{8}} \sum_{x \in \Omega_{\delta}} (\vert \tilde h_{\delta}(x) \vert - \tilde h_{\delta}(x)) \sigma_x \bigg)\bigg]
\end{equation*}
where under $\EE_{\delta}^{\mp}$, the Ising spin configuration $\sigma$ in $\Omega_{\delta}$ has $-1$, respectively $+1$, boundary conditions on $\partial \Omega_{\delta}^{+}$, respectively $\partial \Omega_{\delta}^{-}$. Note that the function $x \mapsto \vert \tilde h_{\delta}(x) \vert - \tilde h_{\delta}(x)$ is non-negative and bounded by $2M$. Therefore, we can proceed as in \eqref{ineq_pm_plus} and \eqref{ineq_log_EE} to obtain that, with $\delta_0, c_1$ and $c_2$ as above, for any $0 < \delta < \delta_0$,
\begin{equation*}
    \EE_{\delta}^{\mp}\bigg[ \exp\bigg( 2p\delta^{\frac{15}{8}} \sum_{x \in \Omega_{\delta}} (\tilde h_{\delta}(x)-\vert \tilde h_{\delta}(x) \vert) \sigma_x \bigg)\bigg] \leq \exp(4pMc_1 + 8p^2M^2c_2).
\end{equation*}
Going back to the inequality \eqref{ineq_Sn_p}, this yields that for any $0 < \delta < \delta_0$,
\begin{equation*}
    \EE_{\delta}^{\pm}[S_{\delta}(\gamma_{\delta})^p] \leq \exp(3pMc_1 + 5p^2M^2c_2).
\end{equation*}
It now remains to obtain a lower bound on $\mathcal{Z}_{\delta}(h)$. For this, we simply observe that Jensen's inequality implies that $\mathcal{Z}_{\delta}(h) \geq \exp(\delta^{\frac{15}{8}}\EE_{\delta}^{\pm}[\sum_x \tilde h_{\delta}(x) \sigma_x])$. Moreover, the results of Section \ref{subsec_controlling_spin_field} show that $\delta^{\frac{15}{8}}\EE_{\delta}^{\pm}[\sum_x \tilde h_{\delta}(x) \sigma_x]$ converges to a finite limit as $\delta \to 0$. Note that this limit may be negative as the model has Dobrushin boundary conditions and $h$ does not have a constant sign. We obtain from this that there exist $\delta_1 > 0$ and $c>0$ such that for any $0 < \delta < \delta_1$, $\mathcal{Z}_{\delta}(h) \geq \exp(-c)$, which concludes the proof of the uniform boundedness of $\EE_{\delta}[F_{\delta}(\gamma_{\delta})^p]$.
Tightness of $(\gamma_{\delta})_{\delta}$ under $(\PP_{\delta,h}^{\pm})_{\delta}$ in the topology \ref{topo_4} now follows from this uniform boundedness and tightness of $(\gamma_{\delta})_{\delta}$ under $(\PP_{\delta}^{\pm})_{\delta}$. Indeed, using Cauchy-Schwarz inequality, for any $K \subset X(\mathbb{C})$, $\PP_{\delta,h}^{\pm}(K)$ can be bounded by a constant, that is uniform in $\delta$, times $\PP_{\delta}^{\pm}(K)^{1/2}$. This concludes the proof of Proposition \ref{prop_tightness}.
\end{proof}

\subsection{Characterization of the limiting law: proof of Propositions \ref{prop_characterization} and \ref{proposition_RN_time}} \label{sec_proof_characterization}

In this section, we prove Proposition \ref{prop_characterization} and Proposition \ref{proposition_RN_time}. We start with Proposition \ref{prop_characterization}.

\begin{proof}[Proof of Proposition \ref{prop_characterization}]
Let $(\delta_n)_n$ be a sequence such that $(\PP_{n,h}^{\pm})_n$ converges weakly as $n \to \infty$ in the topology \ref{topo_4}. Note that such a sequence exists by Proposition \ref{prop_tightness}. By Theorem \ref{theorem_SLE3}, we also know that $(\PP_{n}^{\pm})_n$ converges weakly in the topologies \ref{topo_1}--\ref{topo_4} to $\PP_{\operatorname{SLE}_3}^{(\Omega,a,b)}$. The first step of the proof is to use Skorokhod representation theorem to define a sequence $(\gamma_n)_n$ and a random curve $\gamma$ on the same probability space $(S,\mathcal{F},\PP)$ in such a way that for each $n$, $\gamma_n$ has law $\PP_{n}^{\pm}$, $\gamma$ has law $\PP_{\operatorname{SLE}_3}^{(\Omega,a,b)}$ and $\PP$-almost surely, $\gamma_n \to \gamma$ as $n \to \infty$ in the topologies \ref{topo_1}--\ref{topo_4}.  

Now, the random variables $(F_{n}(\gamma_n))_n$ defined via \eqref{RN_delta} (with $\delta=\delta_n$) can all be defined on $(S, \mathcal{F}, \PP)$. To prove Proposition \ref{prop_characterization}, we are going to show that there exists a subsequence $(\delta_{m})_m \subset (\delta_n)_n$ such that $F_{m}(\gamma_{m})$ converges in $L^1(\PP)$ to the random variable of the right-hand side of \eqref{RN_infty_continuum}. As Proposition \ref{prop_tightness} applied with $p=2$ shows that $(F_{n}(\gamma_n))_n$ is uniformly bounded in $L^2(\PP)$, it actually suffices to show that there exists a subsequence $(\delta_{m})_m \subset (\delta_n)_n$ such that $F_{m}(\gamma_m)$ converges $\PP$-almost surely to the random variable on the right-hand side of \eqref{RN_infty_continuum}. For this, we first observe that by the Markov property of the Ising model and the definition of $\gamma_n$, for any $n \geq 1$, $F_n(\gamma_n)$ can be decomposed as
\begin{align} \label{decomposition_RN_infty}
    F_{n}(\gamma_{n}) = &\frac{1}{\mathcal{Z}_{n}(h)}\exp \big(C_{\sigma}^{-1}h\delta_{n}^{\frac{15}{8}}(\vert V_L(\gamma_{n})\vert-\vert V_R(\gamma_{n}) \vert)\big) \nonumber \\
    &\times \EE_{n,L}^{+}\bigg[ \exp \bigg( C_{\sigma}^{-1}h\delta_{n}^{15/8}\sum_{x \in \Omega_{n,L}}\sigma_x \bigg)\bigg]\EE_{n,R}^{-}\bigg[ \exp \bigg( C_{\sigma}^{-1}h\delta_{n}^{15/8}\sum_{x \in \Omega_{n,R}}\sigma_x \bigg)\bigg]
\end{align}
where $\Omega_{n,L}$, respectively $\Omega_{n,R}$, is the connected component of $\Omega_{n} \setminus V(\gamma_{n})$ on the left, respectively right, of $\gamma_{n}$ and, conditionally on $\gamma_n$, $\EE_{n,L}^{+}$, respectively $\EE_{n,R}^{-}$, denotes the expectation with respect to the critical Ising model with $+$, respectively $-$, boundary conditions in $\Omega_{n,L}$, respectively $\Omega_{n,R}$. Here, $V(\gamma_{n})$ is the set of vertices adjacent to $\gamma_{n}$, where a vertex $x \in \Omega_{n}$ is said to be adjacent to $\gamma_{n}$ if $\gamma_{n}$ traverses an edge of the octagon whose center is $x$. $V_{L}(\gamma_{n})$, respectively $V_{R}(\gamma_{n})$, denotes the set of vertices that are adjacent to $\gamma_{n}$ and on its left, respectively right. For $j \in \{L,R\}$, $\vert V_j(\gamma_n) \vert$ is the number of vertices in $V_j(\gamma_n)$. Note that the $\PP$-almost sure convergence of $\gamma_n$ to $\gamma$ in the topologies \ref{topo_1}--\ref{topo_4} implies that almost surely, $\hat \Omega_{n,L} \to \Omega_{L}$ and $\hat \Omega_{n,R} \to \Omega_{R}$ in the Carath\'eodory topology, where $\Omega_{L}$, respectively $\Omega_{R}$, is the connected component of $\Omega \setminus \gamma$ on the left, respectively right, of $\gamma$.

Expanding the two exponentials in \eqref{decomposition_RN_infty}, the existence of a subsequence along which almost sure convergence of $F_{n}(\gamma_{n})$ holds is a consequence of the following lemmas. Their proofs are given in Section \ref{subsec_invisibility_interface} and Section \ref{subsec_controlling_spin_field}.

\begin{lemma} \label{lemma_thinness_gamma}
   There exists a subsequence $(\delta_m)_m \subset (\delta_{n})_n$ such that, almost surely, as $m \to \infty$, $\delta_m^{\frac{15}{8}}(\vert V_{L}(\gamma_m)\vert-\vert V_{R}(\gamma_m)\vert) \to 0$. 
\end{lemma}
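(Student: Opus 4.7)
The plan is to prove the stronger $L^1$ statement $\delta^{15/8}\,\EE_\delta^{\pm}[N_\delta]\to 0$ as $\delta\to 0$, where
\[ N_\delta := |V_L(\gamma_\delta)| + |V_R(\gamma_\delta)| \]
is the total number of vertices adjacent to $\gamma_\delta$. Since $\bigl||V_L(\gamma_\delta)|-|V_R(\gamma_\delta)|\bigr|\le N_\delta$ and $\gamma_n$ has law $\PP_n^{\pm}$ on $(S,\mathcal{F},\PP)$, this will yield $L^1(\PP)$ convergence of $\delta_n^{15/8}\bigl(|V_L(\gamma_n)|-|V_R(\gamma_n)|\bigr)$ to $0$, and a standard diagonal extraction then produces a $\PP$-a.s.\ convergent subsequence $(\delta_m)_m$, which is exactly what the lemma asserts.

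To bound $\EE_\delta^{\pm}[N_\delta]$, I would write $N_\delta = \sum_{x\in V(\Omega_\delta)}\mathbb{1}_{A_x(\delta)}$, where $A_x(\delta)$ is the event that $x$ is adjacent to $\gamma_\delta$. Under Dobrushin boundary conditions, $A_x(\delta)$ forces a polychromatic two-arm event at $x$: a $+$-connection from $x$ to $\partial\Omega_\delta^+$ and a $-$-connection from $x$ to $\partial\Omega_\delta^-$, so in particular two arms of opposite sign emanate from $x$ and travel out to the macroscopic scale $\operatorname{dist}(x,\partial\Omega)$. The two-arm exponent $\alpha_2$ for the critical planar Ising model, computed in \cite{armExponentsIsing}, matches the Hausdorff-dimension prediction $\alpha_2 = 5/8$ coming from $\operatorname{SLE}_3$, and yields a constant $C$ such that
\[ \PP_\delta^{\pm}\bigl[A_x(\delta)\bigr]\le C\Bigl(\frac{\delta}{\operatorname{dist}(x,\partial\Omega)}\Bigr)^{\alpha_2}. \]
Summing over $x\in V(\Omega_\delta)$, and using that the assumption $\dim_H(\partial\Omega)<7/4$ ensures integrability of $\operatorname{dist}(\cdot,\partial\Omega)^{-\alpha_2}$ on $\Omega$, one obtains $\EE_\delta^{\pm}[N_\delta]\le C'\,\delta^{\alpha_2-2}$ and hence $\delta^{15/8}\,\EE_\delta^{\pm}[N_\delta]\le C'\,\delta^{\alpha_2-1/8} = C'\,\delta^{1/2}\to 0$.

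The main technical obstacle is the treatment of vertices close to $\partial\Omega$, and in particular close to the interface endpoints $a_\delta, b_\delta$, where the bulk two-arm bound degenerates since $\operatorname{dist}(x,\partial\Omega)$ can be of order $\delta$. This is handled by appealing to the boundary version of the polychromatic two-arm exponent, which has strictly faster decay and therefore contributes even less to the sum, combined with the close-approximation property of $(a_\delta,b_\delta)$ which prevents the interface from pathologically concentrating near its endpoints. Once both bulk and boundary contributions are controlled, Markov's inequality together with the Borel--Cantelli lemma along a summable subsequence $(\delta_m)_m$ (for instance any $(\delta_m)_m$ with $\sum_m \delta_m^{1/2}<\infty$) produces the desired $\PP$-a.s.\ convergence.
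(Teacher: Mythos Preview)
Your approach is essentially the paper's: bound $\bigl||V_L(\gamma_\delta)|-|V_R(\gamma_\delta)|\bigr|$ by $N_\delta=|V(\gamma_\delta)|$, control $\EE_\delta^\pm[N_\delta]$ by summing polychromatic two-arm probabilities from \cite{armExponentsIsing}, treat bulk and boundary contributions separately, and extract an almost-sure subsequence via Borel--Cantelli. One small slip worth flagging: the assertion that $\dim_H(\partial\Omega)<7/4$ ensures integrability of $\operatorname{dist}(\cdot,\partial\Omega)^{-5/8}$ on $\Omega$ is not correct as stated (that would require $\dim_H(\partial\Omega)<11/8$), so the boundary really does need the separate dyadic or half-plane treatment you sketch afterwards and cannot simply be absorbed into a single convergent integral. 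Note also that the paper in fact proves the sharper estimate $\delta_m^{11/8+\beta}|V(\gamma_m)|\to 0$ for every $\beta>0$ (Lemma~\ref{lemma_size_gamma_infty}); your bound $\delta^{15/8}\EE[N_\delta]\lesssim\delta^{1/2}$ is more than enough for the present lemma, but the tighter version is reused later in Claim~\ref{claim_integral_CR} to control coverings of neighbourhoods of the curve, so it is worth recording.
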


\begin{lemma} \label{lemma_RN_infty}
For $(\delta_m)_m$ as in Lemma \ref{lemma_thinness_gamma}, almost surely, for $\xi \in \{+,-\}$ with $j(\xi)=L$ if $\xi=+$ and $j(\xi)=R$ if $\xi=-1$,
\begin{align} \label{RN_infty_core}
    \lim_{m \to \infty} \sum_{k=0}^{\infty} \frac{C_{\sigma}^{-k}h^k}{k!}\delta_m^{\frac{15k}{8}} \EE_{m,j(\xi)}^{\xi} \big[\big( \sum_{x \in \Omega_{m,j(\xi)}}\sigma_x\big)^k \big]
    = \sum_{k=0}^{\infty} \frac{h^k}{k!} \int_{\Omega_{j(\xi)}^k} f_{j(\xi)}^{(\xi,k)}(x_1, \dots ,x_k) \prod_{j=1}^{k} dx_j < \infty
\end{align}
where the functions $f_{j(\xi)}^{(\xi,k)}:=f_{\Omega_{j(\xi)}}^{(\xi,k)}$ are as in Theorem \ref{theorem_spin_correlations}.
\end{lemma}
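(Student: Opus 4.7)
The plan is to establish \eqref{RN_infty_core} by first proving termwise convergence as $m\to\infty$ of the series on the left to the corresponding terms on the right, and then exchanging the limit with the sum via a summable dominating bound. I condition on the realization of $\gamma$ throughout; the almost sure convergence $\gamma_m\to\gamma$ in the topologies \ref{topo_1}--\ref{topo_4} ensures the Carath\'eodory convergence $\hat\Omega_{m,j(\xi)}\to\Omega_{j(\xi)}$, which underlies every step.

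For the termwise convergence, I would rewrite for fixed $k\geq 1$
\begin{equation*}
    \frac{C_\sigma^{-k}}{k!}\delta_m^{15k/8}\EE^\xi_{m,j(\xi)}\Bigl[\Bigl(\sum_x \sigma_x\Bigr)^k\Bigr] = \frac{1}{k!}\delta_m^{2k}\sum_{x_1,\ldots,x_k\in\Omega_{m,j(\xi)}}\delta_m^{-k/8}C_\sigma^{-k}\EE^\xi_{m,j(\xi)}[\sigma_{x_1}\cdots\sigma_{x_k}],
\end{equation*}
and interpret the right-hand side as a Riemann sum of mesh $\delta_m^{2}$ for the integral of $f_{j(\xi)}^{(\xi,k)}$ over $\Omega_{j(\xi)}^k$. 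For small $\eta>0$, I would split the $k$-tuples into a good region $G_\eta^{(k)}$ where each point is at distance $\geq \eta$ from $\partial\hat\Omega_{m,j(\xi)}$ and the pairwise distances are all $\geq\eta$, and its complement. On $G_\eta^{(k)}$, Theorem \ref{theorem_spin_correlations} gives uniform convergence of the rescaled correlations to $f_{j(\xi)}^{(\xi,k)}$, which, together with Carath\'eodory convergence, yields that the good-region sum converges to the integral of $f_{j(\xi)}^{(\xi,k)}$ over the analogous good region in $\Omega_{j(\xi)}^k$. To control the complement, I apply Lemma \ref{lemma_bound_EEplus} (invoking spin-flip symmetry for $\xi=-$) to obtain
\begin{equation*}
    \bigl|\delta_m^{-k/8}C_\sigma^{-k}\EE^\xi_{m,j(\xi)}[\sigma_{x_1}\cdots\sigma_{x_k}]\bigr| \leq C^k\prod_{j=1}^k\bigl(\tfrac12 L(x_j)\wedge d(x_j,\partial\hat\Omega_{m,j(\xi)})\bigr)^{-1/8}.
\end{equation*}
The associated Riemann sum is bounded uniformly in $m$ by the integral of the same expression, which is finite since the exponent $-1/8$ is integrable in two dimensions both near the diagonal and near $\partial\Omega_{j(\xi)}$. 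Hence the complement's contribution vanishes as $\eta\to 0$ uniformly in $m$, and letting $\eta\to 0$ gives termwise convergence.

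To justify exchanging the limit with the sum over $k$, the same pointwise bound must produce a majorant that is summable in $k$. Via a symmetrization/permutation argument in the spirit of \cite{GarbanMagnetization}, I would bound
\begin{equation*}
    \frac{|h|^k C^k}{k!}\delta_m^{2k}\sum_{x_1,\ldots,x_k}\prod_{j=1}^k\bigl(\tfrac12 L(x_j)\wedge d(x_j,\partial\hat\Omega_{m,j(\xi)})\bigr)^{-1/8}
\end{equation*}
uniformly in $m$ by a term $\alpha_k$ with $\sum_k\alpha_k<\infty$. Dominated convergence then yields \eqref{RN_infty_core}, with the finiteness of the right-hand side coming for free. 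The main obstacle is this last step: unlike in \cite{GarbanMagnetization}, the integration domain $\Omega_{j(\xi)}$ is random with a partly SLE-generated boundary, so one must verify that the symmetrization argument that produces the $1/k!$-matched summability still works uniformly in the realization of $\gamma$; the assumption that $\partial\Omega$ has Hausdorff dimension strictly less than $7/4$ and the fact that SLE$_3$ has dimension $11/8<2$ are what keep the dominating integrals finite.
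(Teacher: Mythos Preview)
Your overall strategy---termwise convergence via Theorem \ref{theorem_spin_correlations} on a good region together with a bound on the bad region, followed by dominated convergence---is exactly the paper's approach (which reduces Lemma \ref{lemma_RN_infty} to Lemmas \ref{lemma_moments_infty} and \ref{lemma_uniform_bound_infty_Omega}, whose proofs mirror those of Lemmas \ref{lemma_moments_time} and \ref{lemma_uniform_bound_time_Omega}). The termwise step you sketch is essentially correct and matches Lemma \ref{lemma_moments_time}.

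The gap is in the uniform summable majorant, which you correctly flag as the main obstacle but leave unresolved. The assertion that ``the associated Riemann sum is bounded uniformly in $m$ by the integral of the same expression'' is not justified: the distance $d(x_j,\partial\hat\Omega_{m,j(\xi)})$ is measured to the \emph{discrete} curve $\gamma_m$, and Carath\'eodory convergence alone gives no control on how many lattice points lie within distance $\eta$ of $\gamma_m$. The paper addresses this by first bounding $\bigl(\tfrac12 L(x_j)\wedge d(x_j)\bigr)^{-1/8}$ by the product $(\tfrac12 L(x_j))^{-1/8}d(x_j)^{-1/8}$ (after rescaling so both quantities are $\leq 1$) and then applying H\"older's inequality with exponents $\alpha,\tilde\alpha$ chosen so that $\tilde\alpha/8<7/8-\beta$ and $\alpha/8<1$, to separate the two contributions. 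The mutual-distance factor $\sum\prod_j (L(x_j)/2)^{-\alpha/8}$ is handled by the symmetrization arguments of \cite{MourratIsing, Junnila} (Claim \ref{claim_integral_Ck_alpha}), yielding a bound $c^k\delta_m^{-2k}k^{k\alpha/16}$ which, combined with $1/k!$, is summable. The boundary-distance factor $\sum_x d(x)^{-\tilde\alpha/8}$ is handled in Claim \ref{claim_integral_CR}: for points close to $\gamma_m$ one invokes Lemma \ref{lemma_size_gamma_infty}, which uses the two-arm exponent of \cite{armExponentsIsing} to show $\delta_m^{11/8+\beta}|V(\gamma_m)|\to 0$ along the specific subsequence $(\delta_m)_m$. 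This is precisely why that subsequence appears in the hypothesis of the lemma. Without this input your near-boundary contribution is not uniform in $m$, and the dominated-convergence step cannot be closed.
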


Proposition \ref{prop_characterization} directly follows from Lemma \ref{lemma_thinness_gamma} and Lemma \ref{lemma_RN_infty}.
\end{proof}

Let us now turn to the proof of Proposition \ref{proposition_RN_time}; this follows a very similar strategy to the proof of Proposition \ref{prop_characterization}.

\begin{proof}[Proof of Proposition \ref{proposition_RN_time}]
As in the proof of Proposition \ref{prop_characterization} and with $(\delta_n)_n$ as there, we start by coupling $(\gamma_n)_n$ under $(\PP_{n}^{\pm})_n$ and $\gamma$ under $\PP_{\operatorname{SLE}_3}^{(\Omega,a,b)}$ on a common probability space $(S,\mathcal{F},\PP)$. We then parametrize the curves $(\gamma_n)_n$ by the half-plane capacity of their conformal images $(\gamma_n^{\HH})_n$, where for each $n$, $\gamma_n^{\HH}$ is defined as in Section \ref{sec_topology_curves}. For $n \geq 1$, denote by $\mathcal{F}_{n,m}$ the $\sigma$-algebra generated by $\gamma_{n}([0,m])$. Here, for $m \in \mathbb{N}$, $\gamma_{n}([0,m])$ is the curve obtained by following $\gamma_{n}$ for $m$ steps, where a step corresponds to $\gamma_{n}$ traversing edges of the square-octagon lattice to join two edges of $\delta_{n} \mathbb{Z}^2$ that surrounds the same face of $\delta_n \mathbb{Z}^2$. If $m$ is larger than the total number of steps taken by $\gamma_{n}$, we set $\gamma_{n}([0,m]):= \gamma_{n}$. For $n \geq 1$ and $t \geq 0$, we then define the following stopping time for the filtration $(\mathcal{F}_{n,m})_m$:
\begin{equation*}
    t_{n} := \inf\{ m \geq 0: \operatorname{hcap}(\phi_{n}(\gamma_{n}([0,m]))) \geq t \}.
\end{equation*}
Accordingly, we let $\Omega_{n,t}$ be the discrete domain whose vertex set is $V(\Omega_{n}) \setminus V(\gamma_{n};t)$ and whose boundary $\partial \Omega_{n,t}$ is split into two subarcs $\partial \Omega_{n,t}^{+}:= \partial \Omega_{n,t}^{+} \cup V_{L}(\gamma_{n};t)$ and $\partial \Omega_{n,t}^{+}:= \partial \Omega_{n,t}^{+} \cup V_{R}(\gamma_{n};t)$. Here, $V(\gamma_{n};t)$ is the set of vertices adjacent to $\gamma_{n}([0,t_{n}])$. $V_{L}(\gamma_{n};t)$, respectively $V_{R}(\gamma_{n};t)$, denotes the set of vertices that are adjacent to $\gamma_{n}([0,t_{n}])$ and on its left, respectively right.

Since $\PP$-almost surely, $\gamma_{n} \to \gamma$, we have that, $\PP$-almost surely, in the Carath\'eodory topology,
\begin{equation*}
    (\hat \Omega_{n,t}; a_{n,t}, b_{n}) \to (\Omega_t;a_t,b) \quad \text{as } n \to \infty
\end{equation*}
where $a_{n,t}$ denotes the tip of $\gamma_{n}([0,t_{n}])$ and $\Omega_{t}$ is the connected component of $\Omega \setminus \gamma([0,t])$ whose boundary contains both $a_t:= \gamma(t)$ and $b$.

Observe now that, for $t \geq 0$ and $n \geq 1$, the Radon-Nikodym of the law of $\gamma_n$ under $\PP_{n,h}^{\pm}$ with respect to that of $\gamma_n$ under $\PP_{n}^{\pm}$ restricted to $\mathcal{F}_{n,t_n}$ is given by
\begin{equation} \label{RN_hnonzero_time}
    \left. F_{n}(\gamma_{n};t) := \frac{\der \PP_{n,h}^{\pm}}{\der \PPpmzero{n}}(\gamma_{n})\right|_{ \sigma(\gamma_n(s): 0 \leq s \leq t_n)} =  \frac{1}{\mathcal{Z}_{n}(h)}\EE_{n}^{\pm} \bigg[ \exp \bigg(C_{\sigma}^{-1} h\delta_{n}^{\frac{15}{8}}\sum_{x \in \Omega_{n}}\sigma_x \bigg) \vert \gamma_{n}([0,t_{n}]) \bigg].
\end{equation}
The family $(F_n(\gamma_n;t), t \geq 0)_n$ is well-defined on $(S,\mathcal{F},\PP)$. To prove Proposition \ref{proposition_RN_time}, we are going to show that there exists a subsequence $(\delta_m)_m \subset (\delta_n)_n$ such that for each $t \geq 0$, $F_{m}(\gamma_m;t)$ converges in $L^1(\PP)$ to the random variable on the right-hand side of \eqref{RN_time_statement}. As $(\PP_{m,h}^{\pm})_m$ converges weakly to $\PP_{h}^{(\Omega,a,b)}$ by Proposition \ref{prop_tightness} and Proposition \ref{prop_characterization}, this is indeed sufficient to establish Proposition \ref{proposition_RN_time}.

We first observe that by the Markov property of the Ising model and the definition of $\gamma_n$, almost surely, for any $t \geq 0$ and any $n \geq 1$,
\begin{align} \label{decomposition_RN_time}
    F_{n}(\gamma_{n};t) = \frac{1}{\mathcal{Z}_{n}(h)}\exp \big(C_{\sigma}^{-1}h\delta_{n}^{\frac{15}{8}}(\vert V_L(\gamma_{n};t)\vert-\vert V_R(\gamma_{n};t) \vert)\big) \EE_{n,t}^{\pm}\bigg[ \exp \bigg( C_{\sigma}^{-1}h\delta_{n}^{15/8}\sum_{x \in \Omega_{n,t}}\sigma_x \bigg)\bigg]
\end{align}
where conditionally on $\gamma_{n}([0,t_{n}])$, $\EEpmtime{n}{t}[\cdot]$ denotes the expectation with respect to the critical Ising model in $\Omega_{n,t}$ with $+$, respectively $-$, boundary conditions on $\partial \Omega_{n,t}^{+}$, respectively $\partial \Omega_{n,t}^{-}$. Above, for $j \in \{L,R\}$, $\vert V_j(\gamma_{n};t) \vert$ is the number of vertices in $V_j(\gamma_{n};t)$. Proposition \ref{prop_tightness} applied with $p=2$ and combined with Jensen's inequality shows that $(F_n(\gamma_n;t), t \geq 0)_n$ is uniformly bounded in $t$ and $n$ in $L^2(\PP)$. To prove Proposition \ref{proposition_RN_time}, it therefore suffices to show that there exists a subsequence $(\delta_m)_m \subset (\delta_n)_n$ such that for each $t \geq 0$, $F_{m}(\gamma_m;t)$ almost surely converges to the right-hand side of \eqref{RN_time_statement} as $m \to \infty$. Expanding the exponential in \eqref{decomposition_RN_time}, this follows from the two lemmas below, which are analogues of Lemma \ref{lemma_thinness_gamma} and Lemma \ref{lemma_RN_infty} and whose proofs are also postponed to Section \ref{subsec_invisibility_interface} and Section \ref{subsec_controlling_spin_field}.

\begin{lemma} \label{lemma_thinness_gamma_time}
   Let $(\delta_m)_m \subset (\delta_n)_n$ be as in Lemma \ref{lemma_thinness_gamma}. Then, almost surely, for any $t \geq 0$, as $m \to \infty$, $\delta_m^{\frac{15}{8}}(\vert V_{L}(\gamma_m;t)\vert-\vert V_{R}(\gamma_m;t)\vert) \to 0$.
\end{lemma}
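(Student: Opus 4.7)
The plan is to deduce Lemma \ref{lemma_thinness_gamma_time} from (the proof of) Lemma \ref{lemma_thinness_gamma} by a simple pointwise-in-$t$ domination. The key observation is that since $\gamma_m([0,t_m])$ is an initial segment of $\gamma_m$, the adjacency sets satisfy $V_L(\gamma_m;t) \subset V_L(\gamma_m)$ and $V_R(\gamma_m;t) \subset V_R(\gamma_m)$ for every $t \geq 0$. Consequently, for every $t \geq 0$,
\begin{equation*}
  \delta_m^{15/8}\bigl|\,|V_L(\gamma_m;t)| - |V_R(\gamma_m;t)|\,\bigr|
  \;\leq\; \delta_m^{15/8}\bigl(|V_L(\gamma_m;t)| + |V_R(\gamma_m;t)|\bigr)
  \;\leq\; \delta_m^{15/8}\bigl(|V_L(\gamma_m)| + |V_R(\gamma_m)|\bigr).
\end{equation*}
So it suffices to prove the stronger claim that the upper bound itself vanishes almost surely along the subsequence $(\delta_m)_m$ selected in Lemma \ref{lemma_thinness_gamma}; once that is in hand, the conclusion holds in fact uniformly in $t \geq 0$.

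To establish this stronger claim, I would use the two-arm exponent for the critical Ising model from \cite{armExponentsIsing}, exactly as in the proof of Lemma \ref{lemma_thinness_gamma}. A vertex $x \in \Omega_m$ is adjacent to $\gamma_m$ only if there exist two disjoint arms of opposite sign connecting a neighborhood of $x$ to $\partial \Omega_m^+ \cup \partial \Omega_m^-$, so its probability is $O\!\bigl((\delta_m/\mathrm{d}(x,\partial \Omega_m))^{1/4-o(1)}\bigr)$ away from the boundary, with standard half-plane arm/RSW corrections accommodating points close to $\partial \Omega$. Summing these probabilities over the $O(\delta_m^{-2})$ lattice points in $\Omega_m$ yields
\begin{equation*}
  \EE_{m}^{\pm}\bigl[|V_L(\gamma_m)| + |V_R(\gamma_m)|\bigr] = O\bigl(\delta_m^{-7/4-o(1)}\bigr),
\end{equation*}
so that $\EE_m^\pm[\delta_m^{15/8}(|V_L(\gamma_m)|+|V_R(\gamma_m)|)] = O(\delta_m^{1/8-o(1)}) \to 0$. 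Markov's inequality gives convergence in probability, and up to passing to a further subsequence (which can be incorporated into the subsequence used in Lemma \ref{lemma_thinness_gamma}), we obtain almost sure convergence.

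Combining the two steps, the bound $\delta_m^{15/8}\bigl|\,|V_L(\gamma_m;t)| - |V_R(\gamma_m;t)|\,\bigr|$ is dominated for every $t \geq 0$ by a deterministic multiple of the quantity $\delta_m^{15/8}(|V_L(\gamma_m)|+|V_R(\gamma_m)|)$, which tends to $0$ almost surely along $(\delta_m)_m$. There is no real obstacle here: the argument is essentially a free bonus from the two-arm-exponent bound driving Lemma \ref{lemma_thinness_gamma}, upgraded to uniformity in $t$ through the elementary set-inclusion of initial segments. The only subtlety worth flagging is ensuring that the two-arm estimate remains effective near $\partial \Omega$, where one appeals to boundary arm exponents or to Russo--Seymour--Welsh-type estimates at criticality; this is already needed for Lemma \ref{lemma_thinness_gamma} and requires no new input here.
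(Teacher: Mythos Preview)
Your approach is essentially identical to the paper's: bound $\bigl||V_L(\gamma_m;t)|-|V_R(\gamma_m;t)|\bigr|$ by $|V(\gamma_m;t)|\le |V(\gamma_m)|$ using the set inclusion for initial segments, and then invoke the almost sure vanishing of $\delta_m^{15/8}|V(\gamma_m)|$ along the chosen subsequence. The paper's proof is exactly this, citing Lemma~\ref{lemma_size_gamma_infty} directly, so your second paragraph (re-deriving the arm-exponent bound and passing to a further subsequence) is unnecessary---that work is already contained in Lemma~\ref{lemma_size_gamma_infty}, which gives the stronger estimate $\delta_m^{11/8+\beta}|V(\gamma_m)|\to 0$ along the subsequence of Lemma~\ref{lemma_thinness_gamma}; also, the two-arm exponent you quote as $1/4$ does not match the value used in the paper.
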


\begin{lemma} \label{lemma_RN_time}
    Let $t \in (0,\infty)$ and let $(\delta_m)_m$ be as Lemma \ref{lemma_thinness_gamma_time}. Then, almost surely, 
    \begin{equation} \label{RN_time_core}
        \lim_{m \to \infty} \sum_{k=0}^{\infty}\frac{C_{\sigma}^{-k}h^k}{k!}\delta_m^{\frac{15k}{8}} \EEpmtime{m}{t} \big[\big( \sum_{x \in \Omega_{m,t}}\sigma_x\big)^k \big] = \sum_{k=0}^{\infty} \frac{h^k}{k!}\int_{\Omega_t^k} f_t^{(\pm, k)}(x_1, \dots ,x_k) \prod_{j=1}^{k} dx_j < \infty
    \end{equation}
    where the functions $f_t^{(\pm, k)}:=f_{\Omega_t}^{(\pm,k)}$ are as in Theorem \ref{theorem_spin_correlations}.
\end{lemma}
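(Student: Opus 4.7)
The plan is to reduce the claim to two pieces: (i) termwise convergence in $k$, for each fixed $k$; and (ii) a uniform-in-$m$ exponential-moment bound that justifies exchanging the infinite sum over $k$ with the limit $m \to \infty$.

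For (i), I would expand
\begin{equation*}
    \delta_m^{15k/8} \EEpmtime{m}{t}\left[\left(\sum_{x \in \Omega_{m,t}} \sigma_x\right)^{\!k}\right] = \delta_m^{15k/8} \sum_{(x_1,\dots,x_k) \in \Omega_{m,t}^k} \EEpmtime{m}{t}[\sigma_{x_1}\cdots\sigma_{x_k}]
\end{equation*}
and split the sum according to whether $(x_1,\dots,x_k)$ lies in the bulk set $B_\eta := \{(x_1,\dots,x_k): d(x_i, \partial\Omega_t) > \eta \text{ and } |x_i-x_j| > \eta \text{ for } i\neq j\}$. Since $\gamma_m \to \gamma$ almost surely in the topology \ref{topo_4}, one obtains Carathéodory convergence $\hat\Omega_{m,t} \to \Omega_t$, and Theorem \ref{theorem_spin_correlations} applied to the slit domain yields $\delta_m^{-k/8}\EEpmtime{m}{t}[\sigma_{x_1}\cdots\sigma_{x_k}] = C_\sigma^k f_t^{(\pm,k)}(x_1,\dots,x_k) + o(1)$ uniformly on $B_\eta$. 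The leftover factor $\delta_m^{2k}$ turns the bulk sum into a Riemann approximation of $C_\sigma^k \int_{B_\eta} f_t^{(\pm,k)} \prod dx_j$. On the complement, Lemma \ref{lemma_EEpm_EEplus} (whose proof extends to the slit domain verbatim) and Lemma \ref{lemma_bound_EEplus} bound $|\EEpmtime{m}{t}[\sigma_{x_1}\cdots\sigma_{x_k}]|$ by $C^k\delta_m^{k/8} \prod_j (L(x_j)\wedge d(x_j,\partial\Omega_{m,t}))^{-1/8}$, so the error is controlled by a Riemann sum for $\int_{B_\eta^c} \prod_j (\cdots)^{-1/8}$. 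This vanishes as $\eta\downarrow 0$ provided the full integral is finite: the diagonal part is integrable because $1/8 < 2$ in two dimensions, and the boundary part is integrable because $\dim(\partial\Omega_t) < 15/8$ (using $\dim\partial\Omega < 7/4$ and that SLE$_3$ has Hausdorff dimension $11/8$) via a standard Minkowski-content estimate.

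For (ii), the bound $|\delta_m^{15k/8}\EEpmtime{m}{t}[(\sum_x\sigma_x)^k]| \leq \delta_m^{15k/8}\EEpmtime{m}{t}[|\sum_x\sigma_x|^k]$ reduces matters to showing that $\EEpmtime{m}{t}[\exp(\lambda \delta_m^{15/8}|\sum_x\sigma_x|)]$ is bounded uniformly in $m$ for any fixed $\lambda > 0$. This follows from the same FKG/spin-flip/Proposition \ref{prop_comparison_BC_Ising} chain used in Proposition \ref{prop_tightness}, reducing the estimate to the $+1$ BC case on $\Omega_{m,t} \subset \Omega_m$, where inequality \eqref{ineq_log_EE} (Garban's bound) gives an $m$-uniform ceiling. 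The series $\sum_k \frac{|h|^k C_\sigma^{-k}}{k!} \delta_m^{15k/8}\EEpmtime{m}{t}[|\sum\sigma|^k]$ is therefore bounded uniformly in $m$ by such an exponential moment, which justifies swapping limit and sum and yields the first equality in \eqref{RN_time_core}. The continuum counterpart (finiteness of the right-hand side of \eqref{RN_time_core}) then follows from the same comparison applied to the limiting spin correlations via Theorem \ref{theorem_spin_correlations}, with $|f_t^{(\pm,k)}| \leq f_t^{(+,k)}$ serving as the continuum analogue of Lemma \ref{lemma_EEpm_EEplus}.

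The main obstacle is the roughness of $\partial\Omega_t$. Because $\gamma([0,t])$ is a piece of an SLE$_3$ curve, classical smoothness of the boundary is unavailable; Theorem \ref{theorem_spin_correlations} is tailored to bypass this for bulk convergence, but controlling the near-boundary error demands the integrability of $d(x,\partial\Omega_t)^{-1/8}$, and to invoke this one must invoke the dimensional hypothesis on $\partial\Omega$ and the Hausdorff dimension of SLE$_3$. Additional care is needed to pass from the discrete slit-domain boundary $\partial\Omega_{m,t}$ to the limiting $\partial\Omega_t$ uniformly in $m$ when evaluating the bound in Lemma \ref{lemma_bound_EEplus}; this should follow from the Carathéodory convergence of $\hat\Omega_{m,t}$ to $\Omega_t$, together with the fact that $d(x,\partial\Omega_{m,t})$ approximates $d(x,\partial\Omega_t)$ from above on compacts of $\Omega_t$ asymptotically.
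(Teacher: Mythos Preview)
Your overall architecture matches the paper's: split into termwise convergence (the paper's Lemma~\ref{lemma_moments_time}) plus a uniform-in-$m$ bound enabling dominated convergence (the paper's Lemma~\ref{lemma_uniform_bound_time_Omega}). The bulk part of (i) is fine and coincides with the paper's argument. However, there is a genuine gap in how you control the near-boundary contribution uniformly in $m$.

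You write that passing from $d(x,\partial\Omega_{m,t})$ to $d(x,\partial\Omega_t)$ ``should follow from the Carath\'eodory convergence of $\hat\Omega_{m,t}$ to $\Omega_t$''. It does not. Carath\'eodory convergence gives pointwise convergence of the distance function on compacts of $\Omega_t$, but it says nothing about the \emph{discrete} boundary structure: the curve $\gamma_m$ could, a priori, have $|V(\gamma_m)|$ much larger than $\delta_m^{-11/8}$, creating far more near-boundary lattice points than the continuum dimension of $\mathrm{SLE}_3$ predicts, and making $\delta_m^2\sum_{x}d(x,\partial\Omega_{m,t})^{-1/8}$ blow up along $m$. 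The paper addresses this explicitly (see the remark opening Section~\ref{subsec_invisibility_interface}) and supplies the missing discrete input: Lemma~\ref{lemma_size_gamma_infty}, proved via the two-arm exponent of the critical Ising model, gives $\delta_m^{11/8+\beta}|V(\gamma_m)|\to 0$ along a subsequence. This is exactly what feeds into Claim~\ref{claim_integral_CR} and Claim~\ref{claim_bound_boundary_eta} to bound the near-curve contribution uniformly in $m$, and it is why the statement of Lemma~\ref{lemma_RN_time} restricts to that particular subsequence $(\delta_m)_m$. Your exponential-moment shortcut in (ii) is a reasonable idea, but it does not bypass this difficulty: applying \eqref{ineq_log_EE} in $\Omega_{m,t}$ still requires $\delta_m^{15/8}\EE_{m,t}^{+}[\sum_x\sigma_x]$ and the corresponding variance to be bounded uniformly in $m$, and those bounds again reduce to controlling $\sum_x d(x,\partial\Omega_{m,t})^{-1/8}$, hence to the arm-exponent estimate you are missing.
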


These two lemmas conclude the proof of Proposition \ref{proposition_RN_time}.
\end{proof}

\subsubsection{Vanishing of the magnetization of the interface} \label{subsec_invisibility_interface}

In this section, we prove Lemma \ref{lemma_thinness_gamma} and Lemma \ref{lemma_thinness_gamma_time}. Note that these lemmas do not follows from the $\PP$-almost sure convergence of $\hat \Omega_{n,t}$ to $\Omega_t$ in the Carath\'eodory topology. Indeed, this topology is not strong enough to guarantee convergence of the Minkowski dimension of $\partial \hat \Omega_{n,t}$ to that of $\partial \Omega_t$. Instead, Lemma \ref{lemma_thinness_gamma} and Lemma \ref{lemma_thinness_gamma_time} will be proved thanks to the following lemma, which provides an almost sure estimate on the size of $\vert V(\gamma_{n}) \vert$ along a subsequence, where $\vert V(\gamma_{n}) \vert$ is the number of vertices adjacent to $\gamma_{n}$.

\begin{lemma}\label{lemma_size_gamma_infty}
Let $\beta > 0$. There exists a subsequence $(\delta_m)_m \subset (\delta_{n})_n$ such that, almost surely, $\delta_m^{\frac{11}{8}+\beta}\vert V(\gamma_m) \vert \to 0$ as $m \to \infty$.
\end{lemma}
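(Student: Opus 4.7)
The plan is to bound $\EE_{n}^{\pm}[\vert V(\gamma_{n}) \vert]$ by $\delta_{n}^{-11/8 - \eta}$ for arbitrarily small $\eta > 0$, then deduce the almost sure statement by Markov's inequality and Borel--Cantelli along a sufficiently sparse subsequence. The heuristic justification is clean: the interface $\gamma_{n}$ converges to SLE$_3$, whose Hausdorff dimension is $1 + 3/8 = 11/8$, so one expects $\vert V(\gamma_{n}) \vert$ to be of the order of $\delta_{n}^{-11/8}$ up to subpolynomial corrections.

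Concretely, the first step is to write
\begin{equation*}
\EE_{n}^{\pm}\bigl[\vert V(\gamma_{n}) \vert\bigr] = \sum_{x \in V(\Omega_{n})} \PPpmzero{n}\bigl(x \in V(\gamma_{n})\bigr),
\end{equation*}
and to bound the probability on the right via the critical Ising two-arm exponent of \cite{armExponentsIsing}. The event $\{x \in V(\gamma_{n})\}$ forces the existence of two disjoint arms of opposite sign from a neighborhood of $x$ (one of $+1$ spins to $\partial \Omega_{n}^{+}$, one of $-1$ spins to $\partial \Omega_{n}^{-}$). Combining the polychromatic two-arm exponent with standard RSW/quasi-multiplicativity bounds for the critical FK-Ising model and comparison of boundary conditions (Proposition \ref{prop_comparison_BC_FK} and Proposition \ref{prop_comparison_BC_Ising}) gives, for every $\eta > 0$, a constant $C = C(\eta, \Omega)$ such that
\begin{equation*}
\PPpmzero{n}\bigl(x \in V(\gamma_{n})\bigr) \leq C \left(\frac{\delta_{n}}{\operatorname{d}(x, \partial \Omega_{n})}\right)^{5/8 - \eta}
\end{equation*}
uniformly in $x \in V(\Omega_{n})$. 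Summing over $x$ and using that $V(\Omega_{n})$ contains at most $O(\delta_{n}^{-2})$ vertices together with the integrability of $\operatorname{d}(\cdot, \partial \Omega)^{-5/8 + \eta}$ on $\Omega$ (which holds since $5/8 - \eta < 1$ and $\partial \Omega$ has Hausdorff dimension strictly less than $7/4 < 2$), one obtains
\begin{equation*}
\EE_{n}^{\pm}\bigl[\vert V(\gamma_{n}) \vert\bigr] \leq C'(\eta, \Omega) \, \delta_{n}^{-11/8 - \eta}.
\end{equation*}

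Given this expectation bound, Markov's inequality yields, for any $\beta > 0$ and $\eta < \beta / 2$,
\begin{equation*}
\PPpmzero{n}\Bigl(\delta_{n}^{\frac{11}{8} + \beta} \vert V(\gamma_{n}) \vert > \delta_{n}^{\beta/2}\Bigr) \leq C'(\eta, \Omega) \, \delta_{n}^{\beta/2 - \eta}.
\end{equation*}
Extract a subsequence $(\delta_{m})_{m} \subset (\delta_{n})_{n}$ with $\delta_{m} \leq 2^{-m/(\beta/2 - \eta)}$ so that the right-hand side is summable in $m$. Since all $F_{m}(\gamma_{m})$ live on the Skorokhod space $(S, \mathcal{F}, \PP)$, the identity in law implies that the same probability bounds hold under $\PP$ for the coupled curves, and Borel--Cantelli then gives that $\PP$-almost surely $\delta_{m}^{\frac{11}{8} + \beta} \vert V(\gamma_{m}) \vert \leq \delta_{m}^{\beta/2}$ for all $m$ large enough, hence tends to $0$.

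The main obstacle is step one: producing the uniform bound $\PPpmzero{n}(x \in V(\gamma_{n})) \leq C (\delta_{n}/\operatorname{d}(x,\partial\Omega_{n}))^{5/8 - \eta}$ with care near the boundary. In the bulk this is an immediate consequence of the two-arm exponent together with quasi-multiplicativity. Near $\partial \Omega_{n}$, one must additionally use half-plane/corner two-arm estimates for Ising (obtained by RSW and comparison of boundary conditions), and the irregularity of $\partial \Omega$ — which is only assumed to have Hausdorff dimension strictly less than $7/4$ — must be absorbed into the integrability of $\operatorname{d}(\cdot, \partial \Omega)^{-5/8 + \eta}$; this is what ultimately fixes how small $\eta$ can be taken and explains why the bound is proved with an arbitrarily small loss $\beta$ rather than with the sharp exponent $11/8$.
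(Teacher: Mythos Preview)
Your strategy coincides with the paper's: bound $\EE_n^\pm[|V(\gamma_n)|]=\sum_x \PP_n^\pm(x\in V(\gamma_n))$ by the two-arm probability, then extract a summable subsequence via Markov's inequality and Borel--Cantelli. The paper in fact invokes the bound $(\delta_n/L)^{7/8-\tilde\beta}$ from \cite{armExponentsIsing} for the two-arm event rather than your $5/8$; either value suffices for the statement, but you should make sure you are citing a \emph{proved} upper bound on $\PP(A_2)$ rather than arguing from the SLE$_3$ dimension heuristic.

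There is, however, a genuine gap in your boundary treatment. You claim that $d(\cdot,\partial\Omega)^{-5/8+\eta}$ is integrable on $\Omega$ ``since $5/8-\eta<1$ and $\partial\Omega$ has Hausdorff dimension strictly less than $7/4$''. This reasoning is incorrect: integrability of $d(\cdot,\partial\Omega)^{-\alpha}$ requires $\alpha<2-\dim(\partial\Omega)$, so for $\alpha=5/8-\eta$ one needs $\dim(\partial\Omega)<11/8+\eta$, whereas the paper only assumes $\dim_H(\partial\Omega)<7/4$. For boundaries with dimension in $(11/8,\,7/4)$ your displayed bound $\EE_n^\pm[|V(\gamma_n)|]\le C'\delta_n^{-11/8-\eta}$ does not follow from the uniform bulk two-arm estimate. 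The paper handles this not through integrability but by covering the $\eta$-neighbourhood $A_n(\eta)$ of $\partial\Omega$ with $O(\eta^{-d_H(\partial\Omega)})$ squares of side $\eta$ and, within each square, running a dyadic decomposition in the distance to a fixed reference point (for the squares touching $a$ and $b$ this is $d(x,a)$, $d(x,b)$, which can be much larger than $d(x,\partial\Omega)$), then appealing to the argument of \cite[Theorem~5.5]{GarbanPivotalMeasure} for the remaining squares. Your closing paragraph correctly flags the boundary as the obstacle, but the half-plane arm-exponent route you propose is neither what the paper does nor carried out; as written, the boundary contribution is uncontrolled.
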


Lemma \ref{lemma_thinness_gamma} and Lemma \ref{lemma_thinness_gamma} now straightforwardly follow from Lemma \ref{lemma_size_gamma_infty}. Below, we only give the proof of Lemma \ref{lemma_thinness_gamma_time}, as that of Lemma \ref{lemma_thinness_gamma} is almost identical.

\begin{proof}[Proof of Lemma \ref{lemma_thinness_gamma_time}]
Observe that, almost surely, for any $t \geq 0$ and any $n \geq 1$, $-\vert V(\gamma_n;t)\vert \leq \vert V_L(\gamma_n;t) \vert - \vert V_R(\gamma_n;t) \vert \leq \vert V(\gamma_n;t) \vert$, where $\vert V(\gamma_n;t)\vert$ denotes the number of vertices in $V(\gamma_n;t)$. Moreover, almost surely, for any $n \geq 1$, $t \mapsto \vert V(\gamma_n;t) \vert$ is an increasing function. Therefore, almost surely, for any $t \geq 0$ and any $n \geq 1$, $-\vert V(\gamma_n) \vert \leq \vert V_L(\gamma_n;t) \vert - \vert V_R(\gamma_n;t) \vert \leq \vert V(\gamma_n) \vert$. The statement then follows directly from Lemma \ref{lemma_size_gamma_infty}.
\end{proof}

Let us now turn to the proof of Lemma \ref{lemma_size_gamma_infty}. For $L>0$ and $v \in \mathbb{C}$, we set $\Lambda_{L,\delta}(v) = \Lambda_L(v) \cap \delta \mathbb{Z}^2$, where $\Lambda_L(v)$ is the Euclidean square of side length $2L$ centered at $v$. For $0 < r < R$ and $v \in \Omega_{\delta}$, we say that there is an arm of sign $+$, respectively $-$, in the annulus $\Lambda_{R,\delta}(v) \setminus \Lambda_{r,\delta}(v)$ if there is a nearest neighbor path joining $\partial \Lambda_{r,\delta}(v)$ to $\partial \Lambda_{R,v}$ along which all vertices have spin $+1$, respectively $-1$. For $0 < r < R$ and $v \in \Omega_{\delta}$, we denote by $A_2(v;r,R)$ the event that there exist two arms of opposite signs crossing the annulus $\Lambda_{R,\delta}(v) \setminus \Lambda_{r,\delta}(v)$. We then define
\begin{equation*}
    \alpha_{\delta}(v;r,R) := \PP_{\delta}^{\pm}(A_2(v;r,R)), \quad \alpha_{L,\delta}(v;r,R) := \PP_{\Lambda_{L,\delta}(v)}^{\pm}(A_2(v;r,R))
\end{equation*}
where the boundary conditions on $\partial \Lambda_{L,\delta}(v)$ under $\PP_{\Lambda_{L,\delta}(v)}^{\pm}$ are $-1$ on the bottom side of $\Lambda_{L,\delta}(v)$ and $+1$ elsewhere (this choice is somewhat arbitrary but convenient).

\begin{proof}[Proof of Lemma \ref{lemma_size_gamma_infty}]
The strategy of the proof is to upper bound $\EEpm{n}[\vert V(\gamma_{n}) \vert]$ and show that, once multiplied by $\delta_{n}^{\frac{11}{8}+\beta}$, this expectation decays fast enough to apply Borel-Cantelli lemma to get almost sure convergence along a subsequence $(\delta_{m})_m$. Let $\beta > 0$. Let $\eta >0$ and set $A_{n}(\eta) := \{ x \in \Omega_{n}: d(x,\partial \Omega) < \eta \}$. We have that, for any $n$,
\begin{equation*}
    \EEpm{n}[\vert V(\gamma_{n}) \vert] = \sum_{x \in \Omega_{n}} \PPpmzero{n}(x \in V(\gamma_n)) \leq \sum_{x \in \Omega_{n}} \alpha_{n}(x;\delta_n,d(x,\partial \Omega_{n})).
\end{equation*}
For $x \in \Omega_{n}$, set $L(x) := \frac{1}{2}d(x,\partial \Omega)$ and note that $\Lambda_{L(x),\delta_{n}}(x) \subset \Omega_{n}$. By the Markov property of the Ising model, we have that
\begin{equation*}
    \alpha_{\delta}(x;\delta_n,d(x,\partial \Omega_{n})) \leq \alpha_{n}(x;\delta_n,\frac{1}{4}d(x,\partial \Omega_{n})) = \EE_{n}^{\pm}[\PP_{\Lambda_{L(x),\delta_n}(x)}^{\sigma_{| \partial \Lambda_{L(x)}(x)}}(A_2(x;\delta_n,\frac{1}{2}L(x)))]
\end{equation*}
where $\PP_{\Lambda_{L(x),\delta_n}(x)}^{\sigma_{| \partial \Lambda_{L(x)}(x)}}$ denotes the critical Ising measure in $\Lambda_{L(x),\delta_n}(x)$ with boundary conditions $\sigma_{| \partial \Lambda_{L(x)}(x)}$. Now, \cite[Corrolary~5.2]{armExponentsIsing} guarantees that, when $\beta = \beta_c$, changing the boundary conditions does not change by more than a multiplicative constant the probability of events depending on edges at macroscopic distance from the boundary. This implies that, almost surely,
\begin{equation*}
    \PP_{\Lambda_{L(x)}(x)}^{\sigma_{| \partial \Lambda_{L(x)}(x)}}(A_2(x;\delta_n,\frac{1}{2}L(x))) \leq 2\alpha_{L(x), \delta_n}(x;\delta_n,\frac{1}{2}L(x))) = 2\alpha_{L(x),\delta_n}(0;\delta_n,\frac{1}{2}L(x)))
\end{equation*}
where the last equality uses translation invariance of the critical Ising model. Moreover, the proof of \cite[Theorem~1.2]{armExponentsIsing} shows that, as $\delta_n \to 0$, uniformly in $x \in \Omega \setminus A(\eta)$, 
\begin{equation*}
    \alpha_{L(x),\delta_n}(0;\delta_n,\frac{1}{2}L(x))) \lesssim (\delta_n/L(x))^{7/8-\tilde \beta},
\end{equation*}
for arbitrary $\tilde \beta > 0$, see e.g. \cite[Equation~(5.2)]{armExponentsIsing}. Choose $\tilde \beta = \beta/2$. We thus obtain that there exist $C_{\eta},\delta_0 > 0$ such that for any $0<\delta < \delta_0$,
\begin{equation*}
    \sum_{x \in \Omega_{n} \setminus A_{n}(\eta)} \PP_{n}^{\pm}(x \in V(\gamma_n)) \leq C_{\eta}\eta^{-\frac{7}{8}+\frac{\beta}{2}}\delta_n^{-2-\frac{\beta}{2}+\frac{7}{8}}.
\end{equation*}
To estimate $\EEpm{n}[\vert V(\gamma_{n}) \vert]$, it now remains to deal with the sum over vertices in $A_{n}(\eta)$. The argument below goes along the same lines as the proof of \cite[Theorem~5.5]{GarbanPivotalMeasure}, except that we must make sure that we can get estimates in $\delta_n$ on the probability of a two-arm event that are uniform in space, even when points are close to the boundary of the domain. We divide $A_{n}(\eta)$ into $\eta^{-d_H(\partial \Omega)}$ squares of side length $\eta$, where $d_{H}(\partial \Omega)$ denotes the Hausdorff dimension of $\partial \Omega$. Among these squares, one touches $a$, call it $s_a$, and another touches $b$, call it $s_b$. For $j\geq 0$, we define the set $P_{j}(a) := \{ x \in s_a: 2^{j}\delta_n \leq d(x,a) \leq 2^{j+1}\delta_n \}$ and $P_{j}(b)$ is defined analogously with $a$ replaced by $b$. We then have that, for any $j \geq 0$,
\begin{align*}
    \sum_{x \in P_{j}(a)} \PPpmzero{n}(x \in V(\gamma_n)) \leq \sum_{x \in P_{j}(a)} \alpha_{\Omega_{n}}(x;\delta_n, 2^j\delta_n).    
\end{align*}
By the Markov property of the Ising model and using again \cite[Corollary~5.2]{armExponentsIsing}, we have that there exist $n_0 \geq 1$ and an absolute constant $C>0$ such that, for any $n \geq n_0$ and any $j \geq 0$,
\begin{equation*}
    \alpha_{n}(x;\delta_n, 2^j\delta_n) \leq C\alpha_{L_{j+1},\delta_n}(x;\delta_n,2^j\delta_n),
\end{equation*}
where we have set $L_{j+1} = 2^{j+1}\delta_n$. Rescaling by $1/L_{j+1}$, so that the meshsize becomes $2^{-(j+1)}$, and translating the domain, we then obtain that, for any $j \geq 0$,
\begin{equation*}
    \alpha_{L_{j+1},\delta_n}(x;\delta_n,2^j\delta_n) = \alpha_{1,2^{-(j+1)}}(x;2^{-(j+1)},\frac{1}{2}) = \alpha_{1,2^{-(j+1)}}(0;2^{-(j+1)},\frac{1}{2}).
\end{equation*}
We note that the event $A_2(0;2^{-(j+1)},\frac{1}{2})$ depends only on edges in the square $\Lambda_{\frac{1}{2}, 2^{-(j+1)}}(0)$, which is at macroscopic distance from $\partial \Lambda_{1,2^{-(j+1)}}(0)$. As above, by \cite{armExponentsIsing}, there exists $K>0$ such that for any $j \geq 0$,
\begin{equation*}
    \alpha_{1,2^{-(j+1)}}(0;2^{-(j+1)},\frac{1}{2}) \leq K2^{-(\frac{7}{8}-\frac{\beta}{2})j}.
\end{equation*}
The same reasoning applies for $x \in P_j(b)$. We thus have that there exist $n_0 \geq 1$ and a constant $\tilde K > 0$ such that for any $n \geq n_0$,
\begin{align*}
    \sum_{x \in s_a \cup s_b} \PPpmzero{n}(x \in V(\gamma_n)) &\leq \tilde K\sum_{j=0}^{O(\log_2(\frac{\eta}{\delta_n}))} \big(\sum_{x \in P_j(a)} 2^{-\frac{7j}{8}+\beta j}+ \sum_{x \in P_n(b)} 2^{-\frac{7j}{8} + \beta j} \big) \\
    &\leq \tilde K\sum_{j=0}^{O(\log_2(\frac{\eta}{\delta_n}))} 2^{2j+1} 2^{(\beta-\frac{7}{8})j} \lesssim \eta^{\frac{7}{8}-\beta} \delta_n^{-(2+\beta-\frac{7}{8})}.
\end{align*}
We are now left with controlling the contribution of points in $A_{n}(\eta)$ that are in squares other than $s_a$ and $s_b$. This can be done following the strategy of the proof of \cite[Theorem~5.5]{GarbanPivotalMeasure} and we leave the details to the reader. We note that tighter estimates for the contribution of these points could be obtained by using the half-plane two-arm exponents but it is not really necessary here and would pose additional technical issues. Putting all the above estimates together, we obtain that there exist $n_0 \geq 1$ and $C>0$ such that for any $n \geq n_0$, $\delta_n^{\frac{11}{8}+\beta}\EE_{n}^{\pm}[\vert V(\gamma_{n}) \vert] \leq C\delta_n^{\frac{\beta}{2}}$. As $(\delta_n)_n$ converges to $0$, there exists a subsequence $(\delta_m)_m \subset (\delta_n)_n$ such that $\sum_m \delta_m^{\frac{\beta}{2}} < \infty$. Borel-Cantelli lemma then yields the statement of the lemma.
\end{proof}

\subsubsection{Controlling the exponential of the magnetization field in the slit domain} \label{subsec_controlling_spin_field}

In this section, we prove Lemma \ref{lemma_RN_time} and Lemma \ref{lemma_RN_infty}. Let us start with the proof of Lemma \ref{lemma_RN_time}. Our end goal is to apply the dominated convergence theorem. The proof is therefore split into two parts. First, in Lemma \ref{lemma_uniform_bound_time_Omega}, we establish bounds on the terms $\delta_m^{15k/8}\EE_{m,t}^{\pm}[(\sum_x \sigma_x)^k]$ on the left-hand side of \eqref{RN_time_core} that are uniform in $m$ and tight enough to be summable once multiplied by $h^k/k!$. Then, in Lemma \ref{lemma_moments_time}, we prove that, almost surely, for any $k \geq 0$, the $k$-th term of the sum on the left-hand side of \eqref{RN_time_core} almost surely converges as $m \to \infty$ to the $k$-th term of the sum on the right-hand side of \eqref{RN_time_core}.

\begin{lemma} \label{lemma_uniform_bound_time_Omega}
Let $t \in (0,\infty)$. There almost surely exist $m_0 \geq 1$ and a sequence $(C_{k,t})_k$ such that, almost surely, for any $m \geq m_0$ and any $k \geq 1$,
\begin{equation*}
    \sum_{(x_1,\dots,x_k) \in \Omega_{m,t}} \delta_m^{\frac{15k}{8}}\vert \EEpmtime{m}{t}[\sigma_{x_1} \dots \sigma_{x_k}] \vert \leq C_{k,t}
\end{equation*}
with $\sum_k\frac{h^k}{k!}C_{k,t} < \infty$ almost surely.
\end{lemma}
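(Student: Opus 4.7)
The plan is to bypass the absolute value by comparing to $+1$-boundary conditions via Lemma~\ref{lemma_EEpm_EEplus}, reassemble the sum into a $k$-th moment of $\sum_x\sigma_x$ under $\EE_{m,t}^{+}$, and extract the required bound from a sub-Gaussian estimate on the slit domain that mimics the one used in the proof of Proposition~\ref{prop_tightness}.

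By Lemma~\ref{lemma_EEpm_EEplus} applied in $\Omega_{m,t}$, $\vert\EEpmtime{m}{t}[\sigma_{x_1}\cdots\sigma_{x_k}]\vert\le\EE_{m,t}^{+}[\sigma_{x_1}\cdots\sigma_{x_k}]$, and since the $+$-BC $k$-point functions are non-negative (GKS), summing over $(x_1,\dots,x_k)\in\Omega_{m,t}^{k}$ collapses the right-hand side and yields
$$
\sum_{(x_1,\dots,x_k)\in\Omega_{m,t}^{k}}\bigl\vert\EEpmtime{m}{t}[\sigma_{x_1}\cdots\sigma_{x_k}]\bigr\vert
\le \EE_{m,t}^{+}\!\biggl[\Bigl(\textstyle\sum_{x\in\Omega_{m,t}}\sigma_x\Bigr)^{\!k}\biggr].
$$
Reproducing inside $\Omega_{m,t}$ the inequality from \cite[Proposition~3.5]{GarbanMagnetization} used in the tightness proof, one has, for every $\lambda\ge 0$,
$$
\log\EE_{m,t}^{+}\!\left[\exp\bigl(\lambda\delta_m^{15/8}\textstyle\sum_x\sigma_x\bigr)\right]
\le \lambda\delta_m^{15/8}\EE_{m,t}^{+}[\textstyle\sum_x\sigma_x]
+\lambda^{2}\delta_m^{15/4}\operatorname{Var}_{m,t}^{+}(\textstyle\sum_x\sigma_x).
$$

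I would then argue that there exist a.s.\ $m$-uniform bounds $\delta_m^{15/8}\EE_{m,t}^{+}[\sum_x\sigma_x]\le K_1$ and $\delta_m^{15/4}\operatorname{Var}_{m,t}^{+}(\sum_x\sigma_x)\le K_2$ for $m\ge m_0$. Both reduce, via Lemma~\ref{lemma_bound_EEplus} applied with $k=1,2$, to uniform bounds on the integrals $\int_{\Omega_{m,t}}d(x,\partial\Omega_{m,t})^{-1/8}dx$ and its two-point analogue involving $\min(\tfrac12\vert x_1-x_2\vert,d(x_j,\partial\Omega_{m,t}))^{-1/8}$. Granted this, the sub-Gaussian bound $\EE_{m,t}^{+}[\exp(\lambda\delta_m^{15/8}\sum\sigma_x)]\le e^{\lambda K_1+\lambda^{2}K_2}$ yields, by extracting the $\lambda^{k}$-coefficient and optimizing at $\lambda=\sqrt{k/(2K_2)}$, a moment bound of the form $\delta_m^{15k/8}\EE_{m,t}^{+}[(\sum\sigma_x)^{k}]\le k!(K/k)^{k/2}$ for a random $K$. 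Setting $C_{k,t}$ equal to this quantity gives $h^{k}C_{k,t}/k!=(h^{2}K/k)^{k/2}$, which is summable in $k$ a.s.\ for every $h$.

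The main obstacle lies in justifying the uniformity of $K_1,K_2$: the relevant integrals are taken over the random slit domain $\Omega_{m,t}$, and Carath\'eodory convergence $\hat\Omega_{m,t}\to\Omega_t$ only controls $d(\cdot,\partial\Omega_{m,t})$ on compact subsets of $\Omega_t$, not uniformly near the rough interface $\gamma_m$. The remedy is a layer-cake decomposition anchored on Lemma~\ref{lemma_size_gamma_infty}: the bound $\vert V(\gamma_m)\vert=O(\delta_m^{-11/8-\beta})$ along the chosen subsequence yields, for $r\ge\delta_m$, the area estimate $\vert\{x\in\hat\Omega_m:d(x,\gamma_m)<r\}\vert=O(r^{2-11/8-\beta})$; since $2-11/8-\beta>1/8$ for sufficiently small $\beta>0$, integrating $r^{-1/8}$ against this measure is finite uniformly in $m$. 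A parallel estimate near $\partial\hat\Omega_m$, using the assumption $\dim_H(\partial\Omega)<7/4$ from Section~\ref{sec_assumptions_domain}, disposes of the contribution close to the outer boundary.
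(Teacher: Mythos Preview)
Your approach is correct and takes a genuinely different route from the paper. The paper works directly on the $k$-point sums: after applying Lemmas~\ref{lemma_EEpm_EEplus} and~\ref{lemma_bound_EEplus}, it splits the sum into a ``good'' set (points well separated and away from the boundary) and its complement, and on the good set uses a H\"older inequality to decouple a factor $\prod_j d_t(x_j)^{-\tilde\alpha/8}$ from the mutual-separation factor $\prod_j(\tfrac12 L(x_j))^{-\alpha/8}$. The boundary factor is controlled by precisely the layer-cake argument you describe (this is the paper's Claim~\ref{claim_integral_CR}), while the separation factor is handled by \cite[Lemma~3.10]{Junnila}; the complement is treated by two further claims. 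Throughout, the $k$-dependence of all constants must be tracked explicitly, yielding $C_{k,t}$ of order $C^k k^{k/16}$.

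By contrast, you collapse the absolute-value sum into $\EE_{m,t}^{+}\bigl[(\sum_x\sigma_x)^k\bigr]$ and invoke GHS-based sub-Gaussianity to reduce the whole problem to $k=1,2$. This is more economical for Lemma~\ref{lemma_uniform_bound_time_Omega} alone: the $k$-dependence of $C_{k,t}$ comes entirely from the moment extraction, and you only need the one- and two-point layer-cake estimates (i.e.\ the $\tilde\alpha=1$ case of Claim~\ref{claim_integral_CR} and the $k=2$ case of Claim~\ref{claim_integral_Ck_alpha}). The paper's finer decomposition, however, produces as a byproduct the $\eta$-dependent bound~\eqref{ineq_contribution_eta}, which is reused in the proof of Lemma~\ref{lemma_moments_time} to show that the near-diagonal and near-boundary contributions vanish as $\eta\to 0$; your shortcut does not generate that side estimate, so the good/bad split would still be needed downstream.

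One minor imprecision: ``extracting the $\lambda^k$-coefficient'' is not quite the mechanism. What makes the step work is that GKS forces every moment $\EE_{m,t}^{+}\bigl[(\sum_x\sigma_x)^k\bigr]$ to be non-negative, so for $\lambda>0$ the single term $\tfrac{\lambda^k}{k!}\EE_{m,t}^{+}[Y^k]$ is dominated by the full series $\EE_{m,t}^{+}[e^{\lambda Y}]\le e^{\lambda K_1+\lambda^2 K_2}$, after which your optimization at $\lambda\asymp\sqrt{k/K_2}$ gives the claimed $C_{k,t}$.
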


\begin{lemma} \label{lemma_moments_time}
Let $t \in (0,\infty)$. Almost surely, for any $k \geq 1$,
\begin{equation} \label{eq_kmoments_limit}
    \lim_{m \to \infty} \sum_{(x_1, \dots, x_k) \in \Omega_{m,t}^k}C_{\sigma}^{-k}\delta_m^{\frac{15k}{8}}\EEpmtime{m}{t}[\sigma_{x_1}\dots \sigma_{x_k}] = \int_{\Omega_t^k} f_t^{(\pm, k)}(x_1,\dots, x_k) \prod_{j=1}^{k}dx_j
\end{equation}
where the functions $f_t^{(\pm, k)}:=f_{\Omega_t}^{(\pm, k)}$ are as in Theorem \ref{theorem_spin_correlations}.
\end{lemma}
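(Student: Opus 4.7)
The plan is to prove \eqref{eq_kmoments_limit} for each fixed $k$ by a bulk/remainder decomposition: Theorem \ref{theorem_spin_correlations} handles the bulk (points well inside $\Omega_t$ and well separated), while Lemma \ref{lemma_EEpm_EEplus} combined with Lemma \ref{lemma_bound_EEplus} controls the remainder. Fix $\eta > 0$ and introduce the good set
\[
    D_{m,t}(\eta) := \{(x_1,\ldots,x_k) \in \Omega_{m,t}^k : d(x_j,\partial\Omega_{m,t}) \geq \eta \text{ and } |x_i - x_j| \geq \eta \text{ for all } i \neq j\},
\]
with continuum analogue $D_t(\eta) \subset \Omega_t^k$. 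Write the left-hand side of \eqref{eq_kmoments_limit} as $S_m^{(k)}(\eta) + R_m^{(k)}(\eta)$, the two sums restricted to $D_{m,t}(\eta)$ and to its complement respectively.

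For the bulk piece, Theorem \ref{theorem_spin_correlations} applied to the random slit domain gives, uniformly over $D_{m,t}(\eta)$,
\[
    C_\sigma^{-k}\delta_m^{15k/8}\EEpmtime{m}{t}[\sigma_{x_1}\cdots\sigma_{x_k}] = \delta_m^{2k}\bigl(f_t^{(\pm,k)}(x_1,\ldots,x_k) + o(1)\bigr).
\]
Since the coupling set up in the proof of Proposition \ref{proposition_RN_time} makes $\gamma_m \to \gamma$ almost surely in the topology \ref{topo_4}, we have almost sure Carath\'eodory convergence $\hat{\Omega}_{m,t} \to \Omega_t$, so $D_{m,t}(\eta)$ fills $D_t(\eta)$ in the Riemann-sum sense. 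Using that $f_t^{(\pm,k)}$ is continuous and bounded on the compact set $D_t(\eta)$, we conclude $S_m^{(k)}(\eta) \to \int_{D_t(\eta)} f_t^{(\pm,k)} \prod_j dx_j$ almost surely as $m \to \infty$.

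For the remainder, Lemma \ref{lemma_EEpm_EEplus} followed by Lemma \ref{lemma_bound_EEplus} gives
\[
    \bigl|C_\sigma^{-k}\delta_m^{15k/8}\EEpmtime{m}{t}[\sigma_{x_1}\cdots\sigma_{x_k}]\bigr| \leq (C/C_\sigma)^k\,\delta_m^{2k}\,\prod_{j=1}^k\bigl(\tfrac{1}{2}L(x_j) \wedge d(x_j,\partial\Omega_{m,t})\bigr)^{-1/8}.
\]
Hence $|R_m^{(k)}(\eta)|$ is dominated by a Riemann sum for the integral
\[
    I(\eta) := \int_{\Omega_t^k \setminus D_t(\eta)} \prod_{j=1}^k \bigl(\tfrac{1}{2}L(x_j) \wedge d(x_j,\partial\Omega_t)\bigr)^{-1/8} \prod_{j=1}^k dx_j.
\]
The near-diagonal contribution is harmless because $|z|^{-1/8}$ is integrable near the origin in $\mathbb{R}^2$; the issue is the near-boundary contribution, which vanishes as $\eta \to 0$ provided the $\eta$-neighborhood of $\partial\Omega_t$ has volume $o(\eta^{1/8})$ (up to logarithms). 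Once this is in hand, sending $m \to \infty$ and then $\eta \to 0$ establishes \eqref{eq_kmoments_limit}, with integrability of $f_t^{(\pm,k)}$ on $\Omega_t^k$ following from the same bound.

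The main obstacle is the near-boundary estimate for $I(\eta)$: the random domain $\Omega_t$ has boundary contained in $\partial\Omega \cup \gamma([0,t])$, and one needs an almost sure upper Minkowski dimension (or equivalent $r$-neighborhood volume bound) strictly less than $15/8$. This combines the standing assumption that $\dim_H \partial\Omega < 7/4$, upgraded to a Minkowski-type neighborhood estimate, with the corresponding neighborhood/Minkowski content bound for an SLE$_3$ trace on a bounded time interval (dimension $11/8$); together these give upper Minkowski dimension $< 15/8$ for $\partial\Omega_t$, which is precisely the exponent needed to kill $d(\cdot,\partial\Omega_t)^{-1/8}$ integrals on thin tubes around $\partial\Omega_t$.
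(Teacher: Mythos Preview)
Your bulk/remainder decomposition and the treatment of the bulk via Theorem \ref{theorem_spin_correlations} and Carath\'eodory convergence match the paper's argument exactly. The gap is in the remainder step. When you assert that $|R_m^{(k)}(\eta)|$ is ``dominated by a Riemann sum for the integral $I(\eta)$'', you have silently replaced $d(x_j,\partial\Omega_{m,t})$ by $d(x_j,\partial\Omega_t)$. These are not comparable near the curve: $\partial\Omega_{m,t}$ contains the \emph{discrete} interface $\gamma_m([0,t_m])$, not the SLE$_3$ trace $\gamma([0,t])$, and convergence of $\gamma_m$ to $\gamma$ in the topology \ref{topo_4} gives no control on distances to $\gamma_m$ inside the $\eta$-tube. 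Your Minkowski-dimension argument for SLE$_3$ bounds the continuum integral $I(\eta)$, but says nothing about the discrete sum uniformly in $m$; at scale $\delta_m$ the curves $\gamma_m$ could in principle be much longer than their $d_X$-limit suggests.

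The paper closes this gap by staying on the discrete side throughout. The uniform-in-$m$ bound on the remainder is exactly the inequality \eqref{ineq_contribution_eta}, proved via Claim \ref{claim_bound_B_eta_k} (close pairs) and Claim \ref{claim_bound_boundary_eta} (near-boundary points). The near-curve contribution inside Claim \ref{claim_integral_CR} is controlled not by SLE dimension but by Lemma \ref{lemma_size_gamma_infty}, which bounds the vertex count $|V(\gamma_m)|$ of the \emph{discrete} interface using the two-arm exponent of the critical Ising model. This is precisely why the statement is along the special subsequence $(\delta_m)_m$: it is the subsequence on which $\delta_m^{11/8+\beta}|V(\gamma_m)|\to 0$ almost surely. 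Your proposed route, if it worked, would prove the lemma along the full sequence $(\delta_n)_n$; but the passage from the discrete sum (with $d(\cdot,\partial\Omega_{m,t})$) to the continuum integral (with $d(\cdot,\partial\Omega_t)$) cannot be made without exactly this kind of discrete curve-size input. The paper then handles the continuum side separately in Claim \ref{claim_no_contribution_eta}, via $L^2$ bounds and Lemma \ref{lemma_fineteness_moments_plusBC}, rather than through a direct Minkowski estimate for $\partial\Omega_t$.
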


Before proving Lemma \ref{lemma_uniform_bound_time_Omega} and Lemma \ref{lemma_moments_time}, let us give the proof of Lemma \ref{lemma_RN_time}.

\begin{proof}[Proof of Lemma \ref{lemma_RN_time}]
This follows from the dominated convergence theorem: Lemma \ref{lemma_uniform_bound_time_Omega} gives us tight enough bounds on $\vert \delta_m^{\frac{15k}{8}}\EE_{m,t}^{\pm}[(\sum_x \sigma_x)^k] \vert$ while Lemma \ref{lemma_moments_time} shows that, almost surely, for any $k \geq 1$, $ \delta_m^{\frac{15k}{8}}\EE_{m,t}^{\pm}[(\sum_x \sigma_x)^k]$ converges to the correct quantity.
\end{proof}

We now turn to the proof Lemma \ref{lemma_uniform_bound_time_Omega}. Let us first introduce some notations. Let $\eta > 0$. For $t \geq 0$, $m \geq 1$ and $k \geq 1$, we set
\begin{equation*}
    G_{m,t}^{(k)}(\eta) := \{ (x_1, \dots , x_k) \in \Omega_{m,t}^k: \forall j, d(x_j, \partial \Omega_t) 
    \geq \eta \text{ and } \forall i \neq j, \vert x_i -x_j \vert \geq \eta \}.
\end{equation*}
$G_{m,t}^{(k)}(\eta)$ is the "good" set in which points are well-separated and at macroscopic distance from the boundary. We also set $A_{m,t}(\eta) :=\{ x \in \Omega_{m,t}: d(x,\partial \Omega_{t}) < \eta \}$ and for $k \geq 2$,
\begin{align*}
    B_{m, t}^{(k)}(\eta) := \{(x_1,\dots,x_k) \in (\Omega_{m,t}\setminus A_{m,t}(\eta))^k: \exists i \neq j, \vert x_i - x_j \vert < \eta \}.
\end{align*}

\begin{proof}[Proof of Lemma \ref{lemma_uniform_bound_time_Omega}]
To prove the lemma, we are going to control the contribution of points in $G_{m,t}^{(k)}(\eta)$ and in $\Omega_{m,t}^k \setminus G_{m,t}^{(k)}(\eta)$ separately. Let $t \in (0,\infty)$. Let us begin with the sum over $G_{m,t}^{(k)}(\eta)$ for $k \geq 1$. We want to show that there almost surely exists a sequence $(C_{k,t})_k$ and $m_0 \geq 1$ such that almost surely, for any $m \geq m_0$, any $\eta > 0$ and any $k \geq 1$,
\begin{equation} \label{bound_sum_k_good_set}
    \sum_{(x_1, \dots, x_k) \in G_{m,t}^{(k)}(\eta)} \delta_m^{\frac{15k}{8}} \vert \EEpmtime{m}{t}[\sigma_{x_1}\dots \sigma_{x_k}] \vert \leq C_{k,t}
\end{equation}
with $\sum_{k \geq 0} \frac{h^k}{k!}C_{k,t} < \infty$ almost surely. By Lemma \ref{lemma_EEpm_EEplus} and Lemma \ref{lemma_bound_EEplus}, we have that, almost surely,
\begin{align*}
    \vert \sum_{(x_1, \dots, x_k) \in G_{m,t}^{(k)}(\eta)} \delta_m^{\frac{15k}{8}} \EEpmtime{m}{t}[\sigma_{x_1} \dots \sigma_{x_k}] \vert &\leq \sum_{(x_1, \dots, x_k) \in G_{m,t}^{(k)}(\eta)} \delta_m^{\frac{15k}{8}} \EE^{+}_{m,t}[\sigma_{x_1} \dots \sigma_{x_k}] \\
    &\leq \sum_{(x_1, \dots, x_k) \in G_{m,t}^{(k)}(\eta)} C^k\delta_m^{2k} \prod_{j=1}^{k}\bigg(\frac{1}{2}L(x_j) \wedge d_t(x_j)\bigg)^{-\frac{1}{8}} \\
    &\leq \sum_{(x_1, \dots, x_k) \in \Omega_{m,t}^k \setminus  D_{m,t}^{(k)}} C^k\delta_m^{2k} \prod_{j=1}^k \bigg(\frac{1}{2}L(x_j) \wedge d_t(x_j)\bigg)^{-\frac{1}{8}} =: I_{t}^{(k)}(m)
\end{align*}
where we have set $D_{m,t}^{(k)}:=\{ (x_1, \dots, x_k) \in \Omega_{m,t}^k: \exists i \neq j, x_i =x_j\}$ and, for $x \in \Omega_{m,t}$, $d_t(x):= \operatorname{dist}(x, \partial \Omega_{m,t})$. The last inequality follows from the non-negativity of the summand. Set $d_{\Omega} = 2\diam(\Omega)$. Since almost surely, for any $m$, $t \mapsto \diam(\Omega_{m,t})$ is a decreasing function, we can assume that almost surely, for any $m$ and any $t \geq 0$, $\diam(\Omega_{m,t}) \leq d_{\Omega}$. We now rescale $\Omega_{m,t}$ by $d_{\Omega}^{-1}$. Denote by $\tilde \Omega_{m,t}$ the graph thus obtained and observe that almost surely, for $m \geq 1$, for any $t \geq 0$ and for any $x \in \tilde \Omega_{m,t}$, $d_t(x) \leq 1$. We then obtain that, almost surely, for any $m \geq 1$, any $k \geq 1$ and any $t \geq 0$,
\begin{align} \label{bound_Ik_m_t}
    I_t^{(k)}(m)
    \leq C^k\sum_{(x_1, \dots, x_k) \in \tilde \Omega_{m,t}^k \setminus  D_{m,t}^{(k)}} \delta_m^{2k} d_{\Omega}^{-{\frac{k}{8}}} C_{k}(x_1, \dots, x_k;\frac{1}{8}) \prod_{j=1}^{k}d_t(x_j)^{-\frac{1}{8}} C_{k}(x_1, \dots, x_k;\frac{1}{8})
\end{align}
where the last inequality uses that almost surely, for any $m \geq 1$ and any $t \geq 0$, $d_t(x) \leq 1$ for $x \in \tilde \Omega_{m,t}$ and where we have set, for $\alpha > 0$,
\begin{equation*}
    C_{k}(x_1, \dots, x_k;\alpha) := \prod_{j=1}^{k} (\frac{1}{2}L(x_j)\big)^{-\alpha}.
\end{equation*}
Fix $\beta \in (0, \frac{1}{100})$. To estimate the right-hand side of \eqref{bound_Ik_m_t}, we use H\"older inequality with $\alpha, \tilde \alpha > 1$ such that $\frac{1}{\alpha}+\frac{1}{\tilde \alpha}=1$, $\frac{\tilde \alpha}{8}<\frac{7}{8}-\beta$ and $\frac{\alpha}{8}<1$. This choice of $\tilde \alpha$ will guarantee that Lemma \ref{lemma_size_gamma_infty} can be used to estimate the contributions of points near the boundary. We get that, almost surely,
\begin{align} \label{ineq_I_delta_Holder}
    I_t^{(k)}(m)
    &\leq C^k\delta_m^{2k}d_{\Omega}^{-\frac{k}{8}}\bigg( \sum_{(x_1, \dots, x_k) \in \tilde \Omega_{m,t}^k} \prod_{j=1}^{k}d(x_j)^{-\frac{\tilde \alpha}{8}} \bigg)^{1/\tilde \alpha}\bigg(  \sum_{(x_1, \dots, x_k) \in \tilde \Omega_{m,t}^k \setminus  D_{m,t}^{(k)}} C_k(x_1,\dots,x_k;\frac{\alpha}{8}) \bigg)^{1/\alpha}
\end{align}
Set $J_{t}(m,\tilde \alpha) := \sum_{x \in \tilde \Omega_{m,t}} d(x)^{-\tilde \alpha/8}$. We claim the following.

\begin{claim} \label{claim_integral_CR}
Let $\tilde \alpha >1$ be such that $\frac{\tilde \alpha}{8}<\frac{7}{8}-\beta$ and let $t \in (0,\infty)$. Then, there almost surely exists $m_0 \geq 1$ such that almost surely, for any $m \geq m_0$, $J_{t}(m,\tilde \alpha) \leq C_{t,\tilde \alpha} \delta_{m}^{-2}$, where $C_{t,\tilde \alpha}$ is a random constant depending on $\tilde \alpha$, $t$ and $\Omega$.
\end{claim}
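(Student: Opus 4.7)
The plan is to bound $\delta_m^2 J_t(m,\tilde\alpha)$ by separating the deterministic contribution of $\partial\tilde\Omega_m$ from the random contribution of the slit $V(\gamma_m;t)$. First I would write $\partial\tilde\Omega_{m,t} = \partial\tilde\Omega_m \cup V(\gamma_m;t)$ and use the inequality $\min(a,b)^{-\tilde\alpha/8} \leq a^{-\tilde\alpha/8} + b^{-\tilde\alpha/8}$ to decompose
\[
J_t(m,\tilde\alpha) \leq \sum_{x \in \tilde\Omega_{m,t}} d(x,\partial\tilde\Omega_m)^{-\tilde\alpha/8} + \sum_{x \in \tilde\Omega_{m,t}} d(x, V(\gamma_m;t))^{-\tilde\alpha/8}.
\]
The first sum is deterministic and can be controlled by a dyadic decomposition combined with the assumption on the (upper Minkowski) dimension of $\partial\Omega$, producing a bound $\leq C\delta_m^{-2}$ via a geometric series in which $\tilde\alpha/8 < 7/8 < 1$ ensures the correct summability.

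For the random sum, the key probabilistic input would be the following: if $\gamma_m$ enters $B(x,r)$, then two arms of opposite sign must cross the annulus $B(x, L(x))\setminus B(x,r)$, where $L(x) := d(x, \partial\tilde\Omega_m)$. Combining the bulk estimate of the two-arm exponent used in the proof of Lemma \ref{lemma_size_gamma_infty} (itself based on \cite{armExponentsIsing}) with Corollary~5.2 of \cite{armExponentsIsing} to compare boundary conditions, one obtains $\PP(d(x,\gamma_m) \leq r) \leq C (r/L(x))^{7/8-\beta/2}$ for $r \in [\delta_m, L(x)/2]$. A layer-cake computation then yields $\EE[d(x,\gamma_m)^{-\tilde\alpha/8}] \leq C L(x)^{-\tilde\alpha/8}$, where the integral over $r$ converges uniformly in $\delta_m$ precisely because $\tilde\alpha/8 < 7/8 - \beta$; this is the role of the hypothesis on $\tilde\alpha$. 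Summing over $x$ and invoking the deterministic bound above gives $\EE[J_t(m,\tilde\alpha)] \leq C\delta_m^{-2}$.

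To upgrade this expectation bound to an almost-sure one, I would compute second moments of $\sum_x d(x, V(\gamma_m;t))^{-\tilde\alpha/8}$ via a two-point analogue of the two-arm estimate, then apply Chebyshev's inequality and the Borel--Cantelli lemma, extracting a further subsequence of $(\delta_m)_m$ if necessary. This delivers the random $m_0$ and $C_{t,\tilde\alpha}$ asserted in the claim.

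The main obstacle I anticipate is controlling the contribution of points close to the original boundary $\partial\Omega$ or to the endpoints $a,b$, where $L(x)$ is small and the bulk two-arm estimate does not apply directly. I would follow the same strategy as in the proof of Lemma \ref{lemma_size_gamma_infty}: partition the $\eta$-neighborhood of $\partial\Omega$ into dyadic annuli around $a$ and $b$, rescale each so that the relevant two-arm event depends only on edges at macroscopic distance from the boundary, and apply the bulk estimate of \cite{armExponentsIsing}. A further subtlety is that the slit domain $\Omega_{m,t}$ is itself random; I would absorb this by bounding two-arm probabilities in $\Omega_{m,t}$ from above by the corresponding ones in the larger domain $\Omega_m$, at the cost of only a bounded factor coming from the boundary-condition comparison of \cite{armExponentsIsing}.
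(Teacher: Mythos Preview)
Your approach is viable but takes a different and more laborious route than the paper. The key difference is that the paper does \emph{not} re-prove any probabilistic estimate here: it simply invokes the conclusion of Lemma~\ref{lemma_size_gamma_infty}, namely that along the already-chosen subsequence $(\delta_m)_m$ one has $\delta_m^{11/8+\beta}|V(\gamma_m)|\to 0$ almost surely. Once this is known, the proof of Claim~\ref{claim_integral_CR} is purely geometric: split $\tilde\Omega_{m,t}$ into points far from the boundary, points near $\partial\Omega$, and points near $\gamma_m([0,t_m])$; for the last set, the a.s.\ bound on $|V(\gamma_m)|$ implies that the $2^k\delta_m$-neighbourhood of the curve can be covered by $O((2^k\delta_m)^{-11/8-\beta})$ balls of radius $2^k\delta_m$, and a dyadic summation (convergent precisely because $\tilde\alpha/8<7/8-\beta$) finishes the job. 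No moment computation, no further Borel--Cantelli, no further subsequence is needed.

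By contrast, you propose to estimate $\EE[d(x,\gamma_m)^{-\tilde\alpha/8}]$ via a layer-cake and the two-arm exponent, then upgrade to an almost-sure statement via second moments and Borel--Cantelli. This duplicates work already packaged in Lemma~\ref{lemma_size_gamma_infty}; in particular the second-moment step (a two-point two-arm estimate with points possibly near each other and near $\partial\Omega$) is genuinely more delicate than anything needed here, and your Borel--Cantelli step may force a further thinning of $(\delta_m)_m$. Neither issue is fatal, but both are avoided by the paper's shortcut: use the \emph{output} of Lemma~\ref{lemma_size_gamma_infty}, not its \emph{method}.
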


\begin{claim} \label{claim_integral_Ck_alpha}
Let $\alpha > 1$ be such that $\alpha/8 <1$. There exists $c>0$ such that almost surely, for any $m \geq 1$, any $k \geq 2$ and any $t \geq 0$,
\begin{equation*}
    \sum_{(x_1,\dots,x_k) \in \tilde \Omega_{m,t}^{k} \setminus  D_{m,t}^{(k)}} C_{k}(x_1,\dots,x_k;\frac{\alpha}{8}) \leq  c^k\delta_m^{-2k}k^{\frac{k\alpha}{16}}.
\end{equation*}
\end{claim}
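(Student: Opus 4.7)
The plan is to iterate a single-variable bound for $\sum_x L(x)^{-\alpha/8}$ over the $k$ points, carefully controlling the fact that summing out one $x_j$ modifies the nearest-neighbor distances $L(x_{j'})$ of the others.

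I would first establish the single-variable estimate: for any $k-1$ distinct lattice points $y_1,\dots,y_{k-1}$ of $\tilde\Omega_{m,t}$, setting $\ell(x) := \min_p |x-y_p|$,
\[
\sum_{x \in \tilde\Omega_{m,t}} \ell(x)^{-\alpha/8} \le C\,k^{\alpha/16}\,\delta_m^{-2}.
\]
This is a standard layer-cake argument: using $\ell(x)^{-\alpha/8}=(\alpha/8)\int_0^\infty t^{-\alpha/8-1}\mathbf{1}_{\ell(x)<t}\,dt$ together with the two super-level-set bounds $\#\{\ell<r\}\le C\delta_m^{-2}$ (trivial) and $\#\{\ell<r\}\le C(k-1)(r/\delta_m)^2$ (from ball counting around each $y_p$), and noting that $\alpha/8<1$ makes the layer-cake integral finite, the crossover between the two regimes at the scale $r\asymp 1/\sqrt k$ is precisely what produces the factor $k^{\alpha/16}$.

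To iterate this estimate, I would pick an index $j \in [k]$ and use the identity $L(x_{j'}) = \min(L^{(-j)}(x_{j'}),|x_j - x_{j'}|)$ for $j'\neq j$, where $L^{(-j)}(x_{j'}) := \min_{p \neq j, j'} |x_{j'}-x_p|$ is independent of $x_j$. By the $2$D kissing-number bound, the random set $S(x_j) := \{j' \neq j : x_j \text{ is the nearest neighbor of } x_{j'}\}$ has cardinality at most $6$, so on $\{S(x_j)=S\}$ the full product factorizes as $L(x_j)^{-\alpha/8} \prod_{j' \in S}|x_j-x_{j'}|^{-\alpha/8} \prod_{j' \notin S \cup \{j\}} L^{(-j)}(x_{j'})^{-\alpha/8}$. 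Summing over $x_j$, the first two groups of factors can be controlled by combining the single-variable bound with a H\"older inequality (valid because $(|S|+1)\alpha/8 \le 7\alpha/8 < 2$ keeps the combined singular exponent below the $\mathbb R^2$ integrability threshold), while the polynomial overhead from summing over the $O(k^6)$ possible values of $S$ is easily absorbed into the final constant. What remains is a sum over $k-1-|S|$ points with nearest-neighbor distances $L^{(-j)}$, which is exactly the claim at a smaller level, so the argument closes by induction on $k$.

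The main obstacle is the iteration step: one must verify that the summation over $x_j$ against $L(x_j)^{-\alpha/8}\prod_{j'\in S}|x_j-x_{j'}|^{-\alpha/8}$ uniformly produces an overhead of at most $C\,k^{\alpha/16}\,\delta_m^{-2}$, regardless of the configuration of the remaining points, so that the factor $k^{\alpha/16}$ compounds cleanly over the $k$ induction steps without picking up any extra power of $k$. The bounded-in-degree property of the $2$D nearest-neighbor function is the key structural ingredient, keeping $|S|$ bounded by an absolute constant and preventing the combinatorial overhead from growing with $k$.
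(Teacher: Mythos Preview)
Your single-variable layer-cake estimate is correct and does produce the right factor $k^{\alpha/16}$. The gap is in the iteration.

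First, a structural issue: after summing out $x_j$, what remains is \emph{not} ``exactly the claim at level $k-1-|S|$''. The leftover product $\prod_{j'\notin S\cup\{j\}} L^{(-j)}(x_{j'})^{-\alpha/8}$ has $k-1-|S|$ factors, but each $L^{(-j)}(x_{j'})$ is the nearest-neighbour distance among all $k-1$ surviving points, so the positions of the points in $S$ still enter. One can try to repair this by inserting the missing factors via $L^{(-j)}\le 1$ and reducing to the full $(k-1)$-point problem, but then each iteration step removes only one point.

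This exposes the main gap. With one point removed per step, the sum over the $\sum_{s\le 6}\binom{k-1}{s}=O(k^6)$ possible sets $S$ accumulates over $k$ steps to $\prod_{j=2}^{k} O(j^6)\sim (k!)^6\sim k^{6k}$, which is \emph{not} absorbable into $c^k$ as you assert: the outcome would be $c^k\delta_m^{-2k}k^{(6+\alpha/16)k}$, and $\sum_k \tfrac{h^k}{k!}\,k^{(6+\alpha/16)k}$ diverges. If instead you bypass the $S$-sum via the pointwise bound $\prod_{j'\in S(x_j)}|x_j-x_{j'}|^{-\alpha/8}\le L(x_j)^{-6\alpha/8}$ (from $|x_j-x_{j'}|\ge L(x_j)$ and $|S|\le 6$), the inner sum contributes $k^{7\alpha/16}$ per step and you obtain $k^{7\alpha k/16}$; this forces the extra restriction $7\alpha/8<2$, i.e.\ $\alpha<16/7$, and still does not match the claimed exponent. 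Either way, the naive one-point-at-a-time scheme does not reach $k^{k\alpha/16}$ in the stated range $\alpha\in(1,8)$.

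The paper itself does not attempt a direct argument. It simply enlarges $\tilde\Omega_{m,t}$ to a fixed deterministic ball (using nonnegativity of $C_k$) and then invokes \cite[Proposition~3.10]{MourratIsing} together with \cite[Lemma~3.10]{Junnila} as a black box; the sharp exponent $k^{k\alpha/16}$ is precisely the content of the latter lemma, whose proof organises the nearest-neighbour combinatorics more carefully than the scheme you outline.
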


These two claims together with the inequality \eqref{ineq_I_delta_Holder} imply that there almost surely exists $m_0 \geq 1$ such that for any $m \geq m_0$ and any $k \geq 1$, 
\begin{equation*}
    I_{t}^{(k)}(m) \leq d_{\Omega}^{-\frac{k}{8}}\delta_m^{2k}(C_{t, \tilde \alpha}\delta_m^{-2k})^{\frac{1}{\tilde \alpha}}c^{\frac{k}{\alpha}}\delta_m^{-\frac{2k}{\alpha}}k^{\frac{k}{16}},
\end{equation*}
which yields the inequality \eqref{bound_sum_k_good_set}.

To prove Lemma \ref{lemma_uniform_bound_time_Omega}, it now remains to control the contribution of points in $\Omega_{m,t}^k \setminus G_{m,t}^{(k)}(\eta)$. We are going to show that there almost surely exists $m_0 \geq 1$ such that for any $m \geq m_0$, any $k \geq 1$ and any $\eta > 0$,
\begin{equation} \label{ineq_contribution_eta}
    M_t^{(k)}(m;\eta) := \sum_{(x_1,\dots,x_k)\in \Omega_{m,t}^k\setminus G_{m,t}^{(k)}(\eta)} \delta_m^{\frac{15k}{8}} \vert \EEpmtime{m}{t}[\sigma_{x_1}\dots \sigma_{x_k}] \vert \leq C_{1,t}^k\eta^{\frac{7}{8}-\beta}k^{\frac{k}{16}} + C_{2,t}^k\eta k^{\frac{k}{16}}
\end{equation}
where $C_{1,t}, C_{2,t} > 0$ are random constants depending on $t$ and $\Omega$. Observe in particular that the terms on the above right-hand side are summable in $k$, once multiplied by $\frac{h^k}{k!}$.
We have that, almost surely, for any $k \geq 1$,
\begin{align} \label{decomposition_contribution_eta}
    M_{t}^{(k)}(m;\eta) &= \sum_{(x_1,\dots,x_k)\in B_{m,t}^{(k)}(\eta)} \delta_m^{\frac{15k}{8}} \vert \EEpmtime{m}{t}[\sigma_{x_1}\dots \sigma_{x_k}] \vert \nonumber \\
    &+ \sum_{r=1}^{k} \sum_{W_r} \sum_{(x_1, \dots, x_k) \in \phi_{W_r}(A_{m,t}(\eta)^r \times (\Omega_{m,t}\setminus A_{m,t}(\eta))^{k-r})}   \delta_m^{\frac{15k}{8}} \vert \EEpmtime{m}{t}[\sigma_{x_1}\dots \sigma_{x_k}] \vert.
\end{align}
Above, $\sum_{W_r}$ ranges over the different possible choices of $r$ coordinates in $A_{m,t}(\eta)$ and $k-r$ ones in $\Omega_{m,t} \setminus A_{m,t}(\eta)$. Given such a choice $W_r$, $\phi_{W_r}(A_{m,t}(\eta)^r \times (\Omega_{m,t}\setminus A_{m,t}(\eta))^{k-r})$ is the permutation of $A_{m,t}(\eta)^r \times (\Omega_{m,t}\setminus A_{m,t}(\eta))^{k-r}$ corresponding to this choice. However, it is easy to see that the sum over points in $\phi_{W_r}(A_{m,t}(\eta)^r \times (\Omega_{m,t}\setminus A_{m,t}(\eta))^{k-r})$ does not depend on this choice of permutations. To obtain the inequality \eqref{ineq_contribution_eta}, we thus see that we must upper bound the two sums appearing in the decomposition \eqref{decomposition_contribution_eta} of $M_{t}^{(k)}(m;\eta)$. These bounds are the content of the following two claims.

\begin{claim} \label{claim_bound_B_eta_k}
There almost surely exists $m_0$ such that, almost surely, for any $m \geq m_0$, any $k \geq 2$ and any $\eta > 0$,
\begin{equation*}
    \sum_{(x_1,\dots,x_k)\in B_{m,t}^{(k)}(\eta)} \delta_m^{\frac{15k}{8}}\vert \EEpmtime{m}{t}[\sigma_{x_1}\dots \sigma_{x_k}] \vert \leq C_1^k\eta k^{\frac{k}{16}}
\end{equation*}
where $C_1=C_1(t,\Omega)$ is a random constant depending on $t$ and $\Omega$.
\end{claim}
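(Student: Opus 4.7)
The strategy will be to adapt the Hölder argument used in the proof of Lemma \ref{lemma_uniform_bound_time_Omega} for the good set $G_{m,t}^{(k)}(\eta)$, exploiting in addition the constraint that at least one pair of points in $B_{m,t}^{(k)}(\eta)$ lies at distance less than $\eta$. As in that proof, I would first apply Lemma \ref{lemma_EEpm_EEplus} and Lemma \ref{lemma_bound_EEplus} to bound $\vert \EEpmtime{m}{t}[\sigma_{x_1}\cdots \sigma_{x_k}] \vert$ by $C^k \delta_m^{k/8} \prod_j (\tfrac{1}{2}L(x_j) \wedge d_t(x_j))^{-1/8}$, and after rescaling $\Omega$ by $d_\Omega^{-1}$ (so that $\tfrac{1}{2}L(x_j), d_t(x_j) \leq 1$ for all $j$) use the pointwise inequality $(\tfrac{1}{2}L(x_j) \wedge d_t(x_j))^{-1/8} \leq (\tfrac{1}{2}L(x_j))^{-1/8} d_t(x_j)^{-1/8}$. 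It therefore suffices to bound $\delta_m^{2k} \sum_{B_{m,t}^{(k)}(\eta)} C_k(x_1,\ldots,x_k;\tfrac{1}{8}) \prod_j d_t(x_j)^{-1/8}$.

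By symmetry of the summand, the set $B_{m,t}^{(k)}(\eta)$ decomposes as a union $\bigcup_{1 \leq i < j \leq k} B^{(i,j)}_{m,t}(\eta)$, where $B^{(i,j)}_{m,t}(\eta)$ imposes $|x_i-x_j|<\eta$ on top of $d_t(x_\ell) \geq \eta$ for all $\ell$. Summing a non-negative quantity over this union produces a factor $\binom{k}{2}$ times the sum over $B^{(1,2)}_{m,t}(\eta)$. On the latter I would apply Hölder's inequality with conjugate exponents $\alpha = 16/7$ and $\tilde\alpha = 16/9$, which satisfy $\tfrac{\alpha}{8} < 1$ and $\tfrac{\tilde\alpha}{8} < \tfrac{7}{8} - \beta$ (for $\beta$ small enough), to split the sum into an $L$-part $(\delta_m^{2k} \sum C_k(\cdots;\tfrac{\alpha}{8}))^{1/\alpha}$ and a $d_t$-part $(\delta_m^{2k} \sum \prod_j d_t(x_j)^{-\tilde\alpha/8})^{1/\tilde\alpha}$. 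Since $B^{(1,2)}_{m,t}(\eta) \subset \tilde\Omega_{m,t}^k \setminus D_{m,t}^{(k)}$, Claim \ref{claim_integral_Ck_alpha} bounds the $L$-part by $c^{k/\alpha} k^{k/16}$.

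The $d_t$-part is where the close-pair constraint will produce the factor of $\eta$. I would decompose $\delta_m^{2k}\sum_{B^{(1,2)}_{m,t}(\eta)} \prod_j d_t(x_j)^{-\tilde\alpha/8}$ as a product of a free sum over $x_3,\ldots,x_k$ (each factor at most $C_{t,\tilde\alpha}$ by Claim \ref{claim_integral_CR}) and a constrained sum over $(x_1,x_2)$. For the latter, fixing $x_1$ and summing over $x_2 \in B(x_1,\eta)\cap\{d_t\geq\eta\}$ using the crude bounds $d_t(x_2)^{-\tilde\alpha/8} \leq \eta^{-\tilde\alpha/8}$ and $|B(x_1,\eta)\cap \delta_m\mathbb{Z}^2| \leq C(\eta/\delta_m)^2$ should give an inner sum bounded by $C\eta^{2-\tilde\alpha/8}$; summing over $x_1$ via Claim \ref{claim_integral_CR} then yields a total of $CC_{t,\tilde\alpha}^{k-1}\eta^{2-\tilde\alpha/8}$. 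Taking the $1/\tilde\alpha$-power gives $\eta^{2/\tilde\alpha - 1/8}$, and the choice $\tilde\alpha = 16/9$ is made precisely so that this equals $\eta^{1}$. Combining the two Hölder factors and absorbing $\binom{k}{2}$ together with all $k$-independent constants into an exponential $C_1^k$ will yield the claimed bound $C_1^k \eta k^{k/16}$. The only subtle point is the balancing of the Hölder exponents: $\tilde\alpha = 16/9$ is the unique value producing exactly $\eta^{1}$ while keeping $\alpha = 16/7 < 8$ so that Claim \ref{claim_integral_Ck_alpha} still applies.
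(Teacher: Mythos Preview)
Your route differs from the paper's. The paper's proof is essentially a two-line citation: after passing to $+$ boundary conditions via Lemma~\ref{lemma_EEpm_EEplus}, it invokes the second part of \cite[Proposition~3.10]{MourratIsing} for the $\eta$-dependence and \cite[Lemma~3.10]{Junnila} for the $k^{k/16}$ factor. You instead rerun the H\"older scheme from the good-set argument with the specific conjugate pair $(\alpha,\tilde\alpha)=(16/7,16/9)$, tuned so that the constrained $(x_1,x_2)$-sum in the $d_t$-factor produces exactly $\eta^{2/\tilde\alpha-1/8}=\eta$ after taking the $1/\tilde\alpha$ power. This is a pleasant self-contained substitute for the literature citation, and the exponent arithmetic and the absorption of $\binom{k}{2}$ into $C_1^k$ are correct.

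There is, however, one genuine gap. Your inclusion $B^{(1,2)}_{m,t}(\eta)\subset\tilde\Omega_{m,t}^k\setminus D_{m,t}^{(k)}$ is false: $B_{m,t}^{(k)}(\eta)$ as defined contains diagonal configurations (some $x_i=x_j$), on which $L(x_i)=0$, so the pointwise bound from Lemma~\ref{lemma_bound_EEplus} is vacuous and Claim~\ref{claim_integral_Ck_alpha} does not apply to the $L$-part. Relatedly, your ball count $|B(x_1,\eta)\cap\delta_m\mathbb{Z}^2|\lesssim(\eta/\delta_m)^2$ fails for $\eta<\delta_m$. The fix is to restrict first to $B^{(1,2)}_{m,t}(\eta)\setminus D_{m,t}^{(k)}$, on which your argument goes through verbatim (this set is empty when $\eta<\delta_m$, so the ball-count issue disappears), and to treat the diagonal separately by collapsing $\sigma_{x_i}\sigma_{x_j}=1$ to a lower-order correlation; its contribution is $O(\delta_m^{7/4})$ per pair collision and hence negligible in the regime $m\to\infty$ in which the claim is actually used (Lemmas~\ref{lemma_uniform_bound_time_Omega} and~\ref{lemma_moments_time}).
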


\begin{claim} \label{claim_bound_boundary_eta}
There almost surely exists $m_0$ such that, almost surely, for any $m\geq m_0$, any $k\geq r \geq 1$ and any $\eta >0$,
\begin{equation*}
    \sum_{(x_1,\dots,x_k)\in A_{m,t}(\eta)^r\times (\Omega_{m,t}\setminus A_{m,t}(\eta))^{k-r}} \delta_m^{\frac{15k}{8}}\vert \EEpmtime{m}{t}[\sigma_{x_1}\dots \sigma_{x_k}]\vert \leq C_{2}^k \eta^{\frac{r}{\rho}(\frac{7}{8}-\beta)}k^{\frac{k}{16}}
\end{equation*}
where $C_2=C_2(t,\Omega)$ is a random constant depending on $t$ and $\Omega$.
\end{claim}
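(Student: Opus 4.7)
The plan mirrors the good-set estimate \eqref{bound_Ik_m_t}--\eqref{ineq_I_delta_Holder}, with a refinement that uses the constraint $x_j \in A_{m,t}(\eta)$ on the first $r$ coordinates to extract a power of $\eta$. First I would apply Lemma \ref{lemma_EEpm_EEplus} followed by Lemma \ref{lemma_bound_EEplus} to obtain, after multiplication by $\delta_m^{15k/8}$, the pointwise bound
\begin{equation*}
    \delta_m^{15k/8}\,\vert \EEpmtime{m}{t}[\sigma_{x_1}\cdots \sigma_{x_k}] \vert \leq C^k\,\delta_m^{2k}\,\prod_{j=1}^{k}\bigl(\tfrac{1}{2}L(x_j)\wedge d_t(x_j)\bigr)^{-1/8},
\end{equation*}
the same pointwise bound used in the good-set case. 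After the diameter rescaling so that $d_t(x)\leq 1$ for every $x$, I would apply H\"older's inequality with conjugate exponents $(\rho,\rho')$ chosen so that $\rho/8<7/8-\beta$ and $\rho'/8<1$, separating the $d_t$-factor from the $L$-factor. The $L$-factor, summed over $\Omega_{m,t}^k\setminus D_{m,t}^{(k)}$ of $\prod_j L(x_j)^{-\rho'/8}$, is controlled by Claim \ref{claim_integral_Ck_alpha} and accounts for the $k^{k/16}$ term and the exponential-in-$k$ prefactor in the target bound.

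The $d_t$-factor, a sum over $A_{m,t}(\eta)^r\times(\Omega_{m,t}\setminus A_{m,t}(\eta))^{k-r}$ of $\prod_j d_t(x_j)^{-\rho/8}$, factorises coordinatewise as
\begin{equation*}
    \Bigl(\sum_{x\in A_{m,t}(\eta)} d_t(x)^{-\rho/8}\Bigr)^{r}\Bigl(\sum_{x\in \Omega_{m,t}\setminus A_{m,t}(\eta)} d_t(x)^{-\rho/8}\Bigr)^{k-r},
\end{equation*}
and Claim \ref{claim_integral_CR} bounds the second factor by $(C_{t,\rho}\delta_m^{-2})^{k-r}$. The main estimate still to prove is the near-boundary bound
\begin{equation*}
    \sum_{x\in A_{m,t}(\eta)} d_t(x)^{-\rho/8} \leq C_t\,\eta^{7/8-\beta}\,\delta_m^{-2},
\end{equation*}
since raising the entire $d_t$-factor to the power $1/\rho$ then yields the advertised $\eta^{r(7/8-\beta)/\rho}$.

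To prove this near-boundary bound, I would decompose $A_{m,t}(\eta)$ dyadically in the distance to $\partial\Omega_{m,t}$, writing it as a disjoint union of shells $\{x: 2^i\delta_m\leq d_t(x)<2^{i+1}\delta_m\}$ with $2^{i+1}\delta_m\leq 2\eta$, and bound the cardinality of each shell in two regimes. Near the original boundary $\partial\Omega$, of Hausdorff dimension strictly less than $7/4$, a Riemann-sum comparison as in the proof of Claim \ref{claim_integral_CR} applies. Near the freshly cut interface $\gamma_m([0,t_m])$, the almost-sure bound $\vert V(\gamma_m)\vert\lesssim \delta_m^{-11/8-\beta}$ from Lemma \ref{lemma_size_gamma_infty} plays the analogous role, together with the transverse tube estimate that each vertex of $V(\gamma_m)$ contributes only $O(\eta/\delta_m)$ lattice points in the direction perpendicular to $\gamma_m$. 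Summing the resulting geometric series in $i$, under the constraint $\rho/8<7/8-\beta$, then produces the $\eta^{7/8-\beta}\delta_m^{-2}$ bound.

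The main obstacle is this near-boundary bound, especially at the junctions where $\gamma_m$ meets $\partial\Omega$ (near the prime ends $a_{m,t}$ and $b_m$), where the two pieces of $\partial\Omega_{m,t}$ interact and one must carefully avoid double counting. The precise exponent $7/8-\beta$ is exactly what the two-arm exponent underpinning Lemma \ref{lemma_size_gamma_infty} permits, and it is this constraint that fixes the admissible range $\rho/8<7/8-\beta$ for the H\"older exponent on the $d_t$-factor.
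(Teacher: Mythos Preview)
Your approach is correct in spirit but differs from the paper's. You recycle the good-set machinery (Lemmas~\ref{lemma_EEpm_EEplus}--\ref{lemma_bound_EEplus}, H\"older, Claims~\ref{claim_integral_CR} and~\ref{claim_integral_Ck_alpha}) and extract the $\eta$-power by restricting the $d_t$-sum to $A_{m,t}(\eta)$ in the first $r$ coordinates. The paper instead avoids H\"older altogether: it passes to the FK representation $\EE^{+}_{m,t}[\sigma_{x_1}\cdots\sigma_{x_k}]=\PP_{m,t}^{\operatorname{FK},\operatorname{wired}}(A_{x_1,\ldots,x_k})$ and uses the spatial Markov property to decouple the $r$ near-boundary points directly at the level of the correlation, obtaining
\[
\PP_{m,t}^{\operatorname{FK},\operatorname{wired}}(A_{x_1,\ldots,x_k})\;\lesssim\;\PP_{m,t}^{\operatorname{FK},\operatorname{wired}}(A_{x_{r+1},\ldots,x_k})\prod_{j=1}^{r}\PP_{\Lambda_{d_t(x_j)}(x_j)}^{\operatorname{FK},\operatorname{wired}}\!\bigl(x_j\leftrightarrow\partial\Lambda_{d_t(x_j)}(x_j)\bigr).
\]
Each one-arm factor is then bounded by $C\delta_m^{1/8}d_t(x_j)^{-1/8}$, the near-boundary sum $\sum_{A_{m,t}(\eta)}d_t(x)^{-1/8}$ is handled exactly as in Claim~\ref{claim_integral_CR}, and the residual $(k-r)$-point sum is controlled by the already-established good-set bounds. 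The gain is that the near-boundary exponent stays at $1/8$ rather than being inflated to $\rho/8$ by H\"older, so no $1/\rho$ loss appears in the final $\eta$-power.

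Two minor slips in your version: first, the bound you state for the near-boundary sum should read $\sum_{x\in A_{m,t}(\eta)}d_t(x)^{-\rho/8}\lesssim C_t\,\eta^{7/8-\beta-\rho/8}\,\delta_m^{-2}$ rather than $\eta^{7/8-\beta}$ (compare \eqref{contribution_points_near_curve} with $\tilde\alpha=\rho$); after H\"older this still gives a positive power of $\eta$ precisely because you chose $\rho/8<7/8-\beta$. Second, the ``transverse tube'' heuristic---each vertex of $\gamma_m$ contributing $O(\eta/\delta_m)$ lattice points perpendicularly---is not reliable for a fractal curve, since nearby pieces of $\gamma_m$ share their tubes; the correct shell-cardinality estimate is the covering argument already carried out in \eqref{contribution_points_near_curve}, which you can simply invoke.
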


Claim \ref{claim_bound_B_eta_k} and Claim \ref{claim_bound_boundary_eta} yield that there almost surely exists $m_0 \geq 1$ such that almost surely, for any $m \geq m_0$, any $k \geq 1$ and any $\eta>0$,
\begin{align*}
    M_t^{(k)}(m;\eta)
    \leq C^k\eta k^{\frac{k}{16}} + \sum_{r=1}^{k} \vert W_r \vert C^k \eta^{(\frac{7}{8}-\beta)}k^{\frac{k}{16}} \leq C_{1}^k\eta k^{\frac{k}{16}}+ 2^kC_{2}^k \eta^{\frac{7}{8}-\beta}k^{\frac{k}{16}},
\end{align*}
which shows the inequality \eqref{ineq_contribution_eta} and concludes the proof of the lemma,
\end{proof}

Let us now prove Claim \ref{claim_integral_CR}, Claim \ref{claim_integral_Ck_alpha}, Claim \ref{claim_bound_B_eta_k} and Claim \ref{claim_bound_boundary_eta}.

\begin{proof}[Proof of Claim \ref{claim_integral_CR}]
Let $\eta > 0$. Let us first estimate the contribution of points in $\tilde \Omega_{m,t} \setminus \tilde A_{m,t}(\eta)$ to $J_{t}(m,\tilde \alpha)$, where $\tilde A_{m,t}(\eta) := \{x \in \tilde \Omega_{m,t}: \operatorname{dist}(x,\partial \Omega_{m,t}) < d_{\Omega}\eta\}$. We have that, almost surely, for any $m \geq 1$,
\begin{equation*}
    \sum_{x \in \tilde \Omega_{m,t} \setminus \tilde A_{m,t}(\eta)} d_t(x)^{-\frac{\tilde \alpha}{8}} \lesssim \eta^{-\frac{\tilde \alpha}{8}}\vol(\tilde \Omega) \delta_m^{-2}.
\end{equation*}
We now want to estimate the contribution of points in $\tilde A_{m,t}(\eta)$ to $J_t(m,\tilde \alpha)$. Let us write $\tilde A_{m,t}(\eta) = \tilde A_{m,t}^{(1)}(\eta) \sqcup \tilde A_{m,t}^{(2)}(\eta)$ where we have set
\begin{align*}
    &\tilde A_{m,t}^{(1)}(\eta) =\{ x \in \tilde A_{m,t}(\eta): \operatorname{dist}(x,\partial \Omega) < \operatorname{dist}(x,\gamma_m([0,t_m])) \} \\
    &\tilde A_{m,t}^{(2)}(\eta) = \{ x \in \tilde A_{m,t}(\eta): \operatorname{dist}(x,\gamma_m([0,t_m])) \leq \operatorname{dist}(x,\partial \Omega) \}.
\end{align*}
Observe that almost surely, for any $m \geq 1$ and any $t \geq 0$, $\tilde A_{m,t}^{(1)}(\eta) \subset \cup_{k=0}^{K(m)} D_k(m)$ where for $k \geq 0$, $D_{k}(m):= \{x \in \tilde \Omega_{m}: 2^{k}\delta_m \leq d_{\Omega}\operatorname{dist}(x,\partial \Omega_{m}) \leq 2^{k+1}\delta_m \}$ with $K(m) = \log_2(\eta/\delta_m)+1$. Therefore, using that almost surely, for any $t \geq 0$ and for any $x \in \tilde A_{m,t}^{(1)}(\eta)$, $d_t(x)= \operatorname{dist}(x,\partial \Omega_{m})$, the contribution of points in $\tilde A_{m,t}^{(1)}(\eta)$ is almost surely bounded by
\begin{equation} \label{contribution_points_near_boundary_domain}
    \sum_{x \in \tilde A_{m,t}^{(1)}(\eta)} \operatorname{dist}(x,\partial \Omega_{m})^{-\tilde \alpha/8} \leq \sum_{k=0}^{O(\log_{2}(\eta/\delta_m))} 2^{-\frac{\tilde \alpha}{8}k}\delta_m^{-\frac{\tilde \alpha}{8}} \vert D_{k}(m) \vert \leq \sum_{k=0}^{O(\log_{2}(\eta/\delta_m))} 2^{\frac{\tilde \alpha}{8}k}\delta_m^{-\frac{\tilde \alpha}{8}} 2^{k}\delta_m^{-1} \lesssim \delta_m^{-2}\eta^{1-\frac{\tilde \alpha}{8}}.
\end{equation}
Let us now estimate the contributions of points in $\tilde A_{m,t}^{(2)}(\eta)$. Observe that almost surely, for any $x \in \tilde A_{m,t}^{(2)}(\eta)$ and any $t \geq 0$, $d_t(x) = \operatorname{dist}(x,\gamma_m([0,t_m]))$. Moreover, we have that, almost surely, $\tilde A_{m,t}^{(2)}(\eta) \subset \cup_{k=0}^{K(m)}C_{k}(\gamma_m;\eta,t)$ where $C_k(\gamma_m; \eta,t):= \{ x \in \tilde \Omega_{m,t}: 2^{k}\delta_m \leq d_{\Omega}\operatorname{dist}(x,\gamma_m([0,t_m])) \leq 2^{k+1}\delta_m\}$. Therefore, we have that, almost surely, for any $m \geq 1$ and any $t \geq 0$,
\begin{equation*}
    \sum_{x \in \tilde A_{m,t}^{(2)}(\eta)} d_t(x)^{-\frac{\tilde \alpha}{8}} \leq \sum_{k=0}^{O(\log_{2}(\eta/\delta_m))} 2^{-k\frac{\tilde \alpha}{8}}\delta_m^{-\frac{\tilde \alpha}{8}} \vert C_k(\gamma_m; \eta, t) \vert \leq \sum_{k=0}^{O(\log_{2}(\eta/\delta_m))} 2^{-k\frac{\tilde \alpha}{8}}\delta_m^{-\frac{\tilde \alpha}{8}} \vert C_k(\gamma_m; \eta, \infty) \vert.
\end{equation*}
By Lemma \ref{lemma_thinness_gamma}, almost surely $\delta_m^{\frac{11}{8}+\beta}\vert V(\gamma_m) \vert \to 0$ as $m \to \infty$. This implies that there almost surely exists $m_0 \geq 1$ such that for any $\eta >0$, the set $C_k(\gamma_m, \eta, \infty)$ can be covered by $C_{\infty}\eta^{-\frac{11}{8}-\beta}$ balls of radius $\eta$, where $C_{\infty}$ is a random constant independent of $t$. Each of these balls contains a number of vertices of order $\eta^{2}\delta_m^{-2}$. This yields that almost surely, for any $m \geq m_0$ and any $\eta > 0$,
\begin{equation} \label{contribution_points_near_curve}
    \sum_{x \in \tilde A_{m,t}^{(2)}(\eta)} d_t(x)^{-\frac{\tilde \alpha}{8}} \leq C_{\infty}\delta_m^{-2}\sum_{k=0}^{O(\log_2(\eta/\delta_m))} 2^{\frac{\tilde \alpha(k+1)}{8}} \eta^{-\frac{\tilde \alpha}{8}} \eta^{\frac{7}{8}-\beta} 2^{k(\frac{11}{8}+\beta)}2^{-2k} \leq C_{\infty}\delta_m^{-2}\eta^{\frac{7}{8}-\beta-\frac{\tilde \alpha}{8}} \sum_{k=0}^{\infty}2^{-(\frac{7}{8}-\beta-\frac{\tilde \alpha}{8})}.
\end{equation}
The sum on the above right-hand side is finite since $\frac{\tilde \alpha}{8} < \frac{7}{8}-\beta$ by assumption. Putting everything together, we get that there almost surely exists $m_0 \geq 1$ such that for any $m \geq m_0$, any $t \geq 0$ and any $\eta >0$,
\begin{equation*}
    J_{t}(m,\tilde \alpha) \lesssim \delta_m^{-2}\eta^{\frac{7}{8}-\beta-\frac{\tilde \alpha}{8}} + \delta_m^{-2}\eta^{1-\frac{\tilde \alpha}{8}} + \eta^{-\frac{\tilde \alpha}{8}}\delta_m^{-2}
\end{equation*}
which yields the claim.
\end{proof}

\begin{proof}[Proof of Claim \ref{claim_integral_Ck_alpha}]
Since $C_k$ is a nonnegative function and $\Omega$ is a bounded domain, there exists $\tilde d_{\Omega} > 1$ such that, almost surely, for any $m \geq 1$, any $t \geq 0$ and any $k \geq 0$,
\begin{equation*}
    \sum_{(x_1, \dots, x_k) \in \tilde \Omega_{m,t} \setminus D_{m,t}^{(k)}} C_{k}(x_1,\dots,x_k;\frac{\alpha}{8}) \leq \sum_{(x_1,\dots,x_k)\in B_{m}(0,\tilde d_{\Omega})^k \setminus D_m^{(k)}(\tilde d_{\Omega})} C_{k}(x_1,\dots,x_k;\frac{\alpha}{8})
\end{equation*}
where $B_{m}(0,\tilde d_{\Omega})$ denotes the $\delta_n$-approximation of the Euclidean ball $B(0,\tilde d_{\Omega})$ and $D_m^{(k)}(\tilde d_{\Omega}):= \{ (x_1, \dots,x_k) \in B_m(0,\tilde d_{\Omega})^k: \exists i \neq j, x_i=x_j \}$. By \cite[Proposition~3.10]{MourratIsing} and \cite[Lemma~3.10]{Junnila}, as $\frac{\alpha}{8}<1$, we obtain that there exists a constant $c>0$ such that, almost surely, for any $m \geq 1$ and any $k \geq 2$,
\begin{equation*}
    \sum_{(x_1,\dots,x_k)\in B_{m}(0,\tilde d_{\Omega})^k \setminus D^{(k)}(\tilde d_{\Omega})} C_{k}(x_1,\dots,x_k;\frac{\alpha}{8}) \leq c^k\delta_{m}^{-2k}k^{\frac{k\alpha}{16}},
\end{equation*}
which yields the claim. We note that the estimate of \cite[Lemma~3.10]{Junnila} is obtained in the continuum but can also be derived in the discrete by using the same type of arguments as in the proof of \cite[Proposition~3.10]{MourratIsing}.
\end{proof}

\begin{proof}[Proof of Claim \ref{claim_bound_B_eta_k}]
By Lemma \ref{lemma_EEpm_EEplus}, we have that, almost surely,
\begin{equation*}
    F_t^{(k)}(m;\eta):=\delta_m^{\frac{15k}{8}}\sum_{(x_1,\dots,x_k)\in B_{m,t}^{(k)}(\eta)} \vert \EEpmtime{m}{t}[\sigma_{x_1}\dots \sigma_{x_k}] \vert \leq \delta_m^{\frac{15k}{8}}\sum_{(x_1,\dots,x_k)\in B_{m,t}^{(k)}(\eta)} \vert \EE^{+}_{m,t}[\sigma_{x_1}\dots \sigma_{x_k}] \vert
\end{equation*}
Using the same arguments as in the second part of the proof of \cite[Proposition~3.10]{MourratIsing} together with \cite[Lemma~3.10]{Junnila} to estimate the $k$-dependent constant, we get that there almost surely exists $m_0 \geq 1$ such that for any $m \geq m_0$, any $k \geq 2$ and any $\eta > 0$,
\begin{equation*}
    \sum_{(x_1,\dots,x_k)\in B_{m,t}^{(k)}(\eta)} \vert \EE^{+}_{m,t}[\sigma_{x_1}\dots \sigma_{x_k}] \vert \leq C_{1}^k\eta\delta_{m}^{-\frac{15k}{8}}k^{\frac{k}{16}}
\end{equation*}
where $C_1 = C_{1}(t,\Omega)$ is a random constant depending on $t$ and $\Omega$. This concludes the proof of the claim. Note that the $\eta$-dependence of the above right-hand side could be made to depend on $k$ but these tighter estimates are not necessary here.
\end{proof}

\begin{proof}[Proof of Claim \ref{claim_bound_boundary_eta}]
Using the Edwards-Sokal coupling of Proposition \ref{prop_ES_coupling_no_field} and with the same notations as in the proof of Lemma \ref{lemma_EEpm_EEplus}, we have that $\EE_{m,t}^{+}[\sigma_{x_1}\dots\sigma_{x_k}] = \PP_{m,t}^{\operatorname{FK}, \operatorname{wired}}(A_{x_1,\dots,x_k})$. By the Markov property of the FK-Ising model, we then have that, almost surely, for any $m \geq 1$, any $r \geq 1$, any $k \geq r$ and any $(x_1,\dots, x_k) \in A_{m,t}(\eta)^r \times (\Omega_{m,t} \setminus A_{m,t}(\eta))^{k-r}$,
\begin{equation*}
    \PP_{m,t}^{\operatorname{FK},\operatorname{wired}}(A_{x_1,\dots,x_k}) \lesssim \PP_{m,t}^{\operatorname{FK},\operatorname{wired}}(A_{x_{r+1},\dots, x_k}) \prod_{j=1}^{r}\PP_{\Lambda_{d_t(x_j),m}(x_j)}^{\operatorname{FK},\operatorname{wired}}(x_j \longleftrightarrow \partial \Lambda_{d_t(x_j),m}(x_j))
\end{equation*}
where for $x \in A_{m,t}(\eta)$, $\Lambda_{d_t(x_j),m}(x_j) \subset \delta_{m}\mathbb{Z}^2$ is the square centered at $x_j$ with side-length $\frac{1}{2}d_t(x_j)$ and $\PP_{\Lambda_{d_t(x_j),m}(x_j)}^{\operatorname{FK},\operatorname{wired}}$ is the FK-Ising measure in $\Lambda_{d_t(x_j),m}(x_j)$ with wired boundary conditions. Here,we recall that $d_t(x_j) = \operatorname{dist}(x,\partial \Omega_{m,t})$. Denoting by $S_{m,t}^{(r,k)}$ the sum in the statement of Claim \ref{claim_bound_boundary_eta}, this implies that
\begin{equation*}
    S_{m,t}^{(r,k)} \lesssim \delta_m^{\frac{15k}{8}}\bigg(\sum_{x \in A_{m,t}(\eta)} \PP_{\Lambda_{d_t(x)}(x)}^{\operatorname{FK},\operatorname{wired}}(x_j \longleftrightarrow \partial \Lambda_{d_t(x)}(x))\bigg)^r \sum_{(x_{r+1},\dots, x_{k-r}) \in (\Omega_{m,t} \setminus A_{m,t}(\eta))^{k-r}} \PP_{m,t}^{\operatorname{FK},\operatorname{wired}}(A_{x_{r+1},\dots, x_k}).
\end{equation*}
By \cite[Lemma~5.4]{RSWbounds}, there exists $C>0$ such that, almost surely, for any $m \geq 1$, any $t \geq 0$ and any $x \in A_{m,t}(\eta)$, 
\begin{equation*}
    \PP_{\Lambda_{d_t(x),m}(x)}^{\operatorname{FK},\operatorname{wired}}(x_j \longleftrightarrow \partial \Lambda_{d_t(x),m}(x)) \leq C\delta_m^{1/8}d_{t}(x_j)^{-1/8}.
\end{equation*}
The sum $\sum_{A_{m,t}(\eta)}d_{t}(x_j)^{-1/8}$ can be estimated as in we did in the proof of Claim \ref{claim_integral_CR}, see the discussion leading to the inequalities \eqref{contribution_points_near_boundary_domain} and \eqref{contribution_points_near_curve}. This yields that there almost surely exits $m_0$ such that, almost surely, for any $m \geq m_0$, any $k\geq r \geq 1$ and any $\eta > 0$,
\begin{equation*}
     S_{m,t}^{(r,k)} \lesssim \delta_m^{\frac{15(k-r)}{8}}\eta^{r(\frac{7}{8}-\beta)} \sum_{(x_{1},\dots, x_{k-r}) \in (\Omega_{m,t} \setminus A_{m,t}(\eta))^{k-r}}\PP_{m,t}^{\operatorname{FK},\operatorname{wired}}(A_{x_{1},\dots, x_{k-r}}).
\end{equation*}
The sum on the above right-hand side can now be upper bounded using Claim \ref{claim_integral_Ck_alpha} and Claim \ref{claim_bound_B_eta_k}, which concludes the proof.
\end{proof}

We now turn to the proof of Lemma \ref{lemma_moments_time}. We keep the same definitions for $G_{m,t}^{(k)}(\eta)$, $A_{m,t}(\eta)$ and $B_{m,t}^{(k)}(\eta)$.

\begin{proof}[Proof of Lemma \ref{lemma_moments_time}]
We have that, almost surely, for any $m \geq 1$, any $t \geq 0$ and any $k \geq 1$,
\begin{align*}
    \sum_{(x_1, \dots, x_k) \in \Omega_{m,t}^k} C_{\sigma}^{-k}\delta_m^{\frac{15k}{8}} \EEpmtime{m}{t}[\sigma_{x_1} \dots \sigma_{x_k}] = &\sum_{(x_1, \dots, x_k) \in G_{m,t}^{(k)}(\eta)} C_{\sigma}^{-k}\delta_m^{\frac{15k}{8}} \EEpmtime{m}{t}[\sigma_{x_1} \dots \sigma_{x_k}] \\
    &+ \sum_{(x_1, \dots, x_k) \in \Omega_{m,t}^k \setminus G_{m,t}^{(k)}(\eta)}  C_{\sigma}^{-k}\delta_m^{\frac{15k}{8}} \EEpmtime{m}{t}[\sigma_{x_1} \dots \sigma_{x_k}].
\end{align*}
Since $(\hat \Omega_{m,t};a_{m,t},b_m)$ almost surely converges to $(\Omega;a,b)$ in the Carath\'eodory topology, we obtain from Theorem \ref{theorem_spin_correlations} that, almost surely, for any $k \geq 1$,
\begin{equation} \label{lim_moments_good_set}
    \lim_{m \to \infty} \sum_{(x_1, \dots, x_k) \in G_{m,t}^{(k)}(\eta)} C_{\sigma}^{-k}\delta_m^{\frac{15k}{8}} \EEpmtime{m}{t}[\sigma_{x_1} \dots \sigma_{x_k}] = \int_{ G_{t}^{(k)}(\eta)} f_t^{(k)}(x_1, \dots, x_k) \prod dx_j
\end{equation}
where we have set $G_{t}^{(k)}(\eta):=\{ (x_1, \dots , x_k) \in \Omega_{t}^k: \forall j, d(x_j, \partial \Omega_t) \geq \eta \text{ and } \forall i \neq j, \vert x_i -x_j \vert \geq \eta \}$ and $f_t^{(k)} := f_{\Omega_{t}}^{(\pm, k)}$. Observe now that the inequality \eqref{ineq_contribution_eta} shows that the contribution of points in $\Omega_{m,t}^k \setminus G_{m,t}^{(k)}(\eta)$ is uniformly bounded in $m$ by a quantity that converges to $0$ as $\eta \to 0$. Together with \eqref{lim_moments_good_set}, this yields that, almost surely, for any $k \geq 1$,
\begin{equation*}
    \lim_{m \to \infty} \sum_{(x_1,\dots, x_k) \in \Omega_{m,t}^k} C_{\sigma}^{-k}\delta_m^{\frac{15k}{8}}\EEpmtime{m}{t}[\sigma_{x_1}\dots \sigma_{x_k}] = \lim_{\eta \to 0} \int_{G_t^{(k)}(\eta)} f_t^{(k)}(x_1,\dots, x_k) \prod_{j=1}^{k}dx_j.
\end{equation*}
Lemma \ref{lemma_moments_time} then follows from the claim below, which shows that in the continuum, the contribution of points in $\Omega_t^k \setminus G_{t}^{(k)}(\eta)$ to the integral on the right-hand side of \eqref{eq_kmoments_limit} vanishes as $\eta \to 0$.

\begin{claim} \label{claim_no_contribution_eta}
Almost surely, for any $k \geq 1$ and any $t \in (0,\infty)$, $\lim_{\eta \to 0} \int_{G_t^{(k)}(\eta)} f_t^{(k)}(x_1,\dots, x_k) \prod_{j=1}^{k}dx_j = \int_{\Omega_t^k} f_t^{(k)}(x_1,\dots,x_k)\prod dx_j < \infty$.
\end{claim}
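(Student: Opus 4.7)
My plan is to prove Claim \ref{claim_no_contribution_eta} by dominated convergence applied to the sequence $(\mathbb{I}_{G_t^{(k)}(\eta)} f_t^{(k)})_{\eta > 0}$ as $\eta \to 0$. First, since $\Omega_t^k \setminus G_t^{(k)}(\eta)$ shrinks to the union of the diagonals $\{x_i = x_j\}$ and the sets $\{x_i \in \partial \Omega_t\}$, both of which have Lebesgue measure zero, the integrand converges pointwise a.e.\ to $f_t^{(k)}$. The task therefore reduces to producing an $L^1(\Omega_t^k)$ function dominating $|f_t^{(k)}|$ uniformly in $\eta$.

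To construct such a dominant, I would transfer the discrete bound of Lemma \ref{lemma_bound_EEplus} to the continuum. For a fixed $(x_1, \dots, x_k) \in \Omega_t^k$ with distinct interior coordinates, Theorem \ref{theorem_spin_correlations} applies uniformly on any neighborhood of well-separated configurations, and passing to the $\delta \to 0$ limit in Lemma \ref{lemma_bound_EEplus} combined with Lemma \ref{lemma_EEpm_EEplus} (to handle the $\pm$ boundary conditions) should yield, for a.e.\ such $k$-tuple,
\begin{equation*}
    |f_t^{(k)}(x_1, \dots, x_k)| \leq C^k \prod_{j=1}^k \Bigl( \tfrac{1}{2} L(x_j) \wedge d_t(x_j) \Bigr)^{-1/8},
\end{equation*}
where $L(x_j) := \min_{p \neq j} |x_p - x_j|$ and $d_t(x_j) := \operatorname{dist}(x_j, \partial \Omega_t)$.

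Next, I would show integrability of this dominant on $\Omega_t^k$ by mirroring the H\"older argument from the proof of Lemma \ref{lemma_uniform_bound_time_Omega}. Fix conjugate exponents $\alpha, \tilde\alpha > 1$ with $\tilde\alpha/8 < 7/8$ and $\alpha/8 < 1$; the integral then factorizes into a boundary piece $\int_{\Omega_t} d_t(x)^{-\tilde\alpha/8}\,dx$ and a diagonal piece $\int \prod_j L(x_j)^{-\alpha/8}\,dx$. The boundary piece is finite because $\partial \Omega_t \subset \partial \Omega \cup \gamma([0,t])$, and by hypothesis $\dim_H(\partial \Omega) < 7/4$ while $\gamma([0,t])$ has dimension $11/8 < 7/4$; an annular decomposition in the spirit of \eqref{contribution_points_near_boundary_domain}--\eqref{contribution_points_near_curve}, now carried out in the continuum rather than the discrete, yields the required bound. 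The diagonal piece is handled by the continuum analogues of \cite[Proposition~3.10]{MourratIsing} and \cite[Lemma~3.10]{Junnila}.

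The hardest step, in my view, will be justifying the pointwise continuum bound on $|f_t^{(k)}|$ rigorously, since Theorem \ref{theorem_spin_correlations} only gives uniform convergence over configurations that are well-separated and at macroscopic distance from the boundary; one needs to combine this with a careful exhaustion argument to extract an almost-everywhere bound on all of $\Omega_t^k$. A secondary subtlety is that the boundary integral implicitly requires an upper box-counting (rather than Hausdorff) dimension bound on the SLE$_3$ portion of $\partial \Omega_t$, which is $11/8$ and can be imported from known results on SLE dimensions.
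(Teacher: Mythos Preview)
Your approach is correct and follows essentially the same dominated-convergence skeleton as the paper, but the paper chooses a different dominant. Rather than passing Lemma~\ref{lemma_bound_EEplus} to the continuum to get a pointwise product bound on $|f_t^{(k)}|$ and then redoing the H\"older/annular argument of Lemma~\ref{lemma_uniform_bound_time_Omega} in the continuum, the paper bounds $|f_t^{(k)}|$ by $f_t^{(+,k)}$ directly: on each good set $G_t^{(k)}(\eta)$ one has, by Theorem~\ref{theorem_spin_correlations} and Lemma~\ref{lemma_EEpm_EEplus},
\begin{equation*}
\int_{G_t^{(k)}(\eta)} |f_t^{(k)}|^2 = \lim_{n} C_\sigma^{-2k}\delta_n^{15k/8}\sum |\EE^{\pm}|^2 \leq \lim_{n} C_\sigma^{-2k}\delta_n^{15k/8}\sum |\EE^{+}|^2 = \int_{G_t^{(k)}(\eta)} |f_t^{(+,k)}|^2 \leq \int_{\Omega_t^k} |f_t^{(+,k)}|^2,
\end{equation*}
and finiteness of the last integral is read off from the \emph{explicit formula} for $f_t^{(+,k)}$ (Lemma~\ref{lemma_fineteness_moments_plusBC} in the Appendix), using that $\dim_H(\partial\Omega_t)<7/4$ almost surely. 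This gives a uniform $L^2$ bound, hence uniform integrability on the bounded domain $\Omega_t^k$, and the claim follows. The paper's route is shorter because it outsources the hard analysis to the closed-form expression of $f^{(+,k)}$, whereas your route recycles the discrete machinery already built for Lemma~\ref{lemma_uniform_bound_time_Omega}; both are valid. Incidentally, the step you flag as ``hardest'' is actually benign: for a.e.\ $(x_1,\dots,x_k)$ there is some $\eta_0>0$ with $(x_1,\dots,x_k)\in G_t^{(k)}(\eta_0)$, so the uniform convergence in Theorem~\ref{theorem_spin_correlations} applies there and the discrete bound passes to the limit pointwise.
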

\end{proof}

We now prove Claim \ref{claim_no_contribution_eta}.

\begin{proof}[Proof of Claim \ref{claim_no_contribution_eta}]
Let $k \geq 1$. Observe that almost surely, as $\eta \to 0$, $\mathbb{I}_{G_{t}^{(k)}(\eta)}f_t^{(k)}(x_1, \dots, x_k) \to f_t^{(k)}(x_1, \dots, x_k)$ pointwise. Therefore, to show the lemma, it is enough to show that $\mathbb{I}_{G_{t}^{(k)}(\eta)}f_t^{(k)}(x_1, \dots, x_k)$ is almost surely uniformly bounded in $L^2(\Omega_t^k)$. Going back to the discrete level and using Lemma \ref{lemma_EEpm_EEplus} and Theorem \ref{theorem_spin_correlations}, we have that, almost surely,
\begin{align*}
    \int_{ G_{t}^{(k)}(\eta)} \vert f_t^{(k)}(x_1, \dots, x_k) \vert^2 \prod dx_j &= \lim_{n \to \infty} \sum_{(x_1, \dots, x_k) \in G_{n,t}^{(k)}(\eta)} \delta_n^{\frac{15k}{8}}C_{\sigma}^{-2k} \vert \EEpmtime{n}{t}[\sigma_{x_1} \dots \sigma_{x_k}] \vert^2 \\
    &\leq \lim_{n \to \infty} \sum_{(x_1, \dots, x_k) \in G_{n,t}^{(k)}(\eta)} \delta_n^{\frac{15k}{8}} C_{\sigma}^{-2k}\vert \EE^{+}_{n,t}[\sigma_{x_1} \dots \sigma_{x_k}] \vert^2 \\
    &= \int_{ G_{t}^{(k)}(\eta)} \vert f_t^{(+,k)}(x_1, \dots, x_k) \vert^2 \prod dx_j \\
    &\leq \int_{ \Omega_t^k} \vert f_t^{(+,k)}(x_1, \dots, x_k) \vert^2 \prod dx_j
\end{align*}
where $f_{t}^{(+,k)}:=f_{\Omega_t}^{(+,k)}$ is as in Theorem \ref{theorem_spin_correlations}. Almost sure finiteness of the last integral follows from Lemma \ref{lemma_fineteness_moments_plusBC} in Appendix \ref{sec_appendix} together with the fact since $\gamma$ has law $\PP_{\operatorname{SLE}_3}^{(\Omega,a,b)}$, $\partial \Omega_{t}$ almost surely has Hausdorff dimension $11/8$.
\end{proof}

Let us now turn to the proof of Lemma \ref{lemma_RN_infty}. The proof follows the same strategy as that of the proof of Lemma \ref{lemma_RN_time}. Namely, we first show that each term in the series on the left-hand side of \eqref{RN_infty_core} almost surely converges and then prove estimates on these terms that allow us to use the dominated convergence theorem. These two statements are the content of the two lemmas below. We omit their proof as it is almost identical to that of Lemma \ref{lemma_moments_time} and Lemma \ref{lemma_uniform_bound_time_Omega}.

\begin{lemma} \label{lemma_moments_infty}
Let $\xi \in \{-,+\}$. Almost surely, for any $k \geq 1$,
\begin{equation*}
    \lim_{m \to \infty} \sum_{(x_1, \dots, x_k) \in \Omega_{m, j(\xi)}^k}C_{\sigma}^{-k}\delta_m^{\frac{15k}{8}}\EE_{m,j(\xi)}^{\xi}[\sigma_{x_1}\dots \sigma_{x_k}] = \int_{\Omega_{j(\xi)}^k} f_{j(\xi)}^{(\xi,k)}(x_1,\dots, x_k) \prod_{j=1}^{k}dx_j
\end{equation*}
where $j(\xi)$, $\Omega_{j(\xi)}$ and the functions $f_{j(\xi)}^{(\xi,k)}$ are as in Lemma \ref{lemma_RN_infty}.
\end{lemma}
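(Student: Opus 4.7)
The plan is to follow, essentially verbatim, the proof strategy used for Lemma \ref{lemma_moments_time}, with the only changes being that we now work in one of the two connected components $\Omega_{m,j(\xi)}$ of $\Omega_m\setminus V(\gamma_m)$ (with $j(+)=L$, $j(-)=R$) equipped with uniform $\xi$ boundary conditions rather than in a slit domain $\Omega_{m,t}$ with Dobrushin boundary conditions. By the spin-flip symmetry of the Ising model, we may assume $\xi=+$, so that the reference measures $\EE_{m,L}^{+}$ have uniform $+$ boundary conditions; this slightly simplifies things because the comparison provided by Lemma \ref{lemma_EEpm_EEplus} becomes unnecessary and Lemma \ref{lemma_bound_EEplus} applies directly.

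First, I would decompose the sum on the left-hand side into a contribution from the "good" set $G_{m,L}^{(k)}(\eta):=\{(x_1,\dots,x_k)\in\Omega_{m,L}^{k}:d(x_j,\partial\Omega_L)\ge\eta\text{ and }|x_i-x_j|\ge\eta\}$ and its complement in $\Omega_{m,L}^{k}$. On the good set, the almost sure Carath\'eodory convergence $\hat\Omega_{m,L}\to\Omega_L$ (which follows from $\PP$-almost sure convergence $\gamma_m\to\gamma$ in the topologies \ref{topo_1}--\ref{topo_4}, as recorded in the proof of Proposition \ref{prop_characterization}) together with Theorem \ref{theorem_spin_correlations} yields, for every fixed $k\ge 1$ and $\eta>0$,
\begin{equation*}
\lim_{m\to\infty}\sum_{(x_1,\dots,x_k)\in G_{m,L}^{(k)}(\eta)}C_\sigma^{-k}\delta_m^{15k/8}\EE_{m,L}^{+}[\sigma_{x_1}\dots\sigma_{x_k}]=\int_{G_L^{(k)}(\eta)}f_L^{(+,k)}(x_1,\dots,x_k)\prod_{j=1}^{k}dx_j.
\end{equation*}

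On the complement, I would prove the analogue of Lemma \ref{lemma_uniform_bound_time_Omega}: namely, that the contribution is bounded, uniformly in $m$ (for $m$ large enough) by a quantity that tends to $0$ with $\eta$. The proof of Lemma \ref{lemma_uniform_bound_time_Omega} uses only that $\partial\Omega_{m,t}\subset\partial\Omega_m\cup\gamma_m([0,t_m])$ together with the bound on $|V(\gamma_m)|$ from Lemma \ref{lemma_size_gamma_infty}; since $\partial\Omega_{m,L}\subset\partial\Omega_m\cup V(\gamma_m)$ with the same global bound on $|V(\gamma_m)|$, the decomposition of $\tilde A_{m,L}(\eta)$ into points closer to $\partial\Omega$ and points closer to $\gamma_m$, carried out in the proof of Claim \ref{claim_integral_CR}, goes through unchanged, and so do Claims \ref{claim_integral_Ck_alpha}, \ref{claim_bound_B_eta_k} and \ref{claim_bound_boundary_eta} (the latter two via the Edwards-Sokal coupling of Proposition \ref{prop_ES_coupling_no_field} and the Markov property of the FK-Ising model with wired boundary conditions, as in Lemma \ref{lemma_bound_EEplus}).

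Combining these two ingredients and taking $\eta\to 0$, one obtains the claimed limit provided one checks the continuum analogue of Claim \ref{claim_no_contribution_eta}, namely that $\int_{\Omega_L^k}|f_L^{(+,k)}(x_1,\dots,x_k)|\,\prod_j dx_j<\infty$ almost surely. This follows from Lemma \ref{lemma_fineteness_moments_plusBC} together with the fact that $\gamma$ has law $\PP_{\operatorname{SLE}_3}^{(\Omega,a,b)}$, so that $\partial\Omega_L$ almost surely has Hausdorff dimension $11/8<7/4$, which is the only regularity assumption on the boundary needed in the finiteness statement. The main (but modest) obstacle is precisely this last point: one must verify that the hypotheses of the appendix lemma, stated for a general domain, are met for $\Omega_L$ and $\Omega_R$ almost surely; this is immediate from known SLE$_3$ dimension results once one observes that $\partial\Omega_L\subset\partial\Omega\cup\gamma$ and $\operatorname{dim}_H(\partial\Omega)<7/4$ by hypothesis.
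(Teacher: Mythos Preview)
Your proposal is correct and takes essentially the same approach as the paper, which in fact omits the proof entirely, stating only that it is almost identical to that of Lemma \ref{lemma_moments_time} (and Lemma \ref{lemma_uniform_bound_time_Omega}). Your outline fills in exactly the details one would expect: Carath\'eodory convergence plus Theorem \ref{theorem_spin_correlations} on the good set, the uniform-in-$m$ bound on the complement via the analogues of Claims \ref{claim_integral_CR}--\ref{claim_bound_boundary_eta} (noting that $\partial\Omega_{m,L}\subset\partial\Omega_m\cup V(\gamma_m)$ and invoking Lemma \ref{lemma_size_gamma_infty}), and the continuum integrability via Lemma \ref{lemma_fineteness_moments_plusBC} and the SLE$_3$ dimension.
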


\begin{lemma} \label{lemma_uniform_bound_infty_Omega}
Let $\xi \in \{-,+\}$. There almost surely exists $m_0$ such that almost surely, for any $m \geq m_0$ and any $k \geq 1$,
\begin{equation*}
    \sum_{(x_1,\dots,x_k) \in \Omega_{m,j(\xi)}} \delta_m^{\frac{15k}{8}}\EE_{m,j(\xi)}^{\xi}[\sigma_{x_1} \dots \sigma_{x_k}] \leq C_{k,\infty}^{(\xi)}
\end{equation*}
with $\sum_k\frac{h^k}{k!}C_{k,\infty}^{(\xi)} < \infty$ almost surely.
\end{lemma}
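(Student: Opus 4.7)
The plan is to mirror the strategy used in the proof of Lemma \ref{lemma_uniform_bound_time_Omega}, with some welcome simplifications arising from the fact that the boundary conditions on $\Omega_{m,j(\xi)}$ are pure ($+1$ or $-1$) rather than Dobrushin. As a preliminary reduction, I would invoke the spin-flip symmetry of the Ising model: the identity $\EE^{-}_{m,R}[\sigma_{x_1}\cdots\sigma_{x_k}] = (-1)^k \widetilde{\EE}^{+}_{m,R}[\sigma_{x_1}\cdots\sigma_{x_k}]$, where $\widetilde{\EE}^{+}_{m,R}$ denotes expectation in $\Omega_{m,R}$ under $+1$ boundary conditions on all of $\partial\Omega_{m,R}$, reduces the statement to bounding the absolute value of the sum in the case $\xi = +$.

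The key structural point is that once we are in the $+$ case, Lemma \ref{lemma_bound_EEplus} applies directly to $\EE^{+}_{m,j(\xi)}[\sigma_{x_1}\cdots\sigma_{x_k}]$, with no need to first pass through Lemma \ref{lemma_EEpm_EEplus}. Fixing a small parameter $\beta \in (0, 1/100)$ and $\eta>0$, I would split the sum into a contribution from the good set
\[
G_{m,j(\xi)}^{(k)}(\eta) := \{(x_1,\ldots,x_k) \in \Omega_{m,j(\xi)}^k : d(x_j, \partial\Omega_{j(\xi)}) \geq \eta \text{ and } |x_i - x_j| \geq \eta \text{ for all } i \neq j\}
\]
and a contribution from its complement. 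On $G_{m,j(\xi)}^{(k)}(\eta)$, I would apply Lemma \ref{lemma_bound_EEplus} and then H\"older's inequality with exponents $\alpha, \tilde\alpha > 1$ satisfying $1/\alpha + 1/\tilde\alpha = 1$, $\tilde\alpha/8 < 7/8 - \beta$ and $\alpha/8 < 1$, which separates the sum into a boundary-distance factor and a point-separation factor. These factors are controlled by exact analogues of Claim \ref{claim_integral_CR} and Claim \ref{claim_integral_Ck_alpha}; their proofs go through verbatim for the slit domain $\tilde\Omega_{m,j(\xi)}$, the inputs being the Carath\'eodory convergence $\hat\Omega_{m,j(\xi)} \to \Omega_{j(\xi)}$ and Lemma \ref{lemma_size_gamma_infty} applied to the entire curve $\gamma_m$.

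On the complement, I would further decompose into the set $B^{(k)}_{m,j(\xi)}(\eta)$ of configurations with all points at distance at least $\eta$ from $\partial\Omega_{j(\xi)}$ but some pair closer than $\eta$, and into sets where at least one coordinate lies within $\eta$ of $\partial\Omega_{j(\xi)}$. The corresponding analogues of Claim \ref{claim_bound_B_eta_k} and Claim \ref{claim_bound_boundary_eta} yield bounds of the form $C_1^k \eta \, k^{k/16}$ and $C_2^k \eta^{7/8-\beta} k^{k/16}$ respectively, via the Edwards--Sokal coupling and an application of the Markov property of FK-Ising percolation together with \cite[Lemma~5.4]{RSWbounds}. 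Combining all contributions gives a bound of the form $C_{k,\infty}^{(\xi)} \lesssim D^k k^{k/16}$ for some random constant $D$, which is summable against $h^k/k!$ since $k^{k/16}/k! \sim e^k k^{-15k/16}/\sqrt{2\pi k}$ by Stirling.

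I expect no substantial new obstacles, since all the key inputs (the bound of Lemma \ref{lemma_bound_EEplus}, the geometric estimate Lemma \ref{lemma_size_gamma_infty} for the full curve, and the Carath\'eodory convergence of the slit domains) are already established. The only point requiring verification is that the dyadic decomposition of $A_{m,j(\xi)}(\eta)$ into portions near $\partial\Omega$ and portions near $\gamma_m$ (as in the proof of Claim \ref{claim_integral_CR}) goes through for the full slit $\Omega_{m,j(\xi)}$; this is immediate since Lemma \ref{lemma_size_gamma_infty} applies uniformly to the entire curve, with no dependence on a time parameter. Consequently, only minor cosmetic changes to the arguments given in Section \ref{subsec_controlling_spin_field} are needed.
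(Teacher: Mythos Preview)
Your proposal is correct and follows precisely the route the paper indicates: the paper omits the proof of this lemma, stating that it is ``almost identical to that of Lemma~\ref{lemma_uniform_bound_time_Omega}'', and your outline does exactly this, mirroring the good-set/bad-set decomposition and the four claims, with the natural simplification that under pure $+$ boundary conditions Lemma~\ref{lemma_bound_EEplus} applies directly (and the $\xi=-$ case reduces to $\xi=+$ by spin-flip symmetry).
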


\begin{proof}[Proof of Lemma \ref{lemma_RN_infty}]
Lemma \ref{lemma_RN_infty} directly follows from Lemma \ref{lemma_moments_infty}, Lemma \ref{lemma_uniform_bound_infty_Omega} and dominated convergence.   
\end{proof}

\subsection{Conformal covariance of the limiting interface}

In this section, we show that the law of $\PP_{h}^{(\Omega,a,b)}$ is conformally covariant.

\begin{proof}[Proof of Proposition \ref{prop_conformal_covariance}]
This follows from the expression \eqref{RN_time_theorem_intro} for the Radon-Nikodym derivative of $\PP_{h}^{(\Omega,a,b)}$ with respect to $\PP_{\operatorname{SLE}_3}^{(\Omega,a,b)}$. Let $\psi: \Omega \to \tilde \Omega$ be a conformal map. $\PP_{\operatorname{SLE}_3}^{(\Omega,a,b)}$ is conformally invariant while by Theorem \ref{theorem_spin_correlations}, a change of variable yields that, almost surely, for any $k \geq 1$,
\begin{equation*}
    \int_{\Omega_t^k} [\prod_{j=1}^{k}h(z_j)] f_t^{(\pm, k)}(z_1,\dots,z_k)\prod_{j=1}^{k}dz_j = \int_{\tilde \Omega_t^k}  [\prod_{j=1}^{k}\tilde h(z_j)] \tilde f_t^{(\pm, k)}(z_1,\dots,z_k) \prod_{j=1}^{k}dz_j
\end{equation*}
where we have set $\tilde \Omega_t = \psi(\Omega_t)$, $\tilde f_{t}^{(\pm,k)}:=f_{\tilde \Omega_t}^{(\pm,k)}$ and for $z \in \tilde \Omega$, $\tilde h(z) = \vert (\psi^{-1})'(z) \vert^{\frac{15}{8}}h(\psi^{-1}(z))$. Similarly, $\mathcal{Z}_{h}(\Omega)=\mathcal{Z}_{\tilde h}(\tilde \Omega)$, which concludes the proof.
\end{proof}

\section{Scaling limit of the interface away from near-criticality}

\subsection{The case of a small magnetic perturbation: proof of Proposition \ref{prop_small_H}} \label{sec_proof_small_H}

In this section, we prove Proposition \ref{prop_small_H}. We give the proof in the case $H_x=h\delta^{\frac{15}{8}}g_{1}(\delta)$ with $h \in \mathbb{R}$ but the extension to the case of space-varying external fields that are bounded is straightforward. We keep the same notations as in Section \ref{sec_proof_near_critical} for $\PP_{\delta}^{\pm}$, $\PP_{\delta,h}^{\pm}$, $\PP_{n}^{\pm}$ and $\PP_{n,h}^{\pm}$.

\begin{proof}[Proof of Proposition \ref{prop_small_H}]
The proof of Proposition \ref{prop_small_H} consists of two steps: first showing tightness of $(\gamma_{\delta})_{\delta}$ under $(\PP_{\delta,h}^{\pm})_{\delta}$ in the topology \ref{topo_4} and then characterizing the law of any subsequential limit of $(\gamma_{\delta})_{\delta}$ under $(\PP_{\delta,h}^{\pm})_{\delta}$. For both of these steps, we rely on the observation that for any $\delta > 0$, the law of $\gamma_{\delta}$ under $\PP_{\delta,h}^{\pm}$ is absolutely continuous with respect to that of $\gamma_{\delta}$ under $\PP_{\delta}^{\pm}$ with corresponding Radon-Nikodym derivative given by
\begin{equation} \label{RN_delta_small_h}
    F_{\delta}(\gamma_{\delta}):= \frac{\der \PP_{\delta,h}^{\pm}}{\der \PP_{\delta}^{\pm}}(\gamma_{\delta}) = \frac{1}{\mathcal{Z}_{\delta}} \EE_{\delta}^{\pm}\bigg[ \exp\bigg(h\delta^{\frac{15}{8}}g_1(\delta) \sum_{x \in \Omega_{\delta}} \sigma_x \bigg) \vert \gamma_{\delta}\bigg].
\end{equation}
With this observation, tightness of $(\gamma_{\delta})_{\delta}$ under $(\PP_{\delta,h}^{\pm})_{\delta}$ in the topology \ref{topo_4} follows from the same arguments as those used in the proof of Proposition \ref{prop_tightness}. Indeed, using the same inequalities as in this proof but with $\delta^{15/8}$ replaced by $\delta^{15/8}g_{1}(\delta)$, it is not hard to see that for any $p \geq 1$, $\sup_{\delta >0}\EE_{\delta}[F_{\delta}(\gamma_{\delta})^p]<\infty$. We leave the details to the reader.

To characterize the law of any subsequential limits of $(\gamma_{\delta})_{\delta}$ under $(\PP_{\delta,h}^{\pm})_{\delta}$, we proceed as in the proof of Proposition \ref{prop_characterization}. Let $(\delta_n)_n$ be a sequence such that $\PP_{n,h}^{\pm}$ converges weakly in the topology \ref{topo_4}. We use Theorem \ref{theorem_SLE3} and Skorokhod representation theorem to define on a common probability space $(S,\mathcal{F},\PP)$ a sequence $(\gamma_n)_n$ and a random curve $\gamma$ such that for any $n \geq 1$, $\gamma_n$ has law $\PP_{n}^{\pm}$, $\gamma$ has law $\PP_{\operatorname{SLE}_3}^{(\Omega,a,b)}$ and $\PP$-almost surely, $\gamma_n \to \gamma$ in the topologies \ref{topo_1}--\ref{topo_4}. The random variables $(F_{n}(\gamma_n))_n$ given by \eqref{RN_delta_small_h} (with $\delta=\delta_n$) can then all be defined on $(S, \mathcal{F}, \PP)$. To characterize the limiting law of $\gamma_n$ under $\PP_{n,h}^{\pm}$, we are going to show that $F_{n}(\gamma_n)$ converges in $L^1(\PP)$ to 1. This will imply that the Radon-Nikodym derivative of the law of any subsequential limit of $(\gamma_{\delta})_{\delta}$ under $(\PP_{\delta,h}^{\pm})_{\delta}$ with respect to $\PP_{\operatorname{SLE}_3}^{(\Omega,a,b)}$ is equal to $1$, thus proving Proposition \ref{prop_small_H}.

To establish the above mentioned convergence, we are going to show that
\begin{equation} \label{lim_RN_equal1}
    \lim_{n \to \infty} \EE_{n}^{\pm}[(F_{n}(\gamma_{n})-1)^2] = 0.
\end{equation}
Developing the square and using that $\EE_{n}^{\pm}[F_{n}(\gamma_{n})] = 1$, to show \eqref{lim_RN_equal1}, we see that it suffices to prove that $\EE_{n}^{\pm}[F_{n}(\gamma_{n})^2]$ converges to $1$. Let us set $S_{n}(\gamma_{n}) := \EE_{n}^{\pm}[\exp(h\delta_n^{15/8}g_{1}(\delta_n)\sum_x \sigma_x) \vert \gamma_{n}]$. We are first going to show that $\EE_{n}^{\pm}[S_{n}(\gamma_{n})^2]$ converges to $1$ as $n \to \infty$ by exhibiting lower and upper bounds on $\EE_{n}^{\pm}[S_{n}(\gamma_{n})^2]$ that both converge to $1$ as $n \to \infty$. For the upper bound, we can proceed as in the proof of Proposition \ref{prop_tightness} (with $p=2$) to get that there exists $n_0 \geq 1$ and $c_1,c_2 > 0$ such that for any $n \geq n_0$,
\begin{equation} \label{upper_bound_Sinfty}
    \EE_{n}^{\pm}[S_{n}(\gamma_{n})^2] \leq \exp(6Mc_1g_{1}(\delta_n) + 20M^2c_2g_1(\delta_n)^2)
\end{equation}
where $M:= \max_{x \in \Omega} \vert h(x) \vert$. Observe that the right-hand side of \eqref{upper_bound_Sinfty} converges to $1$ as $n \to \infty$. On the other hand, by Jensen's inequality, $\EE_{n}^{\pm}[S_{n}(\gamma_{n})^2] \geq \mathcal{Z}_{n}(h)^2$. The same reasoning as in the proof of Proposition \ref{prop_tightness} now shows that there exist $n_1 \geq 1$ and $c > 0$ such that for any $n \geq n_1$, $\mathcal{Z}_{n}(h) \geq \exp(-cg_{1}(\delta_n))$. Together with the inequality \eqref{upper_bound_Sinfty}, this implies that $\EE_{n}^{\pm}[S_{n}(\gamma_{n})^2]$ converges to 1 as $n \to \infty$.

To prove \eqref{lim_RN_equal1}, it remains to show $\mathcal{Z}_{n}(h)$ converges to $1$ as $n \to \infty$. For this, we can once again apply the same reasoning as in the proof of Proposition \ref{prop_tightness} (with $p=1$) to get that for any $n \geq n_0$, $\mathcal{Z}_{n}(h)\leq \exp(4Mc_1g_{1}(\delta_n)+8M^2c_2g_1(\delta_n)^2)$. As we have already shown that for any $n \geq n_1$, $\mathcal{Z}_{n}(h) \geq \exp(-cg_1(\delta_n))$, this yields that $\mathcal{Z}_{n}(h) \to 1$ as $n \to \infty$. As explained above, this also completes the proof of Proposition \ref{prop_small_H}. 
\end{proof}

\subsection{The case of a large magnetic perturbation: proof of Proposition \ref{prop_large_H}} \label{sec_proof_large_H}

\subsubsection{The Edwards-Sokal coupling with non-zero magnetic field} \label{subsec_ES_coupling}

For $\delta > 0$, let $(\Omega_{\delta};a_{\delta},b_{\delta})_{\delta}$ be as in Section \ref{sec_assumptions_domain}. The Edwards-Sokal cooupling of Proposition \ref{prop_ES_coupling_no_field} can be extended to the case when the Ising model is perturbed by an external magnetic field, at least when the external field is non-negative and when the boundary conditions are chosen to be $+$ or free. This coupling will be instrumental for the proof of Proposition \ref{prop_large_H}, so let us describe it.

To construct the Edwards-Sokal coupling with an external field, one direct way to proceed is to consider the extended graph obtained from $(V(\Omega_{\delta}), E(\Omega_{\delta}))$ by adding a ghost vertex $g$ to the graph and drawing an edge $(vg)$ between each vertex $v \in V(\Omega_{\delta})$ and $g$. We denote the set of such edges $E^*(\Omega_{\delta})$ and call a function $\omega: E(\Omega_{\delta}) \cup E^*(\Omega_{\delta}) \to \{0,1\}^{\vert  E(\Omega_{\delta}) \vert + \vert  E^*(\Omega_{\delta}) \vert}$ an edge configuration. We say that the edge $e$ is open (in $\omega$) if $\omega(e)=1$. Otherwise, $e$ is said to be closed. We identify $\omega$ with the subgraph whose vertex set is $V(\Omega_{\delta}) \cup g$ and whose edge set is $\{e \in E(\Omega_{\delta}) \cup E^*(\Omega_{\delta}): \omega(e)=1\}$. A cluster of $\omega$ is a connected component of this subgraph. 

With these notations, the FK-Ising percolation measure with parameter $p$ and external field $H$ on $(V(\Omega_{\delta})\cup g, E(\Omega_{\delta})\cup E^*(\Omega_{\delta}))$ is defined as follows: for $\xi \in \{\operatorname{free}, \operatorname{wired}\}$ and $\omega$ an edge configuration,
\begin{equation*}
    \PP_{\delta,p,H}^{\operatorname{FK}, \xi}(\omega) = \frac{1}{\mathcal{Z}_{p,H}}2^{k(\omega^{\xi})}\prod_{e \in E(\Omega_{\delta})} p^{\omega(e)}(1-p)^{1-\omega(e)} \prod_{e^* \in E^*(\Omega_{\delta})} (1-e^{-2H})^{\omega(e^*)}(e^{-2H})^{1-\omega(e^*)}
\end{equation*}
where $\omega^{\xi}$ is the configuration obtained by identifying wired vertices and $k(\omega^{\xi})$ is the number of clusters of $\omega^{\xi}$ not connected to $g$. $\mathcal{Z}_{p,H}$ is a normalizing constant chosen such that $\PP_{\delta,p,H}^{\operatorname{FK}, \xi}$ is a probability measure. Note that when $H \equiv 0$, $\PP_{\delta,p,0}^{\operatorname{FK}, \xi}$ can be seen as a measure on edge configurations in $(V(\Omega_{\delta}), E(\Omega_{\delta}))$ and it is equivalent to the measure $\PP_{\delta,p}^{\operatorname{FK}, \xi}$ introduced in Section \ref{sec_ES_coupling_no_external_field}.

The Edwards-Sokal coupling with external field $H$ goes as follows \cite{EScoupling_magnetic, CamiaFKIsing, Camia_exponential_decay}. One first samples an edge configuration $\omega$ on $(V(\Omega_{\delta})\cup g, E(\Omega_{\delta})\cup E^*(\Omega_{\delta}))$ according to $\PP_{\delta,p,H}^{\operatorname{FK}, \xi}$ with $p=1-e^{-2\beta}$, where $\beta$ is the inverse temperature of the Ising model. Then, all clusters of $\omega$ connected to $g$, and also the cluster of $\omega$ connected to $\partial \Omega_{\delta}$ if $\xi = \operatorname{wired}$, are assigned the spin $+1$. The spins of the remaining clusters, i.e. the clusters not connected to $g$, are chosen by tossing independent fair coins, one for each cluster. This gives rise to a spin configuration in $\Omega_{\delta}$, where each vertex has the spin of its cluster. This spin configuration has the same distribution as the Ising model in $\Omega_{\delta}$ with inverse temperature $\beta$, external field $H$ and boundary conditions $\xi$.

From now on, we will set $\beta = \beta_{c}$ and for $H \geq 0$ and $\xi \in \{\operatorname{free}, \operatorname{wired} \}$, we denote by $\PP_{\delta,H}^{\operatorname{FK}, \xi}$ the corresponding FK-Ising percolation measure with external field $H$. For the proof of Proposition \ref{prop_large_H}, it will be useful to have a version of the Edwards-Sokal coupling with external field in which one first samples an edge configuration $\omega$ on $(V(\Omega_{\delta}), E(\Omega_{\delta}))$ and then, conditionally on $\omega$, samples the set of clusters that are connected to $g$. This procedure does not sample an edge configuration on $(V(\Omega_{\delta})\cup g, E(\Omega_{\delta})\cup E^*(\Omega_{\delta}))$. However, given the edge configuration $\omega$ on $(V(\Omega_{\delta}), E(\Omega_{\delta}))$ and the set of clusters of $\omega$ connected to $g$, one can still sample the sign of the clusters in such a way that the resulting spin configuration in $\Omega_{\delta}$ has the same distribution as a critical Ising spin configuration in $\Omega_{\delta}$ with external field $H$ and boundary conditions $\xi$.

We summarize this alternative procedure in the following proposition, which is \cite[Proposition~1]{Camia_exponential_decay} and \cite[Lemma~4]{Camia_exponential_decay} and appears in an implicit form in \cite{EScoupling_magnetic}. We state these results for free boundary conditions as these are the boundary conditions that we will use in the proof of Proposition \ref{prop_large_H}. Below, for $\omega: E(\Omega_{\delta}) \to \{0,1\}^{\vert E(\Omega_{\delta}) \vert}$, we denote by $\mathcal{C}_{\delta}(\omega)$ the set of clusters of $\omega$ and for $\mathcal{C} \in \mathcal{C}_{\delta}(\omega)$, $\vert \mathcal{C} \vert$ is the number of vertices in $\mathcal{C}$. The event that a cluster $\mathcal{C} \in \mathcal{C}_{\delta}(\omega)$ is connected to the ghost vertex $g$ is denoted by $\{\mathcal{C} \longleftrightarrow g \}$.

\begin{proposition}\label{prop_ES_magnetic}
Let $\hat \PP^{\operatorname{free}}_{\delta, H}$ denote the law of the Edwards-Sokal coupling with external field $H \geq 0$. For $\omega: E(\Omega_{\delta}) \to \{0,1\}^{\vert E(\Omega_{\delta}) \vert}$ and $\mathcal{C} \in \mathcal{C}_{\delta}(\omega)$, we let $\sigma(\mathcal{C})$ denote the spin of $\mathcal{C}$ assigned by the coupling. We then have that
\begin{equation*}
    \PP_{\delta, H}^{\operatorname{FK}, \operatorname{free}}(\mathcal{C} \longleftrightarrow g \vert \omega) = \tanh(H \vert \mathcal{C} \vert), \quad \hat \PP_{\delta,H}^{\operatorname{free}}(\sigma(\mathcal{C})=+1 \vert \omega)= \frac{1}{2}+ \frac{1}{2}\tanh(H \vert \mathcal{C} \vert).
\end{equation*}
Moreover, conditionally on $\omega$, the events $(\{\mathcal{C} \longleftrightarrow g\})_{\mathcal{C} \in \mathcal{C}_{\delta}(\omega)}$ are mutually independent. Similarly, conditionally on $\omega$, the events $(\{\sigma(\mathcal{C})=+1\})_{\mathcal{C} \in \mathcal{C}_{\delta}(\omega)}$ are mutually independent. The Radon-Nikodym derivative of $\mathbb{Q}_{\delta,H}^{\operatorname{free}}$, the marginal law of $\PP_{\delta,H}^{\operatorname{FK}, \operatorname{free}}$ on edge configurations $\omega: E(\Omega_{\delta}) \to \{0,1\}^{\vert E(\Omega_{\delta}) \vert}$, with respect to $\PP_{\delta}^{\operatorname{FK}, \operatorname{free}}$ is given by
\begin{equation*}
    \frac{\der \mathbb{Q}_{\delta,H}^{\operatorname{free}}}{\der \PP_{\delta}^{\operatorname{FK}, \operatorname{free}}}(\omega) = \frac{1}{\mathcal{Z}_{\delta,H}} \prod_{\mathcal{C} \in \mathcal{C}_{\delta}(\omega)} \cosh(H \vert \mathcal{C} \vert)
\end{equation*}
where $\mathcal{Z}_{\delta,H}$ is a normalization constant.
\end{proposition}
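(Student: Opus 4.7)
The plan is to perform a direct computation: sum out the ghost edges in the joint measure $\PP_{\delta,H}^{\operatorname{FK},\operatorname{free}}$ on $\{0,1\}^{E(\Omega_\delta)\cup E^*(\Omega_\delta)}$, cluster by cluster. The key observation is that for a fixed $\omega|_{E(\Omega_\delta)}$ with clusters $\mathcal{C}_1,\dots,\mathcal{C}_m$ of sizes $n_i=|\mathcal{C}_i|$, the ghost edges incident to distinct $\mathcal{C}_i$ are disjoint, and each $\mathcal{C}_i$ becomes connected to $g$ in the extended configuration if and only if at least one of its $n_i$ ghost edges is open. Consequently, the count $k(\omega^\xi)$ of clusters not connected to $g$ equals $\#\{i:\mathcal{C}_i\not\longleftrightarrow g\}$, so both the Bernoulli weights on ghost edges and the factor $2^{k(\omega^\xi)}$ factorize across $i$.

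First I will compute the local contribution of a single cluster $\mathcal{C}_i$. Summing over its ghost-edge configurations, the ``all closed'' case contributes $2(e^{-2H})^{n_i}$ (the factor $2$ reflecting that $\mathcal{C}_i$ still counts as an unconnected cluster), while the ``at least one open'' case contributes $1-e^{-2Hn_i}$. Their sum is $1+e^{-2Hn_i}=2e^{-Hn_i}\cosh(Hn_i)$. Dividing the ``at least one open'' contribution by this total gives
\begin{equation*}
\PP_{\delta,H}^{\operatorname{FK},\operatorname{free}}(\mathcal{C}_i\longleftrightarrow g \mid \omega) = \frac{1-e^{-2Hn_i}}{1+e^{-2Hn_i}} = \tanh(Hn_i),
\end{equation*}
and the factorization over $i$ delivers the mutual conditional independence of the connection events.

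The spin statement then follows by conditioning: the Edwards--Sokal rule forces $\sigma(\mathcal{C}_i)=+1$ whenever $\mathcal{C}_i\longleftrightarrow g$, and otherwise assigns $\pm 1$ by an independent fair coin. Hence
\begin{equation*}
\hat\PP_{\delta,H}^{\operatorname{free}}(\sigma(\mathcal{C}_i)=+1 \mid \omega) = \tanh(Hn_i) + \tfrac12\bigl(1-\tanh(Hn_i)\bigr) = \tfrac12 + \tfrac12\tanh(Hn_i),
\end{equation*}
and the mutual independence of the spins given $\omega$ follows from that of the connection events combined with the independence of the auxiliary coins.

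Finally, for the Radon--Nikodym derivative, summing the joint density over all ghost-edge configurations produces a marginal on $\omega|_{E(\Omega_\delta)}$ proportional to $\prod_e p^{\omega(e)}(1-p)^{1-\omega(e)}\cdot \prod_i 2e^{-Hn_i}\cosh(Hn_i)$. Since $\sum_i n_i=|V(\Omega_\delta)|$ is deterministic, the factor $\prod_i e^{-Hn_i}$ is a constant absorbed into the normalization, while $\prod_i 2 = 2^{k(\omega|_{E(\Omega_\delta)})}$ matches exactly the cluster factor defining $\PP_\delta^{\operatorname{FK},\operatorname{free}}$; dividing yields
\begin{equation*}
\frac{\der \mathbb{Q}_{\delta,H}^{\operatorname{free}}}{\der \PP_\delta^{\operatorname{FK},\operatorname{free}}}(\omega) = \frac{1}{\mathcal{Z}_{\delta,H}}\prod_{\mathcal{C}\in\mathcal{C}_\delta(\omega)}\cosh(H|\mathcal{C}|),
\end{equation*}
as claimed. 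The proof is essentially bookkeeping with no serious obstacle; the only pitfall is to remember that in the extended graph $k(\omega^\xi)$ counts only clusters \emph{not} connected to $g$, so a cluster merging with $g$ does not contribute the usual factor of $2$.
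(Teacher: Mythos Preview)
Your proof is correct. The paper does not actually prove this proposition: it states the result and attributes it to \cite[Proposition~1]{Camia_exponential_decay}, \cite[Lemma~4]{Camia_exponential_decay} and \cite{EScoupling_magnetic}. Your direct computation---summing out the ghost edges cluster by cluster, using that $k(\omega^{\xi})$ counts only clusters not touching $g$ so that the contribution of each $\mathcal{C}_i$ is $2e^{-2Hn_i}+(1-e^{-2Hn_i})=1+e^{-2Hn_i}=2e^{-Hn_i}\cosh(Hn_i)$---is exactly the standard argument behind those references, and every step checks out. In particular, the identification $\prod_i 2 = 2^{k(\omega|_{E(\Omega_\delta)})}$ and the absorption of $\prod_i e^{-Hn_i}=e^{-H|V(\Omega_\delta)|}$ into the normalization are handled correctly.
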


Let us also record two stochastic domination results for the measures $\PP_{\delta,H}^{\operatorname{FK}, \operatorname{free}}$, $\QQ_{\delta,H}^{\operatorname{free}}$ and $\PP_{\delta}^{\operatorname{FK}, \operatorname{free}}$. They are established in \cite[Lemma~2]{Camia_exponential_decay} and \cite[Theorem~6]{EScoupling_magnetic}.

\begin{lemma} \label{lemma_stochastic_domination}
For any $H \geq 0$, $\PP_{\delta,H}^{\operatorname{FK}, \operatorname{free}}$ stochastically dominates $\PP_{\delta}^{\operatorname{FK}, \operatorname{free}}$. Similarly, for any $H \geq 0$, $\QQ_{\delta,H}^{\operatorname{free}}$ stochastically dominates $\PP_{\delta}^{\operatorname{FK}, \operatorname{free}}$.
\end{lemma}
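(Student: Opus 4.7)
The plan is to deduce both stochastic dominations from the FKG inequality for $\PP_{\delta}^{\operatorname{FK},\operatorname{free}}$ (Proposition \ref{prop_FKG_FK}) by exploiting the explicit Radon--Nikodym derivative recorded in Proposition \ref{prop_ES_magnetic}. The two claims are essentially the same statement viewed on two different configuration spaces, so I would prove the marginal statement first and then transfer it to the enlarged space.

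I would begin with the second claim. By Proposition \ref{prop_ES_magnetic},
\begin{equation*}
    \frac{\der \QQ_{\delta,H}^{\operatorname{free}}}{\der \PP_{\delta}^{\operatorname{FK},\operatorname{free}}}(\omega) = \frac{1}{\mathcal{Z}_{\delta,H}} F(\omega), \qquad F(\omega) := \prod_{\mathcal{C} \in \mathcal{C}_{\delta}(\omega)} \cosh(H \vert \mathcal{C} \vert).
\end{equation*}
The crucial observation is that $F$ is a nonnegative \emph{increasing} function on $\{0,1\}^{E(\Omega_{\delta})}$. Indeed, if $\omega \leq \omega'$ then each cluster $\mathcal{C}'$ of $\omega'$ is a disjoint union of clusters $\mathcal{C}_1, \dots, \mathcal{C}_r$ of $\omega$ with $\sum_i \vert \mathcal{C}_i \vert = \vert \mathcal{C}' \vert$, so it suffices to check that $\cosh(H(\vert \mathcal{C}_1 \vert + \dots + \vert \mathcal{C}_r \vert)) \geq \prod_i \cosh(H \vert \mathcal{C}_i \vert)$. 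This reduces inductively to the two-term inequality $\cosh(a+b) = \cosh(a)\cosh(b) + \sinh(a)\sinh(b) \geq \cosh(a)\cosh(b)$, valid for $a,b \geq 0$.

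Once monotonicity of $F$ is in hand, the desired domination is immediate: for any nonnegative increasing $g: \{0,1\}^{E(\Omega_{\delta})} \to \mathbb{R}$, the FKG inequality of Proposition \ref{prop_FKG_FK} applied to $F$ and $g$ yields
\begin{equation*}
    \EE_{\QQ_{\delta,H}^{\operatorname{free}}}[g] = \frac{\EE_{\PP_{\delta}^{\operatorname{FK},\operatorname{free}}}[F g]}{\EE_{\PP_{\delta}^{\operatorname{FK},\operatorname{free}}}[F]} \geq \frac{\EE_{\PP_{\delta}^{\operatorname{FK},\operatorname{free}}}[F]\,\EE_{\PP_{\delta}^{\operatorname{FK},\operatorname{free}}}[g]}{\EE_{\PP_{\delta}^{\operatorname{FK},\operatorname{free}}}[F]} = \EE_{\PP_{\delta}^{\operatorname{FK},\operatorname{free}}}[g],
\end{equation*}
which is the definition of stochastic domination.

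For the first claim I would view $\PP_{\delta}^{\operatorname{FK},\operatorname{free}}$ as a measure on the enlarged configuration space $\{0,1\}^{E(\Omega_{\delta}) \cup E^{\ast}(\Omega_{\delta})}$ by declaring every ghost edge closed almost surely (which corresponds to $H=0$). For any increasing function $g$ on this enlarged space, monotonicity of $g$ in the ghost coordinates gives $\EE_{\PP_{\delta,H}^{\operatorname{FK},\operatorname{free}}}[g(\omega,\omega^{\ast})] \geq \EE_{\PP_{\delta,H}^{\operatorname{FK},\operatorname{free}}}[g(\omega,\mathbf{0})]$, and since the marginal of $\PP_{\delta,H}^{\operatorname{FK},\operatorname{free}}$ on $E(\Omega_{\delta})$ is exactly $\QQ_{\delta,H}^{\operatorname{free}}$, the right-hand side equals $\EE_{\QQ_{\delta,H}^{\operatorname{free}}}[g(\cdot,\mathbf{0})]$, which in turn dominates $\EE_{\PP_{\delta}^{\operatorname{FK},\operatorname{free}}}[g(\cdot,\mathbf{0})]$ by the second claim. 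The only genuinely non-routine step in this whole plan is the cluster-merging monotonicity of $F$; once it is established, everything else is a one-line application of FKG together with the marginal identity from Proposition \ref{prop_ES_magnetic}.
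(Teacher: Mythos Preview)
Your argument is correct. The paper does not give its own proof of this lemma but simply cites \cite[Lemma~2]{Camia_exponential_decay} and \cite[Theorem~6]{EScoupling_magnetic}, so there is nothing to compare against directly; what you have written is the standard self-contained argument (monotonicity of $\omega \mapsto \prod_{\mathcal{C}} \cosh(H\vert\mathcal{C}\vert)$ via the addition formula for $\cosh$, followed by FKG for $\PP_{\delta}^{\operatorname{FK},\operatorname{free}}$, and then a marginal-projection step for the ghost edges) and is presumably close to what those references contain.
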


\subsubsection{Proof of Proposition \ref{prop_large_H}}

In this section, we prove Proposition \ref{prop_large_H}. We give the proof in the case $h>0$ but it could easily be adapted to handle the case of a non-constant perturbation $h:x\mapsto h(x)$, as long as $h$ is lower bounded by a positive constant.

\begin{proof}[Proof of Proposition \ref{prop_large_H}]
Let $\eta > 0$. For $\delta > 0$, we set $\Omega_{\delta}^{-}(\eta):=\{ x \in \Omega_{\delta}: d(x,\partial \Omega_{\delta}^{-}) \leq \eta \}$. We are going to show that
\begin{equation} \label{lim_proba_intersection}
    \lim_{\delta \to 0} \PP_{\delta,h}^{\pm}[\gamma_{\delta} \cap (\Omega_{\delta} \setminus \Omega_{\delta}^{-}(\eta)) \neq \emptyset] = 0.
\end{equation}
The first step is to turn the Dobrushin boundary conditions on $\partial \Omega_{\delta}$ into $+1$ boundary conditions, that are easier to handle. For this, let us start by briefly recalling the results of \cite[Section~5.1]{armExponentsIsing}. A spin configuration in $\Omega_{\delta}$ gives rise to an edge configuration in $\Omega_{\delta}$: an edge is open if its two endvertices have the same spin, closed otherwise. Let $E$ be an event measurable with respect to the edge configuration in $\Omega_{\delta} \setminus \Omega_{\delta}^{-}(\eta/2)$. As $E$ is measurable with respect to edges at macroscopic distance from $\partial \Omega_{\delta}^{-}$, one can expect that the boundary conditions on $\partial \Omega_{\delta}^{-}$ do not really influence the probability of $E$: this phenomena is called spatial mixing. More precisely, \cite[Section~5.1]{armExponentsIsing} shows the following. Define $a(\eta/2)$, respectively $b(\eta/2)$, as the intersection point of $\partial B(a,\eta/2)$, respectively $\partial B(b,\eta/2)$, with $\partial \Omega^{+}$. If this intersection point is not unique, then choose the first one encountered when tracing $\partial \Omega^{+}$ starting from $a$, respectively $b$. For $\delta > 0$, let $a_{\delta}(\eta/2)$, respectively $b_{\delta}(\eta/2)$, be the discrete approximation of $a(\eta/2)$, respectively $b(\eta/2)$. Denote by $\{ (b_{\delta}b_{\delta}(\eta/2)) \overset{+}{\longleftrightarrow} (a_{\delta}(\eta/2)a_{\delta}) \operatorname{in} \Omega_{\delta}^{-}(\eta/2) \}$ the event that the topological rectangle $\Omega_{\delta}^{-}(\eta/2)$ is crossed by a path of $+1$ spins staying in $\Omega_{\delta}^{-}(\eta/2)$ and going from the boundary arc $(b_{\delta}b_{\delta}(\eta/2))$ to the boundary arc $(a_{\delta}(\eta/2)a_{\delta})$. Here, the boundary arcs are oriented counter-clockwise. The results of \cite[Section~5.1]{armExponentsIsing} show that if there exists $c>0$ and $\delta_0 >0$ such that for any $\delta>0$,
\begin{equation} \label{lower_bound_crossing}
    \PP_{\delta,h}^{\pm}[(b_{\delta}b_{\delta}(\eta/2)) \overset{+}{\longleftrightarrow} (a_{\delta}(\eta/2)a_{\delta}) \operatorname{in} \Omega_{\delta}^{-}(\eta/2) ] \geq c,
\end{equation}
then there exists $C>0$ such that for any $0<\delta < \delta_0$, $\PP_{\delta,h}^{\pm}[E]\leq C\PP_{\delta,h}^{+}[E]$.

Below, we will apply this result with the event $E=\mathcal{C}^{\pm}(\eta/2,\eta)$, where $\mathcal{C}^{\pm}(\eta/2,\eta)$ is defined as follows. Define $a(\eta)$, $b(\eta)$, $a_{\delta}(\eta)$ and $b_{\delta}(\eta)$ as $a(\eta/2)$, $b(\eta/2)$, $a_{\delta}(\eta/2)$ and $b_{\delta}(\eta/2)$ but with $\eta/2$ replaced by $\eta$. Consider the topological rectangle $(R_{\delta}; a_{\delta}(\eta/2), b_{\delta}(\eta/2), b_{\delta}(\eta),b_{\delta}(\eta))$, where the four marked points are in counterclockwise order and $R_{\delta}:= \Omega_{\delta}^{-}(\eta) \setminus \Omega_{\delta}^{-}(\eta/2)$. With these notations, the event $\mathcal{C}^{\pm}(\eta/2,\eta)$ is the event that there exists a path $\tilde \gamma$ on the dual of $\Omega_{\delta}$ going from the boundary arc $(a_{\delta}(\eta/2)b_{\delta}(\eta/2))$ to the boundary arc $(a_{\delta}(\eta)b_{\delta}(\eta))$ such the following is fulfilled: if $x,y \in \Omega_{\delta} \cup \partial \Omega_{\delta}$ are adjacent to $\tilde \gamma$, then $\sigma_x = \sigma_y$ if $x$ and $y$ are on the same side of $\tilde \gamma$, and $\sigma_x = -\sigma_y$ if $x$ and $y$ are on different sides of $\tilde \gamma$. Here, we say that a vertex is adjacent to $\tilde \gamma$ if it is the endpoint of a primal edge crossed by $\tilde \gamma$. Moreover, as before, the boundary arcs $(a_{\delta}(\eta/2)b_{\delta}(\eta/2))$ and $(a_{\delta}(\eta)b_{\delta}(\eta))$ are oriented counter-clockwise. Observe now that the event $\mathcal{C}^{\pm}(\eta/2,\eta)$ is measurable with respect to the edge configuration in $\Omega_{\delta} \setminus \Omega_{\delta}^{-}(\eta/2)$ and that
\begin{equation} \label{ineq_intersection_path_pm}
    \PP_{\delta,h}^{\pm}[\gamma_{\delta} \cap (\Omega_{\delta} \setminus \Omega_{\delta}^{-}(\eta)) \neq \emptyset] \leq \PP_{\delta,h}^{\pm}[\mathcal{C}^{\pm}(\eta/2,\eta)].
\end{equation}
As explained above, to change the boundary conditions and show that there exist $C>0$ and $\delta_0 > 0$ such that for any $0 < \delta < \delta_0$,
\begin{equation} \label{ineq_bdry_cond_path_pm}
    \PP_{\delta,h}^{\pm}[\mathcal{C}^{\pm}(\eta/2,\eta)] \leq C\PP_{\delta,h}^{+}[\mathcal{C}^{\pm}(\eta/2,\eta)],
\end{equation}
we must show that the inequality \eqref{lower_bound_crossing} is satisfied. Here, we recall that under $\PP_{\delta,h}^{+}$, the boundary conditions of the Ising model in $\Omega_{\delta}$ are $+1$. Using the Markov property of the Ising model and the fact that $\{ (b_{\delta}b_{\delta}(\eta/2)) \overset{+}{\longleftrightarrow} (a_{\delta}(\eta/2)a_{\delta}) \operatorname{in} \Omega_{\delta}^{-}(\eta/2)\}$ is an increasing event, we have that
\begin{align*}
    \PP_{\delta,h}^{\pm}\big[(b_{\delta}b_{\delta}(\eta/2)) \overset{+}{\longleftrightarrow} (a_{\delta}(\eta/2)a_{\delta}) \operatorname{in} \Omega_{\delta}^{-}(\eta/2) \big] &\geq \PP_{\Omega_{\delta}^{-}(\eta/2),h}^{\pm}\big[ (b_{\delta}b_{\delta}(\eta/2)) \overset{+}{\longleftrightarrow} (a_{\delta}(\eta/2)a_{\delta})\big] \\
    &\geq \PP_{\Omega_{\delta}^{-}(\eta/2)}^{\pm}\big[ (b_{\delta}b_{\delta}(\eta/2)) \overset{+}{\longleftrightarrow} (a_{\delta}(\eta/2)a_{\delta}) \big]
\end{align*}
where $\PP_{\Omega_{\delta}^{-}(\eta/2)}^{\pm}$ is the probability measure on Ising spin configurations in $\Omega_{\delta}^{-}(\eta/2)$ with $-1$ boundary conditions on $\partial \Omega_{\delta}^{-}$ and the boundary arc $(b_{\delta}(\eta/2)a_{\delta}(\eta/2))$ and $+1$ boundary conditions on the boundary arcs $(a_{\delta}(\eta/2)a_{\delta})$ and $(b_{\delta}b_{\delta}(\eta/2))$. By \cite[Corollary~1.7]{crossing-rectangles}, there exist $c=c(\eta)>0$ and $\delta_0 > 0$ such that for any $0 < \delta < \delta_0$,
\begin{equation*}
    \PP_{\Omega_{\delta}(\eta/2)}^{\pm}\big[(b_{\delta}b_{\delta}(\eta/2)) \overset{+}{\longleftrightarrow} (a_{\delta}(\eta/2)a_{\delta})\big] \geq c,
\end{equation*}
which yields \eqref{ineq_bdry_cond_path_pm}.

With the inequality \eqref{ineq_bdry_cond_path_pm} at hands, the next step of the proof is to replace $\mathcal{C}^{\pm}(\eta/2,\eta)$ by an event whose probability is easier to estimate. To this end, define $\{ (a_{\delta}(\eta)a_{\delta}(\eta/2)) \overset{+}{\longleftrightarrow} (b_{\delta}(\eta/2)b_{\delta}(\eta)) \operatorname{in} R_{\delta}\}$ as the event that there exits a crossing of $R_{\delta}$ by a path of $+1$ spins going from the boundary arc $(a_{\delta}(\eta)a_{\delta}(\eta/2))$ to the boundary arc $(b_{\delta}(\eta/2)b_{\delta}(\eta))$ and staying in $R_{\delta}$. Observe that if $\{ (a_{\delta}(\eta)a_{\delta}(\eta/2)) \overset{+}{\longleftrightarrow} (b_{\delta}(\eta/2)b_{\delta}(\eta)) \operatorname{in} R_{\delta}\}$ happens, then $\mathcal{C}^{\pm}(\eta/2,\eta)$ cannot happen. Therefore,
\begin{equation} \label{ineq_path_pm_long_plus_path}
    \PP_{\delta,h}^{+}[\mathcal{C}^{\pm}(\eta/2,\eta)] \leq 1 - \PP_{\delta,h}^{+}\big[ (a_{\delta}(\eta)a_{\delta}(\eta/2)) \overset{+}{\longleftrightarrow} (b_{\delta}(\eta/2)b_{\delta}(\eta)) \operatorname{in} R_{\delta} \big].
\end{equation}
We are now going to show that the $\liminf$ as $\delta \to 0$ of the probability on the right-hand side above is equal to 1. For this, we will use the Edwards-Sokal coupling for the Ising model with an external field, see Section \ref{subsec_ES_coupling}. For $K \in \mathbb{N}$, we let $\mathcal{C}_{\delta}(K)$ be the event that there exists a sequence $\mathcal{C}_1, \dots , \mathcal{C}_k$ of open clusters in $\Omega_{\delta}$ such that $k \leq K$, $\vert \mathcal{C}_i \vert \geq \delta^{-15/8}/K$ for each $i$ and one can find a path $\tilde \gamma$ staying in $R_{\delta}$ that joins $(a_{\delta}(\eta)a_{\delta}(\eta/2))$ to $(b_{\delta}(\eta/2)b_{\delta}(\eta))$ such that the edges traversed by $\tilde \gamma$ are either open edges of some $\mathcal{C}_{i}$ or closed edges of the form $(xy)$ with $x \in \mathcal{C}_i$, $y \in \mathcal{C}_j$ and $i \neq j$. Here, we stress that connections are consider in $(V(\Omega_{\delta}) \cup \partial \Omega_{\delta}, E(\Omega_{\delta}))$. We further define the event $\mathcal{C}_{\delta}(g;K)$ as $\mathcal{C}_{\delta}(K)$ but we additionally require that all clusters $(\mathcal{C}_i)_i$ visited by the path $\tilde \gamma$ are connected to the ghost vertex $g$.

In the Edwards-Sokal coupling with an external field, on the event $\mathcal{C}_{\delta}(g;K)$, the clusters visited by $\tilde \gamma$ all have spin $+1$. Thus, we have that $\mathcal{C}_{\delta}(g;K) \subset \{ (a_{\delta}(\eta)a_{\delta}(\eta/2)) \overset{+}{\longleftrightarrow} (b_{\delta}(\eta/2)b_{\delta}(\eta)) \operatorname{in} R_{\delta} \}$. We note that on the event $\mathcal{C}_{\delta}(g;K)$, no restriction is imposed on the edges of $\tilde \gamma$ joining the boundary arcs $(a_{\delta}(\eta)a_{\delta}(\eta/2))$ and $(b_{\delta}(\eta/2)b_{\delta}(\eta))$ to some vertex in some $\mathcal{C}_i$: these edges may be open or closed. This is because under $\PP_{\delta,h}^{+}$, the Ising model has $+1$ boundary conditions and therefore, the spins on $(a_{\delta}(\eta)a_{\delta}(\eta/2))$ and  $(b_{\delta}(\eta/2)b_{\delta}(\eta))$ are $+1$.

Since $\mathcal{C}_{\delta}(g;K)$ is an increasing event, by Proposition \ref{prop_ES_magnetic}, we obtain that
\begin{align} \label{ineq_C_delta_ghost}
    \PP_{\delta,h}^{+}\big[ (a_{\delta}(\eta)a_{\delta}(\eta/2)) \overset{+}{\longleftrightarrow} (b_{\delta}(\eta/2)b_{\delta}(\eta)) \operatorname{in} R_{\delta} \big] &\geq \hat \PP_{\delta,H}^{\operatorname{wired}}[\mathcal{C}_{\delta}(g;K)] \nonumber \\
    &\geq \hat \PP_{\delta,H}^{\operatorname{free}}[\mathcal{C}_{\delta}(g;K)] = \hat \PP_{\delta,H}^{\operatorname{free}}[\mathcal{C}_{\delta}(g;K) \vert \mathcal{C}_{\delta}(K)] \hat \PP_{\delta, H}^{\operatorname{free}}[\mathcal{C}_{\delta}(K)].
\end{align}
Using that $\tanh(x) \leq 1$ and $x \mapsto \tanh(x)$ is increasing, Proposition \ref{prop_ES_magnetic} yields that
\begin{equation} \label{ineq_cond_C_delta_ghost}
    \hat \PP_{\delta,H}^{\operatorname{free}}[] \mathcal{C}_{\delta}(g;K) \vert \mathcal{C}_{\delta}(K)] = \hat \EE_{\delta,H}^{\operatorname{free}}\big[ \prod_{\mathcal{C}_i \cap \tilde \gamma \neq \emptyset} \tanh(H(\delta)\vert \, \mathcal{C}_i \vert) \vert \mathcal{C}_{\delta}(K) \big] \geq \bigg[\tanh\bigg( H(\delta) \frac{\delta^{-\frac{15}{8}}}{K}\bigg)\bigg]^{K} = \bigg[\tanh \bigg( \frac{g_{2}(\delta)}{K}\bigg)\bigg]^K.
\end{equation}
On the other hand, by stochastic domination, see Lemma \ref{lemma_stochastic_domination}, $\PP_{\delta,H}^{\operatorname{free}}[\mathcal{C}_{\delta}(K)] \geq \PP_{\delta}^{\operatorname{FK}, \operatorname{free}}[\mathcal{C}_{\delta}(K)]$. Since $\mathcal{C}_{\delta}(K)$ is an increasing event, we then arrive at
\begin{equation} \label{ineq_C_delta_K}
    \PP_{\delta,H}^{\operatorname{free}}[\mathcal{C}_{\delta}(K)] \geq \PP_{R_{\delta}}^{\operatorname{FK}, \operatorname{free}}[\mathcal{C}_{\delta}(K)]
\end{equation}
where $\PP_{R_{\delta}}^{\operatorname{FK}, \operatorname{free}}$ is the probability measure on FK-Ising configurations in $R_{\delta}$ with free boundary conditions. We now claim the following.

\begin{claim} \label{claim_proba_path_clusters}
    Let $\eps > 0$. There exists $K=K(\eps)<\infty$ such that $\liminf_{\delta \to 0} \PP_{R_{\delta}}^{\operatorname{FK}, \operatorname{free}}[\mathcal{C}_{\delta}(K)] > 1-\eps$.
\end{claim}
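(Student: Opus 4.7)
The strategy is to pass to the scaling limit. By \cite{cvg-CLE-CME}, the critical FK-Ising cluster boundaries in $R_\delta$ with free boundary conditions jointly converge, together with the rescaled vertex-counting measures $\delta^{15/8}\sum_{x\in\mathcal{C}}\delta_x$ on each cluster $\mathcal{C}$, to CLE$_{16/3}$ in $R$ together with its associated conformal measure ensemble CME$_{16/3}$. Smoothness of $\partial\Omega$ ensures that $R$ is a topological rectangle with boundary regular enough for the convergence to apply up to the boundary, and this is where that assumption enters. The plan is to verify the continuum analog of $\mathcal{C}_\delta(K)$ and then transfer back to the discrete.

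Step 1: I would define the continuum event $\widetilde{\mathcal{C}}(K)$ as the existence of $k\leq K$ CLE$_{16/3}$-clusters $\mathcal{C}_1,\dots,\mathcal{C}_k$ in $R$, each of CME$_{16/3}$-mass at least $1/K$, arranged into a chain where consecutive clusters touch and the first, respectively last, cluster touches the short arc corresponding to $(a(\eta)a(\eta/2))$, respectively $(b(\eta/2)b(\eta))$. I would then show $\PP[\widetilde{\mathcal{C}}(K)]\to 1$ as $K\to\infty$. The two inputs are: (i) a continuum RSW-type statement asserting that $R$ is almost surely crossed by some chain of CLE$_{16/3}$-clusters joining its two short sides, which follows from the discrete FK-Ising RSW estimates of \cite{crossing-rectangles} combined with the convergence of cluster boundaries; (ii) since CME$_{16/3}$ is a finite measure on $R$, for any $\varepsilon>0$ only finitely many clusters have mass above a given threshold, so by taking $K$ large enough one may restrict attention to crossing chains composed entirely of clusters of mass at least $1/K$, after discarding short-diameter clusters using the standard one-arm estimate for $\kappa=16/3$.

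Step 2: I would transfer $\widetilde{\mathcal{C}}(K)$ back to the discrete using Skorokhod representation on the joint convergence above. On the coupled probability space, each macroscopic continuum cluster is the Hausdorff-limit of a sequence of discrete clusters whose rescaled counting measures converge weakly to the corresponding CME$_{16/3}$-mass. On the event $\widetilde{\mathcal{C}}(K)$, for every $\delta$ small enough the matched discrete clusters will have cardinality at least $\delta^{-15/8}/K'$ for some $K' > K$, and consecutive discrete clusters approximating touching continuum clusters will share a dual edge, allowing the construction of the required path $\tilde\gamma$ inside $R_\delta$. Combined with Step 1 this yields $\liminf_{\delta\to 0}\PP_{R_\delta}^{\operatorname{FK},\operatorname{free}}[\mathcal{C}_\delta(K')]\geq\PP[\widetilde{\mathcal{C}}(K)]>1-\varepsilon$.

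The main obstacle is the discrete realization of ``touching'' in Step 2. Two continuum CLE$_{16/3}$-clusters may touch at a single point, while the discrete event requires an actual primal edge $(xy)$ with $x$ in one cluster and $y$ in the other; one must exclude the bad scenario where the corresponding discrete clusters come within $O(\delta)$ without being directly connected. Uniform control of this comes from polychromatic arm-exponent bounds for FK-Ising, which decay in $\delta$ fast enough to absorb the finitely many pinch points of the chain. A secondary technical point is that the chain in Step 1 must also touch the short sides of $R$, which are part of $\partial R$ but not part of $\partial\Omega$; one handles these boundary contacts by an identical RSW/arm-exponent argument, using smoothness of $\partial\Omega$ to ensure that neighborhoods of the short sides inside $R$ are uniformly nice topological rectangles.
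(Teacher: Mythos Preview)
Your approach is essentially the same as the paper's. The paper does not give a self-contained argument: it observes that under $\PP_{R_\delta}^{\operatorname{FK},\operatorname{free}}$ the event $\mathcal{C}_\delta(K)$ coincides with the event $E^a(K,N)$ (with $a=\delta$, $N=1$) of \cite[Lemmas~5 and~7]{Camia_exponential_decay}, defers to those lemmas, and only explains how their proof (stated for rectangles) extends to $R$ provided $\partial R$ has Minkowski dimension $1$ so that boundary $010$-type arm events can be excluded. Your sketch is precisely an outline of the argument in \cite{Camia_exponential_decay}: pass to the joint CLE$_{16/3}$/CME$_{16/3}$ limit of \cite{cvg-CLE-CME}, establish the continuum analog, and transfer back via Skorokhod and arm estimates. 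You have correctly located where smoothness of $\partial\Omega$ enters. One small geometric slip: the short sides $(a_\delta(\eta)a_\delta(\eta/2))$ and $(b_\delta(\eta/2)b_\delta(\eta))$ of $R_\delta$ \emph{are} arcs of $\partial\Omega^+$, not artificial boundaries; it is the long sides that are distance-level-sets, and these are smooth for small $\eta$ because $\partial\Omega$ is. Also note that the $010$-arm control you place in Step~2 (discrete realization of touching) is equally what justifies your Step~1(ii), i.e.\ that the crossing chain need not pass through small bridging clusters; the paper highlights exactly this point.
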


We postpone the proof of Claim \ref{claim_proba_path_clusters} to the end and first explain how to conclude the proof of the proposition. Let $\eps > 0$. Claim \ref{claim_proba_path_clusters} together with the inequalities \eqref{ineq_C_delta_ghost}, \eqref{ineq_cond_C_delta_ghost} and \eqref{ineq_C_delta_K} yield that there exists $K > 0$ such that
\begin{equation*}
    \liminf_{\delta \to 0} \PP_{\delta,h}^{+}\big[ (a_{\delta}(\eta)a_{\delta}(\eta/2)) \overset{+}{\longleftrightarrow} (b_{\delta}(\eta/2)b_{\delta}(\eta)) \operatorname{in} R_{\delta} \big] > (1-\eps) \liminf_{\delta \to 0} \bigg[\tanh \bigg( \frac{g_{2}(\delta)}{K}\bigg)\bigg]^K.
\end{equation*}
Since $g_{2}(\delta) \to \infty$ as $\delta \to 0$, the $\liminf$ on the right-hand side above is equal to $1$. By choosing $\eps >0$ small enough, this implies that for any $\rho >0$, there exists $0<K<\infty$ such that $\liminf_{\delta \to 0}  \PP_{\delta,h}^{+}\big[ (a_{\delta}(\eta)a_{\delta}(\eta/2)) \overset{+}{\longleftrightarrow} (b_{\delta}(\eta/2)b_{\delta}(\eta)) \operatorname{in} R_{\delta} \big] > 1- \rho$. Going back to \eqref{ineq_bdry_cond_path_pm} and \eqref{ineq_path_pm_long_plus_path}, this shows that for any $\rho > 0$, 
\begin{equation*}
    \limsup_{\delta \to 0} \PP_{\delta,h}^{\pm}[\mathcal{C}^{\pm}(\eta/2,\eta)] < C\rho,
\end{equation*}
which, by \eqref{ineq_intersection_path_pm}, establishes \eqref{lim_proba_intersection} and concludes the proof.
\end{proof}

Let us now turn to the proof of Claim \ref{claim_proba_path_clusters}.

\begin{proof}[Proof of Claim \ref{claim_proba_path_clusters}]
The proof is similar to the proofs of \cite[Lemma~5]{Camia_exponential_decay} and \cite[Lemma~7]{Camia_exponential_decay}. Indeed, under $\PP_{R_{\delta}}^{\operatorname{FK},\operatorname{free}}$, the event $\mathcal{C}_{\delta}(K)$ is the same as the event $E^a(K,N)$ with $a=\delta$ and $N=1$ of \cite[Lemma~7]{Camia_exponential_decay}. This is because under $\PP_{R_{\delta}}^{\operatorname{FK},\operatorname{free}}$, on the event $\mathcal{C}_{\delta}(g;K)$, the path $\tilde \gamma$ automatically stays in $R_{\delta}$. The proofs of \cite[Lemma~5]{Camia_exponential_decay} and \cite[Lemma~7]{Camia_exponential_decay} are given when the domain is a rectangle of a fixed aspect ratio. However, one can observe that it is easy to extend the arguments to the case of an arbitrary bounded and simply connected domain $D$ and its discrete approximation $D_{\delta}$, as long as one considers boundary arcs of $D$ that have positive harmonic measure seen from a point in the interior of the domain. The only restriction that must be imposed is that $\partial D$ has Minkowski dimension $1$: if $\partial D$ is too rough, one may not be able to rule out that an arm event of type $010$ takes place near $\partial D_{\delta}$ in the limit $\delta \to 0$. This point is crucial to deduce results for $\PP_{R_{\delta}}^{\operatorname{FK}, \operatorname{free}}(\mathcal{C}_{\delta}(K))$ from results obtained for CLE$_{16/3}$ and CME$_{16/3}$ in the continuum. We leave the details to the reader.
\end{proof}

\subsection{On the $h \to \infty$ limit of $\PP_{h}^{(\Omega,a,b)}$: proof of Proposition \ref{prop_h_limit}} \label{sec_h_limit}

\begin{proof}[Proof of Proposition \ref{prop_h_limit}]
For $h \in \mathbb{R}$ and $\eta > 0$, set $p(h,\eta):= \PP_{h}^{(\Omega,a,b)}[\gamma \cap (\Omega \setminus \Omega^{-}(\eta)) \neq \emptyset]$. To prove the proposition, we argue by contradiction. Assume that there exists $\eta > 0$ such that $p(h,\eta)$ does not converge to $0$ as $h \to \infty$. Then, there exist $\eps > 0$ and a sequence $(h_n)_n$ such that $\lim_{n \to \infty} h_n = \infty$ and for any $n \geq 1$, $p(h_n,\eta) \geq \eps$. Moreover, with $H(n,\delta)=C_{\sigma}^{-1}h_n\delta^{\frac{15}{8}}$, Theorem \ref{theorem_magnetic_SLE3}, when considering convergence in the topology \ref{topo_4}, implies that for any $n \geq 1$,
\begin{equation} \label{probab_greater_eps}
    \lim_{\delta \to 0} \PP_{\delta,H(n,\delta)}^{\pm}[\gamma_{\delta} \cap (\Omega_{\delta} \setminus \Omega_{\delta}^{-}(\eta)) \neq \emptyset] = p(h_n,\eta) \geq \eps
\end{equation}
where we have set $\Omega_{\delta}^{-}(\eta):= \{ x \in \Omega_{\delta}: \operatorname{dist}(x,\partial \Omega_{\delta}) \leq \eta \}$. Now, let $(\delta_n)_n$ be a sequence converging to $0$ such that for any $n \geq 1$,
\begin{equation} \label{proba_n_geq_eps}
    \PP_{\delta_n, H(n,\delta_n)}^{\pm}[\gamma_{\delta_n} \cap (\Omega_{\delta_n} \setminus \Omega_{\delta_n}^{-}(\eta)) \neq \emptyset] \geq \eps.
\end{equation}
Note that such a sequence $(\delta_n)_n$ exists by \eqref{probab_greater_eps}. As $h_n$ tends to $\infty$, we can write $h_n=g(\delta_n)$ with $g(\delta_n) \to \infty$ as $n \to \infty$. Therefore, by Proposition \ref{prop_large_H}, we obtain that
\begin{equation*}
    \lim_{n \to \infty} \PP_{n,g(\delta_n)}^{\pm}[\gamma_{\delta_n} \cap (\Omega_{\delta_n} \setminus \Omega_{\delta_n}^{-}(\eta)) \neq \emptyset] = 0.
\end{equation*}
This contradicts \eqref{proba_n_geq_eps} and completes the proof of Proposition \ref{prop_h_limit}.
\end{proof}

\section{Appendix} \label{sec_appendix}

\begin{lemma} \label{lemma_fineteness_moments_plusBC}
Let $D \subset \mathbb{C}$ be a bounded, open and simply connected domain such that $\partial D$ has Hausdorff dimension strictly smaller than $7/4$. Then, for any $k \geq 1$,
\begin{equation*}
    \int_{D^k} \vert f_{D}^{(+,k)}(x_1,\dots,x_k) \vert^2 \prod_{j=1}^{k}dx_j < \infty
\end{equation*}
where $f_{D}^{(+,k)}$ is as in Theorem \ref{theorem_spin_correlations}.
\end{lemma}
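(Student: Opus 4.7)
The plan is to derive finiteness of $\int_{D^k}|f_D^{(+,k)}|^2$ from a pointwise upper bound on $|f_D^{(+,k)}|$ — obtained by passing Lemma \ref{lemma_bound_EEplus} to the scaling limit — combined with a factor-by-factor splitting of the integrand and a dyadic decomposition near $\partial D$ that uses the Hausdorff dimension assumption.

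First, I would represent $f_D^{(+,k)}$ via Theorem \ref{theorem_spin_correlations}. For a suitable discrete approximation $(D_\delta)_\delta$ of $D$ compatible with the assumptions of Section \ref{sec_assumptions_domain}, and any $(x_1, \ldots, x_k) \in D^k$ with pairwise distinct coordinates lying at positive distance from $\partial D$, one has
\begin{equation*}
f_D^{(+,k)}(x_1, \ldots, x_k) = \lim_{\delta \to 0} C_\sigma^{-k}\delta^{-k/8}\EEplus{D_\delta}[\sigma_{x_1}\cdots\sigma_{x_k}].
\end{equation*}
Combining with Lemma \ref{lemma_bound_EEplus} and passing to the limit yields the almost-everywhere pointwise bound
\begin{equation*}
|f_D^{(+,k)}(x_1, \ldots, x_k)| \leq \tilde C^k \prod_{j=1}^k \bigl(\tfrac{1}{2} L(x_j) \wedge d(x_j, \partial D)\bigr)^{-1/8},
\end{equation*}
where $L(x_j) := \min_{p\neq j}|x_p-x_j|$ and $\tilde C := C/C_\sigma$ with $C$ the constant of Lemma \ref{lemma_bound_EEplus}.

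Squaring the bound and applying $(A \wedge B)^{-1/4} \leq A^{-1/4} + B^{-1/4}$ to each factor separately expands $|f_D^{(+,k)}|^2$ into a finite sum, indexed by $S \subseteq \{1,\ldots,k\}$, of terms proportional to $\prod_{j \in S} L(x_j)^{-1/4} \prod_{j \notin S} d(x_j, \partial D)^{-1/4}$. It thus suffices to show that each corresponding integral over $D^k$ is finite. Using $L(x_j)^{-1/4} \leq \sum_{p\neq j}|x_p-x_j|^{-1/4}$ and integrating variables iteratively, the finiteness of the collision part is handled exactly as in the proof of \cite[Proposition~3.10]{MourratIsing} together with \cite[Lemma~3.10]{Junnila}, since the singularity exponent $1/4$ is strictly below the critical value $2$ and $D$ is bounded. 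The only remaining input needed for the full integrability is the single-variable statement $\int_D d(x, \partial D)^{-1/4}\, dx < \infty$.

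The main obstacle — and the step where the Hausdorff dimension hypothesis is used — is establishing this boundary integral. Via a dyadic decomposition $A_n := \{x \in D : 2^{-n-1} < d(x, \partial D) \leq 2^{-n}\}$, one has
\begin{equation*}
\int_D d(x, \partial D)^{-1/4}\,dx \leq C_0 + \sum_{n \geq 0} 2^{(n+1)/4}|A_n|,
\end{equation*}
so it suffices to prove $|A_n| \lesssim 2^{-n(1/4+\epsilon)}$ for some $\epsilon > 0$. The hypothesis $d_H(\partial D) < 7/4$ allows, for any $\epsilon \in (0, 7/4 - d_H(\partial D))$, a cover of $\partial D$ by $\lesssim 2^{n(7/4-\epsilon)}$ balls of radius $2^{-n}$; since $A_n$ is contained in the $2^{-n}$-neighborhood of $\partial D$, this yields the desired area estimate and hence a convergent geometric series. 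Strictly speaking, the existence of such a uniform-radius cover corresponds to the upper Minkowski dimension rather than the Hausdorff dimension, but for the sets to which the lemma is applied — namely $\partial \Omega$ and SLE$_3$ traces — these two dimensions coincide, so the argument applies in the intended setting.
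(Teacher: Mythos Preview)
Your argument is correct and complete in spirit, but it proceeds differently from the paper. The paper does not pass Lemma \ref{lemma_bound_EEplus} to the limit; instead it invokes the explicit continuum formula for $f_D^{(+,k)}$ from \cite[Theorem~7.1]{spinCorrelations}, which factors as $\prod_j \CR(x_j,\partial D)^{-1/8}$ times a function of the pairwise Green functions $G_D(x_j,x_m)$. It then separates these two pieces by a single H\"older inequality with exponents $\tilde\alpha,\alpha$ chosen so that $\tilde\alpha/4 < 2-\dim_H(\partial D)$ and $\alpha<2$, reducing to $\int_D \CR(x,\partial D)^{-\tilde\alpha/4}\,dx<\infty$ (the boundary part) and a multi-point Green-function integral controlled by \cite[Proposition~3.9, Lemma~3.10]{Junnila}. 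Your route --- elementary splitting via $(A\wedge B)^{-1/4}\le A^{-1/4}+B^{-1/4}$ and then $L(x_j)^{-1/4}\le\sum_{p\ne j}|x_p-x_j|^{-1/4}$ --- avoids the explicit formula entirely and stays closer to the discrete machinery already used in Section~\ref{subsec_controlling_spin_field}, at the cost of producing mixed boundary/collision terms that require a short (but genuinely routine) iterative-integration argument not literally contained in the cited references. The paper's H\"older split is cleaner because the factorization is exact rather than a sum over $2^k$ subsets.

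Your remark about Hausdorff versus upper Minkowski dimension is well taken: the dyadic covering bound $|A_n|\lesssim 2^{-n(1/4+\epsilon)}$ really uses a box-counting hypothesis. The paper's proof has exactly the same implicit reliance (its boundary integral $\int_D \CR(x,\partial D)^{-\tilde\alpha/4}\,dx$ is, via Koebe, equivalent to $\int_D d(x,\partial D)^{-\tilde\alpha/4}\,dx$), so this is not a defect of your argument relative to the paper but a shared feature; as you note, the two dimensions agree for the boundaries actually arising in the application.
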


\begin{proof}
To show this lemma, we use the explicit expression of $f_{D}^{(+,k)}$: by \cite[Theorem~7.1]{spinCorrelations}, we have that, for any $k \geq 1$ and any $x_1, \dots, x_k \in D^k$,
\begin{equation*}
    f_{D}^{(+,k)}(x_1, \dots, x_k) = \prod_{j=1}^{k}\CR(x_j,\partial D)^{-\frac{1}{8}} \bigg( 2^{-\frac{k}{2}} \sum_{\mu \in \{-1,1\}^k} \prod_{1 \leq j < m \leq k}\exp\big(\frac{\mu_i\mu_m}{2}G_{D}(x_j,x_m)\big)\bigg)^{\frac{1}{2}}
\end{equation*}
where $G_D$ denotes the Green function in $D$ with Dirichlet boundary conditions. Here, $G_D$ is normalized so that as $y \to x$, $G_{D}(x,y) \sim -\log(\vert x - y \vert)$. Let $\alpha, \tilde \alpha > 1$ be such that $\frac{1}{\alpha}+\frac{1}{\tilde \alpha}=1$ and $\frac{\tilde \alpha}{4} < 2- \dim_H(\partial D)$ and $\alpha < 2$, where $\dim_{H}(\partial D)$ denotes the Hausdorff dimension of $\partial D$. Applying H\"older inequality with the functions
\begin{equation*}
    g_1(x_1,\dots,x_k) = \prod_{j=1}^k \CR(x_j,\partial D)^{-\frac{1}{4}}, \quad g_{2}(x_1,\dots,x_k) = \sum_{\mu \in \{-1,1\}^k} \prod_{1 \leq j < m \leq k}\exp\big(\frac{\mu_i\mu_m}{2}G_{D}(x_j,x_m)\big),
\end{equation*}
we obtain that for any $k \geq 1$,
\begin{equation*}
    \int_{D^k} \vert f_{\Omega}^{(+,k)}(x_1,\dots,x_k) \vert^2 \prod_{j=1}^{k} dx_j \leq \bigg(\int_{D} \CR(x,\partial D)^{-\frac{\tilde \alpha}{4}} dx \bigg)^{\frac{k}{\tilde \alpha}}\bigg(\int_{D^k} g_{2}(x_1,\dots,x_k)^{\alpha} \prod_{j=1}^{k}dx_{j} \bigg)^{\frac{1}{\alpha}}.
\end{equation*}
By our assumption on $d_{H}(\partial D)$ and our choice of $\tilde \alpha$, it is easy to see that the first integral on the above right-hand side is indeed finite. On the other hand, since $G_D$ is a non-negative function, we have that
\begin{equation*}
    \int_{D^k} g_{2}(x_1,\dots,x_k)^{\alpha} \prod_{j=1}^{k}dx_j \leq 2^k \int_{D^k} \prod_{1 \leq j < m \leq k}\exp\big(\frac{\alpha}{4}G_{D}(x_j,x_m)\big) \prod_{j=1}^{k}dx_{j}.
\end{equation*}
Finiteness of the above integral then follows by combining \cite[Proposition~3.9]{Junnila} and \cite[Lemma~3.10]{Junnila}.
\end{proof}

\bibliography{interface-magnetic-Ising-arxiv}

\end{document}